\documentclass[11pt]{preprint}

\usepackage[margin=2.8cm]{geometry}
\usepackage[T1]{fontenc} 
\usepackage[latin1]{inputenc}
\usepackage[english]{babel}
\usepackage[babel]{csquotes}
\usepackage{cite}
\usepackage{amssymb}
\usepackage{amsmath}
\usepackage{amsthm}
\usepackage{latexsym}
\usepackage{graphicx}
\usepackage{mathrsfs}
\usepackage{bbm}
\usepackage{verbatim}
\usepackage[colorlinks=true, pdfstartview=FitV, linkcolor=colorLink, citecolor=colorCite, urlcolor=colorLink, linktocpage=true]{hyperref}
\usepackage{enumerate, enumitem}
\usepackage{tabularx}

\numberwithin{equation}{section}

\usepackage[usenames,dvipsnames]{color}

\newtheorem{thm}{Theorem}[section]
\newtheorem{cor}[thm]{Corollary}
\newtheorem{lem}[thm]{Lemma}
\newtheorem{prop}[thm]{Proposition}
\newtheorem{defin}[thm]{Definition}
\newtheorem{remark}[thm]{Remark}

\def\OO{\mathcal O}

\def\Td{\OO}
\renewcommand{\d}{{\mathrm d}} 

\usepackage{mathtools}

\DeclareMathOperator*{\essinf}{ess\,inf}

\def\enne{\mathbb{N}}

\def\erre{\mathbb{R}}

\def\P{\mathbb{P}}

\def\E{\mathop{{}\mathbb{E}}}

\def\s{\varsigma}
\def\ind{\mathbbm 1}

\def\cL{\mathscr{L}}
\def\cF{\mathscr{F}}
\def\cB{\mathscr{B}}
\def\eps{\varepsilon}
\def\cP{\mathscr{P}}

\def\OO{\mathcal{O}}

\def\xe{X^x_\varepsilon}
\def\xex{X^x_\varepsilon}
\def\ye{Y_\varepsilon}
\def\yeh{Y_\varepsilon^h}
\def\pe{P^\varepsilon}
\def\ve{v^\varepsilon}

\renewcommand{\d}{{\mathrm d}}

\def\beq{\begin{equation}}
\def\eeq{\end{equation}}

\def\to{\rightarrow}
\def\wto{\rightharpoonup}
\def\wstarto{\stackrel{*}{\rightharpoonup}}
\def\embed{\hookrightarrow}
\def\cembed{\stackrel{c}{\hookrightarrow}}

\def\norm #1{\left\|#1\right\|}

\def\sp #1#2{\left<#1,#2\right>}
\newcommand\ip\sp

\allowdisplaybreaks

\definecolor{colorLink}{RGB}{0,100,162}
\definecolor{colorCite}{RGB}{8,124,100}

\setlist[itemize]{leftmargin=5mm}
\setlist[enumerate]{leftmargin=10mm}

\begin{document}

\title{Strong Feller property, irreducibility,\\
and uniqueness of the invariant measure\\
for stochastic PDEs with degenerate multiplicative noise}

\author{Luca Scarpa$^{1}$, Margherita Zanella$^{2}$}

\institute{Politecnico di Milano, Email: \href{mailto:luca.scarpa@polimi.it}{\color{black} luca.scarpa@polimi.it} \and Politecnico di Milano, Email: \href{mailto: margherita.zanella@polimi.it}{\color{black} margherita.zanella@polimi.it}}

\maketitle

\begin{abstract}
We establish strong Feller property and irreducibility 
for the transition semigroup associated to a 
class of nonlinear stochastic partial differential equations
with multiplicative degenerate noise.
As a by-product, we prove uniqueness of the invariant 
measure under no strong-dissipativity assumptions.
The drift of the equation 
diverges exactly 
where the noise coefficient vanishes,
resulting in a competition between the dissipative 
effects and the degeneracy of the noise.
We propose a method to measure the accumulation 
of the solution towards the potential barriers,
allowing to give rigorous meaning to
the inverse of the degenerate noise coefficient.
From the mathematical perspective, 
this is one of the first contributions in the literature 
establishing strong Feller properties and irreducibility
in the multiplicative degenerate case,
and opens up novel investigation paths in the direction
of regularisation effects and ergodicity 
in the degenerate-noise framework.
From the application perspective, 
the models cover interesting scenarios
in physics, in the context of evolution of relative concentrations of mixtures, under the influence of 
thermodynamically-relevant
potentials of Flory-Huggins type.

\vspace{2mm}
\noindent{\small{\textit{MSC2020}: 35R60, 37L55, 60G53, 60H15.}}

\vspace{2mm}
\noindent{\small{\textit{Keywords}: strong Feller property, Markov semigroup, 
irreducibility, stochastic PDEs, degenerate noise.}}
\end{abstract}

\setcounter{tocdepth}{2}
\tableofcontents

\maketitle

\section{Introduction}
\label{sec:intro}
One of the core research lines in the theory of 
stochastic processes is 
the investigation of smoothing properties
and long-time behaviour of Markov semigroups.
These are families $P=(P_t)_{t\geq0}$
of liner operators acting on
bounded measurable functions over a given separable metric space $\mathcal X$
and play a central role in the analysis of transition dynamics 
for stochastic evolution equations.
In this direction, a key feature
is represented by the well-known strong Feller property,
meaning that 
at every positive time the operator $P_t$ maps
bounded measurable functions into bounded continuous functions.
This provides instant regularisation 
along the flow, and is a crucial 
point in establishing important qualitative 
results, such as regularisation-by-noise phenomena
and ergodicity. A further core trademark arising 
in the study of Markov transition semigroups
is irreducibility, meaning that every open 
subset of $\mathcal X$ is reached with positive probability, 
starting from any given initial point.
Together with the strong Feller property, irreducibility
allows to achieve refined
results in the long-time analysis,  
such as the equivalence of transition probabilities 
and, more importantly, 
uniqueness of invariant measures.

\subsection{State of the art and open problems}
The study of strong Feller properties 
for Markov semigroups has been well developed 
in the last decades. In the finite dimensional 
case, several results have been achieved, 
especially in the context of 
hypoellipticity 
\cite{hor, mal}.
In the infinite dimensional case, 
the importance of the 
strong Feller property is even more evident,
with long-standing knowledge of applications
to potential theory \cite{car, gross} and,
alongside irreducibility, to uniqueness of
invariant measures and ergodicity
\cite{doob, has}.
Nonetheless, 
the infinite dimensional case is 
notably more delicate. 

For stochastic 
evolution equations with additive noise,
the strong Feller property is usually obtained when the stochastic forcing acts on an infinite numbers of modes:
We refer to the 
pioneering contributions
\cite{mas1, mas2, mas3, DPZ1991, FlaMas95} and 
the monographs
\cite{dapratozab, DPZ-erg} for a general overview. More recently, strong Feller property and irreducibility 
have 
been 
dealt with in more general scenarios:
we mention \cite{MS} for perturbed evolutions, 
\cite{BR} for the Banach space setting, 
\cite{LW} for fast-diffusion equations, 
and \cite{HM} for singular SPDEs. 
While
strong Feller property and irreducibility 
have been widely analysed in addressing
uniqueness of invariant measures,
over the past decades also
different techniques have been developed to address such problem. Among them, we mention coupling and generalised coupling methods that require a non-degenerate forcing on the unstable part of the equation and use a change of measure via Girsanov Theorem and the pathwise contractive properties of the dynamics to prove ergodicity; see \cite{GHMR17, KS}. A major breakthrough in the study of uniqueness of invariant measures was the paper by Hairer and Mattingly \cite{HM1} (see also \cite{HM2}), in which the authors introduced the weaker notion of the asymptotic strong Feller property which is sufficient to prove unique ergodicity in the hypoelliptic setting, i.e. where some modes are excited by the noise and nonlinear term propagates the noise to the whole system.

By contrast, the multiplicative noise case is significantly less studied.
In this direction, all contributions in the literature proving uniqueness of the invariant measure via the strong Feller property and irreducibility require a non-degeneracy assumption on the multiplicative noise, for which we refer to the first works \cite{DPEZ,PesZab}.
This allows indeed to give proper sense to 
the inverse of the noise operator and to 
exploit a Bismut-Elworthy-Li formula \cite{EL}
to represent the derivative of the transition 
semigroup.
In terms of the Kolmorogov operator associated
to the equation, non-degeneracy
consists in requiring 
uniform ellipticity in space.
Only recently it has been shown that generalized coupling techniques can be used to treat the case of multiplicative noises that are non-degenerate along the unstable directions; see \cite{FZ23, FerZannew, DPSZ}.

Up to now, when looking at strong Feller properties and irreducibility for stochastic evolution equations with multiplicative noise, all contributions in the literature require that the noise is 
non-degenerate \cite{DPEZ,PesZab}.
No general results are actually 
available so far in the genuine case of degenerate 
multiplicative noise,
for which strong Feller property and irreducibility
remain open in general, let alone uniqueness of invariant measures.

This paper introduces a method to achieve 
strong Feller property and irreducibility 
also for a class of stochastic evolution equations
with genuine degenerate noise of multiplicative type, 
hence also to
establish uniqueness of the invariant measure
under very general assumptions.
Up to the authors knowledge, this is the 
first contribution in the literature 
that answers such questions.

\subsection{Presentation of the model}
We are interested in smoothing properties and long-time behaviour of the Markov transition
semigroup associated to
stochastic partial differential equations with degenerate 
diffusion in the form 
\beq
\label{eq0}
  \d X - \Delta X\,\d t + \Psi_\beta'(X)\, \d t= G_\alpha(X)\,\d W, \qquad X(0)=x,
\eeq
on a bounded domain $\Td$ in $\erre^d$, with 
$W$ being a cylindrical Wiener process on $H=L^2(\Td)$,
endowed with the boundary conditions
\[
  c_d X+c_n \partial_{\bf n}X = 0 \quad\text{on } \partial\OO.
\]
Here, the parameters $c_d,c_n\in\{0,1\}$
satisfy $c_d+c_n=1$ and specify 
the type of boundary condition:
the choice $(c_d,c_n)=(1,0)$ yields
Dirichlet conditions while 
$(c_d,c_n)=(0,1)$ yields
Neumann conditions.
The dependence of the multiplicative 
noise coefficient $G_\alpha$ 
on the state variable $X$ is rendered through 
the superposition operator associated to
a degenerate mobility function of the form
\[
m_\alpha:\erre\to[0,+\infty),
\qquad m_\alpha(r):=\begin{cases}
(1-r^2)^\alpha \quad&\text{if } r\in[-1,1],\\
0 \quad&\text{if } r\in\erre\setminus[-1,1],
\end{cases}
\]
where $\alpha\geq1$ is a parameter taking into account
the degree of degeneracy. 
Roughly speaking, for every point $v\in H$,
the linear operator $G_\alpha(v)$ acts along every 
direction of $H$, possibly in a coloured fashion, 
with intensity tuned through the term $m_\alpha(v)$.
More precisely, 
the multiplicative 
noise operator is defined as
\[
  G_\alpha: H\to\cL(H,H),
  \qquad
  G_\alpha(v)[h]:=\sqrt{m_\alpha(v)}\cdot
  (\operatorname{I}-\Delta)^{-\delta}h, \quad v,h\in H,
\]
where $\delta>0$ keeps track of the roughness of the noise.
Let us highlight that the noise is indeed degenerate, in the sense that 
\[
  \inf_{v\in H}\operatorname{Tr}[G_\alpha^*(v)G_\alpha(v)]=0.
\]
By contrast, the drift $\Psi_\beta'$ is the derivative of a singular potential of the form 
\[
  \Psi_\beta:(-1,1)\to[0,+\infty),
  \qquad\Psi_\beta(r):=\int_0^r\int_0^z\frac1{(1-w^2)^\beta}\,\d w\,\d z, \quad r\in(-1,1),
\]
where $\beta\geq1$ is another parameter that keeps track of the singularity of the drift close to the potential barriers $\pm1$.

The choices of the degenerate 
mobility $m_\alpha$ and of the singular drift $\Psi_\beta'$
are classical and arise naturally
in numerous fields in physics, such as
thermodynamics and phase separation models.
The variable $X$ in \eqref{eq0} typically 
represents the difference of two concentrations 
in mixtures of fluids or alloys, so
is expected to 
take values in the physically-relevant range $[-1,1]$.
This is indeed forced
by the presence of the singularities of $\Psi_\beta'$ at the potential barriers $\pm1$.
The family of singular potentials 
$(\Psi_\beta)_{\beta\geq1}$ introduced above include 
the relevant choices that have been proposed
in the pioneering works \cite{cahn-hill, cahn-hill2, AC1979} on deterministic phase-separation, such as
the well-known Flory-Huggins \cite{Flory42, Huggins41} logarithmic potential 
\[
  \Psi_1(r)=(1+r)\ln(1+r)+(1-r)\ln(1-r), \quad r\in(-1,1).
\]
The family of degenerate mobilities 
$(m_\alpha)_{\alpha\geq1}$ also cover the interesting choices 
required in thermodynamics, see \cite{ell-gar},
and the scaling $\sqrt{m_\alpha}$ appearing in $G_\alpha$
is well-established in stochastic modelling of 
phase-separation theory, see e.g.~the pioneering work \cite{cook}.
A special mention goes to particular
case $\alpha=\beta$, where one recovers the typical 
balance $m_\alpha\Psi''_\beta\equiv1$. In our framework, 
keeping track of the parameters $\alpha$ and $\beta$ separately
allows for even more generality in this sense.
From the mathematical point of view, the equation \eqref{eq0} is a
generalisation of well-studied stochastic models in phase-field theory, for which we refer 
the reader to \cite{scar-SCH, scar-SVCH, scarpa21, Bertacco21, SZ, DPSZ, DPGS}.

\subsection{Heuristic idea}
It is evident that 
the noise operator $G_\alpha$ degenerates exactly 
where the drift $\Psi_\beta'$ diverges, i.e.~in
correspondence of the potential barriers.
This means, at least from a heuristic perspective, that
the evolution described 
by equation \eqref{eq0} is driven by the competition 
between two opposite factors: 
the monotone singular drift $\Psi_\beta'$ tends to 
keep $X$ away from the potential barriers $\pm1$, 
whereas the degenerate noise $G_\alpha$
tends to attract $X$ towards $\pm1$.
The intuitive idea behind our proposed method
is that, 
despite the fact that $X$ might be arbitrarily close to $\pm1$
throughout the evolution,
if one is able to balance these two opposite 
effects then
on the long run the dynamics stabilise.

This intuition naturally calls 
for a rigorous mathematical method 
to measure the accumulation of the solution $X$ towards
the potential barriers. To this end, the idea is to introduce 
the convex subsets 
\[
  \mathcal K_\gamma:=
  \left\{v:\Td\to(-1,1): \;\int_{\Td}\Psi_\gamma(v)<+\infty\right\}, \quad\gamma\geq1,
\]
and to rely on the fact that for
$v\in\mathcal K_\gamma$, the parameter
$\gamma$ yields exactly a degree of the accumulation of
$v$ towards $\pm1$: the higher $\gamma$ is, the less accumulated $v$ is at the potential barriers.
In order to keep track of this in the dynamics of 
equation \eqref{eq0}, we first look for explicit scalings
of the parameters $\alpha,\beta,\gamma$ such that
the convex set $\mathcal K_\gamma$ is invariant for the 
transition semigroup $P$ associated to \eqref{eq0}, 
namely such that if $x\in \mathcal K_\gamma$ then 
$\P(X(t)\in\mathcal K_\gamma\;\forall\,t\geq0)=1$.
Analogously, it will be necessary to keep track of the 
Sobolev space-regularity of the solution $X$
throughout the evolution: this is done by 
prescribing the initial datum $x$ in $H^{2\xi}(\Td)$,
where $\xi\in[0,\frac12)$.
To summarise, the framework of the whole paper
involves five main parameters, $\alpha,\beta,\gamma,\delta,\sigma,\xi$, with the following interpretation:
\begin{itemize}
    \item $\alpha$: degree of degeneracy of the noise;
    \item $\beta$: degree of singularity of the drift;
    \item $\gamma$: accumulation
    at the potential barriers;
    \item $\delta$: range of the noise;
    \item $\sigma$: colour of the noise;
    \item $\xi$: space regularity.
\end{itemize}

\subsection{Main results and technical strategy}
Let us now briefly discuss the main results and 
the technical ideas behind them. The paper presents
four main results, focusing on well-posedness of equation \eqref{eq0},
strong Feller property, irreducibility, and uniqueness of the invariant measure. These are achieved 
by prescribing a specific tuning of the main parameters introduced above.

The first result concerns well-posedness of equation \eqref{eq0} under very general assumptions on the 
coefficients. More precisely, as we have anticipated above, 
the initial datum is assumed to be in $H^{2\xi}(\Td)\cap\mathcal K_\gamma$ and the solution is shown to 
belong to $H^{2\xi}(\Td)\cap\mathcal K_\gamma$
at all times. The main idea to treat the singularity 
of the drift is to rely on 
energy estimates for the potential $\Psi_\gamma$,
and on the fact that the degeneracy of the noise 
helps in controlling second-order contributions 
in It\^o formula.
This technique implicitly requires the noise to be 
sufficiently coloured, hence prescribing 
suitable conditions on the parameter $\delta$.
The well-posedness result identifies 
the metric space $\mathcal X:=\mathcal K_1$ as the natural one to frame the 
evolution of \eqref{eq0}, and 
allows to introduce the 
Markov transition semigroup
as a family $P=(P_t)_{t\geq0}$ of linear operators
acting on bounded measurable functions defined 
on $\mathcal X$. Let us stress that it still holds that
\[
  \inf_{v\in \mathcal X}\operatorname{Tr}[G_\alpha^*(v)G_\alpha(v)]=0,
\]
so even by framing the evolution in the metric space $\mathcal X$, the solution $X$ may approach the potential barriers $\pm1$
indefinitely, and
the action of the noise on $X$
is still degenerate.

The second result of the paper states the strong Feller property for the transition semigroup $P$.
The main argument consists first in studying differentiability 
with respect to the initial datum in suitable topologies:
this is done via careful stochastic maximal regularity arguments and refined energy estimates on the solution.
Secondly, it is rigorously shown that the noise operator 
is invertible in a specific domain, 
even in the degenerate framework. More precisely, 
we prove that there exists a convex set 
$\mathcal D_{\delta}\subset H$, which 
is also invariant for the transition semigroup $P$,
and an operator $G_\alpha^{-1}:\mathcal D_{\delta}
\to\cL(H^{2\delta}(\Td),H)$ such that 
$G_\alpha(x)G_{\alpha}^{-1}(x)=\operatorname{I}_{H^{2\delta}(\Td)}$
and $G_\alpha^{-1}(x)G_{\alpha}(x)=\operatorname{I}_{H}$
for every $x\in \mathcal D_\delta$.
Unlike the classical setting of non-degenerate noise, 
in this case the inverse $G_\alpha^{-1}(x)$
is not uniformly controlled with respect to $x\in\mathcal D_\delta$. Nonetheless, we prove the estimate 
\[
  \|G_\alpha^{-1}(X)\|_{\cL(H^{2\delta}(\Td), H)}
  \lesssim\|\Psi''_{\frac\alpha2}(X)\|_{H^{2\delta}(\Td)},
\]
where the right-hand side can be controlled 
provided that $\gamma$ is sufficiently large
with respect to $\delta$ and $\sigma$.
The proof
strongly relies on energy estimate on $X$, 
hence it requires the noise to be coloured: 
this will force
the space dimension to be $1$ in 
this framework.
Eventually, these considerations
actually allow to obtain uniform estimates 
on the Bismut-Elworthy-Li formula and to infer the required 
strong Feller property. 
This argument ensures 
that the semigroup maps bounded measurable functions
into bounded locally Lipschitz-continuous functions,
which are not necessarily globally Lipschitz
as it happens in the classical setting of non-degenerate 
noise with uniformly-bounded inverse.
However, this is still
enough 
to infer a strong Feller property 
with respect to the metric of $H^{2\delta}(\Td)$.

The third main result concerns irreducibility.
In this direction, since the strong Feller property 
is obtained with respect to the metric of $H^{2\delta}(\Td)$,
it is natural to investigate irreducibility of 
$P$ with respect to the same topology.
Namely, we show that for every initial datum 
$x\in\mathcal K_\gamma\cap H^{2\delta}(\Td)$
and $t>0$, there is 
a strictly positive probability that $X(t)$ belongs to 
any ball of $H^{2\delta}(\Td)$ in 
$\mathcal X$. The proof 
is inspired from an idea 
presented in \cite{PesZab} and \cite{Zhang} in the non-degenerate case, 
and consists is suitably forcing \eqref{eq0}
with an additional drift term in order to
recover irriducibility. However, while in the classical 
non-degenerate case the modified system is trivially equivalent to the original one via Girsanov arguments, here 
the situation is much more delicate. Indeed, 
since the inverse of the noise is not uniformly bounded,
we are not able to rely on the classical Girsanov 
result to prove equivalence of the laws. Nonetheless, the estimate on $G_\alpha^{-1}$
pointed out above
still allows to exploit a weaker version 
of the Girsanov theorem (see \cite{LS}
for the finite dimensional case
and \cite{Ferr} for the infinite dimensional case), which only ensures that the modified system is absolutely continuous with respect to the original one \eqref{eq0}. This is shown to be enough to infer irreducibility.

The final result of the paper is then uniqueness of the invariant measure for the Markov semigroup $P$. This is a classic corollary of the fact that the strong Feller property and irreducibility
are obtained with respect to the same metric,
and is based on showing that the semigroup is regular, in the sense that the transition probabilities are equivalent.
Let us stress that such uniqueness result is 
extremely general, as it does not rely on 
strong dissipativity assumptions,
and definitely uncommon for 
the degenerate-noise case.

\subsection{Structure of the paper}
We eventually summarise here the main structure of the work. Section~\ref{sec:main} contains
the main setting of the work, the assumptions, and the statements of the results.
In Section~\ref{sec:X} we prove the well-posedness result and collect useful estimates on
some approximated equations.
Section~\ref{sec:Y} is focused on the study of differentiability with respect to the initial datum and estimates on the derivatives.
Eventually, 
in Section~\ref{sec:str_fell} we prove the 
strong Feller property, while is Section~\ref{sec:irr} we show irreducibility and uniqueness of the invariant measure.

\section{Setting and main results}
\label{sec:main}

\subsection{Notation}
In the following, we write $A \lesssim B$ for $A,B>0$ if $A \le cB$ for some constant $c>0$
independent of $A$ and $B$. We also use the notation $A \simeq B$ when $A \lesssim B$ and $B \lesssim A$.

For a general Banach space $E$, its topological dual is denoted by $E^*$, while the duality pairing between $E^*$ and $E$ is denoted by $\ip{\cdot}{\cdot}_{E^*,E}$.
	If $E$ is a Hilbert space, then the scalar product of $E$ is denoted by $(\cdot,\cdot)_E$.
	Given two separable Hilbert spaces $E_1$ and $E_2$, the space of linear continuous, 
    Hilbert-Schmidt, and trace-class operators  
	from $E_1$ to $E_2$ are denoted, respectively, 
    by the symbols $\cL(E_1,E_2)$, $\cL^2(E_1,E_2)$, and $\cL^1(E_1,E_2)$.
	For every $s\in[1,+\infty]$, if $(A,\mathcal A, \mu)$
    is a measure space and $E$ is a Banach space, 
	the symbol $L^s(A; E)$ indicates the usual spaces of strongly measurable, Bochner-integrable functions
	from $A$ to $E$.
    If $E$ is omitted, it is understood that $E = \mathbb{R}$.
    For every $T>0$, we denote by $\mathcal C^{1,2}([0,T]\times E)$ the space of all functions $f:[0,T] \times E \rightarrow \mathbb{R}$ which are one-time differentiable w.r.t.~$t \in [0,T]$ and two-times Fr\'echet differentiable 
    w.r.t.~$x \in E$.

\subsection{Functional and probabilistic setting}
Let $d\in\{1,2,3\}$ and $\OO\subset\erre^d$ be
a bounded Lipschitz domain. 
For $p \in [1, + \infty]$ and $s\geq0$ 
we denote by $L^p(\Td)$ and 
$H^{s}(\Td)$ the classical Lebesgue and Sobolev spaces 
of real-valued functions 
on $\Td$, respectively, endowed with their natural norms 
and scalar products.

We also recall the technical lemma in \cite[Thm.~7.4]{BH2021} stated in the form that best fits our needs. 
\begin{lem}[Multiplication of Sobolev functions]
\label{multiplication}
Let $s\ge 0$, $s_1\geq s$, and $s_2\ge s$
be real numbers satisfying
\[
s_1+s_2 >s+ \frac{d}2.
\]
Then, the pointwise multiplication of functions extends uniquely to a continuous bilinear map 
\[
H^{s_1}(\Td) \times H^{s_2}(\Td) \rightarrow H^{s}(\Td),
\]
namely there exists a constant $C$, depending on 
$s, s_1, s_2, d$ such that, for every $f_1\in H^{s_1}(\Td)$ and $f_2\in H^{s_2}(\Td)$ it holds that 
$f_1f_2\in H^{s}(\Td)$ and
\[
\norm{f_1f_2}_{H^{s}(\Td)}\leq C
\norm{f_1}_{H^{s_1}(\Td)}\norm{f_2}_{H^{s_2}(\Td)}.
\]
\end{lem}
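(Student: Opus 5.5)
The plan is to reduce to the case of the whole space $\erre^d$ and argue there with the Fourier transform, using the Bessel-potential characterisation $\|f\|_{H^s(\erre^d)}=\|\langle\zeta\rangle^s\hat f\|_{L^2}$, where $\langle\zeta\rangle=(1+|\zeta|^2)^{1/2}$. Since $\Td$ is a bounded Lipschitz domain, there is a universal (Stein-type) extension operator $\mathcal E$ that is bounded $H^{s_i}(\Td)\to H^{s_i}(\erre^d)$ for all $s_i\ge0$ simultaneously. Restriction $H^s(\erre^d)\to H^s(\Td)$ is bounded by the definition of the norm on $\Td$ as a quotient norm. Hence $f_1f_2=(\mathcal Ef_1\,\mathcal Ef_2)|_{\Td}$, and it suffices to prove the bilinear estimate on $\erre^d$, first for Schwartz functions, and then extend by density. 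Uniqueness of the extension is automatic because smooth functions are dense in $H^{s_1}(\Td)\times H^{s_2}(\Td)$.

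On $\erre^d$ we have $\widehat{f_1f_2}=\hat f_1*\hat f_2$. Using $\langle\zeta\rangle^s\lesssim\langle\zeta-\eta\rangle^s+\langle\eta\rangle^s$ (valid since $s\ge0$), it suffices to bound
\[
\Big\|\int\langle\zeta-\eta\rangle^s|\hat f_1(\zeta-\eta)||\hat f_2(\eta)|\,\d\eta\Big\|_{L^2_\zeta}
\]
together with the symmetric term. Set $g_i=\langle\cdot\rangle^{s_i}|\hat f_i|\in L^2$. The first term becomes the convolution of $\langle\cdot\rangle^{s-s_1}g_1$ with $\langle\cdot\rangle^{-s_2}g_2$.

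The key step is a case distinction.
\begin{itemize}
\item If $s_2>d/2$, then $\langle\cdot\rangle^{-s_2}g_2\in L^1$ by Cauchy--Schwarz, and $\langle\cdot\rangle^{s-s_1}g_1\in L^2$ because $s\le s_1$. Young's inequality $L^2*L^1\subset L^2$ closes the estimate.
\item If $s_2\le d/2$, choose exponents via Hölder: $\langle\cdot\rangle^{-s_2}g_2\in L^{q}$ with $1/q=1/2+(s_2/d)^{-}$, more precisely any $1/q<1/2+s_2/d$. Likewise $\langle\cdot\rangle^{s-s_1}g_1\in L^{p}$ with $1/p<1/2+(s_1-s)/d$. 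Young's inequality requires $1/p+1/q=3/2$, and this is attainable precisely when $(s_1-s)/d+s_2/d>1/2$, i.e.\ $s_1+s_2>s+d/2$.
\end{itemize}
The symmetric term is handled identically with the roles of $s_1,s_2$ swapped, using $s_2\ge s$.

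I expect the main obstacle to be the borderline bookkeeping in the Hölder/Young step. The cases where one of $s_1-s$ or $s_2$ is $\ge d/2$ force one exponent to equal $1$ (giving an $L^1$ weight), so a strict inequality is needed to keep the weights $\langle\cdot\rangle^{-a}$ in the relevant $L^r$ spaces, since $\langle\cdot\rangle^{-a}\in L^r$ iff $ar>d$. Alternatively, I would simply cite \cite[Thm.~7.4]{BH2021} directly, as the paper does, once the domain reduction via extension is justified.
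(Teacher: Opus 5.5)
Your proof is correct. The paper, however, gives no proof of this lemma: it is recalled from \cite[Thm.~7.4]{BH2021}, specialised to the form needed, and used as a black box. Yours is therefore a genuinely self-contained alternative. Your chain of steps closes exactly under $s_1,s_2\geq s$ and $s_1+s_2>s+\frac d2$: extension from the Lipschitz domain $\Td$ to $\erre^d$, the Fourier-side splitting $\langle\zeta\rangle^s\lesssim\langle\zeta-\eta\rangle^s+\langle\eta\rangle^s$, H\"older against the weights $\langle\cdot\rangle^{-a}\in L^r$, and Young's inequality with $\frac1p+\frac1q=\frac32$.

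When you write it out, record the admissible exponents precisely. Take $\frac1p-\frac12\in[0,\frac12]$ with $\frac1p-\frac12<\frac{s_1-s}{d}$ unless $p=2$, in which case the weight is taken in $L^\infty$. Likewise take $\frac1q-\frac12\in[0,\frac12]$ with $\frac1q-\frac12<\frac{s_2}{d}$ unless $q=2$. With these ranges, the existence of a pair summing to $\frac32$ is equivalent to the hypothesis. The edge cases are then covered automatically: $s_1=s$ forces $s_2>\frac d2$, which is your first case, and $s_2=0$ forces $s_1-s>\frac d2$, giving $p=1$, $q=2$.

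Also, since $s_1,s_2\geq0$, the product $f_1f_2\in L^1$ is already defined pointwise, and $\widehat{f_1f_2}=\hat f_1*\hat f_2$ holds directly for $L^2$ functions. Density is thus only needed for the uniqueness clause.

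As for what each route buys: the citation covers a much broader setting (Sobolev spaces $W^{s,p}$ with different integrability exponents, negative smoothness, borderline cases) at no cost. Your argument is elementary and confined to the Hilbert scale with $s\geq0$, which is all the paper ever uses. It also makes transparent that the strict inequality $s_1+s_2>s+\frac d2$ is exactly the integrability condition $ar>d$ on the Fourier weights.
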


Let
    $T > 0$ arbitrary final time, the probabilistic domain of the problem is denoted by  $(\Omega,\cF,(\cF_t)_{t\in[0,T]},\P)$, i.e., a filtered probability space satisfying the usual conditions (namely, the filtration is saturated and right-continuous). For every Banach space $E$ and 
    for every $s\in[1,+\infty]$, 
    the space $L^s(\Omega; E)$ is the classical space of strongly measurable $E$-valued random variables with finite moments up to order $s$, with the usual extension for $p = +\infty$. If the space $E$ is a functional space depending on time, we use the notation $L^s_\cP(\Omega;E)$, to stress that measurability is also intended with respect to the progressive $\sigma$-algebra $\cP$.
    We recall that for all $s\in(1,+\infty)$ and for every separable and reflexive Banach space $E$, 
	the space
	\[
	L^s_w(\Omega; L^\infty(0,T; E^*)):=
	\left\{v:\Omega\to L^\infty(0,T; E^*) \text{ weakly*-measurable, }
	\norm{v}_{L^\infty(0,T; E^*)}\in L^s(\Omega)
	\right\}
	\]
	satisfies
	\cite[Thm.~8.20.3]{edwards} the identification
	\[
	L^s_w(\Omega; L^\infty(0,T; E^*))=
	\left(L^{\frac{s}{s-1}}(\Omega; L^1(0,T; E))\right)^*.
	\]
    The symbol $W$ denotes a $U$-cylindrical Wiener process,
    where $U$ is a fixed separable Hilbert space. 
    Namely, it formally holds that 
	\begin{equation} \label{eq:representation}
		W = \sum_{k=0}^{\infty} \beta_k u_k,
	\end{equation}
	where $\{\beta_k\}_{k \in \enne}$ are real independent Brownian motions and the family $\{u_k\}_{k\in\enne} \subset U$ is a fixed orthonormal system of $U$. The series \eqref{eq:representation} may not be convergent in $U$, 
    but converges in any separable Hilbert 
    space $U_1\supset U$ such that the 
    inclusion $\iota:U\to U_1$ is Hilbert-Schmidt.
    Such space $U_1$ always exists and 
    it is possible then to identify $W$ as a $Q_1$-Wiener process on $U_1$, where $Q_1:=\iota\circ\iota^*\in\cL^1(U_1,U_1)$ is trace-class on $U_1$ and 
    satisfies $Q_1^{\frac12}(U_1)=\iota(U)=U$ (for an exhaustive treatment of the subject, see \cite[Subsec.~2.5.1]{LiuRo}). In order not to burden the notation in this work, we may implicitly assume this extension by simply saying that $W$ is a cylindrical process on $U$. Stochastic integration with respect to the cylindrical process $W$ is defined in terms of
    stochastic integration with respect to $Q_1$-Wiener process, so that for every Hilbert space $E$ and for every 
    process $B\in L^2_\cP(\Omega; L^2(0,T; E))$, the stochastic integral 
	\[
	\int_0^\cdot B(s)\,\d W(s)
	\]
	is well-defined in $L^2_\cP(\Omega; C^0([0,T]; E))$. This definition is well posed and does not depend on the choice of $U_1$ or $\iota$ (cfr.~\cite[Subsec.~2.5.2]{LiuRo}).

\subsection{Operators}
From now on, given $c_d,c_n\in\{0,1\}$ with $c_d+c_n=1$,
we set
	\[
    H:= L^2(\Td), \qquad 
	V:=
    \begin{cases}
    H^1_0(\Td) \quad&\text{if } (c_d,c_n)=(1,0),\\
    H^1(\Td) \quad&\text{if } (c_d,c_n)=(0,1),
    \end{cases}
	\]
	endowed with their standard norms $\norm{\cdot}_H$,
	$\norm{\cdot}_{V}$, respectively.
	As usual, we identify the Hilbert space $H$ with its dual through
	the corresponding Riesz isomorphism, so that we have the variational structure
	$V\embed H \embed V^*$
	with dense an compact embeddings. 

\subsubsection{The linear operator and its fractional powers}
Let $A:=-\Delta$ 
be the realisation of the negative Laplace operator with homogeneous boundary conditions
of either Dirichlet type (if $c_d=1$)
or Neumann type (if $c_n=1$).
Namely, $A$ can be viewed equivalently either as an unbounded linear non-negative operator on $H$ with effective domain 
\[
\mathcal{D}(A):=\begin{cases}
H^1_0(\Td)\cap H^2(\Td) \quad&\text{if } (c_d,c_n)=(1,0),\\
\left\{v\in H^2(\Td):\;\partial_{\bf n}v=0 \text{ a.e.~on } \partial\OO\right\}
\quad&\text{if } (c_d,c_n)=(0,1),
\end{cases}
\]
or as a variational operator 
$A:V\to V^*$ as
	\[
	\ip{A\psi}{\phi}_{V^*,V}=
    \int_{\Td}\nabla\psi\cdot\nabla\phi\,,
	\qquad \psi,\phi\in V.	
    \]
We recall that  $A$ is a
self-adjoint operator with compact resolvent. 
Let also $(e_k)_{k\in\enne_+}$
be 
a complete orthonormal system
of $H$
formed by eigenfunctions of $A$ with associated eigenvalues $(\lambda_k)_{k\in\enne_+}$:
let us recall that $\lambda_k\sim k^{\frac 2d}$
as $k\to\infty$.
Furthermore, 
the operator $-A$ generates 
a strongly continuous analytic
semigroup $S$ of contractions on $H$.

For $s \in (0,1)$, the fractional operator $A^s$ 
is defined in the sense of
spectral theory as
the linear unbounded operator on $H$ with 
effective domain 
\[
\mathcal{D}(A^s):=
\left\{ u \in H \ : \ \sum_{k \in \mathbb N_+}|\lambda_k|^{2s}|(u, e_k)_H|^2\simeq\sum_{k \in \mathbb N_+}|k|^{\frac{4s}d}
|(u, e_k)_H|^2< +\infty\right\}
\]
given by 
\[
 A^su:=\sum_{k \in \mathbb N_+}\lambda_k^{s}(u, e_k)_He_k, \quad u\in\mathcal D(A^s).
\]
For brevity of notation, we set
\[
V_{2s}:=\mathcal{D}(A^{s}), \quad s \in [0,1],
\]
and note that we have 
\[
\|u\|_{V_{2s}}^2\simeq\|u\|_H^2+\|A^su\|^2_H
\simeq\|(\operatorname{I}+A)^su\|_H^2
\qquad\forall\,u\in V_{2s}.
\]
For any $s \in (0,1)$ we set 
$V_{-2s}:=(\mathcal{D}(A^s))^*$ endowed with its natural dual norm. The duality pairing between $V_{-2s}$ and $V_{2s}$ will be denoted by the symbol $\langle \cdot, \cdot\rangle_{V_{-2s},V_{2s}}$. Notice that the restriction of $S$ to $V_{2s}$ is a strongly continuous semigroup of contractions on $V_{2s}$.

We conclude this part with the following lemma, which will be useful in the sequel.
\begin{lem}
\label{reg_A}
   Let  $\sigma\in[0,1]$ and $\delta > \sigma + \frac d4$. Then, 
   $(\operatorname{I}+A)^{-\delta} \in \cL^2(H, V_{2\sigma})$.
\end{lem}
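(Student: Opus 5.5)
We compute the Hilbert--Schmidt norm directly in the eigenbasis $(e_k)_{k\in\enne_+}$ of $A$, which is an orthonormal basis of $H$. Since $(\operatorname{I}+A)^{-\delta}$ is defined spectrally, we have $(\operatorname{I}+A)^{-\delta}e_k=(1+\lambda_k)^{-\delta}e_k$. This vector lies in $V_{2\sigma}$, because every eigenfunction belongs to $\mathcal D(A^s)$ for all $s\in[0,1]$. By the equivalence of norms $\|u\|_{V_{2\sigma}}^2\simeq\|(\operatorname{I}+A)^{\sigma}u\|_H^2$ recalled above, we get
\[
\|(\operatorname{I}+A)^{-\delta}e_k\|_{V_{2\sigma}}^2\simeq\|(1+\lambda_k)^{\sigma-\delta}e_k\|_H^2=(1+\lambda_k)^{2(\sigma-\delta)}.
\]
(For $\sigma=0$ the space $V_0=H$ and the identity holds trivially; for $\sigma=1$ we use $V_2=\mathcal D(A)$ with its graph norm, for which the same equivalence holds.)

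Summing over $k$, the Hilbert--Schmidt norm satisfies
\[
\|(\operatorname{I}+A)^{-\delta}\|_{\cL^2(H,V_{2\sigma})}^2=\sum_{k\in\enne_+}\|(\operatorname{I}+A)^{-\delta}e_k\|_{V_{2\sigma}}^2\simeq\sum_{k\in\enne_+}(1+\lambda_k)^{-2(\delta-\sigma)}.
\]
The Weyl asymptotics $\lambda_k\sim k^{2/d}$ recalled above give $1+\lambda_k\gtrsim k^{2/d}$ for all $k$, with a constant depending only on $\OO$. Hence the series is bounded by a constant times $\sum_k k^{-4(\delta-\sigma)/d}$. This converges if and only if $4(\delta-\sigma)/d>1$, that is $\delta>\sigma+\frac d4$, which is exactly the hypothesis.

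Finally, since each term is finite and the sum converges, the operator is Hilbert--Schmidt from $H$ into $V_{2\sigma}$; in particular it maps $H$ into $V_{2\sigma}$ continuously. The only mildly delicate point is the Neumann case, where $\lambda_1=0$. This is harmless because we work with $\operatorname{I}+A$, so every factor $(1+\lambda_k)$ is at least $1$, and the finitely many small indices contribute a finite amount. The Weyl-law comparison $\lambda_k\gtrsim k^{2/d}$ for large $k$ is the only nontrivial ingredient, and it is taken from the asymptotics stated in the paper.
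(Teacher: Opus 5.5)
Your proposal is correct and follows the paper's proof essentially verbatim: you expand the Hilbert--Schmidt norm in the eigenbasis, reduce it to $\sum_{k}(1+\lambda_k)^{2(\sigma-\delta)}$, and use $\lambda_k\sim k^{2/d}$ to get convergence exactly when $\delta>\sigma+\frac d4$. Your added remarks on the Neumann case ($\lambda_1=0$) and the endpoints $\sigma\in\{0,1\}$ are harmless refinements of the same argument.
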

\begin{proof}
It holds that 
\begin{align*}
  \sum_{k \in \mathbb N_+}
  \|(\operatorname{I}+A)^{-\delta} e_k\|^2_{V_{2\sigma}}
  \simeq \sum_{k \in \mathbb N_+} 
  (1+\lambda_k)^{2(\sigma-\delta)}
  =\sum_{k \in \mathbb N_+} 
  (1+k^{\frac2d})^{2(\sigma-\delta)},
\end{align*}
which is finite provided that 
$4(\delta-\sigma)>d$, i.e.~$\delta > \sigma + \frac d4$. 
\end{proof}

\subsubsection{The potential and mobility operators}
For $\gamma>0$, we introduce the family of functions 
\[
m_\gamma: (-1,1) \rightarrow \mathbb{R},
\qquad
m_\gamma(r):= (1-r^2)^\gamma, \quad r \in (-1,1).
\]
and 
\[
\Psi_\gamma:(-1,1) \rightarrow \mathbb{R}, \qquad
\Psi_\gamma(r):=\int_0^r\int_0^z\frac{1}{m_\gamma(w)}\,\d w\,\d z, \quad r \in (-1,1).
\]
Let us note that, for every $\gamma>0$, 
$\Psi_\gamma\in C^2(-1,1)$ is strictly convex
and $m_\gamma \Psi_\gamma''\equiv1$.
As usual, we extend $m_\gamma$ to $0$ outside $(-1,1)$
and denote such extension with the same symbol $m_\gamma$.
This allows to extend also $\Psi_\gamma$ to a proper, convex, lower semicontinuous function $\Psi_\gamma:\mathbb R\to[0,+\infty]$,
that satisfies in particular $\Psi_\gamma(x)=+\infty$ 
for all $x\in\mathbb R\setminus[-1,1]$.

Note that for every $\gamma\geq2$ it holds that
$\Psi_\gamma(r)\to+\infty$ and $|\Psi_\gamma'(r)|\to+\infty$
as $|r|\to1^-$,
while for $\gamma\in[1,2)$ one has that 
$\Psi_\gamma$ is bounded in $(-1,1)$
and $|\Psi_\gamma'(r)|\to+\infty$ as $|r|\to1^-$,
and for $\gamma\in(0,1)$ both 
$\Psi_\gamma$ and $\Psi_\gamma'$ are bounded in $(-1,1)$.
In particular, 
for $\gamma=1$ the potential $\Psi_\gamma=\Psi_1$
coincides with the classical logarithmic one, i.e. 
\[
  \Psi_1(r)=(1+r)\ln(1+r) + (1-r)\ln(1-r), \quad r\in[-1,1],
\]
and
\[
  \Psi_1'(r)=\ln\frac{1+r}{1-r}, \quad r\in(-1,1).
\]

In order to keep track of specific 
summability properties, 
let us also preliminarily introduce, for every 
$\beta,\gamma\geq1$, the function 
\begin{equation}
    \label{N}
    N_{\beta,\gamma}:\erre\to\erre,
    \qquad
    N_{\beta,\gamma}(r):=
      r\Psi_\gamma'((\Psi_\beta')^{-1}(r)), \quad r\in\erre.
\end{equation}
Note that for every $\beta,\gamma\geq1$ the function 
$N_{\beta,\gamma}$ is convex and superlinear at infinity, namely
\[
  \lim_{|r|\to+\infty}\frac{N_{\beta,\gamma}(r)}{|r|}=+\infty.
\]
Moreover, it is not difficult to check  that 
\[
N_{\beta,\gamma}(\Psi_\beta'(r))\lesssim 
\Psi_\beta'(r)\Psi_\gamma'(r)
  \quad\forall\,r\in(-1,1),
\]
and 
\[
N_{\beta,\gamma}(r)\gtrsim
\begin{cases}
    |r|^2
      \quad&\text{if } \beta=1 \text{ and } \gamma\geq 1,\\
      |r|^{\frac{\beta+\gamma-2}{\beta-1}} 
      \quad&\text{if } \beta>1 \text{ and } \gamma>1.
\end{cases}
\]

\subsubsection{The multiplicative noise operator}
Let $B_1^\infty$ be the closed unit ball in $L^\infty(\Td)$
and let $\delta>\frac d4$.
For every $\gamma>0$, we define the operator
\[
  G_\gamma:B^\infty_1\to \cL^2(H,H)
\]
by setting
\[
  G_\gamma(v)[e_k]:=
  \sqrt{m_{\gamma}(v)}
  (1+\lambda_k)^{-\delta}e_k, \quad k\in\mathbb N_+,
  \quad v\in B^{\infty}_1.
\]
Note that the operator is well-defined 
as a consequence of Lemma~\ref{reg_A} and since for every 
$v\in B^{\infty}_1$ one has $m_{\frac\gamma2}(v)\in L^\infty(\Td)$
by the boundedness of $m_\gamma$.
Equivalently, the operator $G_\gamma$ can be expresses, in compact form,
as
\[
G_\gamma(v)[h]=m_{\frac\gamma2}(v)
(\operatorname{I}+A)^{-\delta}(h), \qquad h \in H,
\quad v\in B^{\infty}_1.
\]
We collect some properties of the noise operator
in the following lemmata.

\begin{lem} 
\label{G_reg}
Let $\gamma\geq2$ 
and $\delta > \frac d4$: then, 
there exists a positive constant $C$, depending on $\gamma, \sigma, \eta, \delta$, such that 
    \[
    \|G_\gamma(v)\|^2_{\cL^2(H, H)} \le C
     \quad\forall\,
    v\in B_1^\infty.
    \]
Moreover, if $d\in\{1,2\}$, $\sigma\in[0,\frac12)\cap (\frac d4-\frac12,\frac12)$, $\delta>\frac d4+\sigma$,
and $\gamma\geq2d$, 
then for every $\eta\in(\sigma,1)\cap(\frac d4, \frac12+\sigma)$
the restriction
    \[
    G_\gamma: B_1^\infty \cap V_{2 \eta} \rightarrow \cL^2(H,V_{2\sigma})
    \]
    is well defined, and 
    there exists a positive constant $C$, 
    depending on $\gamma, \sigma, \eta, \delta$, such that 
    \[
    \|G_\gamma(v)\|^2_{\cL^2(H, V_{2\sigma})} \le C
    \|v\|_{V_{2\rho}}^{2(d-1)}
    \|v\|^2_{V_{2\eta}} \quad\forall\,
    v\in B_1^\infty \cap V_{2 \eta},
    \]
    where $\rho=0$ if $d=1$ and
    $\rho\in(1-\eta,\frac12)$ if $d=2$.
 \end{lem}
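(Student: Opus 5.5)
The plan is to reduce both estimates to bounds on the multiplier $m_{\frac\gamma2}(v)$. The Hilbert--Schmidt norm of $G_\gamma(v)$ is computed on the eigenbasis, and the $V_{2\sigma}$ case then follows from Lemma~\ref{multiplication} and Lemma~\ref{reg_A}.

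\textbf{First estimate.} We write
\[
\|G_\gamma(v)\|_{\cL^2(H,H)}^2=\sum_k (1+\lambda_k)^{-2\delta}\|m_{\frac\gamma2}(v)e_k\|_H^2 .
\]
Since $0\le m_{\frac\gamma2}(v)\le 1$ for $v\in B_1^\infty$, each summand is at most $(1+\lambda_k)^{-2\delta}$. The resulting series is finite by Lemma~\ref{reg_A} with $\sigma=0$, because $\delta>\frac d4$.

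\textbf{Second estimate.} Here we use the compact form $G_\gamma(v)h=m_{\frac\gamma2}(v)\,(\operatorname{I}+A)^{-\delta}h$. We apply Lemma~\ref{multiplication} with $s=\sigma$, $s_1=2\eta$ and $s_2=2\sigma$. The hypotheses $s_1\ge s$ and $s_2\ge s$ hold trivially. The condition $s_1+s_2>s+\frac d2$ reads $2\eta+\sigma>\frac d2$, which holds since $\eta>\frac d4$.

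We also need $V_{2\sigma}=H^{2\sigma}$, which is valid because $2\sigma<1$ (so no boundary condition is involved). With this, summing over $e_k$ gives
\[
\|G_\gamma(v)\|^2_{\cL^2(H,V_{2\sigma})}\lesssim \|m_{\frac\gamma2}(v)\|_{H^{2\eta}}^2\,\|(\operatorname{I}+A)^{-\delta}\|^2_{\cL^2(H,V_{2\sigma})}.
\]
The last factor is finite by Lemma~\ref{reg_A}, since $\delta>\sigma+\frac d4$.

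It therefore remains to bound $\|m_{\frac\gamma2}(v)\|_{H^{2\eta}}$ by $\|v\|_{V_{2\rho}}^{d-1}\|v\|_{V_{2\eta}}$. This is the main obstacle, because $2\eta<1+2\sigma$ may exceed $1$, so a simple Lipschitz composition argument does not suffice.

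The function $f(r)=(1-r^2)^{\gamma/2}$ is $C^{\lfloor\gamma/2\rfloor}$ on $[-1,1]$ after extension by zero. With $\gamma\ge 2d$ we get $f\in C^{d}$, with $f(0)=1$ and derivatives bounded on $[-1,1]$.

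\textbf{Case $d=1$.} Here $2\eta<2$. I would use the Gagliardo seminorm if $2\eta\le1$. If $2\eta>1$, I would write $\nabla f(v)=f'(v)\nabla v$ and estimate the $H^{2\eta-1}$ norm of this product by Lemma~\ref{multiplication}. The factor $f'(v)$ is handled via its $C^1$ bound and $H^{2\eta}\hookrightarrow L^\infty$, using $|v|\le1$ to absorb any extra powers. The constant part is handled by working with $f(v)-f(0)$ in the Dirichlet case, or by adding the $L^2$ norm otherwise; note that $|f(v)-1|\lesssim|v|$.

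\textbf{Case $d=2$.} Again $\nabla f(v)=f'(v)\nabla v$, and I would estimate this in $H^{2\eta-1}$ by Lemma~\ref{multiplication}, placing $f'(v)$ in $H^{2\rho}$ and $\nabla v$ in $H^{2\eta-1}$. The required condition is $2\rho+2\eta-1>2\eta-1+1$, i.e.\ $\rho>\frac12$, which contradicts the assumption $\rho<\frac12$. So the exponents must be arranged differently: put $\nabla v\in H^{2\eta-1}$ and $f'(v)\in H^{s}$ with $s>1$, and control $\|f'(v)\|$ by a Moser-type estimate using $f\in C^2$ and $|v|\le1$. This produces the extra factor $\|v\|_{V_{2\rho}}$, where the condition $\rho>1-\eta$ is exactly what balances the exponents.

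I expect this exponent bookkeeping in $d=2$ to be the delicate step. If it fails, the fallback is a fractional chain rule (Kato--Ponce) applied directly to $f(v)$.
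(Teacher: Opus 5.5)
Your first estimate is correct and is the paper's argument. The second estimate has a genuine gap. It comes from fixing $s_1=2\eta$ in Lemma~\ref{multiplication}, which forces you to bound $\|m_{\frac\gamma2}(v)\|_{H^{2\eta}}$ with $2\eta$ possibly above $1$.

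\textbf{Why your route fails in $d=2$.} You correctly see that pairing $f'(v)\in H^{2\rho}$ with $\nabla v\in H^{2\eta-1}$ at target index $2\eta-1$ would need $\rho>\frac12$. Your repair does not work, however. Placing $f'(v)$ in $H^{s}$ with $s>1$ requires $v\in H^{s}$, so any Moser-type bound produces a factor $\|v\|_{H^s}$ with $s>1$. That factor is not controlled by $\|v\|_{V_{2\rho}}$, because $2\rho<1$. You would end up with something like $\|v\|_{V_{2\eta}}^2$ for $\|m_{\frac\gamma2}(v)\|_{H^{2\eta}}$ instead of $\|v\|_{V_{2\rho}}\|v\|_{V_{2\eta}}$, and the condition $\rho>1-\eta$ plays no role. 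This route would also need more smoothness of $m_{\frac\gamma2}$ near $\pm1$ than $\gamma\geq4$ guarantees.

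\textbf{Why your route is insufficient in $d=1$ with $2\eta>1$.} A $C^1$ bound on $f$ only gives $f'(v)\in L^\infty$, which is not a multiplier on $H^{2\eta-1}$. Lemma~\ref{multiplication} would need $f'(v)\in H^{s_1}$ with $s_1>\frac12$, i.e.\ $f'$ Lipschitz on $[-1,1]$. This fails for $\gamma\in(2,4)$, although only $\gamma\geq2$ is assumed.

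\textbf{The missing idea.} Take $s=s_2=2\sigma$ in Lemma~\ref{multiplication}; note $s=2\sigma$, not $\sigma$, so the relevant condition is $s_1>\frac d2$. Then the lemma only needs $m_{\frac\gamma2}(v)\in H^{2\eta'}$ for some $\eta'\in(\sigma,\eta)$ with $\eta'>\frac d4$, and you should take $\eta'$ as small as allowed.
\begin{itemize}
\item In $d=1$, pick $\eta'\in(\sigma,\eta)\cap(\frac14,\frac12)$. Then $2\eta'<1$, and the Gagliardo seminorm together with the Lipschitz continuity of $m_{\frac\gamma2}$ ($\gamma\geq2$) bounds the seminorm by $\|v\|_{V_{2\eta'}}\lesssim\|v\|_{V_{2\eta}}$.
\item In $d=2$, pick $\eta'\in(\frac12,\eta+\rho-\frac12)\cap(\frac12,\frac12+\rho)$, which is nonempty precisely because $\rho>1-\eta$. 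Estimate $\nabla m_{\frac\gamma2}(v)=m'_{\frac\gamma2}(v)\nabla v$ in $H^{2\eta'-1}$ by Lemma~\ref{multiplication}, with $m'_{\frac\gamma2}(v)\in H^{2\rho}$ and $\nabla v\in H^{2\eta-1}$. The bound on $m'_{\frac\gamma2}(v)$ follows from the Gagliardo seminorm, since $2\rho<1$, $m'_{\frac\gamma2}$ is Lipschitz for $\gamma\geq4$, and $m'_{\frac\gamma2}(0)=0$. The lemma's requirements are $2\rho\geq2\eta'-1$ and $2\rho+2\eta-1>2\eta'$, both guaranteed by the choice of $\eta'$.
\end{itemize}

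\textbf{The constant term.} Your handling of the constant does not help. $G_\gamma(0)=(\operatorname{I}+A)^{-\delta}$ has a fixed positive $\cL^2(H,V_{2\sigma})$-norm whatever the boundary conditions, so subtracting $f(0)$ cannot remove it. The inequality as stated therefore fails at $v=0$ and can only hold in the form $C(1+\|v\|^{2(d-1)}_{V_{2\rho}}\|v\|^2_{V_{2\eta}})$. The paper's proof silently drops the $L^2$ part of $\|m_{\frac\gamma2}(v)\|_{V_{2\eta'}}$ in the same way.
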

 \begin{proof}
The first statement follows directly from 
the H\"older inequality 
and the fact that 
$\|m_{\frac\gamma2}(v)\|_{L^\infty(\Td)}\leq\|m_{\frac\gamma2}\|_{L^\infty(-1,1)}$.
Let us consider the second one:
since $(I+A)^{-\delta}\in \cL^2(H, V_{2\sigma})$ by Lemma~\ref{reg_A},
by assumption on $\eta$ there exists 
$\eta'\in(\sigma,\eta)\cap (\frac d4, \eta)$ and
by Lemma~\ref{multiplication}
we have
 \begin{align*}
 \sum_{k \in \mathbb N_+}\|m_{\frac \gamma 2}(v)(\operatorname{I}+A)^{-\delta}e_k\|^2_{V_{2\sigma}}
\lesssim 
\|m_{\frac \gamma 2}(v)\|^2_{V_{2\eta'}}
\|(\operatorname{I}+A)^{-\delta}\|^2_{\cL^2(H, V_{2\sigma})}
\lesssim \|m_{\frac \gamma 2}(v)\|^2_{V_{2\eta'}}.
 \end{align*}
If $d=1$, one can choose $\eta'\in(\sigma,\eta)\cap(\frac14,\frac12)$,
so that 
the Lipschitz-continuity of $m_{\frac\gamma2}$ ($\gamma\geq2$) 
yields directly
\[
\int_{\Td\times\Td}
\frac{|m_{\frac\gamma2}(v(x))-m_{\frac\gamma2}(v(y))|^2}
{|x-y|^{d+2\eta'}}\, \d x\, \d y
\lesssim\int_{\Td\times\Td}\frac{|v(x)-v(y)|^{2}}
{|x-y|^{d+2\eta'}}\, \d x\, \d y
\lesssim\|v\|_{V_{2\eta'}}^2\lesssim\|v\|_{V_{2\eta}},
\]
as required. If $d=2$, one has 
$\eta\in(\frac12,\frac12+\sigma)$ and
$\eta'\in(\frac 12,\eta)$:
hence, noting that $\rho\in(1-\eta,\frac12)$,
one can choose $\eta'\in
(\frac12, \eta+\rho-\frac12)\cap(\frac12, \frac12+\rho)$,
so that $\rho\geq\eta'-\frac12$
and $\rho+\eta-\frac12>\eta'-\frac12+\frac d4$, and 
Lemma~\ref{multiplication} 
and the Lipschitz-continuity of $m_{\frac\gamma2}'$ ($\gamma\geq4$)
yield
\[
\|m_{\frac \gamma 2}(v)\|^2_{V_{2\eta'}}\lesssim
\|m'_{\frac \gamma 2}(v)\nabla v\|^2_{V_{2\eta'-1}}
\lesssim\|m'_{\frac \gamma 2}(v)\|_{V_{2\rho}}^2\|\nabla v\|^2_{V_{2\eta-1}}
\lesssim \|v\|_{V_{2\rho}}^2\|v\|_{V_{2\eta}}^2.
\]
This concludes the proof.
\end{proof}

\begin{lem}
\label{G_basis}
  Let $\gamma\geq2$, $\sigma\in[0,\frac12)\cap[\frac d4-\frac12, \frac12)$, and $\delta>\frac d4+\sigma$. Then, 
  the operator $G_\gamma$ is 
  Lipschitz continuous, in the sense that 
    for every $\lambda>0$
  there exists $L_\lambda>0$ such that,
  \[
  \|G_\gamma(v_1)- G_\gamma(v_2)\|^2_{\cL^2(H,H)} 
  \le \lambda\|v_1-v_2\|_V^2+
  L_\lambda\|v_1-v_2\|^2_{H}
  \quad\forall\,v_1,v_2\in B^\infty_1\cap V.
  \]
\end{lem}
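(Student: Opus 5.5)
Since $G_\gamma(v_1)-G_\gamma(v_2)=\bigl(m_{\frac\gamma2}(v_1)-m_{\frac\gamma2}(v_2)\bigr)(\operatorname{I}+A)^{-\delta}$, the whole estimate reduces to a multiplication estimate. Write $w:=m_{\frac\gamma2}(v_1)-m_{\frac\gamma2}(v_2)$, which is a function on $\Td$. We have
\[
\|G_\gamma(v_1)-G_\gamma(v_2)\|^2_{\cL^2(H,H)}=\sum_{k}\|w\,(\operatorname{I}+A)^{-\delta}e_k\|_H^2 .
\]
The plan is to put $w$ in a fractional Sobolev space of order $s<\frac12$ and the operator $(\operatorname{I}+A)^{-\delta}$ in $\cL^2(H,V_{2\sigma})$, using Lemma~\ref{reg_A}, which applies since $\delta>\frac d4+\sigma$. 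Lemma~\ref{multiplication} with target regularity $0$ then requires $s+2\sigma>\frac d2$. Since $\sigma\geq\frac d4-\frac12$, we have $2\sigma\geq\frac d2-1$, so a choice $s\in(\frac d2-2\sigma,1)$ works whenever $\sigma>\frac d4-\frac12$. In the endpoint case $\sigma=\frac d4-\frac12$, $s$ would have to exceed $1$. To cover it, I would instead use the $\lambda$-freedom: choose $s\in(\frac d2-2\sigma,1]$ with $s\le1$, taking $s=1$ only as a limiting case, or alternatively lower $\sigma$ is not allowed, so I would interpolate as described next. This yields
\[
\|G_\gamma(v_1)-G_\gamma(v_2)\|^2_{\cL^2(H,H)}\lesssim \|w\|^2_{H^{s}(\Td)}
\]
for some $s\in(0,1)$ (with $s$ close to $1$ in the worst case).

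The next step is to bound $\|w\|_{H^s}$ by $\|v_1-v_2\|_{H^s}$. For $s<1$ I use the Gagliardo seminorm and the global Lipschitz continuity of $m_{\frac\gamma2}$ on $[-1,1]$, which holds because $\gamma\ge2$. The obstacle is that differences of compositions are not Lipschitz in the seminorm in general: $|w(x)-w(y)|$ is not bounded by $|(v_1-v_2)(x)-(v_1-v_2)(y)|$. I would handle this by staying in $L^2$ and $H^1$ directly and avoiding the seminorm. The $L^2$ part is $\|w\|_H\le L\|v_1-v_2\|_H$. For the $H^1$ part,
\[
\nabla w=m'_{\frac\gamma2}(v_1)\nabla(v_1-v_2)+\bigl(m'_{\frac\gamma2}(v_1)-m'_{\frac\gamma2}(v_2)\bigr)\nabla v_2 .
\]
The second term causes trouble without bounds on $\nabla v_2$. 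This is why the statement is restricted to the Hilbert–Schmidt norm into $H$, where I expect to need less regularity.

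A cleaner route is to avoid the product with the gradient altogether. I would use the bound $\|w f\|_H\le \|w\|_{L^p}\|f\|_{L^q}$ with $(\operatorname{I}+A)^{-\delta}e_k$ estimated in $L^q$ via the embedding $V_{2\sigma}\embed L^q$, where $\frac1q=\frac12-\frac{2\sigma}{d}$ (or $q=\infty$ if $d=1$ and $\sigma>\frac14$). This gives
\[
\|G_\gamma(v_1)-G_\gamma(v_2)\|^2_{\cL^2(H,H)}\lesssim\|w\|_{L^p}^2,\qquad \frac1p=\frac{2\sigma}d .
\]
Pointwise Lipschitz continuity then gives $\|w\|_{L^p}\le L\|v_1-v_2\|_{L^p}$, and the condition $\sigma\ge\frac d4-\frac12$ is exactly what makes $p\le\frac{2d}{d-2}$ when $d\ge3$, so that $V\embed L^p$. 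The final step interpolates between $H$ and $V$ via Gagliardo–Nirenberg and Young's inequality:
\[
\|u\|_{L^p}^2\lesssim\|u\|_V^{2\theta}\|u\|_H^{2(1-\theta)}\le\lambda\|u\|_V^2+L_\lambda\|u\|_H^2 .
\]
Here $\theta<1$ except at the endpoint $\sigma=\frac d4-\frac12$ with $d\ge3$. Since the lemma restricts to $\sigma<\frac12$ and $\sigma\ge\frac d4-\frac12$, the endpoint is excluded when $d\le2$, which is the case used later. The main obstacle is this endpoint bookkeeping, namely checking that $\theta<1$ in all admissible cases.
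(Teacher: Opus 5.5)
Your final route is the same as the paper's skeleton. You split $\|w\,f_k\|_H\le\|w\|_{L^p}\|f_k\|_{L^q}$, use pointwise Lipschitz continuity of $m_{\frac\gamma2}$, and absorb $\|v_1-v_2\|_{L^p}$ into $\lambda\|\cdot\|_V^2+L_\lambda\|\cdot\|_H^2$. This works in the interior of the parameter range.

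\textbf{The gap.} You place $(\operatorname{I}+A)^{-\delta}$ only in $\cL^2(H,V_{2\sigma})$ and take $q$ to be exactly the Sobolev exponent of $V_{2\sigma}$. Then $p=\frac{d}{2\sigma}$ is critical for $V$ in borderline cases, and the absorption step fails there.
\begin{itemize}
\item For $d=3$, $\sigma=\frac14$ you get $p=6$. The embedding $V\embed L^6$ is not compact, and no bound $\|u\|_{L^6}^2\le\lambda\|u\|_V^2+L_\lambda\|u\|_H^2$ with small $\lambda$ holds, even for $|u|\le2$. Take $u_n=\phi(n(\cdot-x_0))$ with $|\phi|\le1$ concentrating at an interior point, and $v_1=u_n$, $v_2=0$. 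Then $\|u_n\|_{L^6}^2/\|u_n\|_V^2$ stays bounded below while $\|u_n\|_H/\|u_n\|_V\to0$.
\item For $d=2$, $\sigma=0$, which the hypotheses allow and you did not notice, you get $q=2$ and $p=\infty$. In two dimensions $\|u\|_{L^\infty}$ is not controlled by $\lambda\|u\|_V+L_\lambda\|u\|_H$: logarithmic spikes with $|u|\le1$ and $\|u\|_V\to0$ are counterexamples. So your claim that $\theta<1$ away from the $d=3$ endpoint is false.
\item For $d=1$, $\sigma=\frac14$ your formula gives $q=\infty$, which relies on the false embedding $V_{1/2}\embed L^\infty$.
\end{itemize}
Saying that $d\le2$ ``is the case used later'' does not prove the lemma as stated. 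It is also inaccurate: the lemma is used in the uniqueness and continuous-dependence argument and in the approximation scheme for Theorem~\ref{th:wp}. That theorem covers $d\in\{1,2,3\}$ and allows $\sigma=\frac d4-\frac12$.

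\textbf{The missing idea} is to use that $\delta>\frac d4+\sigma$ is strict. By Lemma~\ref{reg_A}, $(\operatorname{I}+A)^{-\delta}\in\cL^2(H,V_{2(\sigma+\s)})$ for some small $\s>0$. So you may take $q$ strictly above the Sobolev exponent of $V_{2\sigma}$: any $q>2$ close to $2$ if $d\le2$, and $q=\frac{6}{3-4(\sigma+\s)}$ if $d=3$. Then $p=\frac{2q}{q-2}$ is finite for $d\le2$ and equals $\frac{3}{2(\sigma+\s)}<6$ for $d=3$. Hence $V\embed L^p$ is compact in every admissible case. Ehrling's lemma, or your Gagliardo--Nirenberg plus Young step now with $\theta<1$, then gives the $\lambda$-inequality; this is exactly how the paper closes the argument. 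The discarded first half of your proposal (fractional Sobolev spaces, gradient splitting) is not needed.
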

\begin{proof}
Since $\delta>\frac d4+\sigma$, one has $(\operatorname{I}+A)^{-\delta}\in \cL^2(H,V_{2(\sigma+\varsigma)})$ for some 
$\s>0$ sufficiently small.
Let now $q>2$ be arbitrary if $d\in\{1,2\}$
and $q:=\frac{6}{3-4(\sigma+\s)}>2$ if $d=3$:
the Sobolev embeddings imply that $V_{2(\sigma+\s)}\embed L^q(\OO)$.
By setting then 
$p:=\frac{2q}{q-2}$, one has $\frac1p+\frac1q=\frac12$: hence
for all $v_1,v_2 \in B_1^\infty$, 
by the H\"older inequality and the 
Lipschitz-continuity of $m_\gamma$ ($\gamma\geq2$),
\begin{align*}
    \|G_\gamma(v_1)-G_\gamma(v_2)\|^2_{\cL^2(H,H)} &=
    \sum_{k \in \mathbb N_+} 
    \|(m_{\frac \gamma2}(v_1)-
    m_{\frac \gamma2}(v_2))(\operatorname{I}+A)^{-\delta}e_k\|^2_H \\
    &\leq\sum_{k \in \mathbb N_+} 
    \|(m_{\frac \gamma2}(v_1)-
    m_{\frac \gamma2}(v_2))\|_{L^p(\Td)}^2
    \|(\operatorname{I}+A)^{-\delta}e_k\|^2_{L^q(\Td)} \\
    &\lesssim
    \|m'_{\frac \gamma2}\|^2_{L^\infty(-1,1)}
    \|(\operatorname{I}+A)^{-\delta}\|^2_{\cL^2(H,V_{2(\sigma+\varsigma)})}
    \|v_1-v_2\|_{L^p(\Td)}^2.
\end{align*}
The thesis follows since the inclusion
$V\embed L^p(\OO)$ is compact. Indeed, this is clear
for $d\in\{1,2\}$, while for $d=3$ one has
$p=\frac{3}{2(\sigma+\s)}$ and
$1-\frac32>0-3\frac{2(\sigma+\s)}{3}$
since $\sigma+\s>\sigma\geq\frac14$.
\end{proof}

\subsection{Main results}
Given $\alpha,\beta\geq 1$, we consider problem \eqref{eq0} written in the following abstract form:
\beq
\label{eq_ast}
\begin{cases}
  \d X +A  X\,\d t + \Psi_{\beta}'(X)\, \d t
  = G_{\alpha} (X)\,\d W\,,
  \\
  X(0)=x\,.
\end{cases}
\eeq
We start by giving the definition of variational solution to problem \eqref{eq_ast}. 

\begin{defin}
\label{def_sol}
Let $\alpha\geq2$, $\beta\geq1$, $\gamma\geq1$,
    $\sigma\in[0,\frac12)$,
    $d\in\{1,2,3\}$, 
    $\delta>\frac d4$, and
\begin{equation}
    \label{init}
    x \in H, 
    \qquad
    \Psi_\gamma(x) \in L^1(\Td).
\end{equation}
A variational solution to \eqref{eq_ast}
starting from $x$ is a process $X$ such that, for every \( T > 0 \),
\begin{align*}
    X &\in L^2_\cP(\Omega; C^0([0,T]; H)) \cap 
    L^2_\cP(\Omega; L^2(0,T; V)),\\
N_{\beta,\gamma}(\Psi'_\beta(X))&\in 
L^{1}_\cP(\Omega; L^1(0,T; L^1(\Td))),
\end{align*}
and it holds, for every $t\in[0,T]$, $\P$-almost surely, that
\begin{multline*}
 ( X(t),v)_H  + \int_0^t (A^{\frac 12}X(s) , A^{\frac 12}v)_H \, {\rm d}s +\int_0^t (\Psi'_\beta(X(s)),v)_H  \, {\rm d}s\\
= (x,v )_H  + \left(\int_0^t G_\alpha(X(s))\,\d W (s), v\right)_H 
\quad\forall\,v\in V\cap L^\infty(\Td).
\end{multline*}
\end{defin}
\begin{remark}
    The condition $N_{\beta,\gamma}(\Psi'_\beta(X))\in 
L^{1}_\cP(\Omega; L^1(0,T; L^1(\Td)))$
 in Definition~\ref{def_sol}
should be interpreted as 
a specific summability condition on $\Psi_\beta'(X)$, which 
in particular ensures that the corresponding term in 
the variational formulation is well-defined since 
$v\in L^\infty(\Td)$. In the case $\gamma=\beta$,
one recovers the classical 
condition $\Psi_\beta'(X)\in L^2_\cP(\Omega; L^2(0,T; H))$.
\end{remark}

The first
main result concerns well-posedeness of equation \eqref{eq_ast}.
\begin{thm}[Well-posedness]
    \label{th:wp}
    Let $\alpha\geq2$, $\beta\geq1$,
    $d\in\{1,2,3\}$, 
    $\sigma\in[0,\frac12)\cap[\frac d4-\frac12, \frac12)$,
    and $\delta>\frac d4+\sigma$.
    Assume also that:
    \begin{itemize}
  \item if $\sigma\in\left(\frac d4, \frac12\right)$, then $\gamma\in[1,+\infty)$;
  \item if $\sigma=\frac d4$, then 
  $\gamma\in[1,+\infty)$, with also
  $\alpha>2$ if $\gamma>\alpha+\beta$;
  \item if $\sigma\in[0,\frac d4)$, then 
  $\gamma\in\left[1,\frac{\alpha - 8 \sigma/d}{1- 4\sigma/d}\right]
  \cup
  \left[1,\min \left\{\alpha+ \beta, 
    \frac{\alpha - 8 \sigma/d +4\beta\sigma/d}{1- 4\sigma/d} 
    \right\}\right]$.
\end{itemize}
    Then, for every 
$x$ satisfying \eqref{init}, there exists 
a unique variational solution $X^x$ to
\eqref{eq_ast}, in the sense of Definition~\ref{def_sol}, and
it holds, for every $T>0$ and $\ell\geq2$, that
\begin{align*}
  &X^x\in L^\ell_\cP(\Omega; C([0,T]; H)\cap L^2(0,T; V)),\\
  &\Psi_\gamma(X^x(t))\in L^\ell(\Omega;L^1(\Td))\quad\forall\,t\in[0,T].
\end{align*}
Moreover, for every $T>0$ and $\ell\geq2$,
there exists a constant $C>0$, only depending on
$\alpha,\beta,\gamma,\delta,\sigma,\ell,T$,
such that for every 
$x_1,x_2$ satisfying \eqref{init}, it holds that
\[
  \norm{X^{x_1}-X^{x_2}}_{L^\ell_\cP(\Omega; C^0([0,T]; H)\cap L^2(0,T; V))}
  \leq C\norm{x_1-x_2}_H.
\]
Eventually, when $d=1$,
if $\gamma\geq\beta$ it holds for every 
$\xi\in[0,\frac12)$ that
\[
X^x\in L^\ell_\cP(\Omega; C([0,T]; V_{2\xi}))
  \quad\text{if } x\in V_{2\xi},
\]
while if $\gamma\geq2\beta$ it further holds
for every $\xi\in[\sigma,\frac12+\sigma)$ that
\[
X^x\in L^\ell_\cP(\Omega; C([0,T]; V_{2\xi})\cap 
L^2(0,T; V_{2\sigma+1}))
  \quad\text{if } x\in V_{2\xi}.
\]
\end{thm}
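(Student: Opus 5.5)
We plan to argue by regularisation and compactness, followed by pathwise uniqueness. Replace $\Psi_\beta'$ by its Yosida approximation $\Psi'_{\beta,\lambda}$, which is monotone and Lipschitz. Replace $m_{\alpha/2}$ by a Lipschitz extension to $\erre$ that vanishes outside $[-1,1]$; this is already the given definition, and it is Lipschitz since $\alpha\geq2$. By Lemma~\ref{G_basis} the noise coefficient is then Lipschitz in the variational sense, so the classical monotone-operator theory (e.g.\ \cite{LiuRo}) yields a unique variational solution $X_\lambda\in L^\ell_\cP(\Omega;C^0([0,T];H)\cap L^2(0,T;V))$ of the regularised problem. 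Applying It\^o's formula to $\|X_\lambda\|_H^2$ and using Lemma~\ref{G_reg} gives $\lambda$-uniform bounds in $C^0([0,T];H)\cap L^2(0,T;V)$ with all moments $\ell$, via BDG and Gronwall.

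The key step is an It\^o formula for $\int_\Td\Psi_{\gamma,\lambda}(X_\lambda)$, where $\Psi_{\gamma,\lambda}$ is a Moreau--Yosida regularisation of $\Psi_\gamma$. The drift produces the good terms
\[
\int_\Td \Psi_\gamma''(X_\lambda)|\nabla X_\lambda|^2
\qquad\text{and}\qquad
\int_\Td \Psi_\gamma'(X_\lambda)\Psi_\beta'(X_\lambda)\gtrsim \int_\Td N_{\beta,\gamma}(\Psi_\beta'(X_\lambda)).
\]
The It\^o correction is
\[
\tfrac12\int_\Td \Psi_\gamma''(X_\lambda)\,m_\alpha(X_\lambda)\sum_k |(I+A)^{-\delta}e_k|^2 = \tfrac12\int_\Td \frac{m_\alpha}{m_\gamma}(X_\lambda)\,q_\delta,
\qquad q_\delta:=\sum_k|(I+A)^{-\delta}e_k|^2 .
\]
This is the main obstacle. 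When $\gamma\le\alpha$ the ratio $m_\alpha/m_\gamma$ is bounded. In general, $q_\delta$ is only in $L^{p}$ for the exponent dictated by $(I+A)^{-\delta}\in\cL^2(H,V_{2\sigma})$ (Lemma~\ref{reg_A}) together with the Sobolev embedding $V_{2\sigma}\embed L^{2d/(d-4\sigma)}$. This gives $q_\delta\in L^{d/(d-4\sigma)}$ when $\sigma<\frac d4$, and $q_\delta\in L^\infty$ when $\sigma>\frac d4$, with the borderline case being all $L^p$.

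We then use H\"older and Young to absorb $(1-X^2)^{\alpha-\gamma}$, raised to the conjugate power $\frac{d}{4\sigma}$, into the good term. Combining $N_{\beta,\gamma}(\Psi'_\beta(r))\simeq\Psi_\beta'\Psi_\gamma'\sim(1-r^2)^{2-\beta-\gamma}$ with $\Psi_\gamma$ itself, this absorption holds exactly under the stated ranges: $\gamma\le\frac{\alpha-8\sigma/d}{1-4\sigma/d}$ handles it through $\Psi_\gamma$ and Gronwall, while the second interval handles it through $N_{\beta,\gamma}$. The extra requirement $\alpha>2$ in the borderline case comes from the loss of $L^\infty$. The martingale term is controlled by BDG in the same way, and we expect it to yield $\Psi_\gamma(X_\lambda(t))$ bounded in $L^\ell(\Omega;L^1)$ uniformly in $\lambda$ and $N_{\beta,\gamma}(\Psi'_{\beta,\lambda}(X_\lambda))$ bounded in $L^1$. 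In particular $|X|<1$ a.e., so $G_\alpha(X)$ is the genuine operator.

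Passage to the limit goes by a Cauchy argument. For the difference $X_\lambda-X_\mu$, It\^o's formula, the monotonicity of the Yosida approximations (the usual $(\lambda+\mu)$ error being controlled by the uniform bounds on the drift) and Lemma~\ref{G_basis} with $\lambda$ small absorbed by the gradient yield strong convergence in $L^\ell(\Omega;C^0(H)\cap L^2(V))$. The de la Vall\'ee-Poussin criterion, through the superlinearity of $N_{\beta,\gamma}$, gives weak $L^1$ convergence of $\Psi'_{\beta,\lambda}(X_\lambda)$, identified with $\Psi_\beta'(X)$ by a.e.\ convergence and Vitali. The $\Psi_\gamma$ bound passes to the limit by lower semicontinuity. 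The Lipschitz estimate and uniqueness come from the same It\^o computation on $X^{x_1}-X^{x_2}$: the monotone drift term is nonnegative, Lemma~\ref{G_basis} handles the noise, and BDG plus Gronwall give the bound with a constant independent of the data.

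For the $d=1$ regularity, we rewrite the equation in mild form and use stochastic maximal regularity, i.e.\ the factorisation estimate for the stochastic convolution. The noise is in $\cL^2(H,V_{2\sigma})$ with the bound from Lemma~\ref{G_reg}, controlled by $\|X\|_{V_{2\eta}}$, which closes by interpolation with the $L^2(V)$ bound. The drift $\Psi'_\beta(X)$ is in $L^2(0,T;H)$ when $\gamma\ge\beta$, since then $N_{\beta,\gamma}(r)\gtrsim r^2$. This gives $C([0,T];V_{2\xi})$ for $\xi<\frac12$ by the deterministic smoothing of $S$. When $\gamma\ge2\beta$ the drift gains $V$-regularity: $\nabla\Psi'_\beta(X)=\Psi''_\beta(X)\nabla X$ is in $L^2(H)$ by the gradient term $\int\Psi_\gamma''|\nabla X|^2$, since $(\Psi''_\beta)^2\lesssim\Psi''_\gamma$ for $\gamma\ge2\beta$. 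Maximal regularity then yields $L^2(V_{2\sigma+1})\cap C(V_{2\xi})$ for $\xi<\frac12+\sigma$.
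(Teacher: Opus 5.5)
Your route differs from the paper's. You Yosida-regularise only the drift, keep the degenerate noise, and pass to the limit by a Cauchy argument; the paper regularises both mobility and potential and uses stochastic compactness. Your a priori estimates and the $d=1$ bootstrap follow the paper closely. The genuine gap is the Cauchy step.

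For Yosida approximations, the drift pairing in the It\^o formula for $\|X_\lambda-X_\mu\|_H^2$ only satisfies
\[
\bigl(\Psi'_{\beta,\lambda}(a)-\Psi'_{\beta,\mu}(b)\bigr)(a-b)\geq-\tfrac\lambda4|\Psi'_{\beta,\mu}(b)|^2-\tfrac\mu4|\Psi'_{\beta,\lambda}(a)|^2 .
\]
So the $(\lambda+\mu)$-error is controlled only through a uniform bound on $\E\|\Psi'_{\beta,\lambda}(X_\lambda)\|^2_{L^2(0,T;H)}$. Your second estimate gives only $N_{\beta,\gamma}(\Psi'_{\beta,\lambda}(X_\lambda))$ bounded in $L^1$. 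For $\beta,\gamma>1$, $N_{\beta,\gamma}(r)$ grows like $|r|^{(\beta+\gamma-2)/(\beta-1)}$, which is at least quadratic exactly when $\gamma\geq\beta$. For $1\leq\gamma<\beta$ the drift is merely uniformly integrable. In particular, for $\gamma=1$ (the case that frames the semigroup on $\mathcal X=\mathcal K_1$), $N_{\beta,1}(r)$ grows only like $|r|\log|r|$.

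Nor can you reduce to $\gamma\geq\beta$ by regularising the initial datum and invoking the Lipschitz estimate. For some admissible parameters no $\gamma\geq\beta$ is allowed at all: for $\sigma=0$ the range collapses to $\gamma\in[1,\alpha]$, so $\beta>\alpha$ is out of reach. There your estimates give no uniform $L^2$ bound on the drift whatsoever.

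What works with mere uniform integrability is the paper's route:
\begin{enumerate}
\item tightness of the laws in $L^2(0,T;V_{1-s})\cap C([0,T];V_{-s})$, from the energy bound and the fractional time regularity of the stochastic integral;
\item Skorokhod--Jakubowski;
\item identification of the drift via a.e.\ convergence and Vitali, which is where your de la Vall\'ee-Poussin remark enters;
\item Gy\"ongy--Krylov together with pathwise uniqueness, to return to the original probability space.
\end{enumerate}

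Two further points need fixing.

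First, uniqueness is asserted among all solutions in the sense of Definition~\ref{def_sol}. Their drift lies only in the $N_{\beta,\gamma}$-class and need not belong to $L^2(0,T;V^*)$; for $d\geq2$, $L^1(\Td)\not\subset V^*$. So the It\^o formula for $\|X^{x_1}-X^{x_2}\|_H^2$ is not directly available. The paper applies $\mathcal R_n=(\operatorname{I}+\frac1nA)^{-m}$ to the difference equation. Using $|X^{x_1}|,|X^{x_2}|\leq1$, it bounds the regularised drift pairing by $2|\mathcal R_n(\Psi'_\beta(X^{x_2})-\Psi'_\beta(X^{x_1}))|$, which is uniformly integrable, and then lets $n\to\infty$.

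Second, with two independent Moreau--Yosida regularisations, the product $\Psi'_{\gamma,\lambda}(X_\lambda)\Psi'_{\beta,\lambda}(X_\lambda)$ and the It\^o correction are evaluated at different resolvent points. Absorbing the correction into the product uniformly in $\lambda$ is therefore not automatic in the regime $\alpha<\gamma<\beta$. The paper builds all $\Psi_{\varsigma,\eps}$ from the common family $m_{\varsigma,\eps}$ precisely so that comparisons such as Lemma~\ref{lem_app}(ix) hold uniformly in $\eps$.
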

\begin{remark}
    Let us comment on the range of values of the parameter $\gamma$. First, note that in the case $\sigma\in(\frac d4,\frac12)$ we have no restriction on $\gamma$.
    Secondly, if $\sigma=\frac d4$ again all values of $\gamma$ are
    admissible, provided that $\alpha>2$ when $\gamma>\alpha+\beta$. Lastly, if $\sigma\in[0,\frac d4)$
    there is an actual restriction on the range of $\gamma$,
    which becomes more strict the more $\sigma$ is close to $0$:
    indeed, when $\sigma\to0^+$ the range of $\gamma$
    comes down to $\gamma\in[1,\alpha]$.
    However, we point out that when $\sigma\to(\frac d4)^-$
    then $\frac{\alpha - 8 \sigma/d +4\beta\sigma/d}{1- 4\sigma/d}
    \to+\infty$, and if also $\alpha>2$ then 
    $\frac{\alpha - 8 \sigma/d}{1- 4\sigma/d}\to+\infty$.
    This means that for every $\gamma\geq1$
    there exists $\sigma_0\in(0,\frac d4)$
    sufficiently close to $\frac d4$ such that $\gamma$
    is admissible
    with the only condition
    that $\alpha>2$ when $\gamma>\alpha+\beta$.
\end{remark}

The well-posedness result in Theorem~\ref{th:wp}
allows to introduce the transition semigroup associated to 
\eqref{eq_ast}. To this end, for every $\gamma\geq1$
we introduce the sets 
\begin{align*}
    \mathcal K_{\gamma}&:=\left\{
  v:\Td\to(-1,1) \text{ measurable:}\quad
  \Psi_\gamma(v)\in L^1(\Td)\right\},\\
  \mathcal K_{\infty}&:=\left\{
  v:\Td\to(-1,1) \text{ measurable:}\quad
  \exists\,c\in(0,1):\;\|v\|_{L^\infty(\Td)}\leq c\right\}.
\end{align*}
Let us first note that 
for every $\gamma\in[1,+\infty]$,
$\mathcal K_\gamma$ 
is a Borel subset of $H$ by
convexity and lower semicontinuity.
Moreover, let us note that since $D(\Psi_1)=[-1,1]$, 
one has that 
$\mathcal K_1=B_1^\infty$
is the closed unitary ball of $L^\infty(\Td)$.
By contrast,  
$\mathcal K_\infty$
is the open unitary ball of $L^\infty(\Td)$, namely
the set of functions that are separated from the 
potential barriers $\pm1$. 

We consider the space 
\[
  \mathcal X:=\mathcal K_1=\left\{v\in L^\infty(\Td):\;
  \|v\|_{L^\infty(\Td)}\leq1\right\}
\]
as a metric space endowed with the metric of $H$, so that 
the Borel $\sigma$-algebra
$\cB(\mathcal X)$
of $\mathcal X$
is exactly the trace of the Borel $\sigma$-algebra
$\cB(H)$ of $H$ on $\mathcal X$.
We will denote by $\mathcal B_b(\mathcal X)$
the space of functions 
$\varphi:\mathcal X\to\mathbb R$
that are bounded and 
$\cB(\mathcal X)$-measurable, and by 
$\mathcal P(\mathcal X, \cB(\mathcal X))$
the space of probability measures on 
$(\mathcal X, \cB(\mathcal X))$.

Theorem~\ref{th:wp} with the choice $\gamma=1$
ensures that the evolution of \eqref{eq_ast}
can be framed in the metric space $\mathcal X$, 
in the sense that for every 
$x\in \mathcal X$, there exists a unique
variational solution $X^x$ to \eqref{eq_ast}.
Moreover, 
for every $t\geq0$ one has that 
$X^x(t):\Omega\to \mathcal X$
is a well defined 
$\cF_t/\cB(\mathcal X)$-measurable
random variable. Consequently, it is possible to introduce 
the transition semigroup associated to \eqref{eq_ast}
as the family of operators
$P:=(P_t)_{t \ge 0}$ given by 
\begin{equation}
\label{P_t}
(P_t\varphi)(x):= \mathbb{E}[ \varphi(X^x(t))], \quad x\in \mathcal X,
\quad \varphi\in \mathcal{B}_b(\mathcal X),
\quad t>0.
\end{equation}
\begin{remark}
We point out that, due to the nonlinear nature of the problem, 
the solution of equation \eqref{eq_ast} exists only on
$\mathcal X$, hence
the transition semigroup can only make sense as a family of operators acting on $\mathcal{B}_b(\mathcal X)$,
and not on $\mathcal{B}_b(H)$ as in more classical cases.
\end{remark}
Let us note that, for every $t\geq0$,
$P_t\varphi$ is bounded for every $\varphi \in \mathcal{B}_b(\mathcal X)$.
Moreover, since we know from \cite[Cor.~23]{On2005}
that the transition function 
$(x,t)\mapsto \mathbb{P}(X^x(t)\in A)$, 
$(x,t)\in \mathcal X\times[0,+\infty)$,
is jointly measurable, we have that 
$P_t\varphi$ is also $\cB(\mathcal X)$-measurable for every $\varphi \in \mathcal{B}_b(\mathcal X)$. 
Hence $P_t$ maps 
$\mathcal{B}_b(\mathcal X)$
into itself for every $t \ge0$, namely
\[
  P_t:\mathcal{B}_b(\mathcal X)\to\mathcal{B}_b(\mathcal X),
  \quad t\geq0.
\]
Furthermore, since the unique solution of \eqref{eq_ast} is an $H$-valued continuous process, 
then it is also a Markov process, see \cite[Thm.~27]{On2005}. 
Therefore, we deduce that the family of operators 
$P$ is a Markov semigroup, 
namely $P_{t+s}=P_tP_s$ for any $s,t\ge0$.
A direct application of the continuous dependence 
of Theorem~\ref{th:wp} and of the dominated convergence theorem ensures also that $P$ is also a Feller semigroup, namely
\[
  P_t\varphi\in \mathcal C_b(\mathcal X) \quad\forall\,\varphi\in \mathcal C_b(\mathcal X), \quad\forall\,t\geq0.
\]
For every $t\geq0$, 
we define the Markov transition kernel 
$\mu_t:\mathcal X\times\cB(\mathcal X)\to[0,1]$ as
\[
  \mu_t(x,A):=\P\left(X^x(t)\in A\right),
  \quad (x,A)\in \mathcal X\times\cB(\mathcal X),
  \quad t\geq0,
\]
so that $\mu_t(x,\cdot)$ is the probability distribution 
of the random variable $X^x:\Omega\to\mathcal X$, for all $x\in \mathcal X$ and $t\geq0$. 
Let us recall that an invariant measure for $P$
is a probability measure $\mu\in\mathcal P(\mathcal X, \cB(\mathcal X))$ such that 
\[
    \int_{\mathcal X}
    P_t\varphi\,\d\mu=
    \int_{\mathcal X}
    \varphi\,\d\mu \qquad\forall\,\varphi\in\,\mathcal B_b(\mathcal X), \quad\forall\,t\geq0.
\]
We also recall that an invariant measure $\mu$ for $P$
is said to be ergodic if 
\[
  \lim_{t\to\infty}
  \frac1t\int_0^tP_s\varphi\,\d s=
  \int_{\mathcal X}\varphi\,\d \mu
  \quad\text{in } L^2(\mathcal X,\mu), \quad\forall\,\varphi\in\mathcal B_b(\mathcal X),
\]
while $\mu$ is strongly mixing if 
\[
\lim_{t\to\infty}
  P_t\varphi=
  \int_{\mathcal X}\varphi\,\d \mu
  \quad\text{in } L^2(\mathcal X,\mu), \quad\forall\,\varphi\in\mathcal B_b(\mathcal X).
\]

We are now ready to state the
strong Feller property of the transition semigroup $P$.

\begin{thm}[Strong Feller property]
    \label{th:sf}
    Let $d=1$, $\alpha\geq2$, and $\beta\geq1$.
    Then, there exist 
    $\sigma_0^1, \sigma_0^2\in (0,\frac14)$,
    only depending of $\alpha,\beta$, 
    such that 
    under any of the following settings
    \begin{itemize}
        \item $\alpha>2$, 
        $\sigma\in(\sigma_{0}^1, \frac14)$,
        $\delta\in(\frac14+\sigma, \frac12)$,
        $\xi\in(\delta-\sigma-\frac14, \frac12+\sigma)$, and $\gamma:=\alpha+\max\{\beta,2\}$,
        \item $\alpha\geq4$, 
        $\sigma\in(\sigma_{0}^2, \frac12)$,
        $\delta\in[\frac12, \frac12+\sigma)\cap
        (\frac14+\sigma,1)$,
        $\xi\in[\delta,\frac12+\sigma)$, and $\gamma:=\alpha+2\beta+2$,
    \end{itemize}
    the following holds: there exist
    $\mathfrak a\in(\frac12,1)$ and
    $\mathfrak b>0$, only depending on $\sigma,\delta$,
    such that for every $T>0$ 
    there exists $C>0$, only depending 
    on $\alpha,\beta,\delta,\sigma,\xi,T$,
    such that,
    for every $\varphi\in\mathcal B_b(\mathcal X)$ and for every $t\in(0,T)$, it holds
    for every $x_1,x_2\in \mathcal K_{\gamma}\cap V_{2\xi}$
    with $x_1-x_2\in V_{2\delta}$
    that 
    \[
    |(P_t\varphi)(x_1)-(P_t\varphi)(x_2)|
    \leq\frac{C}{t^{\mathfrak a}}
    \sup_{v\in \mathcal X}|\varphi(v)|
    \sum_{i=1}^2
    \left(1+\|x_i\|_{V_{2\xi}}^{\mathfrak b}
    +\|\Psi_{\gamma}(x_i)\|_{L^1(\Td)}^{\mathfrak b}\right)\|x_1-x_2\|_{V_{2\delta}}.
    \]
    Furthermore, there exist 
    $\tilde\sigma_0^1, \tilde\sigma_0^2\in (0,\frac14)$,
    only depending of $\alpha,\beta$, such that 
    the same conclusion holds
    with $\frac12$ instead of $\mathfrak a$
and with some $\tilde{\mathfrak b}>0$ instead of $\mathfrak b$
    in any of the following scenarios:
    \begin{itemize}
        \item $\alpha>2$, 
        $\sigma\in(\tilde\sigma_{0}^1, \frac14)$,
        $\delta\in(\frac14+\sigma, \frac12)$,
        $\xi\in[\frac12+\frac{\alpha+2}{4(\gamma-2)}, 
        \frac12+\sigma)$, $\gamma\in[\alpha+\beta,+\infty)\cap(\frac{\alpha+2}{4\sigma}+2,+\infty)$,
        \item $\alpha\geq4$, 
        $\sigma\in(\tilde\sigma_{0}^2, \frac12)$,
        $\delta\in[\frac12, \frac12+\sigma)\cap
        (\frac14+\sigma,1)$,
        $\xi\in[\max\{\delta, \frac12+\frac{\alpha+4}{4(\gamma-2)}\},\frac12+\sigma)$, \\and $\gamma\in[\alpha+2\beta+2,+\infty)\cap 
        (\max\{1,\frac1{4\sigma}\}(\alpha+4)+2,+\infty)$.
    \end{itemize}
\end{thm}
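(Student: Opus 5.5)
The plan is to go through a Bismut--Elworthy--Li representation of the derivative of $P_t\varphi$, made rigorous on an approximating level and then passed to the limit. First, I would regularise. Replace $\Psi_\beta'$ by its Yosida approximation $\Psi'_{\beta,\lambda}$, replace $m_\alpha$ by a smooth, non-degenerate truncation $m_{\alpha,\eps}\ge\eps$, and replace $\varphi$ by smooth cylindrical functions $\varphi_n\in C^2_b$. This gives solutions $X_\eps^x$ that are twice differentiable with respect to the initial datum, with a classical BEL formula available because $G_{\alpha,\eps}$ is invertible with uniform bound. For a direction $h\in V_{2\delta}$, the derivative $Y^h=DX^x[h]$ solves the linearised equation
\[
\d Y+AY\,\d t+\Psi_\beta''(X)Y\,\d t=G_\alpha'(X)[Y]\,\d W,\qquad Y(0)=h .
\]
The BEL formula then reads
\[
D(P_t\varphi)(x)[h]=\frac1t\,\E\Big[\varphi(X^x(t))\int_0^t\big(G_\alpha^{-1}(X^x(s))Y^h(s),\d W(s)\big)_H\Big].
\]
By Cauchy--Schwarz and the It\^o isometry, this gives
\[
|D(P_t\varphi)(x)[h]|\le \frac{\|\varphi\|_\infty}{t}\Big(\E\int_0^t\|G_\alpha^{-1}(X(s))\|^2_{\cL(V_{2\delta},H)}\|Y^h(s)\|^2_{V_{2\delta}}\,\d s\Big)^{1/2}.
\]

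Second, I would estimate the inverse. On $\mathcal D_\delta$ one has $G_\alpha^{-1}(x)k=(\operatorname I+A)^{\delta}\big(\Psi''_{\alpha/2}(x)k\big)$, since $m_{\alpha/2}\Psi''_{\alpha/2}\equiv1$. By Lemma~\ref{multiplication} in $d=1$, this yields
\[
\|G_\alpha^{-1}(X)\|_{\cL(V_{2\delta},H)}\lesssim\|\Psi''_{\alpha/2}(X)\|_{V_{2\delta}} .
\]
Estimating $\|\Psi''_{\alpha/2}(X)\|_{V_{2\delta}}$ is, I expect, the main obstacle. In the regime $\delta<\frac12$, I would use the fractional Gagliardo seminorm together with the mean value theorem:
\[
|\Psi''_{\alpha/2}(a)-\Psi''_{\alpha/2}(b)|\lesssim\big(m_{\alpha/2+1}(a)^{-1}+m_{\alpha/2+1}(b)^{-1}\big)|a-b|.
\]
Combining this with the embedding $V_{2\xi}\hookrightarrow C^{0,\xi-1/4}$ in one dimension, and with integrability of negative powers of $1-X^2$ coming from $\Psi_\gamma(X)\in L^1$ (where $\Psi_\gamma\sim(1-r^2)^{2-\gamma}$), one bounds $\|\Psi''_{\alpha/2}(X)\|_{V_{2\delta}}$ by a polynomial in $\|X\|_{V_{2\xi}}$ and $\|\Psi_\gamma(X)\|_{L^1}$. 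This is where the choice $\gamma=\alpha+\max\{\beta,2\}$ (resp.\ $\alpha+2\beta+2$ when $\delta\ge\frac12$) enters, together with the requirement $\xi>\delta-\sigma-\frac14$. In the regime $\delta\ge\frac12$, I would instead differentiate once, writing $\nabla\Psi''_{\alpha/2}(X)=\Psi'''_{\alpha/2}(X)\nabla X$, and use the $L^2(0,T;V_{2\sigma+1})$ regularity from Theorem~\ref{th:wp}, which requires $\gamma\ge2\beta$. The moments in time of these quantities are controlled by the energy estimates for $\Psi_\gamma$ and for the $V_{2\xi}$-norm from Theorem~\ref{th:wp}, uniformly in the regularisation parameters.

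Third, I would estimate the derivative process. I would show $\E\sup_t\|Y^h\|_{V_{2\delta}}^{p}\lesssim\|h\|_{V_{2\delta}}^p$ modulo random factors, or more precisely a weighted bound
\[
\E\int_0^t\|Y^h(s)\|^{2q}_{V_{2\delta}}\,\d s\lesssim t^{\theta}\,\|h\|^{2q}_{V_{2\delta}}\,(\dots).
\]
The tools are stochastic maximal regularity for $A$ on $V_{2\delta}$, and testing the linearised equation by $Y$: the term $\Psi''_\beta(X)Y^2\ge0$ is monotone and helps, while the noise term is handled by Lemma~\ref{G_basis}--type bounds on $G_\alpha'$ (Lipschitz since $\alpha\ge2$). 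The possible loss of regularity near $s=0$ when $h\in V_{2\delta}$ but the coefficients are rough is what produces $t^{-\mathfrak a}$ with $\mathfrak a\in(\frac12,1)$, rather than $t^{-1/2}$. In the second set of scenarios the larger $\xi$ and $\gamma$ give time-uniform bounds, recovering exactly $t^{-1/2}$. I would then apply H\"older's inequality to split the product of $\|G_\alpha^{-1}\|$ and $\|Y\|$.

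Finally, I would integrate along the segment $x_1+\theta(x_2-x_1)$. This segment stays in $\mathcal K_\gamma\cap V_{2\xi}$ by convexity, and $\|\Psi_\gamma\|_{L^1}$ and the $V_{2\xi}$-norm along it are bounded by the sum over the endpoints. I would then pass to the limit $\eps,\lambda\to0$ using the uniform estimates and continuous dependence from Theorem~\ref{th:wp}. The last step is to extend from smooth $\varphi_n$ to $\varphi\in\mathcal B_b(\mathcal X)$ by the usual approximation: pointwise $\mu_t(x_i,\cdot)$-a.e.\ convergence for a sequence with $\sup|\varphi_n|\le\sup|\varphi|$, combined with dominated convergence.
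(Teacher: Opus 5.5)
\textbf{The main gap is in your second step, the estimate of the inverse.} For $\delta<\frac12$ you want to bound $\|\Psi''_{\alpha/2}(X(s))\|_{V_{2\delta}}$ at each fixed time by a polynomial in $\|X(s)\|_{V_{2\xi}}$ and $\|\Psi_\gamma(X(s))\|_{L^1(\Td)}$, using the Gagliardo seminorm, the mean value theorem and H\"older continuity of $X$. This cannot work, for two reasons.

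\emph{The H\"older mechanism fails in every admissible range.} In one dimension $V_{2\xi}\hookrightarrow C^{0,\theta}(\overline\Td)$ only for $\theta<2\xi-\frac12$. Since $\xi<\frac12+\sigma$ and $\delta>\frac14+\sigma$, you always have $\theta<\frac12+2\sigma<2\delta$. After inserting $|X(x)-X(y)|\le[X]_\theta|x-y|^\theta$ you are left with $\int_{\Td}|x-y|^{2\theta-1-4\delta}\,\d y=+\infty$.

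\emph{The pointwise claim is false in the first scenario.} There $\xi$ may be arbitrarily close to $\delta-\sigma-\frac14$, hence close to $0$. Take $v=1-c|x-x_0|^a$ near an interior point (suitably cut off), with $a(\gamma-2)<1$ and $2\xi<a+\frac12$. Then $v\in V_{2\xi}\cap\mathcal K_\gamma$, but $\Psi''_{\alpha/2}(v)\notin L^\infty(\Td)\supset V_{2\delta}$. Had such a bound held, you would get $t^{-1/2}$ already in the first scenario.

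\emph{The missing idea is that only time-integrated control is available.} It\^o's formula for $\int_{\Td}\Psi_\gamma(X)$ gives the weighted dissipation $\Psi''_{\gamma/2}(X)\nabla X\in L^\ell(\Omega;L^2(0,T;H))$ (Proposition~\ref{prop3Xeps}). The paper combines this with three facts:
$\|f\|_{H^{2\delta}(\Td)}\lesssim\|f\|_H+\|f\|_H^{1-2\delta}\|\nabla f\|_H^{2\delta}$;
$\nabla\Psi''_{\alpha/2}(X)=\Psi'''_{\alpha/2}(X)\nabla X$;
and $|\Psi'''_{\alpha/2}|\lesssim\Psi''_{\alpha/2+1}\le\Psi''_{\gamma/2}$, which is where $\gamma\ge\alpha+2$ enters.
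The outcome is that $G^{-1}_{\alpha,\eps}(X_\eps)$ is bounded only in $L^\ell(\Omega;L^{1/\delta}(0,T;\cL(V_{2\delta},H)))$. For $\delta\ge\frac12$ the same device is applied to $\Psi'''_{\alpha/2}(X)$ in $H^{2\zeta}$, which needs $\gamma\ge\alpha+4$, with $\nabla X\in C([0,T];V_{2\delta-1})$ since $\xi\ge\delta$.

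\emph{This is where $t^{-\mathfrak a}$ comes from.} H\"older in time, $\|G^{-1}\|_{L^2(0,t)}\le t^{\frac12-\frac1r}\|G^{-1}\|_{L^r(0,T)}$, gives $t^{-\mathfrak a}$ with $\mathfrak a=\frac12+\frac1r$. So attributing $\mathfrak a>\frac12$ to a singularity of $Y^h$ at $s=0$ is wrong: for $h\in V_{2\delta}$, $Y^h_\eps$ is bounded in $L^\ell(\Omega;C([0,T];V_{2\delta}))$. Likewise, the time-uniform bound in the second block of scenarios is a bound on $G^{-1}$, obtained from $X\in C([0,T];V_{2\xi})$ with $2\xi>1$; it is not a bound on $Y$.

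\textbf{Two further steps are asserted rather than proved.}

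\emph{The $\eps$-level Bismut--Elworthy--Li formula is not classical.} $G_{\alpha,\eps}(x)=m_{\frac\alpha2,\eps}(x)(\operatorname{I}+A)^{-\delta}$ is Hilbert--Schmidt, hence never boundedly invertible on $H$. Non-degeneracy only gives an inverse $V_{2\delta}\to H$ whose norm depends on $x$ through the $H^{2\delta}$-norm of $\Psi''_{\frac\alpha2,\eps}(x)$. The paper therefore proves the formula on Galerkin projections using a right inverse, then passes to the limit. That limit needs convergence of the derivative processes in $C([0,T];V_{2\delta})$ and of $X$ in $C([0,T];V_{2\xi})\cap L^2(0,T;V_{2\sigma+1})$. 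The hypothesis $\xi>\max\{\delta-\frac12,\delta-\sigma-\frac14\}$ belongs to this limit argument (Proposition~\ref{BEL3}), not to the inverse estimate as you suggest.

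\emph{The $V_{2\delta}$ bound on $Y^h$ needs more than testing by $Y^h$.} Maximal regularity requires the forcing $\Psi''_\beta(X)Y^h$ in $L^2(0,T;H)$, or in $L^2(0,T;V_{2\sigma})$ when $\delta\ge\frac12$. Testing by $Y^h$ only yields $\Psi''_{\beta/2}(X)Y^h\in L^2(0,T;H)$, and the sign of this term is useless in fractional norms. The paper closes this with an $L^p$ It\^o estimate giving $\Psi''_{\beta/p}(X)Y^h\in L^p$. It combines this via H\"older with integrability of $\Psi''_{2\beta(p-1)/(p-2)}(X)$, extracted from the dissipation $\Psi'_\gamma(X)\Psi'_\beta(X)\in L^1$ (Propositions~\ref{prop_stime_Y_2}--\ref{prop_stime_Y_3}). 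This, not the inverse estimate, is where the $\beta$-dependence of $\gamma$ comes from.
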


\begin{remark}
    The strong Feller property stated in Theorem~\ref{th:sf}
    can be formulated by saying that 
    the transition semigroup $P$ maps bounded measurable functions on $\mathcal X$
    into locally Lipschitz functions on $\mathcal K_{ \gamma}\cap V_{2\xi}$
 with respect to the metric
    of $V_{2\delta}$.
    The precise constants appearing 
    in Theorem~\ref{th:sf} are the following:
        \[
    \sigma_{0}^i:=
    \begin{cases}
    \frac14\max\left\{\frac\beta{\alpha+\beta-2}, \frac1{2\alpha}\right\}
    \quad&\text{if } i=1,\\
        \frac14\max\left\{
        \frac{2\beta+2}{\alpha+2\beta},
        \frac1{\alpha+2}\right\}
    \quad&\text{if } i=2,
    \end{cases}
    \qquad
    \tilde\sigma_0^i:=
    \begin{cases}
    \max\left\{\sigma_0^1, \frac{\alpha+2}{8\alpha}\right\}
    \quad&\text{if } i=1,\\
      \max\left\{\sigma_0^3, \frac18\frac{\alpha+4}{\alpha+1}\right\}
        \quad&\text{if } i=2,
    \end{cases}
    \]
    and
    \[
    \mathfrak a:=
    \begin{cases}
    \frac12+\delta,\\
      \in(\frac34,1),\\
     \delta,
    \end{cases}
    \mathfrak b:=
    \begin{cases}
      \frac12+\delta+2\sigma ,\\
      3+\mathfrak a ,\\
      3+\delta,
    \end{cases}
    \tilde{\mathfrak b}:=
    \begin{cases}
      \frac12+2\delta+2\sigma \qquad\qquad&\text{if } \delta\in(\frac14,\frac12),\\
      \in(4,\frac92) \qquad\qquad&\text{if } \delta\in[\frac12,\frac34],\\
      \frac52+2\delta 
      \qquad\qquad&\text{if } \delta\in(\frac34,1).
    \end{cases}
    \]
\end{remark}

The last two results concern irreducibility 
of the semigroup $P$ and 
uniqueness of invariant measures.
Since the strong Feller property has been established with respect to the metric of 
$V_{2\delta}$, it is natural
(and also necessary to get uniqueness of invariant measures)
to state irreducibility 
with respect to the same topology.

\begin{thm}[Irreducibility]
    \label{th:irr}
    Under the assumptions of Theorem~\ref{th:wp},
    suppose that $d=1$, 
    $\delta<\frac12+\sigma$,
    and either 
    $\gamma\geq\max\{\beta,\alpha+2\}$ if $\delta<\frac12$
    or 
    $\gamma\geq\max\{2\beta,\alpha+4\}$ if $\delta\geq\frac12$.
    Then,
    for every 
    $x,a\in\mathcal K_{\gamma}\cap V_{2\delta}$ and $t,r>0$,
    it holds that 
    \[
      \P\left(
      \norm{X^x(t)-a}_{V_{2\delta}}<r
      \right)>0.
    \]
\end{thm}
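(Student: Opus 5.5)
The plan follows the forcing-plus-Girsanov approach of \cite{PesZab,Zhang} announced in the introduction. Fix $x,a\in\mathcal K_\gamma\cap V_{2\delta}$ and $t,r>0$. I would build a controlled equation whose solution is steered deterministically towards $a$ at time $t$, and then transfer positivity of the event back to \eqref{eq_ast} through an absolute-continuity argument.

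\textbf{Step 1: choice of the controlled path.} Fix a small $\epsilon\in(0,t)$. Let $z$ be a smooth interpolation between $X^x(t-\epsilon)$ and $a$ on $[t-\epsilon,t]$, for instance $z(s)=\frac{t-s}{\epsilon}X^x(t-\epsilon)+\frac{s-t+\epsilon}{\epsilon}a$. By convexity of $\mathcal K_\gamma$, this path stays in $\mathcal K_\gamma$. Alternatively, one can fix only the endpoint and add a dissipative feedback. Concretely, I would consider
\[
\d Y+AY\,\d t+\Psi_\beta'(Y)\,\d t=\ind_{[t-\epsilon,t]}(s)\,\frac{\kappa}{\epsilon}\,(a_\epsilon-Y)\,\d s+G_\alpha(Y)\,\d W,\qquad Y(0)=x,
\]
where $a_\epsilon$ is a regularisation of $a$ lying in $\mathcal K_\gamma\cap V_{2\delta+2}$. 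The extra linear drift pulls $Y$ towards $a_\epsilon$. Energy estimates in $V_{2\delta}$, obtained from It\^o's formula for $\|A^{\delta}(Y-a_\epsilon)\|_H^2$, should then give
\[
\E\|Y(t)-a\|_{V_{2\delta}}^2\le e^{-2\kappa}C+C\epsilon^{c}+\|a-a_\epsilon\|^2_{V_{2\delta}}.
\]
Here the stochastic term contributes $O(\epsilon)$ because $\|G_\alpha(Y)\|_{\cL^2(H,V_{2\delta})}$ is controlled by Lemma~\ref{G_reg}, together with the regularity $\delta<\frac12+\sigma$ from Theorem~\ref{th:wp}. The $\Psi_\beta'$ term is handled by monotonicity. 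Choosing $\kappa$ large, then $\epsilon$ small, and applying Chebyshev gives $\P(\|Y(t)-a\|_{V_{2\delta}}<r)>0$. Well-posedness of the controlled equation, and the invariance of $\mathcal K_\gamma$ for it, follow exactly as in Theorem~\ref{th:wp}: the additional drift is Lipschitz, and for $a_\epsilon\in\mathcal K_\gamma$ it is dissipative against $\Psi_\gamma'(Y)$, since $(a_\epsilon-Y)\Psi_\gamma'(Y)\le \Psi_\gamma(a_\epsilon)-\Psi_\gamma(Y)$ by convexity.

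\textbf{Step 2: rewriting the control as a noise shift.} Set $h(s):=G_\alpha^{-1}(Y(s))\big[\ind_{[t-\epsilon,t]}(s)\frac{\kappa}{\epsilon}(a_\epsilon-Y(s))\big]$. Then $Y$ solves \eqref{eq_ast} driven by $\tilde W=W+\int_0^\cdot h$. For this to make sense I need $a_\epsilon-Y\in V_{2\delta}$ and $G_\alpha^{-1}(Y)$ to be defined. The intended bound is
\[
\|h\|_H\lesssim\|\Psi''_{\frac\alpha2}(Y)\|_{V_{2\delta}}\,\|a_\epsilon-Y\|_{V_{2\delta}},
\]
which follows from $G_\alpha^{-1}(v)=(\operatorname{I}+A)^{\delta}\,[\Psi''_{\frac\alpha2}(v)\,\cdot\,]$ (recall $1/m_{\alpha/2}=\Psi''_{\alpha/2}$) and Lemma~\ref{multiplication} in $d=1$. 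The conditions $\gamma\ge\alpha+2$ (respectively $\gamma\ge\alpha+4$ and $2\beta$ when $\delta\ge\frac12$) are exactly what makes $\Psi''_{\frac\alpha2}(Y)\in V_{2\delta}$ a.s. along the trajectory. For $\delta<\frac12$, the fractional norm is controlled through $|\Psi'''_{\alpha/2}(Y)|\,|\nabla Y|$ and the $\Psi_\gamma$ bound. For $\delta\ge\frac12$, a further derivative is needed, which uses $Y\in L^2(0,T;V_{2\sigma+1})$ from Theorem~\ref{th:wp} with $\gamma\ge2\beta$. Consequently $\int_0^t\|h\|_H^2<\infty$ $\P$-a.s., although not with exponential moments.

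\textbf{Step 3: absolute continuity and conclusion.} This is the main obstacle, since Novikov fails and the shift is only a.s. square integrable. I would use the weak Girsanov theorem (\cite{LS}, \cite{Ferr}): if $\int_0^t\|h\|_H^2<\infty$ a.s., then the law of $Y$ on $C([0,t];H)$ is absolutely continuous with respect to the law of $X^x$. To prove it, I would localise with stopping times $\tau_n=\inf\{s:\int_0^s\|h\|^2\ge n\}$; on $\{\tau_n\ge t\}$, Girsanov applies with the truncated shift, so $\P(Y\in\Gamma,\tau_n\ge t)>0$ forces $\P(X^x\in\Gamma)>0$ via pathwise uniqueness. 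Since $\P(\tau_n\ge t)\to1$, taking $\Gamma=\{\|v(t)-a\|_{V_{2\delta}}<r\}$ (a Borel set) and $n$ large, the positivity from Step 1 transfers to $X^x$. This yields $\P(\|X^x(t)-a\|_{V_{2\delta}}<r)>0$.
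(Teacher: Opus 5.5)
The architecture matches the paper's. Your forcing $\ind_{[t-\epsilon,t]}\frac{\kappa}{\epsilon}(a_\epsilon-Y)$ is the paper's $\frac{M}{t-\tau}(Z-\tilde a)$ on $[\tau,t]$, with $\tau=t-\epsilon$ and $M=\kappa$. Your Step~2 is the bound \eqref{est:inv_G} combined with \eqref{est:Z5}. Your localisation in Step~3 is a correct self-contained proof of the relaxed Girsanov theorem that the paper only cites from \cite{Ferr,LS}.

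The gap is the quantitative $V_{2\delta}$ estimate in Step~1. There is no It\^o energy identity for $\|A^{\delta}(Y-a_\epsilon)\|_H^2$, because its It\^o correction $\frac12\|G_\alpha(Y)\|^2_{\cL^2(H,V_{2\delta})}$ is $+\infty$:
\begin{itemize}
\item One has $\|(\operatorname{I}+A)^{-\delta}e_k\|_{V_{2\delta}}\simeq1$ for every $k$, so $(\operatorname{I}+A)^{-\delta}\notin\cL^2(H,V_{2\delta})$.
\item The factor $m_{\frac\alpha2}(Y)$ cannot cure this. In the very regime of your Step~2 it is an invertible multiplier on $V_{2\delta}$, with inverse $\Psi''_{\frac\alpha2}(Y)$.
\item Lemma~\ref{G_reg} only gives $G_\alpha(Y)\in\cL^2(H,V_{2\sigma})$, and $\sigma<\delta-\frac14$. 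It\^o-type energy estimates therefore reach only $V_{2\sigma}$, with dissipation in $V_{2\sigma+1}$. This does not control $\|Y(t)-a\|_{V_{2\delta}}$ at a fixed time.
\item The pairing $(\Psi_\beta'(Y),A^{2\delta}(Y-a_\epsilon))_H$ also has no sign from monotonicity once $2\delta>1$.
\end{itemize}
So your bound $\E\|Y(t)-a\|^2_{V_{2\delta}}\le Ce^{-2\kappa}+C\epsilon^c+\dots$ is not justified by the argument given. The hypothesis $\delta<\frac12+\sigma$ is invoked at a point where it does no work.

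The repair is what the paper does in Lemmas~\ref{lem:Z0}--\ref{lem:Z1}.
\begin{enumerate}
\item Do energy estimates on $[t-\epsilon,t]$ only in $H$ and for $\Psi_\gamma$, tracking the forcing explicitly. Your convexity inequality shows the $\frac{\kappa}{\epsilon}$-sized drift costs at most $\kappa\|\Psi_\gamma(a_\epsilon)\|_{L^1(\Td)}$. The bounds therefore grow like $1+\kappa$, uniformly in $\epsilon$.
\item From these bounds, control $\Psi_\beta'(Y)$ in $L^2(H)$ if $\delta<\frac12$ (using $\gamma\ge\beta$), or in $L^{\frac1\sigma}(V_{2\sigma})$ if $\delta\ge\frac12$ (using $\gamma\ge2\beta$).
\item Write $Y-a_\epsilon$ in mild form with the damped semigroup $e^{-\frac{\kappa}{\epsilon}(s-t+\epsilon)}S(s-t+\epsilon)$.
  \begin{itemize}
  \item The initial term is at most $e^{-\kappa}\|X^x(t-\epsilon)-a_\epsilon\|_{V_{2\delta}}$, which has moments by Theorem~\ref{th:wp}.
  \item The $Aa_\epsilon$ and $\Psi_\beta'(Y)$ convolutions are handled by deterministic maximal regularity.
  \item The stochastic convolution is handled via the It\^o isometry, using only the $\cL^2(H,V_{2\sigma})$ bound on $G_\alpha(Y)$ and the smoothing $\|S(r)\|_{\cL(V_{2\sigma},V_{2\delta})}\lesssim r^{-(\delta-\sigma)}$. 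This kernel is square integrable at $r=0$ precisely because $\delta<\frac12+\sigma$.
  \end{itemize}
\end{enumerate}
This yields $\E\|Y(t)-a_\epsilon\|^2_{V_{2\delta}}\le C\,[e^{-\kappa}+(1+\kappa)\epsilon^{c}]$ with $C$ independent of $\kappa$ and $\epsilon$. After that, your order of choices ($\kappa$ first, then $\epsilon$), Chebyshev, and Steps~2--3 go through. The same maximal-regularity argument also gives $Y\in C([t-\epsilon,t];V_{2\delta})$, which you need in Step~2 to have $a_\epsilon-Y\in V_{2\delta}$.
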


\begin{thm}[Uniqueness of the invariant measure]
    \label{th:uniq_im}
    Let $d=1$, $\beta\geq1$, and assume one of the following settings:
    \begin{itemize}
        \item $\alpha>2$, $\sigma\in(\sigma_0^1,\frac14)$, 
        and $\delta\in(\frac14+\sigma,\frac12)$,
        \item $\alpha\geq4$, $\sigma\in(\sigma_0^2,\frac12)$, 
        and $\delta\in[\frac12,\frac12+\sigma)
        \cap(\frac14+\sigma,1)$.
    \end{itemize}
    Then,
    there exists a unique probability measure $\mu$
    on $(\mathcal X, \cB(\mathcal X))$ that is invariant 
    for $P$. Moreover, $\mu$ is ergodic, strongly mixing,
    and $\mu(\mathcal K_\infty\cap V_{2\sigma+1})=1$.
    Lastly, $\mu$ is equivalent to all transition 
    probabilities $\mu_t(x,\cdot)$ for every $t>0$ and $x\in\mathcal X$, and it holds that 
    \[
    \lim_{t\to+\infty}\mu_t(x,A)=\mu(A)
    \quad\forall\,A\in\cB(\mathcal X),
    \quad\forall\,x\in\mathcal X.
    \]
\end{thm}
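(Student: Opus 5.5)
Our plan is to follow the classical Doob--Khasminskii route, adapted to the facts that strong Feller and irreducibility hold only on the invariant subsets $\mathcal K_\gamma\cap V_{2\xi}$ and with respect to the $V_{2\delta}$-metric.

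\textbf{Step 1: existence.} First we produce an invariant measure by Krylov--Bogoliubov. The state space $\mathcal X$ is bounded in $L^\infty(\Td)$, so no dissipativity is needed for tightness. We apply It\^o's formula to $\|X\|_H^2$ and to $\int_\Td\Psi_\gamma(X)$, using Theorem~\ref{th:wp} with the large $\gamma$ of Theorem~\ref{th:sf}. Together with the $V_{2\sigma+1}$-regularity from the last part of Theorem~\ref{th:wp} (valid since $\gamma\ge2\beta$), this should give time-averaged bounds
\[
\frac1t\int_0^t\mathbb E\bigl[\|X^x(s)\|_{V_{2\sigma+1}}^2+\|\Psi_\gamma(X^x(s))\|_{L^1(\Td)}\bigr]\,\d s\le C(x)
\]
uniformly in $t\ge1$. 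The coefficients are bounded and $\Psi'_\beta(X)X\ge0$, so the bounds grow at most linearly in $t$. Hence the averaged laws are tight in $H$ (via compactness of $V_{2\sigma+1}\embed H$), and any limit point $\mu$ is invariant by the Feller property. By lower semicontinuity, $\mu$ is concentrated on $\mathcal K_\gamma\cap V_{2\sigma+1}$. Since $d=1$ and $V_{2\sigma+1}\embed C(\overline\Td)$, we get $\|v\|_{L^\infty(\Td)}<1$ whenever $\Psi_\gamma(v)\in L^1(\Td)$ with $\gamma\ge2$ (where $\Psi_\gamma$ blows up at $\pm1$). This gives $\mu(\mathcal K_\infty\cap V_{2\sigma+1})=1$, and the same holds for every invariant measure, by averaging the same bounds under $\mu$.

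\textbf{Step 2: regularity (equivalence of transition probabilities).} Next we show that $\mu_t(x,\cdot)$, $t>0$, are mutually equivalent over $x$. Instead of full Doob, we use a smoothing time $s>0$: by Theorem~\ref{th:wp} and the bounds above, $X^x(s)\in\mathcal K_\gamma\cap V_{2\xi}$ almost surely, with $\xi=\sigma+\frac12-\epsilon\ge\delta$, since $V_{2\sigma+1}$-regularity at almost every time gives the needed regularity. Fix a set $A$ with $\mu_{t}(x_0,A)>0$. The map $y\mapsto\mu_{t-s}(y,A)=P_{t-s}\ind_A(y)$ is $V_{2\delta}$-continuous on $\mathcal K_\gamma\cap V_{2\xi}$ by Theorem~\ref{th:sf}, and $\mu_t(x_0,A)=\mathbb E[P_{t-s}\ind_A(X^{x_0}(s))]>0$. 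Hence there are $y_0$ and a $V_{2\delta}$-ball $B$ around $y_0$ with $P_{t-s}\ind_A>0$ on $B\cap\mathcal K_\gamma\cap V_{2\xi}$. The continuity constant from Theorem~\ref{th:sf} depends on $\|y\|_{V_{2\xi}}$ and $\|\Psi_\gamma(y)\|_{L^1}$, so we restrict further to a bounded set in these quantities.

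By Theorem~\ref{th:irr} applied at time $s/2$ from $X^x(s/2)\in\mathcal K_\gamma\cap V_{2\delta}$, together with the Markov property, the process reaches $B$ with positive probability. The delicate point is to reach $B$ while simultaneously staying inside the bounded $V_{2\xi}$/$\Psi_\gamma$ set. We would handle this by choosing the bound large enough that, conditionally on hitting $B$, the bad set has small probability; Chebyshev and the moment bounds make this work. It follows that $\mu_t(x,A)>0$ for every $x$, which is the required equivalence.

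\textbf{Step 3: conclusion.} Doob's theorem (as in \cite{DPZ-erg}) then gives uniqueness, strong mixing, ergodicity, equivalence of $\mu$ with every $\mu_t(x,\cdot)$, and $\mu_t(x,A)\to\mu(A)$. Uniqueness also follows directly: distinct ergodic invariant measures would be singular, contradicting equivalence with $\mu_t(x,\cdot)$.

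The main obstacle is Step 2: reconciling the local, weighted $V_{2\delta}$-continuity of Theorem~\ref{th:sf} with irreducibility, which is only in $V_{2\delta}$ and only for data in $\mathcal K_\gamma\cap V_{2\delta}$. This requires the regularisation after a small time and the careful choice of parameters; we would first check whether the settings of Theorem~\ref{th:uniq_im}, with $\gamma$ as in Theorem~\ref{th:sf}, meet the hypotheses of Theorem~\ref{th:irr}.
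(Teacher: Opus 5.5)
\paragraph{Verdict.} Your architecture is the paper's: Krylov--Bogoliubov with support on $\mathcal K_\infty\cap V_{2\sigma+1}$, then regularity of $P$ via Chapman--Kolmogorov, Theorem~\ref{th:sf} and Theorem~\ref{th:irr}, then Doob. You also correctly spot the point the paper passes over in one line: the constant in Theorem~\ref{th:sf} carries the weight $1+\|y\|_{V_{2\xi}}^{\mathfrak b}+\|\Psi_\gamma(y)\|_{L^1(\Td)}^{\mathfrak b}$. However, your device for handling this weight fails.

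\paragraph{Why the truncation fails.} Once you truncate the weight at level $K$, the ball $B_{r_K}(y_0)$ on which $P_{t-s}\ind_A\geq\eps_0$ has radius $r_K\to0$ as $K\to\infty$. Theorem~\ref{th:irr} gives $\P(X^x(s)\in B_{r_K}(y_0))>0$ with no quantitative lower bound. Chebyshev bounds only the \emph{unconditional} probability of the bad set by $C/K$. So nothing prevents the hitting probability from decaying faster than $C/K$. Equally, nothing prevents the law of $X^x(s)$ restricted to $B_{r_K}(y_0)$ from being carried entirely by points where the weight exceeds $K$. No conditional moment bound is available to make ``conditionally on hitting $B$'' rigorous. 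Taking $\xi=\sigma+\frac12-\epsilon>\delta$ makes this worse: $V_{2\delta}$-closeness, which is all that irreducibility provides, gives no control of $V_{2\xi}$-norms.

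\paragraph{The fix.} No truncation is needed.
\begin{enumerate}
\item Take $\xi=\delta$, which is admissible in both settings of Theorem~\ref{th:sf}.
\item Pick $y_0\in\{P_{t-s}\ind_A>0\}\cap\mathcal K_\infty\cap V_{2\delta}$. This set is nonempty because $\mu_s(x_0,\mathcal K_\infty\cap V_{2\delta})=1$. That follows from your regularisation at time $s$ together with the H\"older argument in the proof of Corollary~\ref{cor:sep}, since $\gamma>4$.
\item Since $d=1$ and $\delta>\frac14$, you have $V_{2\delta}\hookrightarrow C^0(\overline\Td)$. So if $\|y_0\|_{L^\infty(\Td)}\le1-\mathfrak L$, every $y$ in a small enough $V_{2\delta}$-ball around $y_0$ satisfies $\|y\|_{L^\infty(\Td)}\le1-\mathfrak L/2$.
\item On that ball both $\|y\|_{V_{2\delta}}$ and $\|\Psi_\gamma(y)\|_{L^1(\Td)}\le|\Td|\,\Psi_\gamma(1-\mathfrak L/2)$ are uniformly bounded. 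Hence the constant of Theorem~\ref{th:sf} is uniform there, and $P_{t-s}\ind_A\ge\eps_0$ on a \emph{fixed} ball.
\item Theorem~\ref{th:irr} then gives a positive probability of entering that ball, which closes the argument directly.
\end{enumerate}
This local boundedness of the weight in the $V_{2\delta}$-topology, around separated points, is what the paper's claim ``$P_{t/2}\ind_A\ge\eps_0$ on $\mathcal K_\gamma\cap B^{2\delta}_{z_0}(r_0)$'' tacitly relies on.

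\paragraph{A minor slip in Step~1.} For $v\in V_{2\sigma+1}$, having $\Psi_\gamma(v)\in L^1(\Td)$ with $\gamma\ge2$ does not force $\|v\|_{L^\infty}<1$. For example, $v=1-|x-x_0|^a$ with $a$ slightly above $2\sigma+\frac12$ touches $1$, while $\Psi_\gamma(v)$ stays integrable for $\gamma$ near $2$. The correct threshold, from the proof of Corollary~\ref{cor:sep}, is $\gamma>\frac{2}{4\sigma+1}+2$. This is harmless for the $\gamma>4$ you actually use.
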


\subsection{Consequences on the random separation property}
The irreducibility Theorem~\ref{th:irr} has 
important consequences on the so-called random separation property, first introduced in
\cite{BOS21, orr-scar-sep}. In particular, 
it answers an open problem on the qualitative behaviour 
of the separation layers of the trajectories.

First of all, let us note that for $d=1$ if $\xi>\frac14$,
then for all $x\in V_{2\xi}\cap \mathcal K_\infty$ 
the following random separation property holds:
    \[
      \P\left(\sup_{(t,z)\in[0,T]\times\overline\Td}|X^x(t,z)|<1\right)
      =1\quad\forall\,T>0.
    \]
This can be shown via the techniques of 
\cite{BOS21, orr-scar-sep}
and means that $\P$-almost every trajectory 
is strictly separated from the potential barriers $\pm1$
in space and time. Equivalently, this can be formulated as 
\[
      \P\left(\left\{\omega\in\Omega:\;
      \exists\,\mathfrak L\in(0,1):
      \sup_{(t,z)\in[0,T]\times\overline\Td}|X^x(\omega, t,z)|\leq1-\mathfrak L\right\}\right)
      =1\quad\forall\,T>0.
\]
The random separation layer $\mathfrak L$ in general depends 
on $\omega\in\Omega$, i.e.~on the trajectory.
One may wonder whether such separation layer 
can be chosen uniformly with respect to the trajectories:
such question was left open in \cite{BOS21, orr-scar-sep}.
The irreducibility Theorem~\ref{th:irr} answers this question, 
in a negative way.
We have in fact the following corollary, that 
states in particular that $\mathfrak L$
{\em cannot} be chosen uniformly with respect to $\omega$.

\begin{cor}
\label{cor:sep}
    Under the assumptions of Theorem~\ref{th:irr}, suppose also that $\gamma>\frac2{4\delta-1}+2$.
    Then, for every $x\in \mathcal K_{\gamma}\cap V_{2\delta}$,
    for every $t>0$, and for every $\mathfrak L\in(0,1)$,
    it holds that 
    \[
    \P\left(\norm{X^x(t)}_{C^0(\overline\Td)}>1-\mathfrak L
    \right)>0.
    \]
\end{cor}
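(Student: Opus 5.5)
The plan is to apply Theorem~\ref{th:irr} to a target $a$ that is uniformly close to the barrier, and then to pass from the $V_{2\delta}$-topology to the sup-norm through a Sobolev embedding. Because $d=1$ and the assumptions of Theorem~\ref{th:irr} give $\delta>\frac14+\sigma\geq\frac14$, we have $2\delta>\frac12$. Hence
\[
V_{2\delta}\embed H^{2\delta}(\Td)\embed C^0(\overline\Td),
\qquad \|v\|_{C^0(\overline\Td)}\leq C_E\|v\|_{V_{2\delta}} .
\]
Fix $x\in\mathcal K_\gamma\cap V_{2\delta}$, $t>0$ and $\mathfrak L\in(0,1)$. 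It suffices to build some $a\in\mathcal K_\gamma\cap V_{2\delta}$ with $\|a\|_{C^0(\overline\Td)}>1-\mathfrak L/2$. Then set $r:=\mathfrak L/(2C_E)$. On the event $\{\|X^x(t)-a\|_{V_{2\delta}}<r\}$ we get
\[
\|X^x(t)\|_{C^0}\geq\|a\|_{C^0}-C_E\|X^x(t)-a\|_{V_{2\delta}}>1-\mathfrak L .
\]
Theorem~\ref{th:irr} says this event has positive probability, which gives the claim.

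It remains to construct $a$. Since $\mathcal K_\infty\subset\mathcal K_\gamma$, the simplest choice is a smooth function with values in $(-1,1)$, compatible with the boundary conditions (so that it lies in $V_{2\delta}$), whose maximum exceeds $1-\mathfrak L/2$. One takes a smooth bump $\phi$ with compact support in $\Td$, $0\le\phi\le1$, $\phi\equiv1$ near an interior point, and sets $a:=(1-\mathfrak L/4)\phi$. This $a$ is smooth and compactly supported, so it belongs to $\mathcal D(A)\subset V_{2\delta}$ under either Dirichlet or Neumann conditions. It is bounded by $1-\mathfrak L/4<1$, so $\Psi_\gamma(a)\in L^\infty$ and $a\in\mathcal K_\gamma$. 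Finally, $\|a\|_{C^0}=1-\mathfrak L/4>1-\mathfrak L/2$.

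The only delicate point is the role of the extra hypothesis $\gamma>\frac2{4\delta-1}+2$. With the construction above it seems unnecessary, since any element of $\mathcal K_\infty$ already lies in $\mathcal K_\gamma$. The authors presumably want it to guarantee consistency of the spaces: that every element of $\mathcal K_\gamma\cap V_{2\delta}$ actually takes values in the open interval $(-1,1)$ with a continuous representative. This would make the statement meaningful for $x$ near the barriers, and compatible with the random separation property, since $\Psi_\gamma$ blows up like $(1-|r|)^{2-\gamma}$ and a Hölder bound of exponent $2\delta-\frac12$ would force integrability failure at touching points when $(\gamma-2)(2\delta-\frac12)>1$. If a sharper step is needed, I would verify that condition by comparing the Hölder continuity of $x$ with the integrability of $\Psi_\gamma(x)$ near points where $|x|=1$. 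For the corollary itself, however, the main step is simply the embedding combined with Theorem~\ref{th:irr}.
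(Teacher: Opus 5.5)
Your proof is correct and follows the paper's route. The paper also derives the corollary directly from Theorem~\ref{th:irr} together with the embedding $V_{2\delta}\embed C^0(\overline\Td)$, and your explicit bump-function target $a$ just supplies the detail the paper leaves implicit. Your reading of the extra hypothesis $\gamma>\frac2{4\delta-1}+2$ is also right: the paper uses it only to show $V_{2\delta}\cap\mathcal K_\gamma=V_{2\delta}\cap\mathcal K_\infty$, via the same H\"older-continuity-versus-integrability argument you sketch, which places the corollary in the context of the non-uniform random separation property rather than being needed for the probability bound itself.
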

\begin{proof}
    It follows directly from Theorem~\ref{th:irr}
    and the fact that $V_{2\delta}\embed C^0(\overline\Td)$.
    For the proof of the random separation property 
we refer e.g.~to the techniques of \cite{BOS21, orr-scar-sep}.
The proof
is based 
on the regularities $X^x\in L^\infty(0,T;V_{2\xi})$,
$\Psi_\gamma(X^x)\in L^\infty(0,T; L^1(\Td))$ $\P$-almost surely,
and the fact that 
for $d=1$
it holds that 
\[
    V_{2\xi}\cap\mathcal K_\gamma=
    V_{2\xi}\cap\mathcal K_\infty \quad\forall\,
    \xi>\frac14, 
    \quad\forall\,
    \gamma>\frac{2}{4\xi-1}+2.
\]
    Indeed, for $\xi>\frac 14$ one has that 
    $V_{2\xi}\embed C^{0,\vartheta}(\overline\Td)$ for all 
    $\vartheta\in[0,2\xi-\frac 12)$. Hence,
    for $v\in V_{2\xi}\cap\mathcal K_\gamma$,
    it holds that $\|v\|_{L^\infty(\Td)}=\max_{x\in \overline\Td}|v(x)|<1$: indeed, if by contradiction there is $x_0\in\overline\Td$ such that $|v(x_0)|=1$, then one would have
    \[
    \frac1{2^{\gamma-2}}\int_{\Td}
    \frac1{|v(x)-v(x_0)|^{\gamma-2}}\,\d x\leq
    \int_{\Td}
    \frac1{|v(x)-1|^{\gamma-2}|v(x)+1|^{\gamma-2}}\,\d x=
    \int_{\Td}\Psi''_{\gamma-2}(v)
    \]
    where the right-hand side is finite since $v\in\mathcal K_\gamma$ and $\Psi''_{\gamma-2}\lesssim\Psi_\gamma$. However, since $\frac 1{\gamma-2}<2\xi-\frac 12$
    one can choose $\vartheta\in(\frac 1{\gamma-2},2\xi-\frac 12)$, so that
    \[
    \int_{\Td}
    \frac1{|v(x)-v(x_0)|^{\gamma-2}}\,\d x
    \geq\|v\|_{C^{0,\vartheta}(\overline\Td)}^{2-\gamma}
    \int_{\Td}\frac1{|x-x_0|^{\vartheta(\gamma-2)}}\,\d x
    =+\infty.
    \]
    Choosing $\xi=\delta>\frac14$ yield then 
    $\gamma>\frac{2}{4\delta-1}+2$, as required.
\end{proof}

    \begin{remark}
    An important consequence of the irreducibility Theorem~\ref{th:irr} and of its Corollary~\ref{cor:sep}
    on the non-uniformity of the random separation property 
    is that, even if the initial datum $x$ is separated from 
    $\pm1$ and even if almost every trajectory is separated 
    in space and time from $\pm1$,
    the process $X^x$ may accumulate arbitrarily towards 
    $\pm1$. This means that, even under the setting 
    of a random separation property, the action of the multiplicative noise on the process $X^x$ is still
    degenerate, i.e.~
    \[
      \essinf_{(\omega,t)\in\Omega\times[0,T]}
      \norm{G_\alpha(X^x(\omega,t))}^2_{\cL^2(H,H)}=0 \quad\forall\,T>0.
    \]
    In this sense, the result of Theorem~\ref{th:sf}
    on the strong Feller property
    is profoundly non-trivial and requires a taylored measurement of accumulation of $X^x$ towards $\pm1$.  
\end{remark}

\section{Well-posedness}
\label{sec:X}
This section is devoted to the proof of 
Theorem~\ref{th:wp} on well-posedness of equation \eqref{eq_ast}.
We first prove continuous dependence with respect to the initial datum, hence also uniqueness, and then we focus on existence.
This is based on the introduction 
of a suitable regularised equation, 
for which a priori estimates are obtained.

\subsection{Uniqueness and continuous dependence}
\label{ssec:uniq}
We focus here on the prove of the continuous dependence
and uniqueness part of Theorem~\ref{th:wp}.
Let then $x_1$ and $x_1$ be initial data satisfying \eqref{init},
and let $X_1$ and $X_2$ be any two respective 
solutions to equation \eqref{eq_ast}, in the sense of 
Definition~\ref{def_sol}.
By setting $R:=X_2-X_1$, 
$\Xi:=\Psi_\beta'(X_2)-\Psi_\beta'(X_1)$,
and $\Gamma:=G_\alpha(X_2)-G_\alpha(X_1)$,
the equation
\[
\d R + AR\,\d t +\Xi\d t=\Gamma\, \d W, \qquad
R(0)=x_2-x_1,
\]
is satisfied still in the sense of Definition~\ref{def_sol}.
Since It\^o formula is not directly applicable 
due to the low 
regularity of $\Xi$,
we rely on an approximation procedure, 
inspired by the one performed in the proof of \cite[Prop.~6.2]{mar-scar-diss}.
To this end, for every $n\in\mathbb N$, let $\mathcal R_n:=(\operatorname{I}+\frac1n A)^{-m}$, where $m>0$ is fixed so that 
$\mathcal R_n\in \cL(H,V)\cap \cL(L^1(\Td), H)$
for every $n\in\mathbb N$
(see e.g.~\cite[App.~A]{scar-div}).
We recall that $\mathcal R_n:$ is non-expansive 
on $H$, $V$, and $L^1(\Td)$, 
and that, as $n\to\infty$, 
$\mathcal R_n z\to z$ in $H$ (resp.~$V$ or $L^1(\Td)$) 
for every $x\in H$ (resp.~$x\in V$ or $x\in L^1(\Td)$).
By setting, for $k=1,2$, $X_{k,n}:=\mathcal R_nX_k$, $x_{k,n}:=\mathcal R_nx_k$,
$R_n:=\mathcal R_nR=X_{2,n}-X_{1,n}$, 
$\Xi_n:=\mathcal R_n\Xi$, 
and $\Gamma_n:=\mathcal R_n\Gamma$, one has
\[
\d R_n + AR_n\,\d t +\Xi_n\d t=\Gamma_n\, \d W, \qquad
R_n(0)=x_{2,n}-x_{1,n}.
\]
Now, the classical It\^o formula yields,
for all $t\in[0,T]$, $\P$-almost surely,
\begin{align*}
    &\frac12\norm{R_n(t)}_H^2
    +\int_0^t\norm{A^\frac12R_n(s)}_H^2\,\d s
    +\int_0^t\int_{\Td}\Xi_n(s)R_n(s)\,\d s\\
    &=\frac12\norm{x_{2,n}-x_{1,n}}_H^2
    +\int_0^t\left(R_n(s), \Gamma_n(s)\,\d W(s)\right)_H
    +\frac12\int_0^t
    \norm{\Gamma_n(s)}^{2}_{\cL^2(H,H)}\,\d s.
\end{align*}
Thanks to the convergence properties of $(\mathcal{R}_n)_n$, 
it is
immediate to see that, as $n\to\infty$,
\begin{align*}
    R_n(t)\to R(t) \quad&\text{in } H, \quad\forall\,t\in[0,T], \quad\P\text{-a.s.},\\
    R_n\to R \quad&\text{in } 
    L^2_\cP(\Omega; L^2(0,T;V)),\\
    \Xi_n\to\Xi \quad&\text{in } 
    L^1_\cP(\Omega; L^1(0,T;L^1(\Td))),\\
    \Gamma_n\to\Gamma
    \quad&\text{in } 
    L^2_\cP(\Omega; L^2(0,T;\cL^2(H,H))).
\end{align*}
Moreover, since $|X_k|\leq1$ for $k=1,2$,
one has $|\Xi_n R_n|\leq2|\Xi_n|$
almost everywhere in $\Omega\times(0,T)\times\Td$.
Since the right-hand side converges in $L^1_\cP(\Omega\times(0,T)\times\Td)$,
we infer that the left-had side is uniformly integrable, hence also that 
\[
  \Xi_n R_n\to\Xi R \quad\text{in } L^1_\cP(\Omega; L^1(0,T; L^1(\Td))).
\]
By letting $n\to\infty$ we get then,
for all $t\in[0,T]$, $\P$-almost surely,
\begin{align*}
    &\frac12\norm{R(t)}_H^2
    +\int_0^t\norm{A^\frac12R(s)}_H^2\,\d s
    +\int_0^t\int_{\Td}\Xi(s)R(s)\,\d s\\
    &=\frac12\norm{x_2-x_1}_H^2
    +\int_0^t\left(R(s), \Gamma(s)\,\d W(s)\right)_H
    +\frac12\int_0^t
    \norm{\Gamma(s)}_{\cL^2(H,H)}^2\,\d s.
\end{align*}
Now, by monotonicity we have 
\[
\int_0^t\int_{\Td}\Xi(s)R(s)\,\d s \geq0,
\]
while by Lemma~\ref{G_basis} we have, for every $\lambda>0$,
\[
  \frac12\int_0^t
    \norm{\Gamma(s)}^2_{\cL^2(H,H)}\,\d s
  \leq \lambda
  \int_0^t\norm{A^\frac12R(s)}_H^2\,\d s
  +C_\lambda \int_0^t\norm{R(s)}_H^2\,\d s.
\]
Similarly, by the Burkholder-Davis-Gundy inequality we get, by possibly renominating the constant $C_\lambda$,
\begin{align*}
    \E\sup_{\tau\in[0,t]}\left|\int_0^\tau
    \left(R(s), \Gamma(s)\,\d W(s)\right)_H
    \right|^{\frac\ell2}
    &\leq C_\ell
    \E\left(\int_0^t
    \norm{R(s)}_H^2
    \norm{\Gamma(s)}^2_{\cL^2(H,H)}\,\d s
    \right)^{\frac\ell4}\\
    &\leq\lambda\E\sup_{s\in[0,t]}
    \norm{R(s)}_H^\ell
    +C_\lambda\E\left(\int_0^t
    \norm{\Gamma(s)}_{\cL^2(H,H)}^2\,\d s\right)^{\frac\ell2},
\end{align*}
where, again by Lemma~\ref{G_basis},
\begin{align*}
    \E\left(\int_0^t
    \norm{\Gamma(s)}_{\cL^2(H,H)}^2\,\d s\right)^{\frac\ell2}
    \leq 
    \lambda\E\left(
  \int_0^t\norm{A^\frac12R(s)}_H^2\,\d s\right)^{\frac\ell2}
  +C_\lambda \E\int_0^t\norm{R(s)}_H^\ell\,\d s.
\end{align*}
The continuous dependence of Theorem~\ref{th:wp}
follows then by
raising the It\^o formula to power $\frac\ell2$,
by taking supremum in time and expectations, 
and by rearranging the terms with a chosen $\lambda$
small enough.

\subsection{The approximated equation}
Let us focus here on the proof of the existence part of Theorem~\ref{th:wp}.
For every $\varsigma>0$, recalling that $m_\varsigma$
is extended to $0$ outside $(-1,1)$,
we set the approximated mobility and potential, 
for every $\varepsilon \in (0,1)$, as
\begin{equation}
    \label{m_eps}
m_{\varsigma, \varepsilon}: \mathbb{R} \rightarrow \mathbb{R},
\qquad
m_{\varsigma, \varepsilon}(r):=(\rho_\eps*m_\varsigma)(r)
+ \varepsilon^\s, \quad r \in\mathbb{R},
\end{equation}
and 
\begin{equation}
\label{psi_eps}
\Psi_{\varsigma, \varepsilon}:\mathbb{R} \rightarrow \mathbb{R},
\qquad 
\Psi_{\varsigma, \varepsilon}(r):=
\int_0^r\int_0^z\frac{1}{m_{\varsigma,\varepsilon}(w)}\,\d w\,\d z, \quad r \in \mathbb{R},
\end{equation}
where $(\rho_\eps)_\eps$ is a classical family of mollifiers.
For example, one can take $\rho_\eps(r)=\eps^{-1}\rho(r/\eps)$, $r\in\erre$, where $\rho(r):=c_Ne^{-\frac1{1-r^2}}\ind_{(-1,1)}(r)$, $r\in\erre$, and $c_N>0$ is a normalisation constant.
Analogously, 
for every $\varsigma>0$,
we define the approximated noise operator as
\begin{equation}
\label{G_eps}
G_{\varsigma, \varepsilon}:H \rightarrow \cL^2(H,H),
\qquad
G_{\varsigma, \varepsilon}(x)[h]:=
m_{\frac \varsigma 2, \varepsilon}(x)
(\operatorname{I}+A)^{-\delta}(h), \qquad x,h \in H.
\end{equation}

We collect the main properties of the approximated families 
in the following lemmata.
\begin{lem}
    \label{lem_app}
    The following properties hold.
        \begin{itemize}
\item[(i)]
\label{mse1}
For every $\varsigma\geq1$ and $\varepsilon\in(0,1)$ 
it holds
that $m_{\varsigma,\varepsilon}\in C^{\infty}_c(\mathbb R)$ and 
\[
  \norm{m_{\varsigma, \varepsilon}}_{L^\infty\left(\mathbb R\right)}
  \leq
\norm{m_\varsigma}_{L^\infty(\mathbb R)}+1,
\qquad
\norm{m'_{\varsigma, \varepsilon}}_{L^\infty(\mathbb R)}\leq
\norm{m'_\varsigma}_{L^\infty(\mathbb R)}.
\]
\item[(ii)]
\label{mse2}
For every $\varsigma\geq1$ and $\varepsilon\in(0,1)$ 
it holds
that 
\[
  \frac{1}{m_{\varsigma, \varepsilon}(r)}\le \frac{1}{\varepsilon^\s} \quad \forall\, r \in \mathbb{R}.
\]
\item [(iii)]
\label{Psi1} 
For every $\varsigma\geq1$ and $\varepsilon\in(0,1)$
it holds that $\Psi_{\varsigma,\varepsilon}\in C^2(\mathbb R)$
is strictly convex, 
$\Psi_{\varsigma,\varepsilon}'$ is monotone increasing and $\frac{1}{\varepsilon^\varsigma}$-Lipschitz continuous,
with $\Psi_{\varsigma,\varepsilon}'(0)=0$, 
$m_{\varsigma,\varepsilon}\Psi_{\varsigma,\varepsilon}''\equiv1$,
and 
\[
  \frac1{\norm{m_\varsigma}_{L^\infty(\mathbb R)}+1}\leq\Psi_{\varsigma,\varepsilon}''(r)\leq \frac1{\varepsilon^\s}
  \quad\forall\,r\in\mathbb R.
\]
\item [(iv)]
\label{Psi2}
For every $\varsigma>1$, there exist $C_\varsigma, c_\varsigma>0$
such that, for every $\varepsilon\in(0,1)$,
it holds that 
\[
c_\varsigma\Psi_{\varsigma-1, \varepsilon}''(r)\leq 
|\Psi_{\varsigma, \varepsilon}'(r)| 
\le C_\varsigma\Psi_{\varsigma-1, \varepsilon}''(r)
\quad\forall\,r\in\mathbb R.
\]
\item [(v)]
\label{Psi3}
For every $\varsigma\geq1$, there exist $C_\varsigma, c_\varsigma>0$
such that, for every $\varepsilon\in(0,1)$,
it holds that 
\begin{alignat*}{2}
c_\varsigma\Psi_{\varsigma+1, \varepsilon}(r) &\leq 
|\Psi_{\varsigma, \varepsilon}'(r)| &&\le C_\varsigma\Psi_{\varsigma+1, \varepsilon}(r)
\quad\forall\,r\in\mathbb R,\\
c_\varsigma\Psi_{\varsigma+2, \varepsilon}(r) &\leq 
\Psi_{\varsigma, \varepsilon}''(r) &&\le C_\varsigma\Psi_{\varsigma+2, \varepsilon}(r)
\quad\forall\,r\in\mathbb R,\\
c_\varsigma\Psi''_{\varsigma+1, \varepsilon}(r) &\leq 
|\Psi_{\varsigma, \varepsilon}'''(r)| &&\le 
C_\varsigma\Psi''_{\varsigma+1, \varepsilon}(r)
\quad\forall\,r\in\mathbb R.
\end{alignat*}
\item [(viii)]
\label{Psi5}
For every $\varsigma, \zeta \ge 1$, $\varsigma\le\zeta$,
there exists $C_{\varsigma,\zeta}>0$ such that,
for every $\varepsilon\in(0,1)$, it holds that
\[
\Psi_{\varsigma,\varepsilon}(r)\leq C_{\varsigma,\zeta}
\Psi_\zeta(r) \quad\forall\,r\in\mathbb R, \qquad
\Psi_{\varsigma,\varepsilon}''(r)\leq C_{\varsigma,\zeta}
\Psi_\zeta''(r) \quad\forall\,r\in\mathbb R.
\]
\item [(ix)]
\label{Psi6}
For every $\varsigma, \zeta \geq 1$ with $\varsigma+\zeta>2$,
there is $K_{\varsigma,\zeta}>0$ such that,
for every $\varepsilon\in(0,1)$, it holds 
    \[
    \Psi_{\varsigma, \varepsilon}'(r)
    \Psi_{\zeta, \varepsilon}'(r)
    \geq K_{\varsigma,\zeta}
    \Psi_{\varsigma+\zeta-2, \varepsilon}''(r) - K_{\varsigma,\zeta}^{-1} \quad\forall\,r\in\mathbb R.
    \]
   \end{itemize}
\end{lem}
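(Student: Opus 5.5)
The plan is to check the items one at a time. Everything should follow from the explicit form $m_\varsigma(r)=(1-r^2)^\varsigma$ on $(-1,1)$ and from elementary properties of mollification. The only delicate point is keeping every constant independent of $\varepsilon$.

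\textbf{Items (i)--(iii).} Since $\varsigma\geq1$, the function $m_\varsigma$ (extended by zero) is Lipschitz on $\erre$ with compact support. Hence $\rho_\eps*m_\varsigma$ is smooth with support in $[-2,2]$. Strictly speaking, the additive constant $\eps^\varsigma$ prevents compact support, so (i) should be read as smoothness with bounded derivatives. The bounds $\|\rho_\eps*m_\varsigma\|_\infty\le\|m_\varsigma\|_\infty$ and $\|(\rho_\eps*m_\varsigma)'\|_\infty\le\|m_\varsigma'\|_\infty$ follow from $\|\rho_\eps\|_{L^1}=1$, and together with $\eps^\varsigma\le1$ they give (i). Since $\rho_\eps*m_\varsigma\geq0$, we get $m_{\varsigma,\eps}\ge\eps^\varsigma$, which is (ii). For (iii) we use $\Psi''_{\varsigma,\eps}=1/m_{\varsigma,\eps}$: the bounds in (i) and (ii) give the two-sided estimate, strict convexity, and the $\eps^{-\varsigma}$-Lipschitz continuity of $\Psi'_{\varsigma,\eps}$. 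Finally, $\Psi'_{\varsigma,\eps}(0)=0$ holds by definition.

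\textbf{Items (iv), (v), (viii), (ix).} The core step is a uniform comparison lemma.
\begin{itemize}
\item[(a)] There is $c>0$, independent of $\eps$, such that for all $\varsigma,\zeta$
\[
  c^{-1}\,m_{\varsigma,\eps}\,m_{\zeta,\eps}\le m_{\varsigma+\zeta,\eps}\le c\,m_{\varsigma,\eps}m_{\zeta,\eps}.
\]
\item[(b)] $m_{\varsigma,\eps}$ behaves like $(\mathrm{dist}(r,\{\pm1\})+\eps)^\varsigma$ near the barriers.
\end{itemize}
For (b), I split $r$ into two regions. If $|r|\le 1-2\eps$, the mollification changes $m_\varsigma$ only by a bounded factor, since $1-s^2$ is comparable on $[r-\eps,r+\eps]$. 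If $|r|>1-2\eps$, both sides are comparable to $\eps^\varsigma$: the convolution term is at most of order $\eps^\varsigma$ and the added constant is exactly $\eps^\varsigma$.

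With (a) and (b) in hand, the remaining items become one-dimensional integral estimates for $\Psi_{\varsigma,\eps}$, which equals a double integral of $(d(w)+\eps)^{-\varsigma}$ up to constants, where $d$ denotes the distance to $\pm1$.
\begin{itemize}
\item[(iv)] Integrating $(d+\eps)^{-\varsigma}$ once gives $(d+\eps)^{1-\varsigma}$ for $\varsigma>1$, which is comparable to $\Psi''_{\varsigma-1,\eps}$. Near $r=0$ this must be corrected: $\Psi'_{\varsigma,\eps}$ vanishes there while $\Psi''_{\varsigma-1,\eps}$ does not. So the lower bound $c\Psi''_{\varsigma-1,\eps}\le|\Psi'_{\varsigma,\eps}|$ can only hold away from $0$, or up to an additive constant. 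I expect to have to interpret it that way, or to note that the paper only uses it in the form with an additive constant.
\item[(v)] This follows by the same integration, one order higher. Again the comparisons are up to bounded additive terms near the origin, and near $r=0$ the functions $\Psi_{\varsigma+1,\eps}$ and $\Psi_{\varsigma+2,\eps}$ are both small or bounded. For the third derivative, $\Psi'''_{\varsigma,\eps}=-m'_{\varsigma,\eps}/m^2_{\varsigma,\eps}$. We have $|m'_{\varsigma,\eps}|\lesssim m_{\varsigma-1,\eps}$ because the mollified derivative of $(1-r^2)^\varsigma$ is controlled by $(d+\eps)^{\varsigma-1}$, and then (a) gives the upper bound $\lesssim\Psi''_{\varsigma+1,\eps}$.
\item[(viii)] This is monotone comparison: $(d+\eps)^{-\varsigma}\lesssim d^{-\varsigma}\le d^{-\zeta}$ for $d\le1$.
\item[(ix)] The product $\Psi'_{\varsigma,\eps}\Psi'_{\zeta,\eps}$ is nonnegative, since both factors have the sign of $r$. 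Away from $0$ it is comparable to $(d+\eps)^{2-\varsigma-\zeta}$ by (iv) and its logarithmic analogue when $\varsigma=1$. Near $0$ everything is bounded, which the constant $K^{-1}$ absorbs.
\end{itemize}

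\textbf{Main obstacle.} I expect the hard part to be the uniform two-sided bound (b) in the transition region $|r|\approx1-\eps$, together with the lower bounds in (iv) and (v), which genuinely fail near $r=0$. My plan there is to prove them for $|r|\ge\frac12$ and handle $|r|<\frac12$ separately with additive constants or direct bounds.
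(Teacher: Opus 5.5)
Your corrections to the literal statement are right, and they go beyond the paper, whose proof is only the remark that everything follows from the definitions. The constant $\varepsilon^\varsigma$ destroys compact support in (i). Near $r=0$, several bounds in (iv)--(v) can only hold up to additive constants, since e.g.\ $\Psi'_{\varsigma,\varepsilon}(0)=\Psi_{\varsigma+2,\varepsilon}(0)=0$ while the corresponding $\Psi''$ terms are positive. Items (i)--(iii), (viii) and (ix) go through as you describe. For (ix) with $|r|>1$, argue by monotonicity rather than comparability: $|\Psi'_{\cdot,\varepsilon}|$ increases in $|r|$ while $\Psi''_{\varsigma+\zeta-2,\varepsilon}\le\varepsilon^{2-\varsigma-\zeta}$.

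The gap is the region $|r|>1$, which your plan never examines. Your comparison (b), with $d=\operatorname{dist}(r,\{\pm1\})$ and used globally, is false there. For $|r|\ge1+\varepsilon$ the mollification window misses $(-1,1)$, so $m_{\varsigma,\varepsilon}(r)=\varepsilon^\varsigma$ exactly, not $\simeq(|r|-1+\varepsilon)^\varsigma$; your case $|r|>1-2\varepsilon$ tacitly assumes $d(r)\lesssim\varepsilon$. The correct global model is $m_{\varsigma,\varepsilon}(r)\simeq((1-|r|)_++\varepsilon)^\varsigma$. So outside $[-1,1]$ the potential $\Psi_{\varsigma,\varepsilon}$ is a parabola of curvature $\varepsilon^{-\varsigma}$. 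Consequently the upper bound in (iv) and all three lower bounds in (v) fail once $|r|-1\gg\varepsilon$, by factors that blow up as $\varepsilon\to0$. At $r=2$:
$|\Psi'_{\varsigma,\varepsilon}(2)|\ge(1-\varepsilon)\varepsilon^{-\varsigma}$ while $\Psi''_{\varsigma-1,\varepsilon}(2)=\varepsilon^{1-\varsigma}$;
$\Psi_{\varsigma+1,\varepsilon}(2)\ge\frac{(1-\varepsilon)^2}{2}\varepsilon^{-\varsigma-1}$ while $|\Psi'_{\varsigma,\varepsilon}(2)|\lesssim\varepsilon^{-\varsigma}$;
$\Psi_{\varsigma+2,\varepsilon}(2)\ge\frac{(1-\varepsilon)^2}{2}\varepsilon^{-\varsigma-2}$ while $\Psi''_{\varsigma,\varepsilon}(2)=\varepsilon^{-\varsigma}$;
and $\Psi'''_{\varsigma,\varepsilon}(2)=0<\Psi''_{\varsigma+1,\varepsilon}(2)$.
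No $\varepsilon$-independent additive constant absorbs these, so your step ``prove (iv)--(v) for $|r|\ge\frac12$, patch $|r|<\frac12$'' cannot succeed.

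With the corrected (a)--(b), plus $|m'_{\varsigma,\varepsilon}|\le2\varsigma\,m_{\varsigma-1,\varepsilon}$ for the third line of (v), your method does prove part of the claim. The lower bound in (iv) and the upper bounds in (v) hold on all of $\mathbb R$. The upper bound in (iv) and the lower bounds in (v) hold only on $[-1,1]$. Both are modulo additive constants near $r=0$. The lemma should be stated in that form. The restriction is not cosmetic: the lemma is later applied along $X_\varepsilon$, which is not confined to $[-1,1]$.
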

\begin{proof}
    The proof is a direct consequence of the definition
    of $m$ and $\Psi$.
\end{proof}

\begin{lem}
\label{lem_app_G}
Let $d\in\{1,2,3\}$, $\delta>\frac d4$, and $\varsigma\geq2$.
Then, the following properties hold.
\begin{itemize}
 \item [(i)]  There exist a constant $C>0$
 and a positive sequence $(L_\lambda)_{\lambda>0}$
 such that, for every $\eps\in(0,1)$, it holds
 for every $v,v_1,v_2\in H$ that
    \begin{align*}
    \|G_{\varsigma, \varepsilon}(v)\|^2_{\cL^2(H,H)} &
    \le C \|(\operatorname{I}+A)^{-\delta}\|^2_{\cL^2(H, H)}
    \end{align*}
    and
    \begin{align*}
    \|G_{\varsigma, \varepsilon}(v_1)-
    G_{\varsigma, \varepsilon}(v_2)\|^2_{\cL^2(H,H)} 
    &\le \lambda\norm{v_1-v_2}_V^2 + L_\lambda 
    \norm{v_1-v_2}^2_H \quad \forall\,\lambda>0.        
    \end{align*}
\item [(ii)] If $d=1$, for every $\sigma\in[0,\frac12)$, $\delta>\frac d4+\sigma$, 
and $\eta\in(\sigma,\frac12)\cap(\frac 14, \frac12)$
    there exists a positive constant $C$, 
    depending on $\s, \sigma, \eta, \delta$, such that 
    \[
    \|G_{\s,\eps}(v)\|^2_{\cL^2(H, V_{2\sigma})} \le C
    \|v\|^2_{V_{2\eta}} \quad\forall\,
    v\in B_1^\infty \cap V_{2 \eta}.
    \]
    \end{itemize}
\end{lem}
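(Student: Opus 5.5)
The plan is to reduce everything to pointwise properties of the approximated mobility $m_{\frac\s2,\eps}$, since
\[
G_{\s,\eps}(v)[h]=m_{\frac\s2,\eps}(v)(\operatorname{I}+A)^{-\delta}h
\]
and the operator $(\operatorname{I}+A)^{-\delta}$ is independent of $\eps$. The $\eps$-uniformity of all constants will then follow from Lemma~\ref{lem_app}(i), applied with $\frac\s2\geq1$ in place of $\varsigma$. This gives
\[
\|m_{\frac\s2,\eps}\|_{L^\infty(\erre)}\leq\|m_{\frac\s2}\|_{L^\infty}+1,
\qquad
\|m'_{\frac\s2,\eps}\|_{L^\infty(\erre)}\leq\|m'_{\frac\s2}\|_{L^\infty},
\]
and the right-hand sides are finite because $\s\geq2$ makes $m_{\frac\s2}$ Lipschitz on $\erre$.

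For the first bound in (i), I expand in the eigenbasis and apply H\"older with the $L^\infty$ bound on $m_{\frac\s2,\eps}$:
\[
\|G_{\s,\eps}(v)\|^2_{\cL^2(H,H)}=\sum_k\|m_{\frac\s2,\eps}(v)(\operatorname{I}+A)^{-\delta}e_k\|_H^2\leq(\|m_{\frac\s2}\|_\infty+1)^2\|(\operatorname{I}+A)^{-\delta}\|^2_{\cL^2(H,H)}.
\]
The right-hand side is finite by Lemma~\ref{reg_A} with $\sigma=0$. Unlike Lemma~\ref{G_reg}, no restriction $v\in B^\infty_1$ is needed, since $m_{\frac\s2,\eps}$ is globally bounded.

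For the Lipschitz bound in (i), I repeat the proof of Lemma~\ref{G_basis} verbatim, replacing $m_{\frac\gamma2}$ with $m_{\frac\s2,\eps}$. Since $\delta>\frac d4$, Lemma~\ref{reg_A} gives $(\operatorname{I}+A)^{-\delta}\in\cL^2(H,V_{2\varsigma'})$ for some small $\varsigma'>0$. I then choose $q>2$ with $V_{2\varsigma'}\embed L^q$ and set $p=\frac{2q}{q-2}$, and H\"older gives
\[
\|G_{\s,\eps}(v_1)-G_{\s,\eps}(v_2)\|^2_{\cL^2(H,H)}\lesssim\|m'_{\frac\s2}\|^2_\infty\|v_1-v_2\|^2_{L^p}.
\]
The global Lipschitz bound on $m_{\frac\s2,\eps}$ lets this work for all $v_i\in H$.

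The delicate point is $d=3$: here $q$ is only slightly above $2$, so $p$ is large, and I must check that $V\embed L^p$ compactly. This needs $p<6$, i.e.\ $\varsigma'>\frac14$, which fails when $\delta$ is only slightly larger than $\frac34$. I would therefore not rely on compactness. Instead I would interpolate the $L^p$ norm between $L^2$ and $L^6$, or more generally between $H$ and $V$ via Gagliardo--Nirenberg whenever $p<2^*$, and then apply Young's inequality to get
\[
\|w\|^2_{L^p}\leq\lambda\|w\|_V^2+L_\lambda\|w\|_H^2.
\]
If $\varsigma'$ is too small for this in $d=3$, I would instead reorganise H\"older so that more integrability falls on $(\operatorname{I}+A)^{-\delta}e_k$, using $\delta>\frac34$. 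I expect this exponent bookkeeping to be the main obstacle. The constants $L_\lambda$ do not depend on $\eps$.

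For (ii), with $d=1$, I follow the second part of Lemma~\ref{G_reg}. I pick $\eta'\in(\sigma,\eta)\cap(\frac14,\frac12)$ and apply Lemma~\ref{multiplication} with $s=2\sigma$, $s_1=2\eta'$, $s_2=2\sigma$; this is admissible since $2\eta'>\frac12$. Combined with Lemma~\ref{reg_A}, this gives
\[
\|G_{\s,\eps}(v)\|^2_{\cL^2(H,V_{2\sigma})}\lesssim\|m_{\frac\s2,\eps}(v)\|^2_{H^{2\eta'}}.
\]
For the seminorm part, the Gagliardo seminorm of $m_{\frac\s2,\eps}(v)$ is bounded by $\|m'_{\frac\s2}\|^2_\infty[v]^2_{2\eta'}$ by the uniform Lipschitz bound. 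For the $L^2$ part, I use $|m_{\frac\s2,\eps}(v)|\leq\|m_{\frac\s2}\|_\infty+1$ on the bounded domain $\Td$.

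This $L^2$ part gives a constant rather than $\|v\|^2_{V_{2\eta}}$, which is a mismatch with the claimed bound. When $v$ is small in $V_{2\eta}$ (e.g.\ $v=0$), the left-hand side is of order one but the right-hand side vanishes, so the inequality as literally stated cannot hold. I would therefore prove it in the form $C(1+\|v\|^2_{V_{2\eta}})$, exactly as Lemma~\ref{G_reg} does for $d=1$, and note that this is the form used later. Finally, all fractional Sobolev norms are compared with $V_{2\eta'}$ norms using $\eta'<\frac12$, where $V_{2\eta'}=H^{2\eta'}$ with no boundary constraints.
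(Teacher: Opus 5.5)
Your route is the paper's. Its proof is the one-line remark that the lemma follows from Lemma~\ref{lem_app}(i), which gives uniform-in-$\eps$ bounds on $m_{\frac\s2,\eps}$ and $m'_{\frac\s2,\eps}$, combined with Lemmata~\ref{reg_A}, \ref{G_reg}, \ref{G_basis}. Your first bound in (i), your Lipschitz bound in the cases covered by Lemma~\ref{G_basis}, and your treatment of (ii) are exactly that replay.

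Your objection to (ii) is correct. Since $m_{\frac\s2,\eps}\geq\eps^{\frac\s2}>0$, one has $G_{\s,\eps}(0)=m_{\frac\s2,\eps}(0)(\operatorname{I}+A)^{-\delta}\neq0$, while the right-hand side vanishes at $v=0$. Only $C(1+\|v\|^2_{V_{2\eta}})$ can hold, and that form is all that is needed where the lemma is applied. However, Lemma~\ref{G_reg} as stated has the same defect: its proof controls only the Gagliardo seminorm of $m_{\frac\gamma2}(v)$, not its $L^2$ part. So you are correcting that lemma, not following it.

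Your closing remark is also inaccurate under Dirichlet conditions. For $2\eta'>\frac12$ one has $V_{2\eta'}=H^{2\eta'}_0(\Td)$, and $m_{\frac\s2,\eps}(v)$ does not vanish on $\partial\Td$. The argument only needs the $H^{2\eta'}$-norm of $m_{\frac\s2,\eps}(v)$, as in your display, together with $V_{2\eta}\embed H^{2\eta}$ for $v$.

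The genuine gap is the Lipschitz bound in (i) for $d=3$ and $\delta\in(\frac34,1]$, which you flag but do not close.
\begin{itemize}
\item Interpolating between $L^2$ and $L^6$ requires $p<6$. That is the same condition as compactness of $V\embed L^p$, so it adds nothing.
\item As long as the sum over $k$ stays outside the integral, any H\"older split needs $p<6$, i.e.\ $q>3$. Bounding $\sum_k\|(\operatorname{I}+A)^{-\delta}e_k\|^2_{L^q}$ for $q>3$ via Lemma~\ref{reg_A} and Sobolev embedding then forces $\delta>1$. Moving more integrability onto the kernel only makes this worse.
\end{itemize}
The missing idea is to bring the sum inside the integral:
\[
\|G_{\s,\eps}(v_1)-G_{\s,\eps}(v_2)\|^2_{\cL^2(H,H)}
=\int_{\Td}|m_{\frac\s2,\eps}(v_1)-m_{\frac\s2,\eps}(v_2)|^2K,
\qquad
K:=\sum_{k\in\enne_+}|(\operatorname{I}+A)^{-\delta}e_k|^2 .
\]
Then observe that $K\leq\|B\|^2_{\cL(H,L^\infty(\Td))}$ a.e., with $B:=(\operatorname{I}+A)^{-\delta}$. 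Indeed, for each $N$, $(\sum_{k\leq N}|Be_k(x)|^2)^{1/2}$ is the supremum of $|B(\sum_{k\leq N}c_ke_k)(x)|$ over a countable dense subset of the unit ball of $\erre^N$. Each of these is at most $\|B\|_{\cL(H,L^\infty)}$ for a.e.\ $x$; then let $N\to\infty$.

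Since $\delta>\frac d4$ gives $V_{2\delta}\embed L^\infty(\Td)$, and hence $B\in\cL(H,L^\infty(\Td))$, you obtain the global bound $\|K\|_{L^\infty}\|m'_{\frac\s2}\|^2_{L^\infty(\erre)}\|v_1-v_2\|_H^2$ for every $d\leq3$. This is (i) with $L_\lambda$ independent of both $\lambda$ and $\eps$.

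The paper's proof does not cover this case either. Lemma~\ref{G_basis} needs $\sigma\geq\frac d4-\frac12$ and $\delta>\frac d4+\sigma$, hence $\delta>1$ when $d=3$. That condition holds wherever the lemma is used, namely under the assumptions of Theorem~\ref{th:wp}, and in that regime your verbatim replay is complete. For the statement as written, with only $\delta>\frac d4$, it is not.
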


\begin{proof}
    The statements are direct consequences of Lemma~\ref{lem_app}
    and Lemmata \ref{reg_A}, \ref{G_reg}, \ref{G_basis}.
\end{proof}

The  approximated problem reads as
\beq
\label{eq_app}
  \d X_\varepsilon +A  X_\varepsilon\,\d t + 
  \Psi_{\beta, \varepsilon}'(X_\varepsilon)\, \d t
  = G_{\alpha, \varepsilon} (X_\varepsilon)\,\d W, 
  \qquad
  X_\varepsilon(0)=x.
\eeq
The well-posedness of \eqref{eq_app}
is established in the following result.
\begin{prop}
\label{ex_app_thm}
Under the assumptions of Theorem~\ref{th:wp},
for every $\eps\in(0,1)$ and for every $x$
satisfying \eqref{init}
there exists a unique process $X_\varepsilon^x$ such that, 
for every $T > 0$,
\begin{equation*}
    X_\varepsilon^x \in L^\ell_\cP(\Omega; C([0,T]; H)) \cap 
    L^\ell_\cP(\Omega; L^2(0,T; V)) \quad\forall\,\ell\geq1,
\end{equation*}
and, for every $t\in[0,T]$, $\P$-almost surely,
\begin{multline*}
 ( X_\eps^x(t),v)_H  + 
 \int_0^t (A^{\frac 12}X^x_\eps(s) , A^{\frac 12}v)_H \, {\rm d}s
 +\int_0^t (\Psi'_{\beta,\eps}(X^x_\eps(s)),v)_H  \, {\rm d}s\\
= (x,v )_H  + \left(\int_0^t G_{\alpha,\eps}(X^x_\eps(s))\d W (s), v\right)_H 
\quad\forall\,v\in V.
\end{multline*}
Moreover, if $d=1$,
it also holds, for every $\xi\in[\sigma,\frac12+\sigma)$
and $T>0$, that 
\begin{alignat*}{2}
    X_\varepsilon^x &\in L^\ell_\cP(\Omega; C([0,T]; V_{2\xi}))
    \quad\forall\,\ell\geq1, \quad&&\text{if } x\in V_{2\xi},\\
    X_\varepsilon^x &\in 
    L^\ell_\cP(\Omega; L^2(0,T; V_{2\sigma+1})) \quad\forall\,\ell\geq1, \quad&&\text{if } x\in V_{2\sigma}.
\end{alignat*}
\end{prop}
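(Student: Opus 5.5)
For fixed $\eps\in(0,1)$ the approximated equation \eqref{eq_app} has globally Lipschitz coefficients, so I would treat it with the classical variational framework. By Lemma~\ref{lem_app}(iii), $\Psi'_{\beta,\eps}$ is monotone, $\eps^{-\beta}$-Lipschitz, and vanishes at $0$. By Lemma~\ref{lem_app_G}(i), $G_{\alpha,\eps}$ is bounded in $\cL^2(H,H)$ and satisfies, for every $\lambda>0$,
\[
\|G_{\alpha,\eps}(v_1)-G_{\alpha,\eps}(v_2)\|^2_{\cL^2(H,H)}\le\lambda\|v_1-v_2\|_V^2+L_\lambda\|v_1-v_2\|_H^2 .
\]
Setting $\mathcal A(v):=-Av-\Psi'_{\beta,\eps}(v)$ on the Gelfand triple $V\embed H\embed V^*$, one checks the standard hypotheses of \cite[Thm.~4.2.4 / 5.1.3]{LiuRo}:
\begin{itemize}
\item hemicontinuity is clear;
\item weak monotonicity follows from $\langle\mathcal A(v_1)-\mathcal A(v_2),v_1-v_2\rangle+\|G_{\alpha,\eps}(v_1)-G_{\alpha,\eps}(v_2)\|^2_{\cL^2}\le-(1-\lambda)\|A^{1/2}(v_1-v_2)\|^2_H+L_\lambda\|v_1-v_2\|^2_H$, with $\lambda<1$ and using monotonicity of $\Psi'_{\beta,\eps}$;
\item coercivity follows from $\Psi'_{\beta,\eps}(v)v\ge0$ and the boundedness of $G_{\alpha,\eps}$;
\item boundedness holds since $\|\mathcal A(v)\|_{V^*}\lesssim\|v\|_V+\eps^{-\beta}\|v\|_H$.
\end{itemize}
This gives a unique variational solution $X^x_\eps\in L^2_\cP(\Omega;C([0,T];H)\cap L^2(0,T;V))$ for any $x\in H$. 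The $L^\ell$ moments follow by applying It\^o's formula to $\|X_\eps\|_H^2$, raising to the power $\ell/2$, and using Burkholder--Davis--Gundy with the uniform bound on $\|G_{\alpha,\eps}\|_{\cL^2(H,H)}$.

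For the case $d=1$, I would use a mild/maximal-regularity argument rather than a variational one in a higher triple. Write
\[
X_\eps(t)=S(t)x-\int_0^tS(t-s)\Psi'_{\beta,\eps}(X_\eps(s))\,\d s+\int_0^tS(t-s)G_{\alpha,\eps}(X_\eps(s))\,\d W(s).
\]
\begin{itemize}
\item If $x\in V_{2\xi}$, then $S(\cdot)x\in C([0,T];V_{2\xi})$.
\item Since $\Psi'_{\beta,\eps}$ is Lipschitz and vanishes at $0$, we have $\Psi'_{\beta,\eps}(X_\eps)\in L^\ell(\Omega;L^2(0,T;V))$ (chain rule in $V$, as $\Psi''_{\beta,\eps}\le\eps^{-\beta}$) and in $L^\ell(\Omega;C([0,T];H))$. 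Deterministic maximal regularity then places the convolution in $C([0,T];V_{2\xi})$ for every $\xi<1$, so in particular for $\xi<\frac12+\sigma$.
\item For the stochastic convolution, the factorization method (or stochastic maximal regularity) gives $C([0,T];V_{2\xi})$ for $\xi<\frac12+\sigma$ and $L^2(0,T;V_{2\sigma+1})$, provided $G_{\alpha,\eps}(X_\eps)\in L^\ell_\cP(\Omega;L^2(0,T;\cL^2(H,V_{2\sigma})))$ for the latter. For the continuity statement, the slightly stronger bound in $L^p(0,T;\cL^2(H,V_{2\sigma}))$ with $p$ large is needed, which costs $\frac1p$ in the exponent but leaves room since $\xi<\frac12+\sigma$ strictly.
\end{itemize}

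The main obstacle is bounding $G_{\alpha,\eps}(X_\eps)$ in $\cL^2(H,V_{2\sigma})$. The tool is Lemma~\ref{lem_app_G}(ii), i.e. $\|G_{\alpha,\eps}(v)\|^2_{\cL^2(H,V_{2\sigma})}\lesssim\|v\|^2_{V_{2\eta}}$ with $\eta\in(\sigma,\frac12)\cap(\frac14,\frac12)$. This lemma is stated on $B_1^\infty$, but $X_\eps$ need not stay in $[-1,1]$. I would bypass this by repeating the proof of Lemma~\ref{G_reg} for $d=1$ directly for $m_{\alpha/2,\eps}$: it is globally Lipschitz on $\erre$ by Lemma~\ref{lem_app}(i), so the Gagliardo seminorm estimate holds for every $v\in V_{2\eta}$, with the constant term $\eps^{\alpha/2}$ handled separately.

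Since $\eta<\frac12$, one gets $\|X_\eps\|_{V_{2\eta}}\le\|X_\eps\|_V$, and hence
\[
\E\Big(\int_0^T\|G_{\alpha,\eps}(X_\eps)\|^2_{\cL^2(H,V_{2\sigma})}\Big)^{\ell/2}\lesssim1+\E\|X_\eps\|^\ell_{L^2(0,T;V)}<\infty .
\]
This yields $X_\eps\in L^2(0,T;V_{2\sigma+1})$ whenever $x\in V_{2\sigma}$; the drift is handled as above, and no contradiction arises since $2\sigma+1<2$.

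For the $C([0,T];V_{2\xi})$ claim, I would bootstrap. First take $x\in V_{2\sigma}$, which gives $X_\eps\in L^2(0,T;V_{2\sigma+1})$. Interpolating with $C([0,T];V_{2\sigma})$ gives $X_\eps\in L^p(0,T;V_{2\eta})$ for some $\eta>\frac14$ and large $p$. Lemma~\ref{lem_app_G}(ii) then gives $G_{\alpha,\eps}(X_\eps)\in L^p(0,T;\cL^2(H,V_{2\sigma}))$, and the factorization lemma concludes. If $x\in V_{2\xi}$ with $\xi<\sigma$ (allowed in the first statement of Theorem~\ref{th:wp} only for $\xi\in[0,\frac12)$, but here $\xi\ge\sigma$), no extra care is required.
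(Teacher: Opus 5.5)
Your variational part (hemicontinuity, weak monotonicity, coercivity and boundedness in $V\embed H\embed V^*$, plus It\^o and BDG for higher moments) matches the paper. Your proof that $X_\eps\in L^\ell(\Omega;L^2(0,T;V_{2\sigma+1}))$ for $x\in V_{2\sigma}$ is sound, and it is cleaner than the paper's. Bounding $\|G_{\alpha,\eps}(X_\eps)\|_{\cL^2(H,V_{2\sigma})}$ by $1+\|X_\eps\|_V$ only uses $X_\eps\in L^2(0,T;V)$. The paper instead derives this from an $L^\infty$-in-time bound on $G_{\alpha,\eps}(X_\eps)$ in $\cL^2(H,V_{2\sigma})$, which needs $X_\eps$ continuous in some $V_{2\eta}$ with $\eta>\max\{\sigma,\frac14\}$. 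You are also right that Lemma~\ref{lem_app_G}(ii) has to be re-proved off $B^\infty_1$, because $X_\eps$ can leave $[-1,1]$.

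The gap is in your final bootstrap for $C([0,T];V_{2\xi})$. Interpolating $L^\infty(0,T;V_{2\sigma})\cap L^2(0,T;V_{2\sigma+1})$ gives $L^{2/\theta}(0,T;V_{2\sigma+\theta})$. Reaching $V_{2\eta}$ with $\eta>\frac14$ forces $\theta>\frac12-2\sigma$, that is $p<\frac{4}{1-4\sigma}$. So for $\sigma<\frac14$ the exponent $p$ is \emph{not} large; it stays below about $4$ when $\sigma$ is small. Factorization then gives continuity of the stochastic convolution in $V_{2\xi}$ only for $\xi<\sigma+\frac12-\frac1p<2\sigma+\frac14$. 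This falls short of $\frac12+\sigma$ exactly when $\sigma<\frac14$.

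The missing ingredient is the one the paper uses. Since $\|G_{\alpha,\eps}(v)\|_{\cL^2(H,H)}\le C$ for every $v$, you have $G_{\alpha,\eps}(X_\eps)\in L^\infty(0,T;\cL^2(H,H))$. Factorization then puts the stochastic convolution in $L^\ell(\Omega;C([0,T];V_{2\eta}))$ for every $\eta<\frac12$, so $X_\eps\in L^\ell(\Omega;C([0,T];V_{2\eta}))$ whenever $x\in V_{2\eta}$. This settles $\xi<\frac12$ directly. For $\xi\in[\frac12,\frac12+\sigma)$, pick $\eta\in(\max\{\sigma,\frac14\},\frac12)$. Your adapted lemma then gives $G_{\alpha,\eps}(X_\eps)\in L^\ell(\Omega;L^\infty(0,T;\cL^2(H,V_{2\sigma})))$, and factorization with arbitrary $p$ finishes the argument.

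Alternatively, for $\sigma\in(0,\frac14)$ you can run your own bootstrap a second time. The first pass already gives $C([0,T];V_{2\zeta})$ for all $\zeta<\min\{\xi,2\sigma+\frac14\}$, and since $2\sigma+\frac14>\max\{\sigma,\frac14\}$, the second pass yields the $L^\infty$-in-time bound.
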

\begin{proof}
    Since $\Psi_{\beta, \varepsilon}'$ is monotone and Lipschitz continuous, it is easy to check that the operator $A +  \Psi_{\beta, \varepsilon}'$ satisfies, for any $\varepsilon \in (0,1)$, the classical conditions of Pardoux, Krylov, and Rozovski\u{\i} \cite{Pard, KR-spde,LiuRo}. Indeed,
    if we define the operator $A_\varepsilon :  V \to V^*$ as
\[
A_\varepsilon(v) := A v + \Psi'_{\beta, \varepsilon}(v), \quad v\in V,
\]
as a consequence of the Lipschitz-continuity of 
$\Psi_{\beta,\varepsilon}'$ 
one has that $A_\eps$ is hemicontinuous, i.e.
\[
s\mapsto \langle A_\varepsilon( v_1 + s v_2), v_3 \rangle, \quad s\in\erre,
\]
is continuous for every $v_1,v_2,v_3\in V$.
Moreover, thanks to the monotonicity of $\Psi_{\beta,\eps}'$
and to Lemma~\ref{lem_app_G}, 
$A_\eps$ is weakly monotone, in the sense that 
for every $v_1,v_2\in V$ it holds that
\begin{align*}
    &\langle A_\varepsilon( v_1) - A_\varepsilon( v_2), v_1 - v_2 \rangle 
    - \frac{1}{2} \| G_{\alpha, \varepsilon}(v_1) 
    - G_{\alpha, \varepsilon}(v_2) \|_{\cL^2(H; H)}^2
    \\
    &\geq \norm{\nabla(v_1-v_2)}_H^2
    -\lambda\norm{v_1-v_2}_V^2 - L_\lambda\norm{v_1-v_2}_H^2
    \\
    &\ge \frac12\norm{v_1-v_2}_V^2-(L_{1/2}+1)\|v_1-v_2\|^2_H.
\end{align*}
By taking $v_2=0$ and using that $\Psi_{\beta,\eps}'(0)=0$, 
it follows that $A_\eps$ is weakly coercive, 
i.e.~for all $v\in V$
\[
  \langle A_\varepsilon( v), v \rangle 
    - \frac{1}{2} \| G_{\alpha, \varepsilon}(v) \|_{\cL^2(H; H)}^2
    \geq \frac12\norm{v}_V^2-(L_{1/2}+1)\|v\|^2_H 
    - C\norm{(\operatorname{I}+A)^{-\delta}}^2_{\cL^2(H,H)}.
\]
Eventually, as a consequence of the $\frac{1}{\eps^{\varsigma}}$-Lipschitz continuity 
of $\Psi_{\beta,\eps}'$, the fact that $\Psi_{\beta,\eps}'(0)=0$,
and again Lemma~\ref{lem_app_G}, it follows that
$A_\eps$ is bounded, in the sense that for every $v_1,v_2\in V$
it holds that
\[
\langle  A_\varepsilon(v_1), v_2\rangle  
\le \|v_1\|_{V}\|v_2\|_V + \frac{1}{\varepsilon^{\varsigma}}\|v_1\|_H\|v_2\|_H.
\]
The well-posedness of \eqref{eq_app} follows then from the classical 
variational theory \cite{LiuRo} in the Hilbert triplet 
$(V,H,V^*)$.
Lastly, by the Lipschitz-continuity of $\Psi_{\beta,\eps}'$
one has that $\Psi_{\beta,\eps}'(\xex)
\in L^\ell_\cP(\Omega; L^2(0,T; V_{2\sigma}))$.
Moreover, 
since
\[
\|G_{\alpha, \varepsilon}(v)\|^2_{\cL^2(H, H)} 
    \le C \quad\text{a.e.~in } \Omega\times(0,T),
\]
by maximal regularity arguments 
on the mild formulation
we get $X_\eps^x\in L^\ell_\cP(\Omega; C([0,T]; V_{2\xi}))$
for all $\xi\in(0,\frac12)$:
for more details, see also Proposition~\ref{prop5Xeps}.
If $d=1$,
this implies by Lemma~\ref{lem_app_G}
that 
\[
    \|G_{\alpha, \varepsilon}(v)\|_{L^\ell(\Omega; L^\infty(0,T;\cL^2(H, V_{2\sigma}))} 
    \leq C,
\]
so that the conclusion follows also for 
$\xi\in[\frac12,\frac12+\sigma)$ again by 
maximal regularity.
\end{proof}

\subsection{Uniform estimates}
In this section we derive some a priori estimates on
the approximated solutions $(X^x_\eps)_\eps$, 
uniform with respect to $\varepsilon \in (0,1)$. These are collected in the following results.

\begin{prop}[First estimate]
\label{prop1Xeps}
    Under the assumptions of Theorem~\ref{th:wp},
    for every $\ell \ge 1$ and $T>0$ there exists 
    a constant $C>0$, only depending on $\alpha, \delta,\ell,T$,
    such that, for every $\eps\in(0,1)$,
\[
\|\xe \|_{L^{\ell}(\Omega; C([0,T]; H))} + \|\xe \|_{L^{\ell}(\Omega; L^2(0,T; V))}
+\|\Psi_{\beta,\eps}(\xe)\|_{L^{\frac\ell2}(\Omega; 
L^1(0,T; L^1(\Td)))} \le 
C.
\]
\end{prop}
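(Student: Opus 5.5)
We apply the Itô formula for $\frac12\|\cdot\|_H^2$ to the approximated equation \eqref{eq_app}. This is legitimate because, by Proposition~\ref{ex_app_thm}, $X_\eps^x$ is a variational solution in the triplet $(V,H,V^*)$ and $\Psi_{\beta,\eps}'$ is Lipschitz. We get, for all $t\in[0,T]$, $\P$-a.s.,
\[
\frac12\norm{\xe(t)}_H^2+\int_0^t\norm{A^{\frac12}\xe(s)}_H^2\,\d s+\int_0^t\int_{\Td}\Psi_{\beta,\eps}'(\xe(s))\xe(s)\,\d s
=\frac12\norm{x}_H^2+\int_0^t(\xe(s),G_{\alpha,\eps}(\xe(s))\,\d W(s))_H+\frac12\int_0^t\norm{G_{\alpha,\eps}(\xe(s))}_{\cL^2(H,H)}^2\,\d s .
\]
The key structural observation is that $\Psi_{\beta,\eps}$ is convex with $\Psi_{\beta,\eps}(0)=\Psi'_{\beta,\eps}(0)=0$. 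Hence $\Psi_{\beta,\eps}'(r)r\geq\Psi_{\beta,\eps}(r)\geq0$ for all $r\in\erre$, so the drift term bounds $\int_0^t\|\Psi_{\beta,\eps}(\xe)\|_{L^1(\Td)}$ from above. The last term is bounded by $CT$ uniformly in $\eps$ and in the solution, by Lemma~\ref{lem_app_G}(i) with $\varsigma=\alpha\geq2$ and $\delta>\frac d4$ (Lemma~\ref{reg_A} with $\sigma=0$). Since $\norm{x}_H\leq|\Td|^{1/2}$ because $x\in\mathcal K_\gamma\subset B_1^\infty$, we also do not need any $\eps$-dependent control on the initial datum. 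To recover the full $V$-norm we add $\int_0^t\norm{\xe}_H^2$ to both sides.

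Next, we raise the identity to the power $\frac\ell2$ (for $\ell\geq2$; smaller $\ell$ then follows by Hölder), take the supremum over $[0,t]$, and take expectations. The stochastic integral is handled by the Burkholder--Davis--Gundy inequality together with the uniform bound on $\norm{G_{\alpha,\eps}}_{\cL^2(H,H)}$:
\[
\E\sup_{\tau\in[0,t]}\Big|\int_0^\tau(\xe,G_{\alpha,\eps}(\xe)\,\d W)_H\Big|^{\frac\ell2}\lesssim\E\Big(\int_0^t\norm{\xe}_H^2\,\d s\Big)^{\frac\ell4}\leq\lambda\,\E\sup_{s\in[0,t]}\norm{\xe(s)}_H^\ell+C_\lambda .
\]
We choose $\lambda$ small, absorb the sup term into the left-hand side, and bound the remaining $\E\int_0^t\norm{\xe}_H^\ell$ by Gronwall's lemma. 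This yields the bounds in $L^\ell(\Omega;C([0,T];H))$ and $L^\ell(\Omega;L^2(0,T;V))$, and at the same time $\E\big(\int_0^T\|\Psi_{\beta,\eps}(\xe)\|_{L^1(\Td)}\big)^{\frac\ell2}\leq C$, which is exactly the claimed $L^{\frac\ell2}(\Omega;L^1(0,T;L^1(\Td)))$ bound. The constant depends only on $\alpha,\delta,\ell,T$ and $|\Td|$.

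The only genuinely delicate point is making sure nothing depends on $\eps$. This rests on three facts: the inequality $\Psi'_{\beta,\eps}(r)r\geq\Psi_{\beta,\eps}(r)\geq0$, which follows from convexity and Lemma~\ref{lem_app}(iii); the $\eps$-uniform Hilbert--Schmidt bound on $G_{\alpha,\eps}$, which uses that $m_{\alpha,\eps}$ is bounded by $\norm{m_\alpha}_\infty+1$; and the boundedness of $x$ in $H$. To justify taking expectations of the stochastic integral and applying Gronwall's lemma, one can first localise with stopping times $\tau_R:=\inf\{t:\norm{\xe(t)}_H>R\}$, which is harmless since Proposition~\ref{ex_app_thm} already gives finiteness of all moments, and then let $R\to\infty$ by monotone convergence.
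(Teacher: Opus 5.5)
Your proposal is correct and follows the paper's proof essentially step by step. The shared steps are Itô's formula for $\frac12\|\cdot\|_H^2$, the lower bound $\Psi_{\beta,\eps}'(r)r\geq\Psi_{\beta,\eps}(r)\geq0$, the $\eps$-uniform Hilbert--Schmidt bound from Lemma~\ref{lem_app_G}, and Burkholder--Davis--Gundy plus Young to absorb the martingale term. The paper gets the lower bound from the Fenchel--Young identity and you get it from the convexity inequality, which is the same fact. You also note explicitly that $\|x\|_H^2\leq|\Td|$, because $\Psi_\gamma(x)\in L^1(\Td)$ forces $|x|\leq1$; this is what justifies the constant being independent of $x$, and the paper leaves it implicit.
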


\begin{proof}
It\^o formula for the square of the $H$-norm yields
\begin{align*}
    \frac 12 \|\xe(t)\|^2_H &+ 
    \int_0^t \| A^{\frac12}\xe(s)\|^2_H \, \d s+ 
    \int_0^t \left( \Psi_{\beta, \varepsilon}'(\xe(s)), 
    \xe (s)\right) \, \d s 
    \\
    &= \frac 12 \|x\|^2_H + \int_0^t \left( \xe (s), G_{\alpha, \varepsilon} (\xe (s)) \, \d W(s)\right)_H 
    + \frac 12 \int_0^t \|G_{\alpha, \varepsilon}(\xe(s))\|^2_{\cL^2(H,H)}\, \d s
\end{align*}
for every $t \in [0,T]$, $\mathbb{P}$-almost surely.
Since $\Psi_{\beta, \varepsilon}'$ is a monotone increasing 
function with $\Psi_{\beta, \varepsilon}'(0)=0$, 
the third term on the left-hand side is nonnegative and the Fenchel-Young inequality for a single-valued subdifferential yields
\begin{align*}
  \int_0^t \left( \Psi_{\beta, \varepsilon}'(\xe(s)), 
    \xe (s)\right) \, \d s &=
    \int_0^t \int_{\Td}\Psi_{\beta, \varepsilon}(\xe(s))\, \d s 
    +\int_0^t \int_{\Td}\Psi_{\beta,\eps}^*(
    \Psi'_{\beta, \varepsilon}(\xe(s)))\, \d s \\
    &\geq \int_0^t \int_{\Td}\Psi_{\beta, \varepsilon}(\xe(s))\, \d s.
\end{align*}
Hence, taking also Lemma~\ref{lem_app_G}
into account we get 
\begin{multline*}
\frac 12 \|\xe(t)\|^2_H
+ \int_0^t \| A^{\frac12}\xe(s)\|^2_H \, \d s
+\int_0^t \int_{\Td}\Psi_{\beta, \varepsilon}(\xe(s))\, \d s\\
\le  \frac 12 \|x\|^2_H + \int_0^t \left( \xe (s), G_{\alpha, \varepsilon} (\xe (s)) \, \d W(s)\right)  
    + \frac {CT}{2}\|(\operatorname{I}+A)^{-\delta}\|^2_{\cL^2(H,H)}.
\end{multline*}
We now raise to power $\frac\ell2$, take supremum in time, 
and then expectations. By 
using the Burkholder-Davis-Gundy and the Young inequalities,
together with Lemma~\ref{lem_app_G},
we note that 
\begin{align*}
&\E\sup_{t\in[0,T]}\left| 
\int_0^t \left( \xe (s), G_{\alpha, \varepsilon} (\xe (s)) \, \d W(s)\right)  \right|^{\frac\ell2}\\
&\qquad \lesssim_\ell \mathbb{E}\left(\sup_{s \in [0,T]} \|\xe (s)\|^2_H \int_0^T\|G_\alpha(\xe (s))\|^2_{\cL^2(H,H)}\,\d s \right)^{\frac{\ell}{4}}
\\
&\qquad\le \frac 1 4 \mathbb{E}\sup_{s \in [0,T]} \|\xe (s)\|^\ell_H + C (\|(\operatorname{I}+A)^{-\delta}\|^2_{\cL^2(H,H)} T)^{\frac{\ell}{2}}.
\end{align*}
Upon rearranging the terms, this concludes the proof.
\end{proof}

\begin{prop}[Second estimate]
\label{prop3Xeps}
Under the assumptions of Theorem~\ref{th:wp},
for every $\ell \ge 2$  and $T>0$
there exists a constant $C>0$,
only depending on $\alpha,\beta,\gamma, \delta, \sigma, \ell,T$,
such that
\begin{multline*}
\|\Psi_{\gamma, \varepsilon}(\xe)\|_{
L^{\ell}(\Omega; L^\infty(0,T; L^1(\Td))} + 
\|\Psi_{\gamma, \varepsilon}'(\xe)
\Psi_{\beta, \varepsilon}'(\xe)\|_{
L^{\ell}(\Omega; L^1(0,T; L^1(\Td))} \\
+\norm{\Psi_{\frac\gamma2}''(\xe)\nabla\xe}_{
L^{\ell}(\Omega; L^2(0,T; H))}
\le C\left(1+
\|\Psi_{\gamma}(x)\|_{L^1(\Td)}\right).
\end{multline*}
\end{prop}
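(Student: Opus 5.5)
We apply It\^o's formula to the functional $\mathcal F_\eps(v):=\int_{\Td}\Psi_{\gamma,\eps}(v)$ along $\xe$. This is legitimate at the approximate level: $\Psi_{\gamma,\eps}\in C^2(\erre)$ has bounded second derivative by Lemma~\ref{lem_app}(iii), and $\xe$ is a variational solution in the triplet $(V,H,V^*)$, so the generalised It\^o formula for convex $C^2$ functionals with Lipschitz derivative applies. This yields, $\P$-a.s. for all $t\in[0,T]$,
\begin{align*}
&\int_{\Td}\Psi_{\gamma,\eps}(\xe(t))
+\int_0^t\!\int_{\Td}\Psi''_{\gamma,\eps}(\xe)|\nabla\xe|^2
+\int_0^t\!\int_{\Td}\Psi'_{\gamma,\eps}(\xe)\Psi'_{\beta,\eps}(\xe)\\
&\quad=\int_{\Td}\Psi_{\gamma,\eps}(x)
+\int_0^t\big(\Psi'_{\gamma,\eps}(\xe),G_{\alpha,\eps}(\xe)\,\d W\big)_H
+\frac12\int_0^t\!\int_{\Td}\Psi''_{\gamma,\eps}(\xe)\,m_{\alpha,\eps}(\xe)\,q_\delta,
\end{align*}
where $q_\delta(z):=\sum_k(1+\lambda_k)^{-2\delta}e_k(z)^2$ (modulo the exact form of the squared coefficient $m_{\frac\alpha2,\eps}^2\simeq m_{\alpha,\eps}$). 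All three left-hand terms are nonnegative up to constants. The first is trivially so. The second dominates $|\Psi''_{\frac\gamma2}(\xe)\nabla\xe|^2$ up to the $\eps$-regularisation, since $\Psi''_{\gamma}=(\Psi''_{\gamma/2})^2$; at the limit we then use Fatou/lower semicontinuity, or directly $\Psi''_{\gamma,\eps}\gtrsim(\Psi''_{\frac\gamma2,\eps})^2$ from the mollification. The third is nonnegative because $\Psi'_{\gamma,\eps}$ and $\Psi'_{\beta,\eps}$ have the same sign. The initial term is bounded by $C\|\Psi_\gamma(x)\|_{L^1}$ by Lemma~\ref{lem_app}(viii).

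\textbf{Main step: the Itô correction.} The key point is that the degeneracy of the noise partially cancels the singularity: $\Psi''_{\gamma,\eps}m_{\alpha,\eps}\simeq m_{\alpha}/m_\gamma\simeq(1-r^2)^{\alpha-\gamma}$, which is singular only when $\gamma>\alpha$.
\begin{itemize}
\item If $\sigma>\frac d4$, then $q_\delta\in L^\infty$ by Lemma~\ref{reg_A} and Sobolev embedding. The term is bounded by $\int\Psi''_{\gamma-\alpha,\eps}(\xe)$, which by Lemma~\ref{lem_app}(ix) is controlled by $K^{-1}\int\Psi'_{\gamma,\eps}\Psi'_{\beta,\eps}+C$ as soon as $\gamma-\alpha\le\gamma+\beta-2$, that is, always. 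To absorb it, however, one needs either smallness or a strictly lower singularity order ($\gamma-\alpha<\gamma+\beta-2$, i.e.\ $\alpha+\beta>2$, true since $\alpha\geq2$, $\beta\geq1$). A Young-type interpolation $\Psi''_{\gamma-\alpha}\le\lambda\Psi''_{\gamma+\beta-2}+C_\lambda$ then gives absorption.
\item If $\sigma\le\frac d4$, then $q_\delta$ lies only in some $L^p$ with $p<\infty$ determined by $V_{2\sigma}\embed L^{p'}$-type embeddings; for instance $q_\delta\in L^{\frac{d}{d-4\sigma}}$ roughly. By H\"older we must then bound $\|\Psi''_{\gamma-\alpha,\eps}(\xe)\|_{L^{p'}}$, interpolating between $\Psi''_{\gamma+\beta-2}$ (from the dissipation) and $|\Psi''_{\frac\gamma2}\nabla\xe|^2$ combined with Gagliardo--Nirenberg (i.e.\ $\Psi'_{\frac\gamma2-1}$-type quantities in $V$, embedded into $L^{2^*}$).
\end{itemize}
Matching exponents is exactly what produces the stated ranges, such as $\gamma\le\frac{\alpha-8\sigma/d}{1-4\sigma/d}$ or $\gamma\le\min\{\alpha+\beta,\dots\}$, and the condition $\alpha>2$ at $\sigma=\frac d4$. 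I expect this exponent bookkeeping to be the main obstacle; I would first try it with $\lambda$-small Young inequalities so that everything is absorbed into the second and third left-hand terms.

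\textbf{Moments.} Raise the identity to power $\ell$, take $\sup_t$ and expectations. The stochastic integral is handled by Burkholder--Davis--Gundy. Its quadratic variation is $\int_0^t\|G_{\alpha,\eps}^*(\xe)\Psi'_{\gamma,\eps}(\xe)\|_H^2\lesssim\int_0^t\|m_{\frac\alpha2,\eps}(\xe)\Psi'_{\gamma,\eps}(\xe)\|_{L^1}^2$-type quantities, using that $(\operatorname{I}+A)^{-\delta}\in\cL(L^1,H)$ for $\delta>\frac d4$. Since $m_{\frac\alpha2}|\Psi'_\gamma|\lesssim\Psi''_{\gamma-1-\frac\alpha2}\lesssim1+\Psi_{\gamma}$ by Lemma~\ref{lem_app}(iv),(v), this is bounded by $\int_0^t(1+\|\Psi_{\gamma,\eps}(\xe)\|_{L^1})^2$. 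BDG and Young then give $\frac12\E\sup_t\|\Psi_{\gamma,\eps}(\xe)\|_{L^1}^\ell+C\E\int_0^T\sup_{r\le s}(\cdot)^\ell$, and Gronwall's lemma concludes, with constants independent of $\eps$.
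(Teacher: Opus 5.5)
Your skeleton matches the paper's. You apply It\^o's formula to $\int_{\Td}\Psi_{\gamma,\eps}$ and use Lemma~\ref{lem_app}-(ix) to produce the dissipation $\Psi''_{\gamma+\beta-2,\eps}$. For $\sigma>\frac d4$ you absorb the It\^o correction through $\Psi''_{\gamma-\alpha,\eps}\le\lambda\Psi''_{\gamma+\beta-2,\eps}+C_\lambda$, as the paper does.

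The genuine difference is the martingale term. The paper bounds its quadratic variation by H\"older, $\sum_k(f,(\operatorname{I}+A)^{-\delta}e_k)_H^2\le\|f\|_{L^{q_2}}^2\sum_k\|(\operatorname{I}+A)^{-\delta}e_k\|_{L^{q_0}}^2$, and then splits into regimes. For $\gamma\le\alpha+\beta$ it absorbs $\Psi''_{2\gamma-2-\alpha,\eps}$ into the dissipation. For $\gamma>\alpha+\beta$ it uses Jensen and Gronwall, under the condition $q_2(\gamma-1-\frac\alpha2)\le\gamma-2$.

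You instead use three facts:
\begin{itemize}
\item the exact identity $G_{\alpha,\eps}^*(v)u=(\operatorname{I}+A)^{-\delta}(m_{\frac\alpha2,\eps}(v)u)$;
\item boundedness of $(\operatorname{I}+A)^{-\delta}$ from $L^1(\Td)$ to $H$, which holds by duality since $V_{2\delta}\embed L^\infty(\Td)$ for $\delta>\frac d4$;
\item $m_{\frac\alpha2}|\Psi'_\gamma|\lesssim 1+\Psi_\gamma$, which needs only $\alpha\ge2$.
\end{itemize}
This gives BDG plus Gronwall in one stroke, for every $\gamma$ and every $\sigma$. It is correct and simpler: it is the paper's $\gamma>\alpha+\beta$ argument with a sharper constant, and it removes the regime splitting.

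The step you leave open, the It\^o correction for $\sigma\le\frac d4$, needs no gradient term and no Gagliardo--Nirenberg. The paper handles it as follows. With $q_1=\frac d{4\sigma}$ one has $\|\Psi''_{\gamma-\alpha,\eps}\|_{L^{q_1}}=\|\Psi''_{q_1(\gamma-\alpha),\eps}\|_{L^1}^{1/q_1}$. Since $1/q_1<1$, a Young inequality in time absorbs this into the dissipation as soon as $q_1(\gamma-\alpha)\le\gamma+\beta-2$, i.e.\ $\gamma\le\frac{\alpha-8\sigma/d+4\beta\sigma/d}{1-4\sigma/d}$. Both admissible intervals in Theorem~\ref{th:wp} satisfy this bound. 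For $\sigma=0$ the hypothesis forces $\gamma\le\alpha$, so the term is bounded; at $\sigma=\frac d4$ one lets $q_1\downarrow1$.

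There is an even more direct route. We have $\|q_\delta\|_{L^\infty}=\|(\operatorname{I}+A)^{-\delta}\|^2_{\cL(H,L^\infty)}=\|(\operatorname{I}+A)^{-\delta}\|^2_{\cL(L^1,H)}$, by Cauchy--Schwarz on $\sum_k(1+\lambda_k)^{-\delta}c_ke_k(z)$ and duality. So the very fact you invoke for the martingale already gives $q_\delta\in L^\infty$, and the case $\sigma\le\frac d4$ becomes identical to $\sigma>\frac d4$.

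Consequently, your expectation that exponent matching \emph{produces} the stated ranges, for instance $\alpha>2$ at $\sigma=\frac d4$, is inaccurate. Along your route none of those restrictions is used. In the paper they come from its H\"older splittings: the bound $\frac{\alpha-8\sigma/d+4\beta\sigma/d}{1-4\sigma/d}$ from the It\^o correction, and $\frac{\alpha-8\sigma/d}{1-4\sigma/d}$, $\min\{\alpha+\beta,\dots\}$ and $\alpha>2$ from the martingale term.
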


\begin{proof}
By proceeding similarly to \cite[App.~A]{Bertacco21}, 
the It\^o formula yields,
for every $t \in [0,T]$, $\mathbb{P}$-almost surely, that
\begin{multline*}
\int_{\Td} \Psi_{\gamma, \varepsilon}(\xe(t)) 
+\int_0^t \int_{\Td}\Psi_{\gamma, \varepsilon}''(\xe(s)) |\nabla \xe(s)|^2\, \d s 
+ \int_0^t \int_{\Td}\Psi_{\gamma, \varepsilon}'(\xe(s))\Psi_{\beta, \varepsilon}'(\xe(s))\, \d s 
\\
= \int_{\Td}\Psi_{\gamma, \varepsilon}(x)
+ \int_0^t \left(\Psi_{\gamma, \varepsilon}'(\xe(s)), G_{\alpha, \varepsilon} (\xe(s))\, \d W(s)\right)_H 
\\
+
\frac 12 \int_0^t \sum_{k \in \mathbb{N}_+}
\int_{\Td} \Psi_{\gamma, \varepsilon}''(\xe(s))
m_{\alpha, \varepsilon}(\xe(s))|(\operatorname{I}+A)^{-\delta}e_k|^2\, \d s.
\end{multline*}
We split the proof in two parts by considering separately the 
two main 
cases $0 \leq \sigma \le \frac d 4$ and $\frac d4<\sigma< \frac 12$.
For both of them, we handle the regimes
$\gamma=1$, $\gamma\in(1,\alpha+\beta]$, and $\gamma>\alpha+\beta$.

\smallskip
\noindent\boxed{\text{Case $\frac d4<\sigma<\frac12$.}}
By the continuous Sobolev embedding $V_{2\sigma}\embed L^\infty(\Td)$, we infer that 
\begin{multline*}
\frac 12 \int_0^t \sum_{k \in \mathbb{N}_+}
\int_{\Td} \Psi_{\gamma, \varepsilon}''(\xe(s))
m_{\alpha, \varepsilon}(\xe(s))|(\operatorname{I}+A)^{-\delta}e_k|^2\, \d s\\
\le \frac 12\sum_{k \in \mathbb{N}_+}\|(\operatorname{I}+A)^{-\delta}e_k\|^2_{L^\infty(\Td)}\int_0^t\int_{\Td} 
\Psi_{\gamma, \varepsilon}''(\xe(s))
m_{\alpha, \varepsilon}(\xe(s))\, \d s \\
\leq C\|(\operatorname{I}+A)^{-\delta}\|^2_{\cL^2(H, V_{2\sigma})}
\int_0^t\int_{\Td} 
\Psi_{\gamma, \varepsilon}''(\xe(s))
m_{\alpha, \varepsilon}(\xe(s))\, \d s.
\end{multline*}
As far as the stochastic integral is concerned, 
by taking $\frac\ell2$-power, supremum in time, and expectations, 
the Burkholder-Davis-Gundy inequality yields
\begin{align*}
    &\E\sup_{t\in[0,T]}
    \left|
    \int_0^t \left(\Psi_{\gamma, \varepsilon}'(\xe(s)), G_{\alpha, \varepsilon} (\xe(s))\, \d W(s)\right)_H
    \right|^{\frac\ell2}\\
    &\qquad=
    \E\sup_{t\in[0,T]}
    \left|
    \int_0^t \left(\Psi_{\gamma, \varepsilon}'(\xe(s))
    m_{\frac\alpha2,\eps}(\xe(s)), 
    (\operatorname{I}+A)^{-\delta}\, \d W(s)\right)_H
    \right|^{\frac\ell2}\\
    &\qquad\leq C\E\left(
    \int_0^T\norm{\Psi_{\gamma, \varepsilon}'(\xe(s))
    m_{\frac\alpha2,\eps}(\xe(s))}_{L^1(\Td)}^2
    \sum_{k\in\mathbb Z^d}\norm{(\operatorname{I}+A)^{-\delta}e_k}_{L^\infty(\Td)}^2\,\d s
    \right)^{\frac\ell4}\\
    &\qquad\leq 
    C\norm{(\operatorname{I}+A)^{-\delta}}^{\frac\ell2}_{\cL^2(H,V_{2\sigma})}
    \E
    \norm{\Psi_{\gamma, \varepsilon}'(\xe)
    m_{\frac\alpha2,\eps}(\xe)}_{L^2(0,T;L^1(\Td))}^{\frac\ell2}.
\end{align*}
By taking these estimates into account in It\^o formula we get 
\begin{align}
    \nonumber
    &\E\norm{\Psi_{\gamma,\eps}(\xe)}_{L^\infty(0,T;L^1(\Td))}^{\frac\ell2}+
    \E\norm{\Psi''_{\frac\gamma2}(\xe)\nabla\xe}_{L^2(0,T; H)}^\ell+\E\norm{\Psi_{\gamma,\eps}'(\xe)
    \Psi_{\beta,\eps}'(\xe)}_{L^1(0,T; L^1(\Td))}^{\frac\ell2}\\
    \nonumber
    &\lesssim_{\delta,\sigma,\ell} \norm{\Psi_\gamma(x)}_{L^1(\Td)}^{\frac\ell2}+
    \E\norm{\Psi_{\gamma, \varepsilon}'(\xe)
    m_{\frac\alpha2,\eps}(\xe)}_{L^2(0,T; L^1(\Td))}^{\frac\ell2}\\
    \label{aux:2}
    &\qquad+
    \E\norm{\Psi_{\gamma,\eps}''(\xe)m_{\alpha,\eps}(\xe)}_{L^1(0,T; L^1(\Td))}^{\frac\ell2}.
\end{align}

\smallskip
\noindent\underline{\sc Regime $\gamma=1$.}
Since $\alpha\geq2$, it is immediate to see that $\Psi_{\gamma,\eps}'m_{\frac\alpha2,\eps}$
and $\Psi''_{\gamma,\eps}m_{\alpha,\eps}$
are bounded in $L^\infty(\mathbb R)$, 
uniformly in $\eps$, so the thesis follows.

\smallskip
\noindent\underline{\sc Regime $1<\gamma\leq\alpha+\beta$.}
On the left-hand side of \eqref{aux:2}
one has, by Lemma~\ref{lem_app}-(ix) that 
\begin{equation}
\label{aux_est}
\E\norm{\Psi_{\gamma,\eps}'(\xe)
    \Psi_{\beta,\eps}'(\xe)}_{L^1(0,T; L^1(\Td))}^{\frac\ell2}
    \geq K\E\norm{\Psi_{\gamma+\beta-2,\eps}''(\xe)
    }_{L^1(0,T; L^1(\Td))}^{\frac\ell2}-K'
\end{equation}
for some positive constants $K,K'$ independent of $\eps$.
On the right-hand side, noting that 
$\Psi_{\gamma,\eps}''m_{\alpha,\eps}$ is bounded
in $L^\infty(\mathbb R)$ uniformly
with respect to $\eps$ if $\gamma\leq\alpha$,
it holds that 
\begin{alignat*}{2}
\E\norm{\Psi_{\gamma,\eps}''(\xe)m_{\alpha,\eps}(\xe)}_{L^1(0,T; L^1(\Td))}^{\frac\ell2}
&\leq C \qquad&&\text{if } \gamma\leq\alpha,\\
\E\norm{\Psi_{\gamma,\eps}''(\xe)m_{\alpha,\eps}(\xe)}_{L^1(0,T; L^1(\Td))}^{\frac\ell2}
&\lesssim
\E\norm{\Psi_{\gamma-\alpha,\eps}''(\xe)
}_{L^1(0,T; L^1(\Td))}^{\frac\ell2}
\qquad&&\text{if } \gamma>\alpha.
\end{alignat*}
In any case, since $\gamma-\alpha<\gamma+\beta-2$
(as a consequence of the fact that $\alpha+\beta\geq3$ by assumption), by the Young inequality 
for every $\s>0$ there exists $C_\s>0$, independent of $\eps$,
such that 
\[
\E\norm{\Psi_{\gamma,\eps}''(\xe)m_{\alpha,\eps}(\xe)}_{L^1(0,T; L^1(\Td))}^{\frac\ell2}
\leq\s\E\norm{\Psi_{\gamma+\beta-2,\eps}''(\xe)
    }_{L^1(0,T; L^1(\Td))}^{\frac\ell2} + C_\s.
\]
In an analogous way, since $\gamma>1$,
by the Young inequality 
for every $\zeta>0$ there exists $C_\zeta>0$, 
independent of $\eps$,
such that 
\[
  \E\norm{\Psi_{\gamma, \varepsilon}'(\xe)
    m_{\frac\alpha2,\eps}(\xe)}_{L^2(0,T; L^1(\Td))}^{\frac\ell2}
    \lesssim \zeta\E\norm{\Psi_{\gamma-1, \varepsilon}''(\xe)
    m_{\frac\alpha2,\eps}(\xe)}_{L^2(0,T; H)}^{\ell}+C_\zeta,
\]
where, similarly as before, 
\begin{alignat*}{2}
\E\norm{\Psi_{\gamma-1,\eps}''(\xe)m_{\frac\alpha2,\eps}(\xe)}_{L^2(0,T; H)}^{\ell}
&\leq C \qquad&&\text{if } \gamma\leq1+\frac\alpha2,\\
\E\norm{\Psi_{\gamma-1,\eps}''(\xe)m_{\frac\alpha2,\eps}(\xe)}_{L^2(0,T; H)}^{\ell}
&\lesssim
\E\norm{\Psi_{2\gamma-2-\alpha,\eps}''(\xe)
}_{L^1(0,T; L^1(\Td))}^{\frac\ell2}
\qquad&&\text{if } \gamma>1+\frac\alpha2.
\end{alignat*}
Since $2\gamma-2-\alpha\leq\gamma+\beta-2$
(as a consequence of the fact that $\gamma\leq\alpha+\beta$),
we get
\[
\E\norm{\Psi_{\gamma, \varepsilon}'(\xe)
    m_{\frac\alpha2,\eps}(\xe)}_{L^2(0,T; L^1(\Td))}^{\frac\ell2}
\leq\zeta\E\norm{\Psi_{\gamma+\beta-2,\eps}''(\xe)
    }_{L^1(0,T; L^1(\Td))}^{\frac\ell2} + C_\zeta.
\]
By putting all together into \eqref{aux:2} and bearing in mind \eqref{aux_est},
the estimate follows by choosing $\zeta$ and $\varsigma$
sufficiently small.

\smallskip
\noindent\underline{\sc Regime $\gamma>\alpha+\beta$.}
The same argument of the case $1<\gamma\leq\alpha+\beta$ 
continues to work, except for the second term
on the right-hand side of \eqref{aux:2}. 
To this end, 
since now $\gamma>\alpha+\beta\geq3$ and $\ell\geq4$, 
by Lemma~\ref{lem_app}-(iv)-(v) and the Jensen inequality we have that
\begin{align*}
    &\E\norm{\Psi_{\gamma, \varepsilon}'(\xe)
    m_{\frac\alpha2,\eps}(\xe)}_{L^2(0,T; L^1(\Td))}^{\frac\ell2}\\
    &\qquad\lesssim\E\left(\int_0^T\norm{
    \Psi''_{\gamma-1,\eps}(\xe(s))m_{\frac\alpha2,\eps}(\xe(s))}_{L^1(\Td)}^2\,\d s\right)^{\frac\ell4}\\
    &\qquad\lesssim\E\int_0^T
    \norm{\Psi_{\gamma,\eps}(\xe(s))
    \frac{\Psi''_{\gamma-1,\eps}(\xe(s))
    m_{\frac\alpha2,\eps}(\xe(s))}
    {\Psi_{\gamma,\eps}(\xe(s))}}_{L^1(\Td)}^{\frac\ell2}\,\d s\\
    &\qquad\lesssim\E\int_0^T
    \norm{\Psi_{\gamma,\eps}(\xe(s))
    \frac{\Psi''_{\gamma-1-\frac\alpha2,\eps}(\xe(s))}
    {\Psi''_{\gamma-2,\eps}(\xe(s))}}_{L^1(\Td)}^{\frac\ell2}\,\d s
\end{align*}
By noting that $\gamma-1-\frac\alpha2\leq\gamma-2$
as a consequence of the fact that $\alpha\geq2$ by assumption, 
one has that $\Psi''_{\gamma-1-\frac\alpha2,\eps}/\Psi''_{\gamma-2,\eps}$
is bounded in $L^\infty(\mathbb R)$, uniformly in $\eps$.
Hence, 
we infer that 
\[
\E\norm{\Psi_{\gamma, \varepsilon}'(\xe)
    m_{\frac\alpha2,\eps}(\xe)}_{L^2(0,T; L^1(\Td))}^{\frac\ell2}
    \lesssim\int_0^T\E
\norm{\Psi_{\gamma,\eps}(\xe(s))}_{L^1(\Td)}^{\frac\ell2}\,\d s.
\]
Since $T$ is arbitrary in \eqref{aux:2}, the estimate follows by the Gronwall lemma.
This concludes the proof in the case $\frac{d}4<\sigma<\frac12$.

\smallskip
\noindent\boxed{\text{Case $0\leq\sigma\leq\frac{d}4$.}}
Recall that by the Sobolev embeddings it holds that
$V_{2\sigma}\embed L^{\frac{2}{1-4\sigma/d}}(\Td)$
if $\sigma\in[0,\frac d4)$ and 
$V_{2\sigma}\embed L^{q}(\Td)$ for every $q\in[1,+\infty)$
if $\sigma=\frac d4$. Hence, by the H\"older inequality we get 
\begin{multline*}
\frac 12 \int_0^t \sum_{k \in \mathbb{N}_+}
\int_{\Td} \Psi_{\gamma, \varepsilon}''(\xe(s))
m_{\alpha, \varepsilon}(\xe(s))|(\operatorname{I}+A)^{-\delta}e_k|^2\, \d s\\
\le \frac 12\sum_{k \in \mathbb{N}_+}\|(\operatorname{I}+A)^{-\delta}e_k\|^2_{L^{q_0}(\Td)}\int_0^t
\norm{ 
\Psi_{\gamma, \varepsilon}''(\xe(s))
m_{\alpha, \varepsilon}(\xe(s))}_{L^{q_1}(\Td)}\, \d s \\
\leq C\|(\operatorname{I}+A)^{-\delta}\|^2_{\cL^2(H, V_{2\sigma})}
\int_0^t
\norm{ 
\Psi_{\gamma, \varepsilon}''(\xe(s))
m_{\alpha, \varepsilon}(\xe(s))}_{L^{q_1}(\Td)}\, \d s,
\end{multline*}
where $q_0=
\frac{2}{1-4\sigma/d}$
and
$q_1=\frac{q_0}{q_0-2}=\frac{d}{4\sigma}$   
if $\sigma\in[0,\frac d4)$, while
$q_0\in(2,+\infty)$ is arbitrary and
$q_1=\frac{q_0}{q_0-2}\in(1,2)$
if $\sigma=\frac{d}4$.
Note that $q_1=+\infty$ if $\sigma=0$.
As far as the stochastic integral is concerned, 
by taking $\frac\ell2$-power, supremum in time, and expectations, 
the Burkholder-Davis-Gundy inequality yields
\begin{align*}
    &\E\sup_{t\in[0,T]}
    \left|
    \int_0^t \left(\Psi_{\gamma, \varepsilon}'(\xe(s)), G_{\alpha, \varepsilon} (\xe(s))\, \d W(s)\right)_H
    \right|^{\frac\ell2}\\
    &\qquad=
    \E\sup_{t\in[0,T]}
    \left|
    \int_0^t \left(\Psi_{\gamma, \varepsilon}'(\xe(s))
    m_{\frac\alpha2,\eps}(\xe(s)), 
    (\operatorname{I}+A)^{-\delta}\, \d W(s)\right)_H
    \right|^{\frac\ell2}\\
    &\qquad\leq C\E\left(
    \int_0^T\norm{\Psi_{\gamma, \varepsilon}'(\xe(s))
    m_{\frac\alpha2,\eps}(\xe(s))}_{L^{q_2}(\Td)}^2
    \sum_{k\in\mathbb Z^d}\norm{(\operatorname{I}+A)^{-\delta}e_k}_{L^{q_0}(\Td)}^2\,\d s
    \right)^{\frac\ell4}\\
    &\qquad\leq 
    C\norm{(\operatorname{I}+A)^{-\delta}}^{\frac\ell2}_{\cL^2(H,V_{2\sigma})}
    \E
    \norm{\Psi_{\gamma, \varepsilon}'(\xe)
    m_{\frac\alpha2,\eps}(\xe)}_{L^2(0,T;L^{q_2}(\Td))}^{\frac\ell2},
\end{align*}
where $q_2=\frac{q_0}{q_0-1}=\frac{2}{1+4\sigma/d}\in(1,2]$ 
if $\sigma\in[0,\frac d4)$, while
$q_2=\frac{q_0}{q_0-1}\in(1,2)$
if $\sigma=\frac d4$.
By taking these estimates into account in It\^o formula we get 
\begin{align}
    \nonumber
    &\E\norm{\Psi_{\gamma,\eps}(\xe)}_{L^\infty(0,T;L^1(\Td))}^{\frac\ell2}+
    \E\norm{\Psi''_{\frac\gamma2}(\xe)\nabla\xe}_{L^2(0,T; H)}^\ell+\E\norm{\Psi_{\gamma,\eps}'(\xe)
    \Psi_{\beta,\eps}'(\xe)}_{L^1(0,T; L^1(\Td))}^{\frac\ell2}\\
    \nonumber
    &\lesssim_{\delta,\sigma,\ell} \norm{\Psi_\gamma(x)}_{L^1(\Td)}^{\frac\ell2}+
    \E\norm{\Psi_{\gamma, \varepsilon}'(\xe)
    m_{\frac\alpha2,\eps}(\xe)}_{L^2(0,T;L^{q_2}(\Td))}^{\frac\ell2}\\
    \label{aux:3}
    &\qquad+
    \E\norm{\Psi_{\gamma,\eps}''(\xe)m_{\alpha,\eps}(\xe)}_{L^1(0,T; L^{q_1}(\Td))}^{\frac\ell2}.
\end{align}

\smallskip
\noindent\underline{\sc Regime $\gamma=1$.}
Since $\alpha\geq2$, it is immediate to see that $\Psi_{\gamma,\eps}'m_{\frac\alpha2,\eps}$
and $\Psi''_{\gamma,\eps}m_{\alpha,\eps}$
are bounded in $L^\infty(\mathbb R)$, 
uniformly in $\eps$, so the thesis follows.

\smallskip
\noindent\underline{\sc Regime $1<\gamma\leq\alpha+\beta$.}
On the left-hand side of \eqref{aux:3}
one has, by Lemma~\ref{lem_app}-(ix) that 
\[
\E\norm{\Psi_{\gamma,\eps}'(\xe)
    \Psi_{\beta,\eps}'(\xe)}_{L^1(0,T; L^1(\Td))}^{\frac\ell2}
    \geq K\E\norm{\Psi_{\gamma+\beta-2,\eps}''(\xe)
    }_{L^1(0,T; L^1(\Td))}^{\frac\ell2}-K'
\]
for some positive constants $K,K'$ independent of $\eps$.
On the right-hand side, noting that 
$\Psi_{\gamma,\eps}''m_{\alpha,\eps}$ is bounded
in $L^\infty(\mathbb R)$ uniformly
with respect to $\eps$ if $\gamma\leq\alpha$,
it holds that 
\begin{alignat*}{2}
\E\norm{\Psi_{\gamma,\eps}''(\xe)m_{\alpha,\eps}(\xe)}_{L^1(0,T; L^{q_1}(\Td))}^{\frac\ell2}
&\leq C \qquad&&\text{if } \gamma\leq\alpha,\\
\E\norm{\Psi_{\gamma,\eps}''(\xe)m_{\alpha,\eps}(\xe)}_{L^1(0,T; L^{q_1}(\Td))}^{\frac\ell2}
&\lesssim
\E\norm{\Psi_{\gamma-\alpha,\eps}''(\xe)
}_{L^1(0,T; L^{q_1}(\Td))}^{\frac\ell2}
\qquad&&\text{if } \gamma>\alpha.
\end{alignat*}
If $\sigma=0$ then $\gamma\leq\alpha$ by assumption, so 
only the former case applies. If $\sigma\in(0,\frac d4)$,
in the latter case, since $q_1>1$, by the Young inequality 
we infer that for every $\s>0$ there exists $C_\s>0$, 
independent of $\eps$, such that 
\begin{align*}
  \E\norm{\Psi_{\gamma-\alpha,\eps}''(\xe)
}_{L^1(0,T; L^{q_1}(\Td))}^{\frac\ell2}
&=\E\left(\int_0^T\norm{\Psi''_{q_1(\gamma-\alpha),\eps}(\xe(s))}_{L^1(\Td)}^{\frac1{q_1}}\,\d s
\right)^{\frac\ell2}\\
&\leq
\s\E\norm{\Psi''_{q_1(\gamma-\alpha),\eps}(\xe)}_{
L^1(0,T; L^1(\Td))}^{\frac\ell2}+C_\s,
\end{align*}
where $q_1(\gamma-\alpha)\leq\gamma+\beta-2$:
indeed, if $\sigma\in(0,\frac d4)$, this is a consequence of 
the facts that $q_1=\frac{d}{4\sigma}$ and 
that $\gamma\leq\frac{\alpha+4\beta\sigma/d-8\sigma/d}{1-4\sigma/d}$ by assumption, whereas if $\sigma=\frac d4$
it follows from the facts that $q_1$ can be taken
arbitrarily close to $1$ (i.e.~by taking $q_0$ arbitrarily large) and 
$\gamma-\alpha<\gamma+\beta-2$.
In any case, we infer that
\[
\E\norm{\Psi_{\gamma,\eps}''(\xe)m_{\alpha,\eps}(\xe)}_{L^1(0,T; L^{q_1}(\Td))}^{\frac\ell2}
\leq\s\E\norm{\Psi_{\gamma+\beta-2,\eps}''(\xe)
    }_{L^1(0,T; L^1(\Td))}^{\frac\ell2} + C_\s.
\]
In an analogous way, since $\gamma>1$ and 
$q_2<2$, one has, for every $\zeta>0$,
\[
  \E\norm{\Psi_{\gamma, \varepsilon}'(\xe)
    m_{\frac\alpha2,\eps}(\xe)}_{L^2(0,T; L^{q_2}(\Td))}^{\frac\ell2}
    \leq\zeta \E\norm{\Psi_{\gamma-1, \varepsilon}''(\xe)
    m_{\frac\alpha2,\eps}(\xe)}_{L^2(0,T; H)}^{\ell}+C_\zeta,
\]
where, similarly as before, 
\begin{alignat*}{2}
\E\norm{\Psi_{\gamma-1,\eps}''(\xe)m_{\frac\alpha2,\eps}(\xe)}_{L^2(0,T; H)}^{\ell}
&\leq C \qquad&&\text{if } \gamma\leq1+\frac\alpha2,\\
\E\norm{\Psi_{\gamma-1,\eps}''(\xe)m_{\frac\alpha2,\eps}(\xe)}_{L^2(0,T; H)}^{\ell}
&\lesssim
\E\norm{\Psi_{2\gamma-2-\alpha,\eps}''(\xe)
}_{L^1(0,T; L^1(\Td))}^{\frac\ell2}
\qquad&&\text{if } \gamma>1+\frac\alpha2.
\end{alignat*}
Since $2\gamma-2-\alpha\leq\gamma+\beta-2$
(as a consequence of the fact that $\gamma\leq\alpha+\beta$),
for every $\zeta>0$ there exists $C_\zeta>0$, 
independent of $\eps$,
such that 
\[
\E\norm{\Psi_{\gamma, \varepsilon}'(\xe)
    m_{\frac\alpha2,\eps}(\xe)}_{L^2(0,T; L^{q_2}(\Td))}^{\frac\ell2}
\leq\zeta\E\norm{\Psi_{\gamma+\beta-2,\eps}''(\xe)
    }_{L^1(0,T; L^1(\Td))}^{\frac\ell2} + C_\zeta.
\]
By putting all together into \eqref{aux:3},
the estimate follows by choosing $\zeta$ and $\varsigma$
sufficiently small.

\smallskip
\noindent\underline{\sc Regime $\gamma>\alpha+\beta$.}
The same argument of the case $1<\gamma\leq\alpha+\beta$ 
continues to work, except for the second term
on the right-hand side of \eqref{aux:3}. 
To this end, 
since now $\gamma>\alpha+\beta\geq3$ and $\ell\geq4$, 
by Lemma~\ref{lem_app}-(iv)-(v) and the Jensen inequality we have that
\begin{align*}
    &\E\norm{\Psi_{\gamma, \varepsilon}'(\xe)
    m_{\frac\alpha2,\eps}(\xe)}_{L^2(0,T; L^{q_2}(\Td))}^{\frac\ell2}\\
    &\qquad\lesssim\E\left(\int_0^T\norm{
    \Psi''_{\gamma-1,\eps}(\xe(s))m_{\frac\alpha2,\eps}(\xe(s))}_{L^{q_2}(\Td)}^2\,\d s\right)^{\frac\ell4}\\
    &\qquad\lesssim1+\E\left(\int_0^T\norm{
    \Psi''_{q_2(\gamma-1),\eps}(\xe(s))
    m_{q_2\frac\alpha2,\eps}(\xe(s))}_{L^{1}(\Td)}^2\,\d s\right)^{\frac\ell4}\\
    &\qquad\lesssim1+\E\int_0^T
    \norm{\Psi_{\gamma,\eps}(\xe(s))
    \frac{\Psi''_{q_2(\gamma-1),\eps}(\xe(s))
    m_{q_2\frac\alpha2,\eps}(\xe(s))}
    {\Psi_{\gamma,\eps}(\xe(s))}}_{L^1(\Td)}^{\frac\ell2}\,\d s\\
    &\qquad\lesssim\E\int_0^T
    \norm{\Psi_{\gamma,\eps}(\xe(s))
    \frac{\Psi''_{q_2(\gamma-1-\frac\alpha2),\eps}(\xe(s))}
    {\Psi''_{\gamma-2,\eps}(\xe(s))}}_{L^1(\Td)}^{\frac\ell2}\,\d s.
\end{align*}
Note that if $\sigma\in[0,\frac d4)$ then
$q_2(\gamma-1-\frac\alpha2)=\frac{2\gamma-2-\alpha}{1+4\sigma/d}\leq\gamma-2$
as a consequence of the fact that 
$(1-4\sigma/d)\gamma\leq\alpha-8\sigma/d$
by assumption, 
whereas if $\sigma=\frac d4$
one can choose $q_2\in(0,1)$ arbitrarily close to $1$
(again, by taking $q_0$ large enough)
such that $q_2(\gamma-1-\frac\alpha2)\leq \gamma-2$
since $\alpha>2$ by assumption.
In any case, it follows that
$\Psi''_{q_2(\gamma-1-\frac\alpha2),\eps}/\Psi''_{\gamma-2,\eps}$
is bounded in $L^\infty(\mathbb R)$, uniformly in $\eps$.
Hence, 
we infer that 
\[
\E\norm{\Psi_{\gamma, \varepsilon}'(\xe)
    m_{\frac\alpha2,\eps}(\xe)}_{L^2(0,T; L^{q_2}(\Td))}^{\frac\ell2}
    \lesssim\int_0^T\E
\norm{\Psi_{\gamma,\eps}(\xe(s))}_{L^1(\Td)}^{\frac\ell2}\,\d s.
\]
Since $T$ is arbitrary in \eqref{aux:3}, the estimate follows by the Gronwall lemma.
This concludes the proof also in the case $0\leq \sigma\leq\frac{d}4$.
\end{proof}

\begin{prop}[Third estimate]
\label{prop4Xeps}
Under the assumptions of Theorem~\ref{th:wp},
for every $\ell \ge 1$, $\rho\in(0,\frac12)$, and $T>0$
there exists a constant $C>0$,
only depending on $\alpha, \delta, \sigma, \ell,\rho,T$,
such that, for every $\eps\in(0,1)$,
\begin{align*}
\|G_{\alpha,\eps}(\xe)\|_{L^{\ell}(\Omega; 
L^\infty(0,T; \cL^2(H,H))\cap L^2(0,T; \cL^2(H,V_{2\sigma})))}
&\le  
C,
\end{align*}
and
\begin{align*}
\norm{\int_0^\cdot
G_{\alpha,\eps}(\xe(s))\,\d W(s)}_{
L^{\ell}(\Omega; W^{\rho,\ell}(0,T;H))
\cap L^2(\Omega; W^{\rho,2}(0,T; V_{2\sigma}))} &\le  
C,
\end{align*}
\end{prop}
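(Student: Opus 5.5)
The two bounds follow from one pointwise/integrated estimate on $G_{\alpha,\eps}(\xe)$ combined with the first two estimates, after which the stochastic integral is handled by standard fractional-in-time Burkholder--Davis--Gundy bounds.

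\textbf{Step 1: the $L^\infty(0,T;\cL^2(H,H))$ bound.} This is immediate from Lemma~\ref{lem_app_G}(i). The bound $\|G_{\alpha,\eps}(v)\|^2_{\cL^2(H,H)}\le C\|(\operatorname{I}+A)^{-\delta}\|^2_{\cL^2(H,H)}$ holds for every $v\in H$, uniformly in $\eps$. Hence the corresponding norm is bounded deterministically.

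\textbf{Step 2: the $L^2(0,T;\cL^2(H,V_{2\sigma}))$ bound.} Here I would use $G_{\alpha,\eps}(v)e_k=m_{\frac\alpha2,\eps}(v)(\operatorname{I}+A)^{-\delta}e_k$ and Lemma~\ref{multiplication} with $s=2\sigma$, $s_1=1$, $s_2=2(\sigma+\s)$ for small $\s>0$, as in the proof of Lemma~\ref{G_basis}. Since $\delta>\frac d4+\sigma$, Lemma~\ref{reg_A} gives $(\operatorname{I}+A)^{-\delta}\in\cL^2(H,V_{2(\sigma+\s)})$. The condition $1+2\sigma+2\s>2\sigma+\frac d2$ holds for $d\le 2$, and for $d=3$ it holds because $\sigma\ge\frac14$. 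This yields
\[
\|G_{\alpha,\eps}(v)\|^2_{\cL^2(H,V_{2\sigma})}\lesssim \|m_{\frac\alpha2,\eps}(v)\|^2_{V}\lesssim 1+\|m'_{\frac\alpha2,\eps}\|_{L^\infty}^2\|\nabla v\|_H^2 .
\]
The factor $\|m'_{\frac\alpha2,\eps}\|_{L^\infty}$ is uniformly bounded by Lemma~\ref{lem_app}(i), because $\alpha\ge2$ makes $m_{\frac\alpha2}$ Lipschitz. The Dirichlet case needs no trace condition on $m(v)$, since the $V_{2\sigma}$ norm with $2\sigma<\frac12$ coincides with $H^{2\sigma}$. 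Integrating in time and applying Proposition~\ref{prop1Xeps} then gives $\E\|G_{\alpha,\eps}(\xe)\|^{\ell}_{L^2(0,T;\cL^2(H,V_{2\sigma}))}\lesssim 1+\E\|\xe\|^\ell_{L^2(0,T;V)}\le C$.

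\textbf{Step 3: the stochastic integral.} I would invoke the classical fractional Sobolev estimate for stochastic integrals (Flandoli--Gatarek type): for $\rho\in(0,\frac12)$, $\ell\ge2$ and a Hilbert space $E$,
\[
\E\Big\|\int_0^\cdot B\,\d W\Big\|^\ell_{W^{\rho,\ell}(0,T;E)}\lesssim \E\int_0^T\|B(s)\|^\ell_{\cL^2(H,E)}\,\d s .
\]
With $E=H$ and Step~1, the right-hand side is bounded for every $\ell$; the case $\ell<2$ follows by H\"older. With $E=V_{2\sigma}$ and $\ell=2$, the right-hand side is $\E\|G_{\alpha,\eps}(\xe)\|^2_{L^2(0,T;\cL^2(H,V_{2\sigma}))}$, which Step~2 bounds. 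This explains why only the $L^2$-in-$\Omega$, $W^{\rho,2}$ integrability is asserted in the $V_{2\sigma}$ scale.

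\textbf{Main obstacle.} The delicate point is Step~2, namely ensuring the multiplication lemma applies with only $V$-regularity of $\xe$ and that the constant does not depend on $\eps$. The uniform Lipschitz bound on $m_{\frac\alpha2,\eps}$, valid because $\alpha\ge2$, is what makes this work. If the exponents were borderline, I would first try to replace $V$ by $V_{2\eta}$ for some $\eta<\frac12$ and interpolate.
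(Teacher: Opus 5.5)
Your outline matches the paper's proof: Lemma~\ref{lem_app_G} with Proposition~\ref{prop1Xeps} for the first bound, and the Flandoli--Gatarek estimate for the second. Steps~1 and~3 are fine. The gap is in Step~2 for $d=3$.

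In the condition $s_1+s_2>s+\frac d2$ of Lemma~\ref{multiplication}, take your choice $s=2\sigma$, $s_1=1$, $s_2=2(\sigma+\s)$. The parameter $\sigma$ cancels, so what you actually need is $1+2\s>\frac d2$, i.e.\ $\s>\frac{d-2}{4}$. For $d=3$ this forces $\s>\frac14$, and then Lemma~\ref{reg_A} requires $\delta>\sigma+\s+\frac34>\sigma+1$. Theorem~\ref{th:wp} only gives $\delta>\sigma+\frac34$. For example, $\sigma=\frac14$, $\delta=\frac{21}{20}$ is admissible, but no valid $\s$ exists.

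The hypothesis $\sigma\ge\frac14$ you invoke is what Lemma~\ref{G_basis} uses for the compactness of $V\embed L^p(\Td)$; it is not what the product estimate needs. Your fallback of replacing $V$ by $V_{2\eta}$ with $\eta<\frac12$ lowers $s_1$ and makes the condition worse. Nothing better than $L^2(0,T;V)$ is available uniformly in $\eps$. For $d\in\{1,2\}$ your Step~2 is correct. For comparison, the paper's one-line proof cites Lemma~\ref{lem_app_G}, whose $V_{2\sigma}$-part (ii) is stated only for $d=1$. That is the only case used later, in Proposition~\ref{prop5Xeps}. So for $d\ge2$ your Step~2 is where the real content lies.

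What is missing is to work with the Hilbert--Schmidt sum directly rather than applying the multiplication lemma term by term, exploiting $\|m_{\frac\alpha2,\eps}\|_{L^\infty(\erre)}\le C$. Set $f:=m_{\frac\alpha2,\eps}(v)$, $g_k:=(\operatorname{I}+A)^{-\delta}e_k$, $s:=2\sigma\in(0,1)$. Split $f(x)g_k(x)-f(y)g_k(y)=(f(x)-f(y))g_k(x)+f(y)(g_k(x)-g_k(y))$ in the Gagliardo seminorm to get
\[
\sum_{k\in\enne_+}|fg_k|^2_{W^{s,2}(\Td)}\le 2\Big(\sup_{x\in\Td}\sum_{k\in\enne_+}|g_k(x)|^2\Big)|f|^2_{W^{s,2}(\Td)}+2\|f\|^2_{L^\infty(\Td)}\sum_{k\in\enne_+}|g_k|^2_{W^{s,2}(\Td)}.
\]
Each term is controlled as follows.
\begin{itemize}
\item $\sum_k|g_k(x)|^2=\sup_{\|h\|_H\le1}|((\operatorname{I}+A)^{-\delta}h)(x)|^2\le\|(\operatorname{I}+A)^{-\delta}\|^2_{\cL(H,L^\infty(\Td))}<\infty$, because $\delta>\frac d4$.
\item The last sum is finite by Lemma~\ref{reg_A}.
\item $|f|_{W^{s,2}}\le\|m'_{\frac\alpha2}\|_{L^\infty(\erre)}|v|_{W^{s,2}}\lesssim\|v\|_V$.
\end{itemize}
Adding the trivial $L^2$-part gives, with the same identification $V_{2\sigma}\simeq H^{2\sigma}(\Td)$ you use, $\|G_{\alpha,\eps}(v)\|^2_{\cL^2(H,V_{2\sigma})}\lesssim1+\|v\|_V^2$, uniformly in $\eps$ and in every dimension. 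After that, your time integration and Step~3 go through.

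A minor point in Step~3: for $\ell<2$, H\"older's inequality does not give $W^{\rho,2}(0,T;H)\embed W^{\rho,\ell}(0,T;H)$ at the same $\rho$. Apply the $\ell=2$ bound with some $\rho'\in(\rho,\frac12)$ first, then use H\"older in the double integral.
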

\begin{proof}
  The first estimate is a consequence of Proposition~\ref{prop1Xeps}
  and Lemma~\ref{lem_app_G}.
  The second estimate follows then from 
the well-known result 
  in \cite[Lem.~2.1]{fland-gat}.
\end{proof}

\begin{prop}[Fourth estimate]
\label{prop5Xeps}
Under the assumptions of Theorem~\ref{th:wp},
let $d=1$.
Then, if $\gamma\geq\beta$ and 
$x\in V_{2\xi}$ for some
$\xi\in(0,\frac12)$, there exists a constant $C>0$,
only depending on 
$\alpha,\beta,\gamma, \delta, \sigma, \ell,T, \xi$,
such that
\begin{align*}
\|\xe\|_{
L^{\ell}(\Omega; C([0,T]; V_{2\xi}))} 
&\le C\left(1+\|x\|_{V_{2\xi}}+
\|\Psi_{\gamma}(x)\|_{L^1(\Td)}\right),
\end{align*}
while if $\gamma\geq2\beta$ and 
$x\in V_{2\xi}$ for some
$\xi\in[\sigma,\frac12+\sigma)$, 
there exists a constant $C>0$,
only depending on 
$\alpha,\beta,\gamma, \delta, \sigma, \ell,T, \xi$,
such that
\begin{align*}
\|\xe\|_{
L^{\ell}(\Omega; C([0,T]; V_{2\xi})
\cap L^2(0,T; V_{2\sigma+1}))} 
&\le C\left(1+\|x\|_{V_{2\sigma}}+
\|\Psi_{\gamma}(x)\|_{L^1(\Td)}\right).
\end{align*}
\end{prop}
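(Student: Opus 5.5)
We work with the mild formulation of \eqref{eq_app},
\[
X_\eps^x(t)=S(t)x-\int_0^tS(t-s)\Psi'_{\beta,\eps}(X_\eps^x(s))\,\d s+\int_0^tS(t-s)G_{\alpha,\eps}(X_\eps^x(s))\,\d W(s),
\]
and estimate the three terms separately in $C([0,T];V_{2\xi})$, with constants uniform in $\eps$. The semigroup term is immediate, since $S$ is contractive on $V_{2\xi}$: $\|S(\cdot)x\|_{C([0,T];V_{2\xi})}\le\|x\|_{V_{2\xi}}$. For the stochastic convolution we use the factorisation method (or stochastic maximal regularity). In the first regime, Proposition~\ref{prop4Xeps} gives $G_{\alpha,\eps}(\xe)$ bounded in $L^\ell(\Omega;L^\infty(0,T;\cL^2(H,H)))$ uniformly in $\eps$. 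This yields the stochastic convolution in $L^\ell(\Omega;C([0,T];V_{2\xi}))$ for every $\xi<\frac12$. In the second regime we also know $G_{\alpha,\eps}(\xe)\in L^\ell(\Omega;L^2(0,T;\cL^2(H,V_{2\sigma})))$. By maximal regularity this places the stochastic convolution in $L^2(0,T;V_{2\sigma+1})\cap C([0,T];V_{2\sigma+\theta})$ for $\theta<1$, which covers $\xi<\frac12+\sigma$. To get the $\ell$-th moment in $L^2(0,T;\cL^2(H,V_{2\sigma}))$ and the $C$-in-time bound, we use Lemma~\ref{lem_app_G}(ii): it gives $\|G_{\alpha,\eps}(v)\|_{\cL^2(H,V_{2\sigma})}\lesssim\|v\|_{V_{2\eta}}$ with $\eta<\frac12$, so we first run the first regime with $\xi=\eta$ and feed the result back in.

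The deterministic convolution with the singular drift is the main obstacle, because the $\eps$-uniform information on $\Psi'_{\beta,\eps}(\xe)$ comes only from Proposition~\ref{prop3Xeps}. Since $\gamma\ge\beta$, we have $|\Psi'_{\beta,\eps}|^2\lesssim 1+|\Psi'_{\beta,\eps}\Psi'_{\gamma,\eps}|$. Indeed, $|\Psi'_{\beta,\eps}|\le C(1+|\Psi'_{\gamma,\eps}|)$ and both functions have the same sign, so Proposition~\ref{prop3Xeps} gives $\Psi'_{\beta,\eps}(\xe)$ bounded in $L^\ell(\Omega;L^2(0,T;H))$. Deterministic maximal regularity then places the drift convolution in $H^1(0,T;H)\cap L^2(0,T;V_2)\hookrightarrow C([0,T];V)$, uniformly in $\eps$. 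This gives the first estimate for every $\xi<\frac12$.

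For the second regime the drift has to be controlled in $L^2(0,T;V_{2\sigma})$ with $\sigma<\frac12$. Here we plan to use $\nabla\Psi'_{\beta,\eps}(\xe)=\Psi''_{\beta,\eps}(\xe)\nabla\xe$ and interpolate between $H$ and $V$. For the $H$ part, $\gamma\ge2\beta$ gives $|\Psi''_{\beta,\eps}|\lesssim\Psi''_{\gamma/2,\eps}$, and Proposition~\ref{prop3Xeps} bounds $\Psi''_{\gamma/2}(\xe)\nabla\xe$ in $L^\ell(\Omega;L^2(0,T;H))$. Hence $\Psi'_{\beta,\eps}(\xe)$ is bounded in $L^\ell(\Omega;L^2(0,T;V))\hookrightarrow L^\ell(\Omega;L^2(0,T;V_{2\sigma}))$. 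We must check that the bound is uniform in $\eps$, comparing $\Psi''_{\beta,\eps}$ with $\Psi''_{\gamma/2}$ via Lemma~\ref{lem_app}(viii). Maximal regularity in $V_{2\sigma}$ then puts the drift convolution in $L^2(0,T;V_{2\sigma+2})\cap C([0,T];V_{2\sigma+1})$, which is more than enough.

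Finally, raising to the power $\ell$, taking expectations and collecting the three bounds gives the stated estimates. The dependence on the initial datum enters only through $\|x\|_{V_{2\xi}}$ and $\|\Psi_\gamma(x)\|_{L^1(\Td)}$, the latter via Proposition~\ref{prop3Xeps}. No Gronwall argument is needed, because the nonlinear terms are controlled a priori by the energy estimates.
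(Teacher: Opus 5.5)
Your argument is correct, and its skeleton is the paper's. Both use the mild formulation and the $L^\ell(\Omega;L^2(0,T;H))$ bound on $\Psi'_{\beta,\eps}(X^x_\eps)$ from Proposition~\ref{prop3Xeps} when $\gamma\geq\beta$; your remark that $|\Psi'_{\beta,\eps}|\leq|\Psi'_{\gamma,\eps}|$ with the same sign is exactly what lies behind the paper's one-line appeal to that proposition. Both use Proposition~\ref{prop4Xeps} for the stochastic convolution and, for $\xi\geq\frac12$, the bootstrap through Lemma~\ref{lem_app_G}(ii) giving $G_{\alpha,\eps}(X^x_\eps)\in L^\ell(\Omega;L^\infty(0,T;\cL^2(H,V_{2\sigma})))$.

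The genuine difference is the drift when $\gamma\geq2\beta$. Write $f_\eps:=\Psi'_{\beta,\eps}(X^x_\eps)$. The paper interpolates, $\|f_\eps\|_{V_{2\sigma}}\lesssim\|f_\eps\|_H+\|f_\eps\|_H^{1-2\sigma}\|\nabla f_\eps\|_H^{2\sigma}$. It controls the $H$-factor in $L^\infty(0,T)$ through $\Psi_{2\beta,\eps}$ and lands in $L^{1/\sigma}(0,T;V_{2\sigma})$, which is the form reused in the irreducibility section to extract the factor $(t-\tau)^{1-2\sigma}$. You instead bound the full $V$-norm in $L^2(0,T)$, using only time-integrated quantities from Proposition~\ref{prop3Xeps}. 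This is simpler and gives more spatial regularity. It is also more robust: the pointwise bound $|\Psi'_{\beta,\eps}|^2\lesssim\Psi_{2\beta,\eps}$ behind the paper's $H$-factor estimate fails for $\beta=1$ ($\log^2$ versus $\log$ near $\pm1$). So at the endpoint $\beta=1$, $\gamma=2$, your version is the safer one.

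For your comparison step, use $m_{\beta,\eps}\geq m_{\frac\gamma2,\eps}$, which is immediate from $m_\beta\geq m_{\frac\gamma2}$ and $\eps^\beta\geq\eps^{\frac\gamma2}$. This gives $\Psi''_{\beta,\eps}\leq\Psi''_{\frac\gamma2,\eps}$. Do not go through Lemma~\ref{lem_app}(viii): its majorant $\Psi''_{\frac\gamma2}$ is infinite outside $[-1,1]$, where $X^x_\eps$ may go.

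Two smaller points. First, knowing only $G_{\alpha,\eps}(X^x_\eps)\in L^2(0,T;\cL^2(H,V_{2\sigma}))$ gives the stochastic convolution in $L^2(0,T;V_{2\sigma+1})\cap C([0,T];V_{2\sigma})$, not in $C([0,T];V_{2\sigma+\theta})$ for $\theta>0$. So the bootstrap is genuinely needed. It requires $x\in V_{2\eta}$ for some $\eta>\max\{\sigma,\frac14\}$, which holds exactly when it is needed ($\xi\geq\frac12$); for $\xi<\frac12$ the first regime already gives the $C([0,T];V_{2\xi})$ bound.

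Second, your final bound involves $\|x\|_{V_{2\xi}}$, not $\|x\|_{V_{2\sigma}}$ as printed in the statement. Your version is the correct one and cannot be improved, since at $t=0$ the left-hand side dominates $\|x\|_{V_{2\xi}}$; the paper's bootstrap also produces $\|x\|_{V_{2\xi}}$. Only the $L^2(0,T;V_{2\sigma+1})$-part is controlled by $\|x\|_{V_{2\sigma}}$ and $\|\Psi_\gamma(x)\|_{L^1(\Td)}$. For that part, remember to include $\|S(\cdot)x\|_{L^2(0,T;V_{2\sigma+1})}\lesssim\|x\|_{V_{2\sigma}}$ for the semigroup term.
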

\begin{proof}
    First of all, note that $\xex$ satisfies the mild formulation
    \[
        \xex(t)+
        \int_0^tS(t-s)\Psi_{\beta,\eps}'(\xex(s))\,\d s
        =S(t)x+\int_0^tS(t-s)G_{\alpha,\eps}(\xex(s))\,\d W(s)
    \]
    for every $t\in[0,T]$, $\P$-almost surely.
    If $\gamma\geq\beta$, one has 
    by Proposition~\ref{prop3Xeps} that 
    \[
    \norm{\Psi_{\beta,\eps}'(\xex)}_{L^\ell(\Omega; L^2(0,T; H))}\lesssim1+\norm{\Psi_\gamma(x)}_{L^1(\Td)},
    \]
    whereas if $\gamma\geq2\beta$, noting that 
    \begin{align*}
    \|\Psi'_{\beta,\eps}(X^x_\eps)\|_{V_{2\sigma}} 
    &\lesssim
    \|\Psi'_{\beta, \eps}(X^x_\eps)\|_H
    +
    \|\Psi'_{\beta, \eps}(X^x_\eps)\|_H^{1-2\sigma} 
    \|\nabla\Psi'_{\beta, \eps}(X^x_\eps)\|_H^{2\sigma} 
    \\
    &\lesssim \|\Psi'_{\beta, \eps}(X^x_\eps)\|_H
    + \|\Psi'_{\beta, \eps}(X^x_\eps)\|_H^{1-2\sigma} 
    \|\Psi''_{\beta, \eps}(X^x_\eps)\nabla X^x\|_H^{2\sigma} 
    \\
    & \lesssim\|\Psi_{2\beta, \eps}(X^x_\eps)\|_{L^1(\Td)}^{\frac12}
    + \|\Psi_{2\beta, \eps}(X^x_\eps)\|_{L^1(\Td)}^{\frac12-\sigma} \|\Psi''_{\frac{2\beta}{2}, \eps}(X^x_\eps)\nabla X^x_\eps\|^{2\sigma}_H,
\end{align*}
   by Proposition~\ref{prop3Xeps}
    one infers that 
    \[
    \norm{\Psi_{\beta,\eps}'(\xex)}_{L^\ell(\Omega; L^{\frac1\sigma}(0,T; V_{2\sigma}))}\lesssim1+\norm{\Psi_\gamma(x)}_{L^1(\Td)}.
    \]
    Together with Proposition~\ref{prop4Xeps}, 
    this allows to conclude for every $\xi\in(0,\frac12)$ 
    by the maximal regularity results
\cite[Thm.~3.3, Ex.~3.2]{VNVW} 
and \cite[Prop.~A.24]{dapratozab}.
Lastly, if $\xi\in[\frac12, \frac12+\sigma)$
in the case $\gamma\geq2\beta$,
the already proved estimate for $\xi\in(0,\frac12)$
and Lemma~\ref{lem_app_G} allow
to obtain by bootstrapping
that 
\[
    \norm{G_{\alpha,\eps}(\xex)}_{L^\ell(\Omega; L^\infty(0,T; \cL^2(H,V_{2\sigma})))}\lesssim 1+\|x\|_{V_{2\xi}}+
    \|\Psi_\gamma(x)\|_{L^1(\Td)},
    \]
and one can conclude again by maximal regularity.
\end{proof}

\subsection{Existence}
In this section we perform the passage to the limit as $\eps\searrow0$ and prove the existence part 
of Theorem~\ref{th:wp}. We employ a stochastic compactness 
argument.

First of all, note that the following compact inclusions hold
(see e.g.~\cite[Cor.~4-5]{simon}):
\begin{alignat*}{2}
  L^2(0,T; V)
  \cap W^{\rho,\ell}(0,T; V_2^*)&\cembed 
L^2(0,T; V_{1-s}) \quad&&\forall\,s>0,\\
C([0,T]; H)
  \cap W^{\rho,\ell}(0,T; V_2^*)&\cembed 
C([0,T]; V_{-s}) \quad&&\forall\,s>0, \quad\text{if } 
\rho\ell>1.
\end{alignat*}
Now, let us fix $\rho\in(0,\frac12)$ and $\ell\geq2$ such that $\rho\ell>1$. Then, as a consequence of 
Propositions~\ref{prop1Xeps}
and \ref{prop4Xeps}, it follows that the sequence of 
probability distributions of 
\[
  \left(\xe,
  \int_0^\cdot G_{\alpha,\eps}(\xe(s))\,\d W(s), 
  W\right)_\eps
\]
is tight on the product space 
\[
  \left[L^2(0,T; V_{1-s})\cap 
  C^0([0,T]; V_{-s})\right]\times
  \left[L^2(0,T; H)\cap C([0,T]; V_{-s})\right]
  \times C([0,T]; \tilde H),
\]
for every $s>0$, where $\tilde H$ is a Hilbert-Schmidt 
extension of $H$.
Furthermore, by Proposition~\ref{prop3Xeps}
and the fact that $N_{\gamma,\beta}$ is superlinear,
it follows that $(\Psi_\beta'(\xe))_\eps$
is uniformly integrable on $\Omega\times(0,T)\times\Td$,
hence also relatively weakly compact in $L^1_\cP(\Omega; L^1(0,T; L^1(\Td)))$ by the Dunford-Pettis theorem.

By the classical Prokhorov and Skorokhod theorems
(see \cite[Thm.~2.7]{ike-wata})
and their weaker version by Jakubowski-Skorokhod (see e.~g.~\cite[Thm.~2.7.1]{breit-feir-hof}),
recalling all the estimates 
in Propositions~\ref{prop1Xeps}--\ref{prop3Xeps},
and \ref{prop4Xeps}, we infer that 
there exists a probability space $(\hat\Omega,\hat{\mathcal F}, \hat\P)$ and measurable 
maps $\Lambda_\eps:(\hat\Omega,\hat{\mathcal F})
\to(\Omega,\mathcal F)$ such that 
$\Lambda_\eps{\#}\hat \P=\P$ for every $\eps\in(0,1)$,
and, as $\eps\searrow0$,
\begin{align*}
  \hat X^x_\eps:=\xe\circ\Lambda_\eps\to \hat X^x
  \qquad&\text{in } 
  L^2(0,T; V_{1-s})\cap 
  C([0,T]; V_{-s})
  \quad\hat\P\text{-a.s.},\\
  \hat X^x_\eps\wstarto \hat X^x
  \qquad&\text{in } L^\ell(\hat\Omega; L^\infty(0,T; H)\cap L^2(0,T; V)),\\
  \Psi_{\beta,\eps}'(\hat X^x_\eps)\wto\hat\xi
  \qquad&\text{in } L^{1}(\hat \Omega; L^1(0,T; L^1(\Td))),\\
  \hat I_{\eps}:=\left(\int_0^\cdot G_{\alpha,\eps}
  (\xe(s))\,\d W(s)\right)\circ\Lambda_\eps \to \hat I
  \qquad&\text{in } L^\ell(\hat \Omega; L^2(0,T; H)
  \cap C([0,T]; V_{-s}))\\
  \hat W_\eps:=W\circ\Lambda_\eps \to \hat W 
  \qquad&\text{in } L^\ell(\hat\Omega; C([0,T]; \tilde H)),
\end{align*}
where
\begin{align*}
  &\hat X^x \in L^\ell(\hat \Omega; C([0,T]; V_{-s})\cap 
  L^\infty(0,T; V)\cap L^2(0,T; V)),\\
  &\hat \xi \in L^{1}(\hat \Omega; L^1(0,T; L^1(\Td))),\\
  &\hat I \in L^\ell(\hat \Omega; L^2(0,T; H)
  \cap C([0,T]; V_{-s})),\\
  &\hat W \in L^\ell(\hat \Omega; C([0,T]; \tilde H)),
\end{align*}
for every $s>0$ and $\ell\geq2$. Moreover, thanks to Lemma~\ref{lem_app_G}
and Proposition~\ref{prop1Xeps}, for every $\lambda>0$ we have 
\begin{align*}
  &\norm{G_{\eps,\alpha}(\hat X^x_\eps)-G_\alpha(\hat X^x)}_{
  L^\ell(\hat\Omega; L^2(0,T; \cL^2(H,H)))}\\
  &\leq
  \norm{G_{\eps,\alpha}(\hat X^x_\eps)-G_{\eps,\alpha}(\hat X^x)}_{
  L^\ell(\hat\Omega; L^2(0,T; \cL^2(H,H)))}+
  \norm{G_{\eps,\alpha}(\hat X^x)-G_\alpha(\hat X^x)}_{
  L^\ell(\hat\Omega; L^2(0,T; \cL^2(H,H)))}\\
  &\leq\lambda\norm{\hat X^x_\eps-\hat X^x}_{
  L^\ell(\hat\Omega; L^2(0,T; V))}
  +C_\lambda\norm{\hat X^x_\eps-\hat X^x}_{
  L^\ell(\hat\Omega; L^2(0,T; H))}
  +\eps^{\frac\alpha2}\norm{(\operatorname{I}+A)^{-\delta}}_{\cL^2(H,H)}\\
  &\leq C(\lambda+\eps) + C_\lambda\norm{\hat X^x_\eps-\hat X^x}_{
  L^\ell(\hat\Omega; L^2(0,T; H))},
\end{align*}
so that the arbitrariness of $\lambda$ and the convergences above yield
\[
  G_{\alpha,\eps}(\hat X^x_\eps)
  \to G_\alpha(\hat X^x) \qquad\text{in }
  L^\ell(\hat\Omega; L^2(0,T; \cL^2(H,H)))
  \quad\forall\,\ell\geq2.
\]

Now, let us show that $\hat\xi=\Psi_\beta'(\hat X^x)$
almost everywhere in $\hat\Omega\times(0,T)\times\Td$.
To this end, denoting by $R_{\beta,\eps}:=(\operatorname{I}+\eps\Psi_\beta')^{-1}:\erre\to\erre$ the resolvent operator, one has that 
$\hat X^x_\eps-R_{\beta,\eps}(\hat X^x_\eps)=\eps\Psi_{\beta,\eps}'(\hat X^x_\eps)$.
Hence, recalling the convergences above,
by possibly arguing on subsequence
we may assume that $\hat X^x_\eps\to\hat X^x$
and $R_{\beta,\eps}(\hat X^x_\eps)\to \hat X^x$
almost everywhere in $\hat\Omega\times(0,T)\times\Td$,
ensuring in turn that $|\hat X^x|\leq1$ almost everywhere in
$\hat\Omega\times(0,T)\times\Td$.
Since $\Psi_{\beta,\eps}'=\Psi_\beta'\circ R_{\beta,\eps}$,
by continuity of $\Psi_\beta'$ one has also that 
\[
\Psi_{\beta,\eps}'(\hat X^x_\eps)\to
\hat\xi':=\Psi_\beta'(\hat X^x)\ind_{\{-1<\hat X^x<1\}}
+\infty\ind_{\{\hat X^x=1\}} - \infty\ind_{\{\hat X^x=-1\}}
\]
almost everywhere in $\hat\Omega\times(0,T)\times\Td$.
By the Fatou lemma and the fact that 
$(\Psi_{\beta,\eps}'(\hat X^x_\eps))_\eps$
is bounded in $L^1(\hat\Omega\times(0,T)\times\Td)$,
it follows that $\hat\xi'\in L^1(\hat\Omega\times(0,T)\times\Td)$,
hence also that $\hat\xi'=\Psi_\beta'(\hat X^x)$,
i.e.~that $|\hat X^x|<1$ almost everywhere 
in $\hat\Omega\times(0,T)\times\Td$ and 
$\Psi_{\beta,\eps}'(\hat X^x_\eps)\to\Psi_\beta'(\hat X)$
almost everywhere in $\hat\Omega\times(0,T)\times\Td$.
Since we know that $(\Psi_{\beta,\eps}'(\hat X^x_\eps))_\eps$
is uniformly integrable on $\hat\Omega\times(0,T)\times\Td$, 
this implies that $\Psi_{\beta,\eps}'(\hat X^x_\eps)
\to\Psi_\beta'(\hat X)$ in $L^1(\Omega\times(0,T)\times\Td)$
by the Vitali dominated convergence theorem.
This clearly ensures that $\hat\xi=\Psi'_\beta(\hat X^x)$
almost everywhere in $\hat\Omega\times(0,T)\times\Td$, as required.

The argument to pass to the limit as $\eps\searrow0$ on $\hat\Omega$
is fairly classical, so we only 
highlight the main points, and we refer to \cite{scarpa21}
for details. By defining the filtration
\[
  \hat{\mathcal F}_{\eps,t}:=\sigma\{\hat X^x_\eps(s), 
  \hat I_\eps(s), \hat W_\eps(s):s\in[0,t]\}\,, 
  \qquad t\geq0,
\]
and by exploiting \cite[\S~8.4, Thm.~8.2]{dapratozab},
we infer that $\hat I_\eps$ is the $H$-valued martingale given by
\[
  \hat I_{\eps}(t)=\int_0^t G_{\alpha,\eps}(\hat X^x_\eps(s))\,\d \hat W_\eps(s)\qquad\forall\,t\geq0.
\]
Moreover, by setting the filtration
\[
  \hat{\mathcal F}_{t}:=\sigma\{\hat X^x(s), 
  \hat I(s), \hat W(s):s\in[0,t]\}\,, 
  \qquad t\geq0,
\]
by using the strong convergences $\hat X^x_\eps\to\hat X^x$
and $G_{\alpha,\eps}(\hat X^x_\eps)\to G_{\alpha}(\hat X^x)$
proved above together again with 
\cite[\S~8.4, Thm.~8.2]{dapratozab}, it is possible to prove that 
\[
  \hat I(t)=\int_0^t G_{\alpha}(\hat X^x(s))\,\d \hat W(s)\qquad\forall\,t\geq0.
\]
The allows to let $\eps\searrow0$ and deduce that 
$\hat X^x$ is a solution to \eqref{eq_ast}
with respect to the stochastic basis 
$(\hat \Omega, \hat{\mathcal F}, (\hat{\mathcal F}_t)_{t\geq0},\hat \P, \hat W)$, i.e.~a probabilistically-weak solution to \eqref{eq_ast}.

In order to conclude, we employ a classical 
argument \`a la Yamada-Watanabe. More precisely, 
since we already know that pathwise uniquenss holds
for the problem \eqref{eq_ast}, as proved in Section~\ref{ssec:uniq}, we can employ the well-know 
result by Gy\"ongy--Krylov \cite[Lem.~1.1]{gyo-kry}
to infer that the strong convergences above hold also 
on the original probability space, namely
\begin{align*}
  \xe\to X^x
  \qquad&\text{in } 
  L^2(0,T; V_{1-s})\cap 
  C([0,T]; V_{-s})
  \quad\P\text{-a.s.},\\
  \xe\wstarto X^x
  \qquad&\text{in } L^\ell(\Omega; L^\infty(0,T; H)\cap L^2(0,T; V)),\\
  \Psi_{\beta,\eps}'(\xe)\wto\xi
  \qquad&\text{in } L^{1}(\Omega; L^1(0,T; L^1(\Td))),
\end{align*}
as well as
\begin{align*}
  \int_0^\cdot G_{\alpha,\eps}
  (\xe(s))\,\d W(s) \to \int_0^\cdot G_{\alpha}
  (X^x(s))\,\d W(s)
  \qquad&\text{in } L^\ell(\Omega; L^2(0,T; H)
  \cap C([0,T]; V_{-s})),
\end{align*}
where
\begin{align*}
  & X^x \in L^\ell(\Omega; C([0,T]; V_{-s})\cap 
  L^\infty(0,T; H)\cap L^2(0,T; V)),\\
  & \xi \in L^{1}(\Omega; L^1(0,T; L^1(\Td))),
\end{align*}
for every $s>0$ and $\ell\geq2$. Proceeding as above, 
it is immediate to show that $\xi=\Psi'_\beta(X^x)$
almost everywhere in $\Omega\times(0,T)\times\Td$, and 
to conclude that $X^x$ is (the unique) solution to \eqref{eq_ast},
in the sense of Definition~\ref{def_sol},
on the original probability space. 
If also $d=1$ and
$x\in V_{2\xi}$, the additional regularities of 
$X^x$ follow from 
Proposition~\ref{prop5Xeps}
and maximal regularity.
This concludes
the proof of Theorem~\ref{th:wp}.

\section{Differentiability with respect to the initial datum}
\label{sec:Y}
Throughout the section, we work under the assumption of Theorem~\ref{th:wp} and
use the same notation and setting
of Section~\ref{sec:X}. Moreover, 
$T>0$ and $\ell\geq2$ are fixed arbitrarily.

For every $\eps\in(0,1)$, the well-posedness result 
in Proposition~\ref{ex_app_thm} ensures that the 
approximated solution map
\[
  S_\eps:H\to L^\ell_\cP(\Omega; C([0,T];H)\cap L^2(0,T; V)),
  \qquad x\mapsto \xe, \quad x\in H,
\]
is well-defined and Lipschitz-continuous.
This section is devoted to the study of differentiability 
of the map $S_\eps$, and the proof of uniform estimates on its derivatives. We will not focus on the passage to 
the limit as $\eps\to0$, as the differentiability 
with respect to the initial datum for the limiting problem \eqref{eq_ast} will not be necessary to our purposes.

\subsection{Fr\'echet differentiability for the approximated problem}

\begin{prop}
\label{prop:diff_psi}
Under the assumptions of Theorem~\ref{th:wp},
for every $\eps\in(0,1)$
the operator $\Psi_{\beta,\eps}':H\to H$ is G\^ateaux-differentiable along directions of $H$, 
Fr\'echet-differentiable along directions of $L^4(\Td)$, 
and $D\Psi'_{\beta,\eps}:H\to\cL(H,H)$
is given by
\[
  D\Psi'_{\beta,\eps}(z)[h]=\Psi''_{\beta, \eps}(z)h , 
  \quad z,h\in H.
\]
Furthermore, $D\Psi_{\beta,\eps}'\in C^{0,1}_b(L^4(\Td);\cL(L^4(\Td), H))\cap 
C^{0,1}_b(H;\cL(L^{2+\eta}(\Td), V^*))$ for every $\eta>0$,
and  it holds that 
\[
(D\Psi_{\beta,\eps}'(z)h,h)_H=(\Psi_{\beta,\eps}''(z)h,h)
\geq0\quad\forall\,z,h\in H,\quad \forall\,\eps\in(0,1).
\]
\end{prop}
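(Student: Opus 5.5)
The argument rests on the scalar properties of $\Psi_{\beta,\eps}$ from Lemma~\ref{lem_app}. Since $m_{\beta,\eps}\in C^\infty$ and $m_{\beta,\eps}\geq\eps^\beta$, we have $\Psi''_{\beta,\eps}=1/m_{\beta,\eps}\in C^\infty(\mathbb R)$ with $0<\Psi''_{\beta,\eps}\leq\eps^{-\beta}$. Moreover $\Psi'''_{\beta,\eps}=-m_{\beta,\eps}'/m_{\beta,\eps}^2$ is bounded by $\eps^{-2\beta}\|m'_\beta\|_{L^\infty}$. So $\Psi'_{\beta,\eps}$ is a $C^2$ function on $\mathbb R$ with bounded first and second derivatives, and everything reduces to standard facts about superposition (Nemytskii) operators.

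\emph{Gâteaux differentiability.} Fix $z,h\in H$ and write, pointwise,
\[
\frac{\Psi'_{\beta,\eps}(z+th)-\Psi'_{\beta,\eps}(z)}{t}-\Psi''_{\beta,\eps}(z)h=h\int_0^1\big(\Psi''_{\beta,\eps}(z+\theta th)-\Psi''_{\beta,\eps}(z)\big)\,\d\theta .
\]
This is dominated by $2\eps^{-\beta}|h|\in L^2(\Td)$ and tends to $0$ a.e. as $t\to0$, so dominated convergence gives convergence in $H$. Hence $D\Psi'_{\beta,\eps}(z)[h]=\Psi''_{\beta,\eps}(z)h$, which defines an operator in $\cL(H,H)$ with norm at most $\eps^{-\beta}$. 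Monotonicity is immediate: $(\Psi''_{\beta,\eps}(z)h,h)_H=\int_\Td\Psi''_{\beta,\eps}(z)h^2\geq0$.

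\emph{Fréchet differentiability and Lipschitz bounds.} For $h\in L^4(\Td)$, the bound $|\Psi'_{\beta,\eps}(z+h)-\Psi'_{\beta,\eps}(z)-\Psi''_{\beta,\eps}(z)h|\leq\frac12\|\Psi'''_{\beta,\eps}\|_\infty h^2$ gives a remainder with $H$-norm at most $C_\eps\|h\|_{L^4}^2=o(\|h\|_{L^4})$. This yields Fréchet differentiability along $L^4$ directions. The same bound on $\Psi'''$ gives, for $z_1,z_2\in L^4(\Td)$,
\[
\|(\Psi''_{\beta,\eps}(z_1)-\Psi''_{\beta,\eps}(z_2))h\|_H\leq C_\eps\|z_1-z_2\|_{L^4}\|h\|_{L^4},
\]
so $D\Psi'_{\beta,\eps}\in C^{0,1}_b(L^4;\cL(L^4,H))$. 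Boundedness follows from $\|\Psi''_{\beta,\eps}(z)h\|_H\leq\eps^{-\beta}\|h\|_H\lesssim\|h\|_{L^4}$ on the bounded domain.

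\emph{The second space.} Take $z_1,z_2\in H$ and $h\in L^{2+\eta}(\Td)$. The product $(z_1-z_2)h$ lies in $L^r$ with $\frac1r=\frac12+\frac1{2+\eta}<1$. I then dualise: for $v\in V$,
\[
|\langle(\Psi''_{\beta,\eps}(z_1)-\Psi''_{\beta,\eps}(z_2))h,v\rangle|\leq C_\eps\|z_1-z_2\|_H\|h\|_{L^{2+\eta}}\|v\|_{L^{r'}},
\]
where $r'=\frac{2(2+\eta)}{\eta}<\infty$. This requires the embedding $V\embed L^{r'}(\Td)$. That embedding is automatic for $d\in\{1,2\}$, but for $d=3$ it needs $r'\leq6$, i.e. $\eta\geq1$. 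This embedding check is the main obstacle. For $d=3$ and small $\eta$, I would interpolate: use the $L^\infty$ bound $2\eps^{-\beta}$ on the difference together with the Lipschitz bound to get a Hölder-type estimate instead. Alternatively, I would note that later sections work with $d=1$, so the statement is only needed in that setting. Boundedness in this space is clear, since $\|\Psi''_{\beta,\eps}(z)h\|_{V^*}\lesssim\eps^{-\beta}\|h\|_{L^{2+\eta}}$.
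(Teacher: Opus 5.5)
Your argument is the paper's argument written out. The paper's proof is one sentence invoking exactly the facts you use: boundedness and Lipschitz continuity of $\Psi''_{\beta,\eps}$ (i.e.\ boundedness of $\Psi'''_{\beta,\eps}$), dominated convergence, and monotonicity. The following steps are correct:
\begin{itemize}
\item Gâteaux differentiability;
\item Fréchet differentiability along $L^4(\Td)$, via the Taylor remainder $\lesssim h^2$;
\item the bound in $C^{0,1}_b(L^4(\Td);\cL(L^4(\Td),H))$;
\item boundedness in $\cL(L^{2+\eta}(\Td),V^*)$;
\item the sign of $(\Psi''_{\beta,\eps}(z)h,h)_H$.
\end{itemize}
Your Lipschitz estimate into $\cL(L^{2+\eta}(\Td),V^*)$ also holds whenever $V\embed L^{2(2+\eta)/\eta}(\Td)$, i.e.\ for $d\in\{1,2\}$, and for $d=3$ with $\eta\geq1$.

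The obstacle you flag for $d=3$, $\eta\in(0,1)$ is real, and neither fallback closes it. In fact no argument can: the Lipschitz claim is false there. Interpolating the $L^\infty$ bound with the $L^2$-Lipschitz bound only gives Hölder continuity with exponent $\frac53-\frac2{2+\eta}<1$, and this exponent is sharp. Take $z_2=0$, $z_1=c\,\ind_{B_\varrho}$ with $B_{2\varrho}\subset\Td$ and $c$ such that $\Psi''_{\beta,\eps}(c)\neq\Psi''_{\beta,\eps}(0)$, and test with $h=\ind_{B_\varrho}$. Then $\|z_1-z_2\|_H\sim\varrho^{3/2}$ and $\|h\|_{L^{2+\eta}(\Td)}\sim\varrho^{3/(2+\eta)}$. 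Pairing with a cutoff $v\in V$ equal to $1$ on $B_\varrho$, supported in $B_{2\varrho}$, with $\|v\|_V\lesssim\varrho^{1/2}$, gives $\|(\Psi''_{\beta,\eps}(z_1)-\Psi''_{\beta,\eps}(z_2))h\|_{V^*}\gtrsim\varrho^{5/2}$. The Lipschitz quotient is therefore $\gtrsim\varrho^{1-\frac{3}{2+\eta}}$, which tends to $+\infty$ as $\varrho\to0$ when $\eta<1$.

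The correct conclusion is to state the $V^*$-part only for $d\leq2$, or for $d=3$ with $\eta\geq1$. The paper's one-line proof overlooks this restriction. The correction costs nothing, because that property is never used later: Proposition~\ref{prop:diff_S} relies only on boundedness and continuity of $\Psi''_{\beta,\eps}$ together with $L^4$ bounds. Your remark that later sections only need $d=1$ is not accurate for Section~\ref{sec:Y}, which works under the assumptions of Theorem~\ref{th:wp} and hence allows $d=3$. So say explicitly that the claim is restricted, rather than appealing to the dimension used later.
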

\begin{proof}
    The result is a consequence of
    the definitions of G\^ateaux- and Fr\'echet-differentiability, 
    the dominated convergence theorem, 
    the facts that $\Psi_{\beta,\eps}'\in C^1(\erre)$ and
    $\Psi_{\beta,\eps}''\in C^{0,1}_b(\erre)$, and
    the monotonicity of $\Psi_{\beta,\eps}'$.
\end{proof}

\begin{prop}
\label{prop:diff_G}
Under the assumptions of Theorem~\ref{th:wp}, let 
\[
  \begin{cases}
  q>2, 
  \quad
  \s_q>\frac{2q}{q-2}
  \qquad&\text{if } \sigma\in\left[\frac d4,\frac12\right),\\
  q>\frac d{2\sigma},
  \quad
  \s_q>\frac{2q}{4\sigma q/d-2}
  \qquad&\text{if } \sigma\in\left(0,\frac d4\right),\\
 q=\s_q=\infty\qquad&\text{if } \sigma=0.
 \end{cases}
\]
Then, for every $\eps\in(0,1)$
the operator $G_{\alpha,\eps}:H\to\cL^2(H,H)$
is Fr\'echet-differentiable along directions of $L^q(\Td)$, 
and $DG_{\alpha,\eps}:H\to\cL(L^q(\Td),\cL^2(H,H))$ is given by 
\[
  (DG_{\alpha,\eps}(z)[h])[u]=m'_{\frac\alpha2, \eps}(z)h 
  (\operatorname{I}+A)^{-\delta}u, \quad z,u\in H, 
  \quad h\in L^q(\Td).
\]
Furthermore, $DG_{\alpha,\eps}\in C^{0,1}_b(L^{\s_q}(\Td);\cL(L^q(\Td),\cL^2(H,H)))$ 
and 
there exists a constant $C>0$,
depending only on $\alpha,\delta,\sigma,d,q$, such that 
\[
\sup_{z\in H}\norm{DG_{\alpha,\eps}(z)}_{\cL(L^q(\Td),\cL^2(H,H))}
\leq C\quad\forall\,\eps\in(0,1).
\]
\end{prop}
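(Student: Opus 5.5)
The plan is a pointwise Taylor expansion of $m_{\frac\alpha2,\eps}$ combined with the Hölder/Sobolev bookkeeping already used in Lemma~\ref{G_basis} and Proposition~\ref{prop3Xeps}. The key observation is that, for $z,u\in H$ and $h\in L^q(\Td)$, all quantities reduce to
\[
\sum_{k\in\mathbb N_+}\norm{f\,(\operatorname{I}+A)^{-\delta}e_k}_H^2
\]
for a suitable scalar function $f$. Throughout, write $q_0$ for the summability exponent such that $V_{2\sigma}\embed L^{q_0}(\Td)$. Thus $q_0=\frac{2}{1-4\sigma/d}$ if $\sigma\in(0,\frac d4)$, $q_0<\infty$ is arbitrary if $\sigma=\frac d4$, $q_0=\infty$ if $\sigma>\frac d4$, and $q_0=2$ if $\sigma=0$. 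By Lemma~\ref{reg_A}, $\sum_k\norm{(\operatorname{I}+A)^{-\delta}e_k}_{L^{q_0}}^2\lesssim\norm{(\operatorname{I}+A)^{-\delta}}^2_{\cL^2(H,V_{2\sigma})}<\infty$. Hence by Hölder,
\[
\sum_k\norm{f(\operatorname{I}+A)^{-\delta}e_k}_H^2\lesssim\norm{f}_{L^{p}}^2,\qquad \tfrac1p+\tfrac1{q_0}=\tfrac12,
\]
and the restrictions on $q$ in the statement say exactly that $q\ge p$ in each regime. For $\sigma\in(0,\frac d4)$, $p=\frac d{2\sigma}$. For $\sigma\ge\frac d4$, $p$ can be taken to be any number $>2$. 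For $\sigma=0$, $p=\infty$. When $\sigma=\frac d4$, I would choose $q_0$ large enough that $p\le q$.

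First step: the uniform bound. With $f=m'_{\frac\alpha2,\eps}(z)h$, Lemma~\ref{lem_app}(i) gives $\norm{m'_{\frac\alpha2,\eps}}_{L^\infty}\le\norm{m'_{\frac\alpha2}}_{L^\infty}$, which is finite since $\alpha\ge2$. Therefore
\[
\norm{DG_{\alpha,\eps}(z)[h]}_{\cL^2(H,H)}\lesssim\norm{h}_{L^p}\lesssim\norm{h}_{L^q}
\]
uniformly in $z$ and $\eps$. In particular the candidate derivative lies in $\cL(L^q(\Td),\cL^2(H,H))$.

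Second step: Fréchet differentiability. Taylor's formula gives
\[
m_{\frac\alpha2,\eps}(z+h)-m_{\frac\alpha2,\eps}(z)-m'_{\frac\alpha2,\eps}(z)h=\int_0^1\bigl(m'_{\frac\alpha2,\eps}(z+\theta h)-m'_{\frac\alpha2,\eps}(z)\bigr)h\,\d\theta .
\]
For fixed $\eps$, $m_{\frac\alpha2,\eps}$ is smooth with bounded $m''$, so the integrand is bounded by $\min\{C_\eps|h|,2\norm{m'}_\infty\}|h|$. Pointwise this gives a remainder of order $|h|^2$. I would choose $\varrho\in(p,q)$ in the case $\sigma>0$ (using $q>p$). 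Then, by Hölder,
\[
\norm{r(h)}_{L^p}\le\norm{\min\{C_\eps|h|,C\}}_{L^{s}}\norm{h}_{L^q},\qquad \tfrac1s=\tfrac1p-\tfrac1q,
\]
and $\norm{\min\{C_\eps|h|,C\}}_{L^s}\lesssim\norm{h}_{L^q}^{\min\{1,q/s\}}\to0$ as $\norm{h}_{L^q}\to0$. Hence the remainder is $o(\norm{h}_{L^q})$ in $\cL^2(H,H)$. The case $\sigma=0$ uses $L^\infty$ directly.

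Third step: Lipschitz continuity of $DG_{\alpha,\eps}$ on $L^{\s_q}$. This step is the most delicate in the exponent bookkeeping. We have
\[
(DG_{\alpha,\eps}(z_1)-DG_{\alpha,\eps}(z_2))[h]u=\bigl(m'_{\frac\alpha2,\eps}(z_1)-m'_{\frac\alpha2,\eps}(z_2)\bigr)h(\operatorname{I}+A)^{-\delta}u .
\]
I would take $f=(m'(z_1)-m'(z_2))h$. Since $|m'(z_1)-m'(z_2)|\le C_\eps|z_1-z_2|$, where $m''_{\frac\alpha2,\eps}$ is bounded for fixed $\eps$, Hölder with $\frac1p=\frac1{\s_q}+\frac1q$ gives $\norm{f}_{L^p}\le C_\eps\norm{z_1-z_2}_{L^{\s_q}}\norm{h}_{L^q}$. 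Solving for $\s_q$ gives $\s_q=\frac{pq}{q-p}$. For $\sigma\in(0,\frac d4)$ this is exactly $\frac{2q}{4\sigma q/d-2}$. For $\sigma\ge\frac d4$, letting $p\downarrow2$ gives the threshold $\frac{2q}{q-2}$. The strict inequalities in the hypotheses leave room to pick $p$ (and $q_0$) appropriately. The $C^{0,1}_b$ boundedness then follows from the first step.
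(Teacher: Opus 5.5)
Your proof is correct and follows essentially the same route as the paper. The uniform bound comes from H\"older's inequality, the embedding $V_{2\sigma}\embed L^{q_0}(\Td)$ and Lemma~\ref{reg_A}; the remainder is handled by the integral Taylor formula; and the $C^{0,1}_b$ property uses the Lipschitz continuity of $m'_{\frac\alpha2,\eps}$ together with the relation $\frac1{\s_q}+\frac1q\le\frac1p$. The only difference is in the Fr\'echet step, and it is a welcome one. The paper bounds the remainder through $\|m'_{\frac\alpha2,\eps}(z+rth)-m'_{\frac\alpha2,\eps}(z)\|_{L^{\s_q}}$ and then appeals to dominated convergence and an external lemma. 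Your pointwise bound $\min\{C_\eps|h|,2\|m'_{\frac\alpha2}\|_{L^\infty}\}$, combined with the intermediate exponent $s$, gives an explicit $o(\|h\|_{L^q})$ estimate, so uniformity over the unit ball of $L^q(\Td)$ is immediate. The auxiliary exponent $\varrho\in(p,q)$ you introduce is never used and can be dropped.
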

\begin{proof}
    First of all, note that the operator 
    $DG_{\alpha,\eps}$ is actually well-defined
    thanks to the fact that $(\operatorname{I}+A)^{-\delta}\in\cL^2(H,V_{2\sigma})$
    and $V_{2\sigma}\embed L^\infty(\Td)$ 
    if $\sigma\in(\frac d4, \frac12)$,
    $V_{2\sigma}\embed L^\s(\Td)$ for all $\s\in[1,+\infty)$ 
    if $\sigma=\frac d4$, and 
    $V_{2\sigma}\embed L^{\frac2{1-4\sigma/d}}(\Td)$ 
    if $\sigma\in[0,\frac d4)$. Indeed, 
    by definition of $q$ and the H\"older inequality,
    for every $z\in H$ and $h\in L^q(\Td)$
    one has 
    \begin{align*}
    \|DG_{\alpha, \varepsilon}(z)[h]\|^2_{\cL^2(H,H)}&=\sum_{k\in\mathbb N_+}\norm{
    m_{\frac\alpha2,\eps}'(z)h(\operatorname{I}+A)^{-\delta}e_k}_H^2\\
    &\lesssim \|m_{\frac\alpha2}'\|^{2}_{L^{\infty}(\erre)}
    \norm{h}_{L^q(\Td)}^2
    \norm{(\operatorname{I}+A)^{-\delta}}^{2}_{\cL^2(H,V_{2\sigma})},
    \end{align*}
    which also readily proves the last estimate.
    Furthermore, by assumption on $q$ and $\varsigma_q$,
    we have $\frac1q+\frac1{\s_q}<\frac12$
    if $\sigma\in[\frac d4, \frac12)$, 
    $\frac1q+\frac1{\s_q}<2\sigma/d$
    if $\sigma\in(0,\frac d4)$, and 
    $\s_q=\infty$ if $\sigma=0$.
    Hence, 
    given $z\in H$ and $h\in L^q(\mathbb{T}^d)$, 
    for every $t\in(-1,1)$ by the H\"older inequality 
    it holds that
    \begin{align*}
    &\left\Vert\frac{G_{\alpha, \varepsilon}(z+th)-
    G_{\alpha, \varepsilon}(z)}{t} - DG_{\alpha, \varepsilon}(z)[h] \right\Vert^2_{\cL^2(H,H)}
    \\
    &= \left\Vert\frac{m_{\frac{\alpha}{2}, \varepsilon}(z+th)
    (\operatorname{I}+A)^{-\delta}-
    m_{\frac{\alpha}{2}, \varepsilon}(z)
    (\operatorname{I}+A)^{-\delta}}{t} - m_{\frac{\alpha}{2}, \varepsilon}'(x)h
    (\operatorname{I}+A)^{-\delta} \right\Vert^2_{\cL^2(H,H)}
    \\
    &=\sum_{k \in \mathbb{N}_+}\left\Vert \int_0^1 h\left( m_{\frac{\alpha}{2}, \varepsilon}'(z+rth)
    (\operatorname{I}+A)^{-\delta}e_k -m_{\frac{\alpha}{2}, \varepsilon}'(z)
    (\operatorname{I}+A)^{-\delta}e_k\right)\, \d r \right\Vert_H^2
    \\
    &\le \sum_{k \in \mathbb{N}_+} \int_0^1 \left\Vert h
    (\operatorname{I}+A)^{-\delta}e_k
    \left( m_{\frac{\alpha}{2}, \varepsilon}'(z+rth)
     -m_{\frac{\alpha}{2}, \varepsilon}'(z)
    \right)\right\Vert_H^2\, \d r\\
    &\lesssim \|h\|_{L^q(\Td)}^2
    \|(\operatorname{I}+A)^{-\delta}\|_{\cL^2(H,V_{2\sigma})}^2
     \int_0^1 \left\Vert m_{\frac{\alpha}{2}, \varepsilon}'(z+rth)
     -m_{\frac{\alpha}{2}, \varepsilon}'(z)\right\Vert_{L^{\s_q}(\Td)}^2\, \d r,
\end{align*}
so that 
\begin{align*}
  &\sup_{\|h\|_{L^q(\Td)}\leq1}
    \left\Vert\frac{G_{\alpha, \varepsilon}(z+th)-
    G_{\alpha, \varepsilon}(z)}{t} - DG_{\alpha, \varepsilon}(z)[h] \right\Vert^2_{\cL^2(H,H)}\\
    &\lesssim\|(\operatorname{I}+A)^{-\delta}\|_{\cL^2(H,V_{2\sigma})}^2
     \int_0^1 \left\Vert m_{\frac{\alpha}{2}, \varepsilon}'(z+rth)
     -m_{\frac{\alpha}{2}, \varepsilon}'(z)\right\Vert_{L^{\s_q}(\Td)}^2\, \d r.
\end{align*}
Now, if $\sigma=0$, then $\s_q=+\infty$ and
$h\in L^\infty(\Td)$,
and the right-hand side converges to $0$ as $t \rightarrow 0$
since $m_{\frac\alpha2, \eps}'$ is Lipschitz-continuous.
If $\sigma>0$, then $\s_q<+\infty$, and
the right-hand side converges to $0$
by the dominated convergence theorem
since $m_{\frac\alpha2,\eps}'$ is continuous and bounded. 
This implies the Fr\'echet-differentiability 
(see \cite[Lem.~2.1]{Mar_Sca_Frechet}).
The $C^{0,1}$-regularity of $DG_{\alpha,\eps}$
follows by analogous computations and the Lipschitz-continuity 
of $m_{\frac\alpha2,\eps}'$.
\end{proof}

\begin{remark}
    As a consequence of Propositions~\ref{prop:diff_psi}--\ref{prop:diff_G}, 
    one has that 
    $\Psi_{\beta,\varepsilon}':H\to H$
    is always Fr\'echet-differentiable along directions of $V$,
    while $G_{\alpha,\eps}:H\to\cL^2(H,H)$ is
    Fr\'echet-differentiable along directions of $V\subseteq L^q(\mathbb{T}^d)$
    if $d=1$ for $\sigma\in[0,\frac12)$,
    if $d=2$ for $\sigma\in(0,\frac12)$,
    and if  $d=3$ for $\sigma\in(\frac14,\frac12)$,
    namelly under the assumptions of Theorem~\ref{th:wp}.
\end{remark}

We are ready to state the differentiability result on
the approximated solution map. In particular, we show that 
the derivative of $S_\eps$ along a direction $h\in H$ can be characterised as the solution to the equation 
\begin{equation}\label{eq:Y}
  \d Y_\eps^h + A Y_\eps^h\,\d t+
  \Psi_{\beta,\eps}''(X_\eps^x)Y_\eps^h\,\d t
  =DG_{\alpha,\eps}(X_\eps^x)[Y_\eps^h]\,\d W, \qquad
  Y_\eps^h(0)=h.
\end{equation}

\begin{prop}
\label{prop:diff_S}
Under the assumptions of Theorem~\ref{th:wp},
let $\eps\in(0,1)$ and $h\in H$.
Then, there exists a unique process $Y_\eps^h$ such that, 
for every $T>0$,
\[
Y_\varepsilon^h \in L^\ell_\cP
(\Omega; C([0,T];H) \cap L^2(0,T;V))\quad\forall\,\ell\geq1,
\]
and it holds, for all $t\in[0,T]$, $\P$-almost surely, that 
\begin{multline*}
    (Y_\eps^h(t),v)_H
    +\int_0^t(A^{\frac12}Y_\eps^h(s), A^{\frac12}v)_H\,\d s
    +\int_0^t(\Psi''_{\beta,\eps}(X_\eps^x(s))Y_\eps^h, v)_H\,\d s\\
    =(h,v)_H + \left(\int_0^t
    DG_{\alpha,\eps}(X_\eps^x(s))[Y_\eps^h(s)]\d W(s), v\right)_H
    \quad\forall\,v\in V.
\end{multline*}
Moreover, $Y_\eps^h$ satisfies \eqref{eq:Y} also in a mild sense, 
namely, for all $t\in[0,T]$, $\P$-almost surely,
\[
    Y_\eps^h(t)
    +\int_0^tS(t-s)
    \Psi''_{\beta,\eps}(X_\eps^x(s))Y_\eps^h\,\d s
    =S(t)h + \int_0^tS(t-s)
    DG_{\alpha,\eps}(X_\eps^x(s))[Y_\eps^h(s)]\d W(s).
\]
Furthermore, the approximated solution map 
$S_\eps$
is Fr\'echet-differentiable and
\[
  DS_\eps(x)[h]=Y_\eps^h \quad\forall\,x,h\in H.
\]
\end{prop}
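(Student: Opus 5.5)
The plan has three steps: well-posedness of the linearised equation \eqref{eq:Y} via variational theory, equivalence with the mild form, and identification of $Y_\eps^h$ as the Fréchet derivative of $S_\eps$ through the difference quotient.

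\textbf{Well-posedness of \eqref{eq:Y}.} For fixed $\eps$ and fixed trajectory $X_\eps^x$, \eqref{eq:Y} is a linear SPDE with random, progressively measurable coefficients. I would apply the Pardoux--Krylov--Rozovski\u{\i} theory \cite{LiuRo} in the triplet $(V,H,V^*)$ to the operator $v\mapsto Av+\Psi''_{\beta,\eps}(X_\eps^x)v$ and the noise $v\mapsto DG_{\alpha,\eps}(X^x_\eps)[v]$.
\begin{itemize}
\item By Lemma~\ref{lem_app}-(iii), $0\le\Psi''_{\beta,\eps}\le\eps^{-\beta}$, so the drift is bounded, hemicontinuous and monotone uniformly in $\omega$.
\item By Proposition~\ref{prop:diff_G}, $\|DG_{\alpha,\eps}(z)[v]\|_{\cL^2(H,H)}\le C\|v\|_{L^q}$ uniformly in $z$. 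Under the assumptions of Theorem~\ref{th:wp} one can choose $q$ with $V\embed L^q(\Td)$ compactly.
\item Interpolation then gives $\|v\|_{L^q}^2\le\lambda\|v\|_V^2+C_\lambda\|v\|_H^2$, which is exactly the weak monotonicity and coercivity condition, as in the proof of Proposition~\ref{ex_app_thm}.
\end{itemize}
This yields a unique variational solution in $L^\ell_\cP(\Omega;C([0,T];H)\cap L^2(0,T;V))$ for every $\ell$, with estimates obtained from It\^o's formula for $\|Y\|_H^2$ raised to the power $\ell/2$, using BDG and Gronwall. The mild formulation follows by the standard equivalence of variational and mild solutions for linear equations with bounded perturbations: test with $S(t-s)^*v$, or approximate by Galerkin.

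\textbf{Fréchet differentiability.} Set $R^h:=X_\eps^{x+h}-X_\eps^x-Y_\eps^h$, where $Y_\eps^h$ is the derivative process at $x$. It satisfies
\[
\d R^h+AR^h\,\d t+\Psi''_{\beta,\eps}(X_\eps^x)R^h\,\d t=DG_{\alpha,\eps}(X_\eps^x)[R^h]\,\d W+\mathcal E_1\,\d t+\mathcal E_2\,\d W,\qquad R^h(0)=0,
\]
with the remainders
\[
\mathcal E_1=-\int_0^1\bigl(\Psi''_{\beta,\eps}(X_\eps^x+r\Delta)-\Psi''_{\beta,\eps}(X_\eps^x)\bigr)\Delta\,\d r,
\qquad
\mathcal E_2=\int_0^1\bigl(DG_{\alpha,\eps}(X_\eps^x+r\Delta)-DG_{\alpha,\eps}(X_\eps^x)\bigr)[\Delta]\,\d r,
\]
where $\Delta:=X_\eps^{x+h}-X_\eps^x$. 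The same It\^o/BDG/Gronwall argument gives
\[
\|R^h\|_{L^\ell(\Omega;C([0,T];H)\cap L^2(0,T;V))}\lesssim\|\mathcal E_1\|_{L^\ell(\Omega;L^2(0,T;V^*))}+\|\mathcal E_2\|_{L^\ell(\Omega;L^2(0,T;\cL^2(H,H)))}.
\]

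\textbf{Main obstacle: $o(\|h\|_H)$ for the remainders.} The Lipschitz bounds of Propositions~\ref{prop:diff_psi}--\ref{prop:diff_G} are stated in $L^4$, respectively $L^{\s_q}$ norms, while $\Delta$ is controlled only in $C([0,T];H)\cap L^2(0,T;V)$ with $\|\Delta\|\lesssim\|h\|_H$, using the Lipschitz continuity of $S_\eps$.

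I would try the Lipschitz estimates first. For $\mathcal E_1$ I would use $D\Psi'_{\beta,\eps}\in C^{0,1}_b(H;\cL(L^{2+\eta},V^*))$, giving $\|\mathcal E_1\|_{V^*}\lesssim\|\Delta\|_H\|\Delta\|_{L^{2+\eta}}$. Gagliardo--Nirenberg in $d\le3$ controls $\|\Delta\|_{L^{2+\eta}}$ by $\|\Delta\|_H^{1-\theta}\|\Delta\|_V^{\theta}$ with $\theta$ small, so $\mathcal E_1$ is quadratic in $\|h\|_H$ after Hölder in $\omega$ and $t$, using higher moments $\ell$. For $\mathcal E_2$ the same scheme applies with $\|\Delta\|_{L^q}\|\Delta\|_{L^{\s_q}}$, using interpolation between $H$ and $V$ in $d=1$.

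If integrability is insufficient in some regime, the fallback is to show only $\|\mathcal E_i\|=o(\|h\|_H)$ by dominated convergence, using continuity and boundedness of $\Psi''_{\beta,\eps}$ and $m'_{\frac\alpha2,\eps}$ together with $\Delta\to0$ in measure. This is enough for Fréchet differentiability, via \cite[Lem.~2.1]{Mar_Sca_Frechet}, and it identifies $DS_\eps(x)[h]=Y_\eps^h$.
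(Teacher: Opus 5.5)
Your proposal is correct and follows the paper's proof. Well-posedness of \eqref{eq:Y} comes from the variational theory, using $0\le\Psi''_{\beta,\eps}\le\eps^{-\beta}$ and $\|DG_{\alpha,\eps}(\cdot)[y]\|_{\cL^2(H,H)}\lesssim\|y\|_{L^q(\Td)}$ with $V\cembed L^q(\Td)$; the mild form follows from the standard variational/mild equivalence; and differentiability follows from an It\^o--BDG--Gronwall estimate on the remainder, whose forcing terms are then shown to vanish. The only difference is the splitting: the paper works with the difference quotient so that the vanishing factors $\Psi''_{\beta,\eps}(\cdot)-\Psi''_{\beta,\eps}(X^x_\eps)$ and $m'_{\frac\alpha2,\eps}(\cdot)-m'_{\frac\alpha2,\eps}(X^x_\eps)$ multiply $Y^h_\eps$, and it closes by H\"older and dominated convergence, using the margin $Y^h_\eps\in L^{2\bar\rho}(0,T;L^q(\Td))$ with $\bar\rho>1$ that comes from the compact embedding. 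Your quadratic remainders instead give explicit rates wherever the Lipschitz bounds close ($\mathcal E_1$ in $V^*$, and $\mathcal E_2$ for $d=1$). Your fallback for the other cases works exactly as in the paper, provided you apply that same integrability margin to $\Delta/\|h\|_H$, which is bounded in the same space as $Y^h_\eps$. Plain dominated convergence is not enough there, because $\Delta/\|h\|_H$ is not a fixed dominating function.
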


\begin{proof}
The existence and uniqueness of a solution $Y_\eps^h$
to \eqref{eq:Y} follows from the classical theory of SPDEs.
Indeed, the drift operator 
$y\mapsto\Psi_{\beta,\eps}''(X_\eps^x)y$, $y\in H$,
is Lipschitz-continuous on $H$, uniformly on
$\Omega\times[0,T]$, since $\Psi_{\beta,\eps}''$ is bounded.
Moreover, arguing as in the proof of 
Proposition~\ref{prop:diff_G},
for the noise covariance operator it holds that
$\|DG_{\alpha,\eps}(X_\eps^x)[y]\|_{\cL^2(H,H)}
\leq C_q\|y
\|_{L^q(\Td)}$. 
Since the inclusion $V\embed L^q(\Td)$ is compact
(recall that here $\sigma>0$ if $d=2$
and $\sigma>\frac14$ if $d=3$),
we deduce that for every $\zeta>0$, there exists $C_\zeta>0$
such that $\|DG_{\alpha,\eps}(X_\eps^x)[y]\|_{\cL^2(H,H)}
\leq \zeta\|y\|_V+
C_\zeta\|y\|_H$ for every $y\in V$.
These considerations ensure indeed that \eqref{eq:Y}
admits a unique solution $Y_\eps^h$. The fact that $Y_\eps^h$
is also a mild solution is a classical result, since 
both variational and mild solutions can be obtained as 
limits of sequences of analytically-strong solutions.
It is also immediate to check that 
\begin{equation}
\label{aux_est_2}
  \norm{Y_\eps^h}_{L^\ell_\cP(\Omega; 
  C([0,T]; H)\cap L^2(0,T; V))}\lesssim_{\ell}\norm{h}_H
  \quad\forall\,h\in H.
\end{equation}
We only need to prove that $DS_\eps(x)[h]=Y_\eps^h$.
To this end, note that for $\eta\in(-1,1)$ we have 
\begin{align*}
  &\d\left(\frac{X_\eps^{x+\eta h}-X_\eps^x}{\eta}-Y_\eps^h\right)
  +A\left(\frac{X_\eps^{x+\eta h}-X_\eps^x}{\eta}-Y_\eps^h\right)\,\d t\\
  &\qquad+\left(\frac{\Psi_{\beta,\eps}'(X_\eps^{x+\eta h})
  -\Psi_{\beta,\eps}'(X_\eps^x)}{\eta}-\Psi_{\beta,\eps}''(X_\eps^x)Y_\eps^h\right)\,\d t\\
  &=\left(\frac{G_{\alpha,\eps}(X_\eps^{x+\eta h})
  -G_{\alpha,\eps}(X_\eps^x)}{\eta}-DG_{\alpha,\eps}(X_\eps^x)[Y_\eps^h]\right)\,\d W,
  \qquad
  \frac{X_\eps^{x+\eta h}(0)-X_\eps^x(0)}{\eta}-Y_\eps^h(0)=0.
\end{align*}
Standard computations based on the It\^o formula
and the Burkholder-Davis-Gundy and Young inequalities yield
for every $t\in[0,T]$,
\begin{align*}
  &\left\|\frac{X_\eps^{x+\eta h}-X_\eps^x}{\eta}-Y_\eps^h\right\|_{L^\ell(\Omega; C([0,t];H))}^\ell 
  +\norm{\frac{X_\eps^{x+\eta h}
  -X_\eps^x}{\eta}-Y_\eps^h
  }_{L^\ell(\Omega; L^2(0,t; V))}^\ell \\
  &\lesssim_\ell 
  \left\|\frac{X_\eps^{x+\eta h}-X_\eps^x}{\eta}-Y_\eps^h\right\|^\ell_{L^\ell(\Omega; L^2(0,t; H))}\\
  &\qquad+
  \left\|\frac{\Psi_{\beta,\eps}'(X_\eps^{x+\eta h})
  -\Psi_{\beta,\eps}'(X_\eps^x)}{\eta}-\Psi_{\beta,\eps}''(X_\eps^x)Y_\eps^h
  \right\|_{L^\ell(\Omega; L^2(0,t; H))}^\ell\\
  &\qquad+
  \norm{\frac{G_{\alpha,\eps}(X_\eps^{x+\eta h})
  -G_{\alpha,\eps}(X_\eps^x)}{\eta}
  -DG_{\alpha,\eps}(X_\eps^x)[Y_\eps^h]}_{
  L^\ell(\Omega; L^2(0,t; \cL^2(H,H)))}^\ell.
\end{align*}
Now, since $|\Psi_{\beta,\eps}''|\leq\frac{1}{\eps^\beta}$, we have 
\begin{align*}
&\left\|\frac{\Psi_{\beta,\eps}'(X_\eps^{x+\eta h})
  -\Psi_{\beta,\eps}'(X_\eps^x)}{\eta}-\Psi_{\beta,\eps}''(X_\eps^x)Y_\eps^h
  \right\|_{L^\ell(\Omega; L^2(0,t; H))}^\ell\\
&\lesssim_T
\left\|
\int_0^1
\left(\Psi_{\beta,\eps}''(X_\eps^{x}+
r(X_\eps^{x+\eta h}-X_\eps^x))
\frac{X_\eps^{x+\eta h}-X_\eps^x}{\eta}
-
\Psi_{\beta,\eps}''(X_\eps^x)
Y_\eps^h\right)\,\d r
  \right\|_{L^\ell(\Omega; L^2(0,t; H))}^\ell\\
&\lesssim_{\eps,\ell, \beta}
\left\|
\frac{X_\eps^{x+\eta h}-X_\eps^x}{\eta}
-
Y_\eps^h
  \right\|_{L^\ell(\Omega; L^2(0,t; H))}^\ell\\
  &\qquad+\int_0^1
\left\|
\left(
\Psi_{\beta,\eps}''(X_\eps^{x}+
r(X_\eps^{x+\eta h}-X_\eps^x))-
\Psi_{\beta,\eps}''(X_\eps^x)
\right)
Y_\eps^h
  \right\|_{L^\ell(\Omega; L^2(0,t; H))}^\ell\,\d r.
\end{align*}
Thanks 
to Lemma~\ref{lem_app_G}, 
analogously computations yield,
for every $\lambda>0$, that
\begin{align*}
    &\norm{\frac{G_{\alpha,\eps}(X_\eps^{x+\eta h})
  -G_{\alpha,\eps}(X_\eps^x)}{\eta}
  -DG_{\alpha,\eps}(X_\eps^x)[Y_\eps^h]}_{
  L^\ell(\Omega; L^2(0,t; \cL^2(H,H)))}^\ell\\
  &\lesssim_{T,\eps,\ell}
  \lambda\left\|
\frac{X_\eps^{x+\eta h}-X_\eps^x}{\eta}
-Y_\eps^h
  \right\|_{L^\ell(\Omega; L^2(0,t; V))}^\ell
  +C_\lambda\left\|
\frac{X_\eps^{x+\eta h}-X_\eps^x}{\eta}
-
Y_\eps^h
  \right\|_{L^\ell(\Omega; L^2(0,t; H))}^\ell\\
  &\qquad+
  \int_0^1
\left\|
\left(
DG_{\alpha,\eps}(X_\eps^{x}+
r(X_\eps^{x+\eta h}-X_\eps^x))-
DG_{\alpha,\eps}(X_\eps^x)
\right)
[Y_\eps^h]
  \right\|_{L^\ell(\Omega; L^2(0,t; \cL^2(H,H)))}^\ell\,\d r.
\end{align*}
By choosing $\lambda$ sufficiently small and 
by using the Gronwall lemma, we obtain then
\begin{align*}
  &\left\|\frac{X_\eps^{x+\eta h}-X_\eps^x}{\eta}-Y_\eps^h\right\|_{L^\ell(\Omega; C([0,T];H))}^\ell 
  +\norm{\frac{X_\eps^{x+\eta h}
  -X_\eps^x}{\eta}-Y_\eps^h
  }_{L^\ell(\Omega; L^2(0,T; V))}^\ell \\
  &\lesssim_{T,\eps,\ell, \beta} 
  \int_0^1
\left\|
\left(
\Psi_{\beta,\eps}''(X_\eps^{x}+
r(X_\eps^{x+\eta h}-X_\eps^x))-
\Psi_{\beta,\eps}''(X_\eps^x)
\right)
Y_\eps^h\
  \right\|_{L^\ell(\Omega; L^2(0,T; H))}^\ell\,\d r\\
  &\qquad+
  \int_0^1
\left\|
\left(
DG_{\alpha,\eps}(X_\eps^{x}+
r(X_\eps^{x+\eta h}-X_\eps^x))-
DG_{\alpha,\eps}(X_\eps^x)
\right)
[Y_\eps^h]
  \right\|_{L^\ell(\Omega; L^2(0,T; \cL^2(H,H)))}^\ell\,\d r.
\end{align*}
The first term on the right-hand side satisfies, 
thanks to the H\"older inequality, 
\begin{align*}
    &\int_0^1
\left\|
\left(
\Psi_{\beta,\eps}''(X_\eps^{x}+
r(X_\eps^{x+\eta h}-X_\eps^x))-
\Psi_{\beta,\eps}''(X_\eps^x)
\right)
Y_\eps^h
  \right\|_{L^\ell(\Omega; L^2(0,T; H))}^\ell\,\d r\\
  &\leq
  \norm{Y_\eps^h}_{L^{2\ell}(\Omega; L^4(0,T; L^4(\Td)))}^\ell\\
  &\qquad\times\int_0^1
\left\|
\Psi_{\beta,\eps}''(X_\eps^{x}+
r(X_\eps^{x+\eta h}-X_\eps^x))-
\Psi_{\beta,\eps}''(X_\eps^x)
  \right\|_{L^{2\ell}(\Omega; L^4(0,T; L^4(\Td)))}^\ell\,\d r.
\end{align*}
Since $V\embed L^\s(\Td)$ for every $\s\geq2$, 
by interpolation one has 
$C([0,T]; H)\cap L^2(0,T; V)\embed L^4(0,T; L^4(\Td))$, 
so that, thanks to \eqref{aux_est_2},
\begin{align*}
    &\int_0^1
\left\|
\left(
\Psi_{\beta,\eps}''(X_\eps^{x}+
r(X_\eps^{x+\eta h}-X_\eps^x))-
\Psi_{\beta,\eps}''(X_\eps^x)
\right)
Y_\eps^h
  \right\|_{L^\ell(\Omega; L^2(0,T; H))}^\ell\,\d r\\
  &\lesssim
  \norm{h}_H^\ell
  \int_0^1
\left\|
\Psi_{\beta,\eps}''(X_\eps^{x}+
r(X_\eps^{x+\eta h}-X_\eps^x))-
\Psi_{\beta,\eps}''(X_\eps^x)
  \right\|_{L^{2\ell}(\Omega; L^4(0,T; L^4(\Td)))}^\ell\,\d r,
\end{align*}
where the last term 
converges to $0$
as $\eta\to0$ by the dominated convergence theorem,
since $\Psi_{\beta,\eps}''$ is 
continuous and bounded.
For the second term on the right-hand side, 
by using the notation
of the proof of Proposition~\ref{prop:diff_G}, 
thanks to the H\"older inequality 
we have, for every $\rho\in(1,+\infty)$,
\begin{align*}
    &\int_0^1
\left\|
\left(
DG_{\alpha,\eps}(X_\eps^{x}+
r(X_\eps^{x+\eta h}-X_\eps^x))-
DG_{\alpha,\eps}(X_\eps^x)
\right)
[Y_\eps^h]
  \right\|_{L^\ell(\Omega; L^2(0,T; \cL^2(H,H)))}^\ell\,\d r\\
  &\leq
  \norm{(\operatorname{I}+A)^{-\delta}}_{\cL^2(H,V_{2\sigma})}^\ell\\
  &\quad\times\E\int_0^1
  \norm{Y_\eps^h}_{L^{2\rho}(0,T;L^q(\Td))}^\ell
  \norm{m'_{\frac\alpha2,\eps}
  (X_\eps^{x}+
r(X_\eps^{x+\eta h}-X_\eps^x))-
m'_{\frac\alpha2,\eps}
  (X_\eps^{x})
}_{L^{\frac{2\rho}{\rho-1}}(0,T;L^{\s_q}(\Td))}^\ell
  \,\d r\\
  &\lesssim\norm{Y_\eps^h}_{L^{2\ell}(\Omega;
  L^{2\rho}(0,T; L^q(\Td)))}^\ell
  \norm{(\operatorname{I}+A)^{-\delta}}_{
  \cL^2(H,V_{2\sigma})}^\ell\\
  &\quad\times\int_0^1\norm{m'_{\frac\alpha2,\eps}
  (X_\eps^{x}+
r(X_\eps^{x+\eta h}-X_\eps^x))-
m'_{\frac\alpha2,\eps}
  (X_\eps^{x})
}_{L^{2\ell}(\Omega; L^{\frac{2\rho}{\rho-1}}(0,T; L^{\s_q}(\Td))}^\ell\,\d r.
\end{align*}
By definition of $q$ and assumption on $\sigma$, 
one has $V\cembed L^{q}(\Td)$ compactly.
Hence, by interpolation there is $\bar\rho=\bar\rho(q)>1$ such that 
$C([0,T]; H)\cap L^2(0,T; V)\embed 
L^{2\bar\rho}(0,T;L^q(\Td))$. It follows that, thanks to \eqref{aux_est_2},
\begin{align*}
    &\int_0^1
\left\|
\left(
DG_{\alpha,\eps}(X_\eps^{x}+
r(X_\eps^{x+\eta h}-X_\eps^x))-
DG_{\alpha,\eps}(X_\eps^x)
\right)
[Y_\eps^h]
  \right\|_{L^\ell(\Omega; L^2(0,T; \cL^2(H,H)))}^\ell\,\d r\\
  &\lesssim
  \norm{h}_H^\ell
  \norm{(\operatorname{I}+A)^{-\delta}}_{
  \cL^2(H,V_{2\sigma})}^\ell\\
  &\qquad\times\int_0^1\norm{m'_{\frac\alpha2,\eps}
  (X_\eps^{x}+
r(X_\eps^{x+\eta h}-X_\eps^x))-
m'_{\frac\alpha2,\eps}
  (X_\eps^{x})
}_{L^{2\ell}(\Omega; L^{\frac{2\bar\rho}{\bar\rho-1}}(0,T; L^{\s_q}(\Td))}^\ell\,\d r,
\end{align*}
where the last term converges to $0$ as $\eta\to0$
by the dominated convergence theorem, since $m_{\frac\alpha2}'$
is continuous and bounded. 
Putting everything together yields then 
\[
  \lim_{\eta\to0}\sup_{\|h\|_H\leq 1}
  \left\|\frac{X_\eps^{x+\eta h}-X_\eps^x}{\eta}-Y_\eps^h\right\|_{L^\ell(\Omega; C([0,T];H)\cap 
L^2(0,T; V))}^\ell=0,
\]
and this concludes the proof.
\end{proof}

\subsection{Uniform estimates}
In this section we derive some apriori estimates
on the family $(Y_\eps^h)_\eps$
of solutions to problem \eqref{eq:Y}, uniform with respect to the parameter $\varepsilon \in (0,1)$.

\begin{prop}[First estimate]
\label{prop_stime_Y_1}
 Under the assumptions of Theorem~\ref{th:wp},
let $h\in H$.
 Then, for every $T>0$ and $\ell\geq2$,
 there exists a constant $C>0$,
 only depending on $T, \ell, \delta, \sigma,d$ such that,
 for every $\eps\in(0,1)$,
\[
\norm{\yeh}_{L^{\ell}_\cP(\Omega; C([0,T]; H) \cap  L^2(0,T; V))} + \norm{\Psi_{\frac{\beta}{2}, \varepsilon}''(\xe)\yeh}_{L^{\ell}_\cP(\Omega; L^2(0,T; H))} \le C\|h\|_H.
\]
\end{prop}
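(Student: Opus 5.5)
We apply It\^o's formula for $\frac12\|\yeh\|_H^2$ to equation \eqref{eq:Y}. This is legitimate because, by Proposition~\ref{prop:diff_S}, $\yeh$ is a variational solution in the triplet $(V,H,V^*)$ with Lipschitz drift. For every $t\in[0,T]$, $\P$-almost surely, we obtain
\begin{multline*}
\frac12\|\yeh(t)\|_H^2+\int_0^t\|A^{\frac12}\yeh(s)\|_H^2\,\d s
+\int_0^t\int_{\Td}\Psi''_{\beta,\eps}(\xe(s))|\yeh(s)|^2\,\d s\\
=\frac12\|h\|_H^2+\int_0^t\left(\yeh(s),DG_{\alpha,\eps}(\xe(s))[\yeh(s)]\,\d W(s)\right)_H
+\frac12\int_0^t\|DG_{\alpha,\eps}(\xe(s))[\yeh(s)]\|^2_{\cL^2(H,H)}\,\d s.
\end{multline*}
The third term on the left is exactly $\int_0^t\|\Psi''_{\frac\beta2,\eps}(\xe)\yeh\|_H^2$ up to a constant. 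This uses the identity $\Psi''_{\beta,\eps}=1/m_{\beta,\eps}$ and the comparison $m_{\beta,\eps}\simeq m_{\frac\beta2,\eps}^2$. Strictly speaking $\rho_\eps*m_\beta$ and $(\rho_\eps*m_{\beta/2})^2$ are only comparable, so we should check that $(\Psi''_{\frac\beta2,\eps})^2\lesssim\Psi''_{\beta,\eps}$ uniformly in $\eps$. This follows from Jensen: $(\rho_\eps*m_{\beta/2})^2\le\rho_\eps*m_\beta$, and $(a+\eps^{\beta/2})^2\le 2(a^2+\eps^\beta)$. Hence $\|\Psi''_{\frac\beta2,\eps}(\xe)\yeh\|_H^2\le 2\int\Psi''_{\beta,\eps}(\xe)|\yeh|^2$, which is the correct direction. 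The term is also nonnegative, which is the monotonicity in Proposition~\ref{prop:diff_psi}.

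The key point is that the It\^o correction must be controlled uniformly in $\eps$ without any help from the drift. Proposition~\ref{prop:diff_G} gives $\|DG_{\alpha,\eps}(z)[y]\|_{\cL^2(H,H)}\le C\|y\|_{L^q(\Td)}$ with $C$ independent of $\eps$ and $z$. This uses $\|m'_{\frac\alpha2,\eps}\|_{L^\infty}\le\|m'_{\frac\alpha2}\|_{L^\infty}$ from Lemma~\ref{lem_app}-(i), valid since $\alpha\ge2$. Under the assumptions of Theorem~\ref{th:wp} the embedding $V\embed L^q(\Td)$ is compact, as noted in the proof of Proposition~\ref{prop:diff_S}. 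Ehrling's lemma then gives, for every $\zeta>0$,
\[
\|DG_{\alpha,\eps}(\xe)[\yeh]\|^2_{\cL^2(H,H)}\le\zeta\|\yeh\|_V^2+C_\zeta\|\yeh\|_H^2 .
\]
Choosing $\zeta$ small, this is absorbed by $\int\|A^{\frac12}\yeh\|^2$ plus an $H$-norm term, since $\|\cdot\|_V^2\simeq\|\cdot\|_H^2+\|A^{\frac12}\cdot\|_H^2$. This is the main step I expect to require care, since every constant must be checked to be $\eps$-independent. It uses only the uniform bound on $m'$ and no moment of $\xe$.

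Next we raise to the power $\frac\ell2$, take the supremum over $[0,t]$, and take expectations. For the stochastic integral, the Burkholder-Davis-Gundy and Young inequalities give
\[
\E\sup_{[0,t]}\Big|\int_0^\cdot(\yeh,DG_{\alpha,\eps}(\xe)[\yeh]\,\d W)_H\Big|^{\frac\ell2}\le\lambda\E\sup_{[0,t]}\|\yeh\|_H^\ell+C_\lambda\E\Big(\int_0^t\|DG_{\alpha,\eps}(\xe)[\yeh]\|^2_{\cL^2}\Big)^{\frac\ell2}.
\]
The last term is treated by the same Ehrling bound. Choosing $\lambda$ and $\zeta$ small, rearranging, and applying Gronwall's lemma to $t\mapsto\E\sup_{[0,t]}\|\yeh\|_H^\ell$ yields the bound on $\|\yeh\|_{L^\ell(\Omega;C([0,T];H)\cap L^2(0,T;V))}$. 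Through the nonnegative drift term, the same argument bounds $\E\|\Psi''_{\frac\beta2,\eps}(\xe)\yeh\|^\ell_{L^2(0,T;H)}$, giving $C\|h\|_H$ with $C$ depending only on $T,\ell,\delta,\sigma,d$ (and on $\alpha$ through $\|m'_{\frac\alpha2}\|_\infty$).
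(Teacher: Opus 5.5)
Your overall route is the paper's. It uses It\^o's formula for $\frac12\|\yeh\|_H^2$, the $\eps$-uniform bound $\|DG_{\alpha,\eps}(z)[y]\|_{\cL^2(H,H)}\lesssim\|y\|_{L^q(\Td)}$ with compactness of $V\embed L^q(\Td)$ (Ehrling), Burkholder--Davis--Gundy and Young for the martingale term, and Gronwall. The paper passes directly from the drift term $\int_0^t\int_{\Td}\Psi''_{\beta,\eps}(\xe)|\yeh|^2$ to $\|\Psi''_{\frac\beta2,\eps}(\xe)\yeh\|^2_{L^2(0,t;H)}$, so you are right that a comparison must be checked. However, the check you give proves the opposite inequality. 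Jensen yields $(\rho_\eps*m_{\beta/2})^2\le\rho_\eps*m_\beta$, hence $m_{\frac\beta2,\eps}^2\le 2m_{\beta,\eps}$. Taking reciprocals flips this to $\Psi''_{\beta,\eps}\le 2(\Psi''_{\frac\beta2,\eps})^2$. So the drift term is bounded \emph{above} by $2\|\Psi''_{\frac\beta2,\eps}(\xe)\yeh\|_H^2$, which gives no control of the latter. What you need is $(\Psi''_{\frac\beta2,\eps})^2\le C\,\Psi''_{\beta,\eps}$, i.e.\ the reverse-Jensen bound $m_{\beta,\eps}\le C\,m_{\frac\beta2,\eps}^2$, uniformly in $\eps$.

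That bound is true, but it needs a local argument using that $\rho_\eps$ is supported in $(-\eps,\eps)$. If $1-|r|\ge2\eps$, every $w$ with $|w-r|<\eps$ satisfies $\frac14(1-r^2)\le 1-w^2\le 3(1-r^2)$. Hence $\rho_\eps*m_\beta(r)\le 3^\beta m_\beta(r)$ and $\rho_\eps*m_{\beta/2}(r)\ge 2^{-\beta}m_{\beta/2}(r)$, so $\rho_\eps*m_\beta(r)\le 12^\beta(\rho_\eps*m_{\beta/2}(r))^2$. If $1-|r|<2\eps$, then $m_\beta\le(6\eps)^\beta$ on $(r-\eps,r+\eps)$. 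Hence $\rho_\eps*m_\beta(r)\le 6^\beta\eps^\beta\le 6^\beta m_{\frac\beta2,\eps}(r)^2$, since $m_{\frac\beta2,\eps}\ge\eps^{\beta/2}$. Adding $\eps^\beta\le m_{\frac\beta2,\eps}^2$ gives $m_{\beta,\eps}\le(12^\beta+1)\,m_{\frac\beta2,\eps}^2$ for all $r\in\mathbb R$ and $\eps\in(0,1)$. With this in place of your Jensen step, the rest of your proof goes through unchanged.
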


\begin{proof}
It\^o formula yields, 
for every $t \in [0,T]$, $\mathbb{P}$-almost surely, that
\begin{align*}
    &\frac 12\|\yeh(t)\|^2_H + 
    \int_0^t \| A^{\frac12}\yeh(s)\|^2_H \, \d s+ \int_0^t \int_{\Td}\Psi_{\beta, \varepsilon}''(\xe (s))|\yeh(s)|^2 \, \d s
    \\
    &= \frac 12 \|h\|^2_H 
   + \int_0^t \left(\yeh (s),
   DG_{\alpha,\eps}(\xex(s))[\yeh(s)] \, \d W(s)\right)_H \\
 &\qquad+ \frac 12 \int_0^t \norm{DG_{\alpha,\eps}(\xex(s))[\yeh(s)]}_{\cL^2(H,H)}^2\, \d s.
\end{align*}
Classical arguments based on the Burkholder-Davis-Gundy 
and Young inequalities imply that 
\begin{align*}
&\E\norm{\yeh}_{C([0,t]; H) \cap  L^2(0,t; V)}^\ell + \E\norm{\Psi_{\frac{\beta}{2}, \varepsilon}''(\xe)\yeh}_{L^2(0,t; H)}^\ell\\
&\lesssim_{T,\ell} \|h\|^{\ell}_H
+\E\norm{DG_{\alpha,\eps}(\xex)[\yeh]}^\ell_{L^2(0,t; \cL^2(H,H))}.
\end{align*}
Now,
we exploit again the fact that
$\|DG_{\alpha,\eps}(X_\eps^x)[Y_\eps^h]\|_{\cL^2(H,H)}
\leq C\|Y_\eps^h\|_{L^q(\Td)}$ for a constant $C>0$
independent of $\eps$ 
(see e.g.~the proof of 
Proposition~\ref{prop:diff_G}).
Since the inclusion $V\embed L^q(\Td)$ is compact,
we deduce that for every $\zeta>0$, there exists $C_\zeta>0$,
independent of $\eps$, such that 
\[
\E\norm{DG_{\alpha,\eps}(\xex)[\yeh]}_{L^2(0,t; \cL^2(H,H))}^\ell
\leq \zeta\E\norm{\yeh}_{L^2(0,t; V)}^\ell
+C_\zeta\E\norm{\yeh}_{L^2(0,t; H)}^\ell.
\]
The thesis follows then by choosing $\zeta$ small enough,
rearranging the terms, and using the Gronwall lemma.
\end{proof}

\begin{prop}[Second estimate]
\label{prop_stime_Y_2}
Under the assumptions of Theorem~\ref{th:wp},
let $h\in L^p(\Td)$, where
\begin{equation}
\label{eq:p}
p\in(2,+\infty) \quad\text{if } \sigma\in\left[\frac d4, \frac12\right), \qquad
p=\frac{2}{1-4\sigma/d} \quad\text{if } 
\sigma\in\left[0,\frac d4\right).
\end{equation}
Then, for every 
$T>0$ and $\ell\geq2$,
there exists a constant $C>0$,
only depending on $T,\ell,\delta,\sigma,d, p$, such that,
for every $\eps\in(0,1)$,
\[
\norm{\yeh}_{L^{\ell}_\cP(\Omega; L^\infty(0,T; L^p(\Td)))} + \norm{\Psi_{\frac{\beta}{p}, \varepsilon}''(\xe)\yeh}_{L^{\ell}_\cP(\Omega; L^p(0,T; L^p(\Td)))} \le C\|h\|_{L^p(\Td)}.
\]
\end{prop}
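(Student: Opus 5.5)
We apply the It\^o formula to the functional $y\mapsto\frac1p\norm{y}_{L^p(\Td)}^p$ along the linear equation \eqref{eq:Y}. Since this functional is not $C^2$ on $H$, we first regularise, e.g.\ by applying $\mathcal R_n=(\operatorname{I}+\frac1nA)^{-m}$ as in Subsection~\ref{ssec:uniq}, or by working with the It\^o formula for $L^p$-norms of variational solutions (Krylov). This yields, for all $t\in[0,T]$, $\P$-almost surely,
\begin{align*}
&\frac1p\norm{\yeh(t)}_{L^p(\Td)}^p+(p-1)\int_0^t\int_{\Td}|\yeh|^{p-2}|\nabla\yeh|^2\,\d s
+\int_0^t\int_{\Td}\Psi''_{\beta,\eps}(\xe)|\yeh|^p\,\d s\\
&=\frac1p\norm{h}_{L^p(\Td)}^p+\int_0^t\left(|\yeh|^{p-2}\yeh,DG_{\alpha,\eps}(\xe)[\yeh]\,\d W\right)_H
+\frac{p-1}2\int_0^t\sum_{k}\int_{\Td}|\yeh|^{p-2}|m'_{\frac\alpha2,\eps}(\xe)|^2|\yeh|^2|(\operatorname{I}+A)^{-\delta}e_k|^2\,\d s.
\end{align*}
The third term on the left controls $\norm{\Psi''_{\frac\beta p,\eps}(\xe)\yeh}_{L^p(0,t;L^p(\Td))}^p$, because $\Psi''_{\frac\beta p,\eps}{}^p\simeq\Psi''_{\beta,\eps}$, uniformly in $\eps$, by the definition of $m_{\s,\eps}$. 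The boundary contributions vanish under both boundary conditions.

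The It\^o correction is the key term. Since $m'_{\frac\alpha2,\eps}$ is uniformly bounded for $\alpha\geq2$ (Lemma~\ref{lem_app}), it is bounded by
\[
C\int_0^t\int_{\Td}|\yeh|^p\,\Phi\,\d s,\qquad \Phi:=\sum_k|(\operatorname{I}+A)^{-\delta}e_k|^2 .
\]
If $\sigma\in(\frac d4,\frac12)$, then $\Phi\in L^\infty(\Td)$ by $V_{2\sigma}\embed L^\infty$ and Lemma~\ref{reg_A}. The term is then bounded by $C\int_0^t\norm{\yeh}_{L^p}^p$, which Gronwall handles. If $\sigma\leq\frac d4$, we only know $\Phi\in L^{q_1}(\Td)$ with $q_1=\frac d{4\sigma}$ (or any $q_1>1$ when $\sigma=\frac d4$). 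We then set $w:=|\yeh|^{p/2}$, so that the gradient term on the left equals $\frac{4(p-1)}{p^2}\norm{\nabla w}_H^2$. By H\"older, $\int w^2\Phi\leq\norm{\Phi}_{L^{q_1}}\norm{w}_{L^{2q_1'}}^2$. Because $\sigma>0$ forces $q_1<\infty$ and $2q_1'=\frac{2}{1-4\sigma/d}$ is strictly subcritical for $V$, Gagliardo--Nirenberg and Young give
\[
\norm{w}_{L^{2q_1'}}^2\leq\zeta\norm{\nabla w}_H^2+C_\zeta\norm{w}_H^2 .
\]
This absorbs the term into the left-hand side. The case $\sigma=0$ gives $\Phi\in L^\infty$ only when $d<4\delta$, which is exactly the standing assumption $\delta>\frac d4$, so it is also fine. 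We expect the main obstacle to be this absorption in the borderline range of $\sigma$, and in particular matching the prescribed $p=\frac2{1-4\sigma/d}$ so that the initial datum $h\in L^p$ suffices while the embedding stays compact.

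For the stochastic integral we take the $\frac\ell p$-power, the supremum in time and expectations, and apply Burkholder--Davis--Gundy. The quadratic variation is $\sum_k\big(\int|\yeh|^{p-1}|m'|\,|(\operatorname{I}+A)^{-\delta}e_k|\big)^2$. Cauchy--Schwarz in $x$ bounds it by $\norm{\yeh}_{L^p}^p\int|\yeh|^p\Phi$ up to constants. Young's inequality then yields
\[
\lambda\,\E\sup_{s\le t}\norm{\yeh(s)}_{L^p}^{\ell}+C_\lambda\,\E\Big(\int_0^t\int|\yeh|^p\Phi\Big)^{\ell/p},
\]
and the last term is treated exactly as the correction term above. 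Rearranging, using Gronwall, and passing to the limit in the regularisation (by lower semicontinuity and Fatou) gives the stated bound with a constant independent of $\eps$. Independence from $\eps$ holds because only the uniform bounds on $m'_{\frac\alpha2,\eps}$ and the positivity of $\Psi''_{\beta,\eps}$ were used.
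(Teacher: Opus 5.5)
Your argument is correct in substance, but in the main case $0<\sigma\le\frac d4$ it follows a different route from the paper.

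The paper never uses the $L^p$-gradient term $p(p-1)\int|Y_\eps^h|^{p-2}|\nabla Y_\eps^h|^2$. It proceeds in three steps:
\begin{itemize}
\item It bounds the It\^o correction by $\sum_k\|(\operatorname{I}+A)^{-\delta}e_k\|^2_{L^p(\Td)}\int_0^t\|Y_\eps^h\|^p_{L^{p^2/(p-2)}(\Td)}$. This is where $p=\frac{2}{1-4\sigma/d}$, i.e.\ $V_{2\sigma}\embed L^p(\Td)$, is used.
\item It interpolates $\|Y_\eps^h\|^p_{L^{p^2/(p-2)}}\le\|Y_\eps^h\|_V^2\|Y_\eps^h\|^{p-2}_{L^p}$.
\item It applies Young to reach $\zeta\sup_s\|Y_\eps^h(s)\|^p_{L^p}+C_\zeta\|Y_\eps^h\|^p_{L^2(0,T;V)}$. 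The last norm is already controlled by $\|h\|_H\lesssim\|h\|_{L^p}$ via Proposition~\ref{prop_stime_Y_1}.
\end{itemize}
The BDG term is treated the same way.

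Your absorption into $\|\nabla|Y_\eps^h|^{p/2}\|_H^2$ buys three things. It is self-contained, since no prior $H$-level bound is needed. It works for every $p\ge2$, not only the $p$ in \eqref{eq:p}, so the obstacle you anticipate does not arise. And it does not depend on the dimension. The paper's interpolation, by contrast, is a low-dimensional device: for $d=3$ one has $p^2/(p-2)\ge8$, beyond the Sobolev range of $V$.

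There are two bookkeeping points.
\begin{itemize}
\item For $\sigma\in(0,\frac d4)$, the facts $(\operatorname{I}+A)^{-\delta}\in\cL^2(H,V_{2\sigma})$ and $V_{2\sigma}\embed L^{2d/(d-4\sigma)}(\Td)$ give $\Phi\in L^{d/(d-4\sigma)}(\Td)$. That is the conjugate of your $q_1=\frac d{4\sigma}$, so you have swapped an exponent with its conjugate. With the correct pairing you need $w\in L^{d/(2\sigma)}(\Td)$. This is strictly subcritical for $V$ except at $d=3$, $\sigma=\frac14$, where the exponent is $6$. In that case, first split $\Phi=\Phi\ind_{\{\Phi>K\}}+\Phi\ind_{\{\Phi\le K\}}$ with $K$ large to create the small constant.
\item The quadratic-variation integrand is $|Y_\eps^h|^{p}m'_{\frac\alpha2,\eps}(X_\eps^x)(\operatorname{I}+A)^{-\delta}e_k$, not $|Y_\eps^h|^{p-1}\cdots$. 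Your Cauchy--Schwarz bound is the right one for the correct integrand.
\end{itemize}

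Finally, your remark for $\sigma=0$ is stronger than you use it. By duality, $\|\Phi\|_{L^\infty(\Td)}=\|(\operatorname{I}+A)^{-\delta}\|^2_{\cL(H,L^\infty(\Td))}$. This is finite as soon as $V_{2\delta}\embed L^\infty(\Td)$, i.e.\ for $\delta>\frac d4$, which is the embedding used in Subsection~\ref{sec:inv}. Since $\delta>\frac d4+\sigma$ is a standing assumption, this holds for every admissible $\sigma$. So $\Phi\in L^\infty(\Td)$ always. The It\^o correction and the quadratic variation then have densities bounded by $C\|Y_\eps^h\|_{L^p}^p$ and $C\|Y_\eps^h\|_{L^p}^{2p}$ respectively. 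Gronwall closes directly, with no case distinction and no absorption.
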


\begin{proof}
We use It\^o formula for the function 
$x\mapsto\|x\|^p_{L^p(\Td)}$, obtaining
for every $t \in [0,T]$, $\mathbb{P}$-almost surely,
\begin{multline}
\label{y_est_lp}
    \|\yeh(t)\|^p_{L^p(\Td)}+
    p(p-1) \int_0^t\int_{\Td} |\yeh(s)|^{p-2}|\nabla\yeh(s)|^2\, \d s + p \int_0^t\int_{\Td}\Psi_{\beta,\eps}''(\xe(s))|\yeh(s)|^p\, \d s
    \\
    =\|h\|^p_{L^p(\Td)} + 
    p \int_0^t \left( |\yeh(s)|^{p-2}\yeh(s), m_{\frac{\alpha}{2},\eps}'(\xe(s))\yeh(s)
    (\operatorname{I}+A)^{-\delta}\, \d W(s)\right)_H
    \\
    + \frac{p(p-1)}{2} \int_0^t  \sum_{k \in \mathbb N_+}\int_{\Td}|\yeh(s)|^p|m_{\frac{\alpha}{2},\eps}'(\xe(s))|^2|(\operatorname{I}+A)^{-\delta}e_k|^2 \, \d s.
\end{multline}
If $\sigma=0$, then $p=2$, and the thesis has been already proven in Proposition~\ref{prop_stime_Y_1}. Hence, 
consider the two cases $0 < \sigma \leq \frac d 4$ and $\frac d4 <\sigma \le \frac12$.

\smallskip
\noindent\boxed{\text{Case $\frac d4 <\sigma \le \frac12$.}}
Since $V_{2\sigma}\embed L^\infty(\Td)$, 
by Lemma~\ref{reg_A}, Lemma~\ref{lem_app}-(i),
and \eqref{mse1} we have
\begin{multline*}
\frac{p(p-1)}{2}
\int_0^t  \sum_{k \in \mathbb N_+}\int_{\Td}|\yeh(s)|^p|m_{\frac{\alpha}{2},\eps}'(\xe(s)|^2|(\operatorname{I}+A)^{-\delta}e_k|^2 \, \d s
\\
\lesssim
\|m_{\frac\alpha2}'\|_{L^\infty(\mathbb R)}^2
\sum_{k \in \mathbb N_+} \|(\operatorname{I}+A)^{-\delta}e_k\|^2_{V_{2\sigma}}
\int_0^t  \|\yeh(s)\|^p_{L^p(\Td)} \, \d s
\lesssim_{\alpha, p,\sigma,\delta,d} \int_0^t  \|\yeh(s)\|^p_{L^p(\Td)} \, \d s.
\end{multline*}
Moreover, 
by the Burkholder-Davis-Gundy, H\"older, and Young inequalities and arguing as above, for all $\ell\geq 2p$ 
we estimate the stochastic integral as 
\begin{align*}
&\E\sup_{r\in[0,t]}
\left| \int_0^r \left( |\yeh(s)|^{p-2}\yeh(s), m_{\frac{\alpha}{2}}'(\xe(s))\yeh(s)(\operatorname{I}+A)^{-\delta}\, \d W(s)\right)_H\right\|^{\frac\ell{p}}
\\
&\lesssim_{\ell} \mathbb{E} \left( \int_0^t\sum_{k\in\mathbb N_+}
\left( |\yeh(s)|^{p-2}\yeh(s), 
m_{\frac{\alpha}{2}}'(\xe(s))\yeh(s)
(\operatorname{I}+A)^{-\delta}e_k\right)_H^2
\,\d s\right)^{\frac{\ell}{2p}}
\\
&= \mathbb{E} \left( \int_0^t\sum_{k \in \mathbb N_+}
\left| \int_{\Td}|\yeh(s)|^p m_{\frac{\alpha}{2}}'(\xe(s))(\operatorname{I}+A)^{-\delta}e_k\right|^2\,\d s\right)^{\frac{\ell}{2p}}
\\
&\lesssim_{\delta,\sigma,d}
\|m_{\frac\alpha2}'\|_{L^\infty(\mathbb R)}^{\frac\ell{p}}
\norm{(\operatorname{I}+A)^{-\delta}}_{\cL^2(H,V_{2\sigma})}^{\frac\ell{p}}
\E\left( \int_0^t \|\yeh(s)\|^{2p}_{L^p(\Td)}\, \d s \right)^{\frac{\ell}{2p}}\\
&\lesssim_{\alpha,\delta,\sigma,d,p}
\E\int_0^t\|\yeh(s)\|^{\ell}_{L^p(\Td)}\, \d s.
\end{align*}
The thesis follows then by raising \eqref{y_est_lp}
to power $\frac\ell{p}$, taking supremum in time and expectations, 
and applying the Gronwall lemma.

\smallskip
\noindent\boxed{\text{Case $0 < \sigma \le \frac d4$.}} 
Recall that
$V_{2\sigma}\embed L^{p}(\Td)$, where 
$p=\frac{2}{1-4\sigma/d}>2$ if $0<\sigma<\frac d4$
and $p\in(2,\infty)$ is arbitrary if $\sigma=\frac d4$.
Since $V\embed L^{\frac{p^2}{p-2}}(\Td)\embed L^p(\Td)$,
by the H\"older inequality, interpolation, and the Young inequality
we have for every $\zeta>0$ that
\begin{align*}
&\frac{p(p-1)}{2} \int_0^t  \sum_{k \in \mathbb N_+}\int_{\Td}|\yeh(s)|^p|m_{\frac{\alpha}{2}}'(\xe(s)|^2|(\operatorname{I}+A)^{-\delta}e_k|^2 \, \d s
\\
&\le \frac{p(p-1)}{2} \|m_{\frac{\alpha}{2}}'\|_{L^\infty(\erre)}^2 \sum_{k \in \mathbb N_+} \|(\operatorname{I}+A)^{-\delta}e_k\|^2_{L^{p}(\Td)}\int_0^t \|\yeh(s)\|^p_{L^{\frac{p^2}{p-2}}(\Td)} \, \d s\\
&\lesssim_{\alpha,\delta,\sigma,d,p}
\int_0^t  \|\yeh(s)\|^p_{L^{\frac{p^2}{p-2}}(\Td)} \, \d s
\leq \int_0^t  \|\yeh(s)\|^2_V 
\|\yeh(s)\|^{p-2}_{L^p(\Td)} \, \d s\\
&\leq
\zeta\sup_{s\in[0,t]}\|\yeh(s)\|^{p}_{L^p(\Td)} \, \d s
+C_{\zeta,p}\|\yeh\|_{L^2(0,T; V)}^p.
\end{align*}
Arguing as above, 
since $V\embed L^{\frac{p^2}{p-1}}(\Td)\embed L^p(\Td)$,
for all $\ell \ge 2p$ we have
\begin{align*}
&\E\sup_{r\in[0,t]}
\left| \int_0^r \left( |\yeh(s)|^{p-2}\yeh(s), m_{\frac{\alpha}{2},\eps}'(\xe(s))\yeh(s)(\operatorname{I}+A)^{-\delta}\, \d W(s)\right)_H\right|^{\frac\ell{p}}
\\
&\lesssim_{\ell,p} \mathbb{E} 
\left( \int_0^t  \sum_{k \in \mathbb N_+}\left|
\int_{\Td}|\yeh(s)|^p m_{\frac{\alpha}{2},\eps}'(\xe(s))
(\operatorname{I}+A)^{-\delta}e_k\right|^2\,\d s
\right)^{\frac{\ell}{2p}}
\\
&\lesssim_{\alpha,\delta,\sigma,d,p}
 \mathbb{E} \left( \int_0^t 
 \|\yeh(s)\|^{2p}_{L^{\frac{p^2}{p-1}}(\Td)}\, \d s \right)^{\frac{\ell}{2p}}\leq
 \mathbb{E} \left( \int_0^t 
 \|\yeh(s)\|_V^{2}
 \|\yeh(s)\|_{L^p(\Td)}^{2p-2}\,\d s
 \right)^{\frac{\ell}{2p}}\\
 &\leq\E\left[\sup_{s\in[0,t]}
 \|\yeh(s)\|^{\frac{\ell (p-1)}{p}}_{L^p(\Td)}
 \|\yeh\|_{L^2(0,T; V)}^{\frac\ell p}\right]
\le \zeta  \mathbb{E} 
\sup_{s \in [0,T]}\|\yeh(s)\|^{p}_{L^p(\Td)} + C_{\zeta,p} 
\mathbb{E}  \|\yeh\|^p_{L^2(0,T; V)}.
\end{align*}
The thesis follows then by raising \eqref{y_est_lp}
to power $\frac\ell{p}$, taking supremum in time and expectations, 
and rearranging the terms by choosing $\zeta$ sufficiently small.
\end{proof}

\begin{prop}[Third estimate]
\label{prop_stime_Y_3}
Under the assumptions of Theorem~\ref{th:wp},
let $\sigma>0$, 
let $p$ satisfy \eqref{eq:p},
and assume that:
\begin{itemize}
\item if $\sigma\in[\frac d4, \frac12)$, then 
$\gamma>\beta+2$ and $p\geq2\frac{\gamma-2}{\gamma-\beta-2}$;
\item if $\sigma\in(0,\frac d4)$, then
$\alpha\geq\beta\frac{1-4\sigma/d}{4\sigma/d}+2$ and $\gamma\geq\frac\beta{4\sigma/d}+2$.
\end{itemize}
Then, for every $\ell\in(2,+\infty)$,
$T>0$, and 
$\vartheta \in (0,\frac12)\cap
\left(0, \sigma+\frac 12 -\frac d4\right)$,
there exists a constant $C>0$,
only depending on $T,\ell,\vartheta,\alpha,\beta,\delta,\sigma,d, p$, such that,
for every $h\in L^p(\Td)\cap V_{1-2\vartheta}$ it holds
\[
\yeh \in L^\ell_\cP(\Omega; C([0,T];V_{1-2\vartheta})) \quad\forall\,\eps\in(0,1),
\]
and, for every $\eps\in(0,1)$,
\begin{equation}
    \label{est_mr}
\|\yeh\|_{L^\ell_\cP(\Omega; C([0,T];V_{1-2\vartheta}))} 
\le C\left[\left( 1+ 
\|\Psi_{\gamma}(x)\|_{L^1(\Td)}^{\frac12-\frac1p}\right)
\|h\|_{L^p(\Td)}+\|h\|_{V_{1-2\vartheta}}\right].
\end{equation}
\end{prop}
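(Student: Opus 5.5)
We will work on the mild formulation of \eqref{eq:Y} given in Proposition~\ref{prop:diff_S}, namely
\[
  \yeh(t)=S(t)h-\int_0^tS(t-s)\Psi''_{\beta,\eps}(\xe(s))\yeh(s)\,\d s
  +\int_0^tS(t-s)DG_{\alpha,\eps}(\xe(s))[\yeh(s)]\,\d W(s),
\]
and estimate each of the three terms in $L^\ell(\Omega; C([0,T];V_{1-2\vartheta}))$ by deterministic and stochastic maximal regularity, in the spirit of Proposition~\ref{prop5Xeps}. The first term is trivial, since $S$ is contractive on $V_{1-2\vartheta}$, and gives $\|h\|_{V_{1-2\vartheta}}$. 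The finiteness of the left-hand side for each fixed $\eps$ follows from the same argument, because $\Psi''_{\beta,\eps}$ is bounded by $\eps^{-\beta}$.

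\emph{Stochastic convolution.} By the factorisation method (e.g.~\cite[Prop.~A.24]{dapratozab} or \cite{VNVW}), continuity in $V_{1-2\vartheta}$ follows once $DG_{\alpha,\eps}(\xe)[\yeh]\in L^\ell(\Omega;L^{r}(0,T;\cL^2(H,V_{-2\vartheta'})))$ for some $\vartheta'$ slightly larger than $\vartheta$ and $r$ large. Here we exploit $\vartheta<\sigma+\frac12-\frac d4$. We write $DG_{\alpha,\eps}(\xe)[\yeh]u=m'_{\frac\alpha2,\eps}(\xe)\yeh\,(\operatorname{I}+A)^{-\delta}u$ and bound its Hilbert--Schmidt norm into a negative Sobolev space by duality and H\"older. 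Using $(\operatorname{I}+A)^{-\delta}\in\cL^2(H,V_{2\sigma})\embed L^{p}$-type spaces and $V_{2\vartheta'}\embed L^{s}$, this reduces to $\|\yeh\|_{L^p(\Td)}$ times a constant. Then Proposition~\ref{prop_stime_Y_2} (the $L^\infty(0,T;L^p)$ bound, uniform in $\eps$) closes this term with $\|h\|_{L^p(\Td)}$. In $d=1$ the needed exponent count is precisely $2\vartheta'<2\sigma+1-\frac d2$.

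\emph{Drift convolution (main obstacle).} Here the factor $\Psi''_{\beta,\eps}(\xe)$ is unbounded uniformly in $\eps$. By deterministic maximal regularity, $\int_0^\cdot S(\cdot-s)f(s)\,\d s\in C([0,T];V_{1-2\vartheta})$ whenever $f\in L^{r}(0,T;V_{-2\vartheta''})$ with $\frac1r<\vartheta''\!-\!\vartheta$-type compatibility. It therefore suffices to control $f=\Psi''_{\beta,\eps}(\xe)\yeh$ in $L^r(0,T;L^{s}(\Td))$ for some $s\ge1$ close to $1$ (in $d=1$, $L^1\embed V_{-2\vartheta''}$ for $\vartheta''>\frac14$). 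We split by H\"older as
\[
\Psi''_{\beta,\eps}(\xe)\yeh=\bigl(\Psi''_{\beta-\frac\beta p,\eps}(\xe)\bigr)\bigl(\Psi''_{\frac\beta p,\eps}(\xe)\yeh\bigr).
\]
The second factor lies in $L^p(0,T;L^p)$ with norm $\lesssim\|h\|_{L^p}$ by Proposition~\ref{prop_stime_Y_2}. The first factor is controlled in $L^\infty(0,T;L^{p'}(\Td))$ through $\int_\Td\Psi''_{p'\beta(1-1/p)}(\xe)\lesssim\int_\Td\Psi_{\gamma,\eps}(\xe)$, using Lemma~\ref{lem_app}-(v),(viii) and Proposition~\ref{prop3Xeps}. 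This requires $\frac{\beta(p-1)}{p}\cdot\frac{p}{p-1}\cdot$(appropriate exponent)$\le\gamma-2$, and this is exactly where the hypotheses $\gamma>\beta+2$ with $p\ge2\frac{\gamma-2}{\gamma-\beta-2}$ (respectively the conditions on $\alpha,\gamma$ when $\sigma<\frac d4$, with $p=\frac2{1-4\sigma/d}$) should enter. The resulting power of $\|\Psi_\gamma(x)\|_{L^1}$ should come out as $\frac12-\frac1p$.

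The delicate point is matching the integrability indices. The product lands only in a weak space, such as $L^{r}(0,T;L^{s})$ with $s=\frac{2p}{p+2}$ or similar. One then has to check that $L^s\embed V_{-2\vartheta''}$ with $\vartheta''$ compatible with $\vartheta<\frac12$, and with time integrability $r\ge p$ sufficient for maximal regularity into $C([0,T];V_{1-2\vartheta})$. If the time exponent is insufficient, I would first use interpolation with the $\Psi''_{\beta/2,\eps}(\xe)\yeh\in L^2(0,T;H)$ bound of Proposition~\ref{prop_stime_Y_1}. Summing the three contributions and taking $L^\ell(\Omega)$ norms, using H\"older in $\omega$ with the moment bounds of Proposition~\ref{prop3Xeps}, gives \eqref{est_mr}.
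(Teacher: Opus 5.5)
Your proposal has two genuine gaps, one in each convolution term.

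\emph{Drift term.} You leave the index matching open, and with your split it cannot be closed in general. Controlling $\Psi''_{\beta(p-1)/p,\eps}(X^x_\eps)$ in $L^\infty(0,T;L^{p'}(\Td))$ through $\int_{\Td}\Psi''_{\beta}\lesssim\int_{\Td}\Psi_{\gamma}$ only places the product in $L^p(0,T;L^1(\Td))$.

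\begin{itemize}
\item In $d=1$ you need $L^1\hookrightarrow V_{-2\vartheta''}$ with $\vartheta''>\frac14$. The deterministic convolution then reaches $C([0,T];V_{1-2\vartheta})$ only if $\frac1p<\frac14+\vartheta$. This fails for $\vartheta\le\frac1p-\frac14$ whenever $p<4$, e.g.\ for $\sigma<\frac18$, where $p=\frac{2}{1-4\sigma}<4$.
\item In $d=3$ it cannot work at all: the embedding needs $\vartheta''>\frac34$, while the convolution allows at most $\vartheta''<\frac12+\vartheta<\frac34$.
\item This route also yields the power $1-\frac1p$ instead of $\frac12-\frac1p$.
\item The hypothesis $p\geq2\frac{\gamma-2}{\gamma-\beta-2}$ never enters; only $\gamma\geq\beta+2$ does. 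That is a sign the mechanism is not the intended one.
\end{itemize}

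The missing ingredient is the \emph{space-time} dissipation bound of Proposition~\ref{prop3Xeps}. There, $\Psi'_{\gamma,\eps}(X^x_\eps)\Psi'_{\beta,\eps}(X^x_\eps)$ is bounded in $L^\ell(\Omega;L^1(0,T;L^1(\Td)))$, and by Lemma~\ref{lem_app}-(ix) so is $\Psi''_{\gamma+\beta-2,\eps}(X^x_\eps)$. This puts $\Psi''_{\beta(p-1)/p,\eps}(X^x_\eps)$ in $L^{\frac{2p}{p-2}}((0,T)\times\Td)$ provided $\frac{2\beta(p-1)}{p-2}\leq\gamma+\beta-2$. That condition is exactly $p\geq2\frac{\gamma-2}{\gamma-\beta-2}$ (resp.\ $\gamma\geq\frac{\beta}{4\sigma/d}+2$ when $p=\frac{2}{1-4\sigma/d}$). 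H\"older in $(t,x)$ against the $L^p((0,T)\times\Td)$ bound of Proposition~\ref{prop_stime_Y_2} then gives $\Psi''_{\beta,\eps}(X^x_\eps)Y^h_\eps$ in $L^\ell(\Omega;L^2(0,T;H))$ with constant $(1+\|\Psi_\gamma(x)\|_{L^1(\Td)}^{\frac12-\frac1p})\|h\|_{L^p(\Td)}$. $L^2$-maximal regularity then lands in $C([0,T];V)\hookrightarrow C([0,T];V_{1-2\vartheta})$, with no index matching and in every dimension.

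\emph{Stochastic term.} The direction of your index shift is reversed. The stochastic convolution gains strictly less than one order, so data in $\cL^2(H,V_{-2\vartheta'})$ reach $C([0,T];V_{1-2\vartheta})$ only if $\vartheta'<\vartheta$.

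With the correct direction, closing with the a priori bound of Proposition~\ref{prop_stime_Y_2} fails for small $\vartheta$. You then essentially need $DG_{\alpha,\eps}(X^x_\eps)[Y^h_\eps]$ in $\cL^2(H,H)$. Knowing only $(\operatorname{I}+A)^{-\delta}\in\cL^2(H,V_{2\sigma})$, this requires $Y^h_\eps\in L^q(\Td)$ with $q\geq\frac{d}{2\sigma}$. But $\frac{d}{2\sigma}>p=\frac{2}{1-4\sigma/d}$ as soon as $\sigma<\frac d8$, and interpolating with $L^2(0,T;V)$ gives too little time integrability as $\vartheta\to0$.

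The paper does not close a priori; it absorbs.
\begin{itemize}
\item First, $\|DG_{\alpha,\eps}(X^x_\eps)[Y^h_\eps]\|_{\cL^2(H,H)}\lesssim\|Y^h_\eps\|_{L^q(\Td)}$ with $q$ just above $\frac{d}{2\sigma}$ (just above $2$ if $\sigma\geq\frac d4$).
\item The embedding $V_{1-2\vartheta}\hookrightarrow L^q(\Td)$ is compact precisely because $\vartheta<\sigma+\frac12-\frac d4$.
\item Ehrling's lemma and Proposition~\ref{prop_stime_Y_1} then give $\|DG_{\alpha,\eps}(X^x_\eps)[Y^h_\eps]\|_{L^\ell(\Omega;L^\ell(0,T;\cL^2(H,H)))}\leq\lambda\|Y^h_\eps\|_{L^\ell(\Omega;C([0,T];V_{1-2\vartheta}))}+C_\lambda\|h\|_H$.
\item Stochastic maximal regularity from $L^\ell(0,T;\cL^2(H,H))$ into $C^{\vartheta-\frac1\ell}([0,T];V_{1-2\vartheta})$, for $\ell>\frac1\vartheta$, lets you absorb the $\lambda$-term into the left-hand side.
\end{itemize}
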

\begin{proof}
The proof of \eqref{est_mr}
is based on maximal regularity results 
on the mild formulation of equation \eqref{eq:Y}.
To this end, we show 
uniform estimates on the deterministic and stochastic 
convolution terms. Note that it is not restrictive
to show the statement only for $\ell>\frac1\vartheta$.\\
\noindent\underline{\sc Step 1.}
First, we show that
\begin{equation}\label{aux:4}
\|\Psi_{\beta,\varepsilon}''(\xe)\yeh\|_{L^\ell_\cP(\Omega; L^2(0,T;H))} \le C\left( 1+
\|\Psi_{\gamma}(x)\|_{L^1(\Td)}^{\frac12-\frac1p}\right)
\|h\|_{L^p(\Td)} .
\end{equation}
To this end, thanks to the H\"older inequality, we have
\begin{align*}
&\|\Psi_{\beta,\varepsilon}''(\xe)\yeh\|_{L^\ell(\Omega; L^2(0,T;H))}
=\norm{\Psi_{\frac{\beta}{p},\varepsilon}''(\xe)\ye\cdot
\Psi_{\frac{\beta(p-1)}{p},\varepsilon}''(\xe)}_{L^\ell(\Omega; L^2(0,T;H))}\\
&\leq 
\norm{\Psi_{\frac{\beta}{p},\varepsilon}''(\xe)\ye}_{
L^{2\ell}(\Omega; L^p(0,T; L^p(\Td)))}
\norm{\Psi_{\frac{\beta(p-1)}{p},\varepsilon}''(\xe)}_{L^{2\ell}(\Omega; L^{\frac{2p}{p-2}}(0,T; L^{\frac{2p}{p-2}}(\Td)))}.
\end{align*}
On the one hand, 
from Proposition~\ref{prop_stime_Y_2} we infer that
\[
\norm{\Psi_{\frac{\beta}{p},\varepsilon}''(\xe)\ye}_{
L^{2\ell}(\Omega; L^p(0,T; L^p(\Td)))}\lesssim
\|h\|_{L^p(\Td)}.
\]
On the other hand, we note that 
\[
\norm{\Psi_{\frac{\beta(p-1)}{p},\varepsilon}''(\xe)}_{L^{2\ell}(\Omega; L^{\frac{2p}{p-2}}(0,T); L^{\frac{2p}{p-2}}(\Td)))}
=\norm{\Psi_{\frac{2\beta(p-1)}{p-2},\varepsilon}''(\xe)}_{
L^{\frac{\ell(p-2)}p}(\Omega; L^1(0,T; L^1(\Td)))}^{\frac{p-2}{2p}},
\]
where by assumption on $\gamma$
and by \eqref{eq:p} one always has that
$\frac{2\beta(p-1)}{p-2} \leq \gamma+\beta-2$:
hence, 
from Lemma\ref{lem_app}-(ix) and Proposition~\ref{prop3Xeps} we obtain
\begin{align*}
\norm{\Psi_{\frac{\beta(p-1)}{p},\varepsilon}''(\xe)}_{L^{2\ell}(\Omega; L^{\frac{2p}{p-2}}(0,T); L^{\frac{2p}{p-2}}(\Td)))}
&\lesssim
1+
\|\Psi_\gamma(x)\|_{L^1(\Td)}^{\frac{p-2}{2p}},
\end{align*}
and estimate \eqref{aux:4} follows.

\noindent\underline{\sc Step 2.}
Secondly, we show that there exists a family 
$(C_\lambda)_\lambda$
independent of $\eps$ such that, for every $\lambda>0$,
it holds that 
\begin{equation}
    \label{aux:5}
  \norm{DG_{\alpha,\eps}(\xex)[\yeh]}_{L^\ell_\cP(\Omega;L^\ell(0,T; \cL^2(H,H)))}
  \leq \lambda
  \|\yeh\|_{L^\ell_\cP(\Omega; C([0,T];V_{1-2\vartheta}))} 
  +C_\lambda \|h\|_H.
\end{equation}
To this end, we note that in the notation of Proposition~\ref{prop:diff_G},
one has that 
\[
  \norm{DG_{\alpha,\eps}(\xex)[\yeh]}_{\cL^2(H,H)}
  \lesssim_q\norm{\yeh}_{L^q(\Td)}
  \quad\text{a.e.~in } \Omega\times(0,T),
\]
and \eqref{aux:5} follows from the fact that 
$q$ in Proposition~\ref{prop:diff_G} can be chosen 
such that 
the inclusion $V_{1-2\vartheta}\embed L^q(\Td)$ is compact
and from Proposition~\ref{prop_stime_Y_1}.
Indeed, if
$\sigma\in[\frac d4, \frac12)$ then $q>2$ is arbitrary, 
and $V_{1-2\vartheta}\embed L^q(\Td)$ is compact provided that 
$q$ is close enough to $2$ since $1-2\vartheta>0$.
Otherwise, if $\sigma\in(0,\frac d4)$
by assumption on $\vartheta$ one has 
$1-2\vartheta-\frac d2>0-2\sigma$, so that 
by the Sobolev embeddings it holds 
$V_{1-2\vartheta}\embed L^{d/{2\sigma}}(\Td)$ compactly:
since in this case
$q>\frac d{2\sigma}$
is arbitrary, $V_{1-2\vartheta}\embed L^q(\Td)$ is compact 
provided that 
$q$ is close enough to $\frac d{2\sigma}$.

\noindent\underline{\sc Step 3.}
We are now ready to 
exploit maximal regularity arguments on the mild
formulation of equation \eqref{eq:Y}
(recall Proposition~\ref{prop:diff_S}), namely
\begin{multline*}
\yeh = S(\cdot)h  
+ \int_0^\cdot S(\cdot-s)
\Psi''_{\beta, \varepsilon}(X_\eps^x(s))\yeh(s)  \, {\rm d}s
+ \int_0^\cdot S(\cdot-s) DG_{\alpha,\eps}(\xe(s))[\yeh(s)]
\,\d W (s).
\end{multline*}
By taking the $L^\ell(\Omega; C([0,T];V_{1-2\vartheta}))$-norm on both sides we get 
\begin{align*}
 \|\yeh \|_{L^\ell(\Omega; C([0,T];V_{1-2\vartheta}))}   
 &\le  \|S(\cdot)h\|_{L^\ell(\Omega; C([0,T];V_{1-2\vartheta}))}    
 \\
 &+  \norm{\int_0^\cdot S(\cdot-s)\Psi''_{\beta, \varepsilon}(X_\eps^x(s))\yeh(s)  \, {\rm d}s }_{L^\ell(\Omega; 
 C([0,T];V_{1-2\vartheta}))}    
 \\
 &+  \norm{ \int_0^\cdot S(\cdot-s) DG_{\alpha,\eps}(\xe(s))[\yeh(s)]\,\d W (s) }_{L^\ell(\Omega; C([0,T];V_{1-2\vartheta}))}.
\end{align*}
Let us estimate the three terms on the right-hand side separately.
First of all, since the restriction of $S$ to $V_{1-2\vartheta}$ is a strongly continuous semigroup of contractions on 
$V_{1-2\vartheta}$, it holds that
\[
\|S(\cdot)h\|_{L^\ell(\Omega; C([0,T];V_{1-2\vartheta}))} \le \|h\|_{V_{1-2\vartheta}}.
\]
Secondly, from the maximal regularity results
\cite[Thm.~3.3, Ex.~3.2]{VNVW} 
and \cite[Prop.~A.24]{dapratozab}, 
as well as the estimate \eqref{aux:4}, we infer that
\begin{align*}
&\norm{\int_0^\cdot S(\cdot-s)\Psi''_{\beta, \varepsilon}(X_\eps^x(s))\yeh(s)  \, {\rm d}s }_{L^\ell(\Omega; 
 C([0,T];V_{1-2\vartheta}))}\\
 &\qquad\lesssim
    \left\Vert\int_0^\cdot S(\cdot-s)\Psi''_{\beta, \varepsilon}(X(s))\yeh(s)  \, {\rm d}s \right\Vert_{L^\ell(\Omega; L^2(0,T;V_2) \cap C([0,T];V))}   
    \\
    &\qquad\lesssim \|\Psi_{\beta,\varepsilon}''(\xe)\yeh\|_{L^\ell(\Omega; L^2(0,T;H))}\lesssim
    \left( 1+
\|\Psi_{\gamma}(x)\|_{L^1(\Td)}^{\frac12-\frac1p}\right)
\|h\|_{L^p(\Td)}.
\end{align*}
Lastly, by the stochastic maximal regularity results
\cite[Thm~3.5]{VNVW} and by
\eqref{aux:5},
we infer that
\begin{align*}
    &\norm{ \int_0^\cdot S(\cdot-s) DG_{\alpha,\eps}(\xe(s))[\yeh(s)]\,\d W (s) }_{L^\ell(\Omega; C([0,T];V_{1-2\vartheta}))}\\
    &\qquad\lesssim
    \norm{ \int_0^\cdot S(\cdot-s) DG_{\alpha,\eps}(\xe(s))[\yeh(s)]\,\d W (s) }_{L^\ell(\Omega;
    C^{\vartheta-\frac 1 \ell}([0,T];V_{1-2\vartheta}))}\\
    &\qquad\lesssim
    \norm{DG_{\alpha,\eps}(\xe)[\yeh]}_{
    L^\ell(\Omega; L^\ell(0,T;\cL^2(H,H)))}\\
    &\qquad\leq
    \lambda
  \|\yeh\|_{L^\ell_\cP(\Omega; C([0,T];V_{1-2\vartheta}))} 
  +C_\lambda \|h\|_H
\end{align*}
so that the estimate \eqref{est_mr} follows
by choosing $\lambda$ small enough.
\end{proof}

\begin{prop}[Fourth estimate]
\label{prop_stime_Y_4}
Under the assumptions of Theorem~\ref{th:wp},
let $d=1$, $\sigma>0$, 
$x\in V_{2\xi}$ for some
$\xi\in(\sigma, 1)\cap(\frac 14, \frac12+\sigma)$, let $\alpha\geq4$, let $p$ satisfy \eqref{eq:p},
and assume that:
\begin{itemize}
\item if $\sigma\in[\frac 14, \frac12)$, then 
$\gamma>2\beta+4$ and $p\geq2\frac{\gamma-2}{\gamma-\beta-2}$;
\item if $\sigma\in(0,\frac 14)$, then
$\alpha\geq2(\beta+1)\frac{1-4\sigma}{4\sigma}+2$
and $\gamma\geq
\frac{2(\beta +1)}{4\sigma}+2$.
\end{itemize}
Then,
for every $\ell\in(2,+\infty)$,
$T>0$, and
$\vartheta\in(0,\sigma]$,
there exists a constant $C>0$,
only depending on $T,\ell,\vartheta,\alpha,\beta,\delta,\sigma,d, p$, such that,
for every $h\in V_{2\sigma+1-2\vartheta}$ it holds
\[
\yeh \in L^\ell_\cP(\Omega; C([0,T];V_{2\sigma+1-2\vartheta})
\cap L^2(0,T; V_{2\sigma+1})) \quad\forall\,\eps\in(0,1),
\]
and, for every $\eps\in(0,1)$,
\begin{multline}
    \label{est2_mr}
\|\yeh\|_{L^\ell_\cP(\Omega; C([0,T];V_{2\sigma+1-2\vartheta})\cap
L^2(0,T; V_{2\sigma+1}))} 
\le C\left( 1+ \|x\|_{V_{2\xi}}^{2-\frac1p}+
\|\Psi_{\gamma}(x)\|_{L^1(\Td)}^{2-\frac1p}\right)
\|h\|_{V_{2\sigma+1-2\vartheta}}.
\end{multline}
\end{prop}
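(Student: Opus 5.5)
We argue as in Proposition~\ref{prop_stime_Y_3}, one level up in regularity: we use the mild formulation of \eqref{eq:Y} from Proposition~\ref{prop:diff_S}, now in $C([0,T];V_{2\sigma+1-2\vartheta})\cap L^2(0,T;V_{2\sigma+1})$. Deterministic maximal regularity (\cite[Thm.~3.3, Ex.~3.2]{VNVW}, \cite[Prop.~A.24]{dapratozab}) requires the drift $\Psi''_{\beta,\eps}(\xe)\yeh$ in $L^2(0,T;V_{2\sigma})$. Stochastic maximal regularity (\cite[Thm~3.5]{VNVW}) requires $DG_{\alpha,\eps}(\xe)[\yeh]$ in $L^\ell(0,T;\cL^2(H,V_{2\sigma}))$; the latter also gives $L^2(0,T;V_{2\sigma+1})$ and, since $\vartheta\le\sigma$ and $\ell$ may be taken large, time continuity in $V_{2\sigma+1-2\vartheta}$. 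The initial term is bounded by $\|h\|_{V_{2\sigma+1-2\vartheta}}$ by contractivity.

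\textbf{Step 1 (noise term).} We write $DG_{\alpha,\eps}(\xe)[\yeh]u=m'_{\frac\alpha2,\eps}(\xe)\yeh(\operatorname{I}+A)^{-\delta}u$. Lemma~\ref{multiplication} ($d=1$) together with Lemma~\ref{reg_A} bounds its $\cL^2(H,V_{2\sigma})$ norm by $\|m'_{\frac\alpha2,\eps}(\xe)\|_{V_{2\eta}}\|\yeh\|_{V_{2\eta}}$ for some $\eta\in(\frac14,\frac12)\cap(\sigma,\frac12)$. Since $\alpha\ge4$, $m'_{\frac\alpha2}$ is Lipschitz, so, as in the proof of Lemma~\ref{G_reg}, $\|m'_{\frac\alpha2,\eps}(\xe)\|_{V_{2\eta}}\lesssim\|\xe\|_{V_{2\xi}}$. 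The latter is controlled in $L^{\ell}(\Omega;C([0,T];V_{2\xi}))$ by Proposition~\ref{prop5Xeps}, with bound $1+\|x\|_{V_{2\xi}}+\|\Psi_\gamma(x)\|_{L^1}$; this uses $\gamma\ge2\beta$, which holds. The factor $\|\yeh\|_{V_{2\eta}}$ is bounded by Proposition~\ref{prop_stime_Y_3} with $1-2\vartheta'=2\eta$, using $h\in V_{2\sigma+1-2\vartheta}\embed L^p\cap V_{2\eta}$. Alternatively, interpolation between $H$ and $V_{2\sigma+1}$ gives a $\lambda$-absorption as in Step~2 there. The product is handled by H\"older in $\Omega$.

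\textbf{Step 2 (drift term, the main obstacle).} We must bound $\|\Psi''_{\beta,\eps}(\xe)\yeh\|_{L^2(0,T;V_{2\sigma})}$, interpolating between $H$ and $V$ as in Proposition~\ref{prop5Xeps}:
\[
\|\Psi''_{\beta,\eps}(\xe)\yeh\|_{V_{2\sigma}}\lesssim\|\Psi''_{\beta,\eps}(\xe)\yeh\|_H^{1-2\sigma}\big(\|\Psi''_{\beta,\eps}(\xe)\yeh\|_H+\|\Psi'''_{\beta,\eps}(\xe)\nabla\xe\,\yeh\|_H+\|\Psi''_{\beta,\eps}(\xe)\nabla\yeh\|_H\big)^{2\sigma}.
\]
The $H$ part is estimate \eqref{aux:4}. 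For the gradient terms we use $|\Psi'''_{\beta,\eps}|\lesssim\Psi''_{\beta+1,\eps}$ (Lemma~\ref{lem_app}-(v)). We put $\yeh\in L^\infty$ via $V_{2\sigma+1-2\vartheta}\embed L^\infty$ (valid since $d=1$), $\nabla\yeh$ in $L^\infty(0,T;L^{r})$ for suitable $r$, and $\nabla\xe$ through $\Psi''_{\frac\gamma2}(\xe)\nabla\xe\in L^2(0,T;H)$ from Proposition~\ref{prop3Xeps}. The remaining singular weights, such as $\Psi''_{\beta+1-\frac\gamma2}(\xe)$ and $\Psi''_{\beta}(\xe)$ raised to H\"older powers, are then integrable by Proposition~\ref{prop3Xeps} and Lemma~\ref{lem_app}-(ix). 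This is precisely where the hypotheses $\gamma>2\beta+4$, respectively $\gamma\ge\frac{2(\beta+1)}{4\sigma}+2$ and the lower bound on $\alpha$, are used. The exponent $2\sigma<1$ lets Young's inequality absorb the $\yeh$-norm in the space being estimated, with a small parameter, producing the stated power $2-\frac1p$ of the data.

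\textbf{Step 3 (closing).} Combining the three contributions gives
\[
\|\yeh\|\le\|h\|+\lambda\|\yeh\|+C_\lambda(1+\|x\|_{V_{2\xi}}+\|\Psi_\gamma(x)\|_{L^1})^{2-\frac1p}\|h\|,
\]
with norms as in \eqref{est2_mr}. Absorbing the $\lambda$ term yields \eqref{est2_mr}. Finiteness of the left side for fixed $\eps$, which is needed for the absorption, follows from the Lipschitz bounds of Lemma~\ref{lem_app} at fixed $\eps$.
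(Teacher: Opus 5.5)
Your architecture (mild formulation of \eqref{eq:Y}, deterministic maximal regularity with the drift in $L^2(0,T;V_{2\sigma})$, stochastic maximal regularity with the noise in $L^\ell(0,T;\cL^2(H,V_{2\sigma}))$) is the paper's, and your Step~1 is essentially the paper's Step~2. The gap is in your Step~2, at the term $\Psi''_{\beta,\eps}(\xe)\nabla\yeh$.

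The only time-uniform control on $\nabla\yeh$ comes from $C([0,T];V_{2\sigma+1-2\vartheta})$. This gives $\nabla\yeh\in L^\infty(0,T;H^{2(\sigma-\vartheta)})\embed L^\infty(0,T;L^r(\Td))$ with $r=\frac{2}{1-4(\sigma-\vartheta)}$. The H\"older partner must then satisfy $\Psi''_{\beta,\eps}(\xe)\in L^2(0,T;L^{q}(\Td))$ with $q=\frac1{2(\sigma-\vartheta)}$. Via Proposition~\ref{prop3Xeps} and Lemma~\ref{lem_app}-(ix) this requires $\frac{\beta}{2(\sigma-\vartheta)}\leq\gamma+\beta-2$, which the hypotheses do not give for $\vartheta$ close to $\sigma$. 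At the admissible endpoint $\vartheta=\sigma$, which is exactly the case $\delta=\frac12$ in Remark~\ref{rem_max_reg2}, you get $r=2$. You would then need $\Psi''_{\beta,\eps}(\xe)\in L^2(0,T;L^\infty(\Td))$ uniformly in $\eps$, and none of the estimates of Section~\ref{sec:X} provides this.

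Independently, your claim that the absorption ``produces the stated power $2-\frac1p$'' is not justified. Young's inequality with a random weight $K(\omega)$ in front of the $\yeh$-norm leaves a factor of order $K^{2\sigma/(1-2\sigma)}$. Its moments carry a power of $\|\Psi_\gamma(x)\|_{L^1(\Td)}$ that blows up as $\sigma\to\frac12$.

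The missing idea is to not differentiate $\yeh$ at all. The paper uses Lemma~\ref{multiplication} to write $\|\Psi''_{\beta,\eps}(\xe)\yeh\|_{V_{2\sigma}}\lesssim\|\Psi''_{\beta,\eps}(\xe)\|_{V_{2\zeta}}\|\yeh\|_{V_{2\rho}}$, with $\zeta,\rho\in[\sigma,\frac12)$ and $\zeta+\rho>\sigma+\frac14$. The factor $\yeh\in L^\infty(0,T;V_{2\rho})$ is already bounded in terms of the data by Proposition~\ref{prop_stime_Y_3}, so no absorption is needed for the drift. All the fractional regularity is placed on the coefficient depending only on $\xe$. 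By interpolation and $|\Psi'''_{\beta,\eps}|\lesssim\Psi''_{\beta+1,\eps}$,
\[
\|\nabla\Psi''_{\beta,\eps}(\xe)\|_H\lesssim\|\Psi''_{\beta+1,\eps}(\xe)\|_{L^\kappa(\Td)}\|\xe\|_{V_{1+1/\kappa}}.
\]
Here $\xe\in L^2(0,T;V_{1+1/\kappa})$ by Proposition~\ref{prop5Xeps} provided $\kappa\geq\frac1{2\sigma}$. The first factor is controlled by $\|\Psi_\gamma(\xe)\|_{L^1(\Td)}$ provided $\kappa(\beta+1)+2\leq\gamma$. The choice $\kappa=\frac{\gamma-2}{\beta+1}$ is precisely where $\gamma\geq\frac{2(\beta+1)}{4\sigma}+2$ (resp.\ $\gamma>2\beta+4$) enters. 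It yields \eqref{aux:7} uniformly in $\vartheta\in(0,\sigma]$, with powers compatible with $2-\frac1p$.

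Within your own framework you could also repair the term by taking $\nabla\yeh\in L^2(0,T;H^{2\sigma})$ from the $L^2(0,T;V_{2\sigma+1})$ component you are estimating. You would pair it with $\Psi''_{\beta,\eps}(\xe)\in L^\infty(0,T;L^{q}(\Td))$ for the conjugate exponent, e.g.\ $q=\frac1{2\sigma}$ when $\sigma<\frac14$, which the hypotheses on $\gamma$ do control. This closes for every $\vartheta\in(0,\sigma]$, but only with a worse power of the data.
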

\begin{proof}
The proof of \eqref{est2_mr} follows as 
a consequence of \eqref{est_mr} by employing a 
a bootstrap argument. To this end, 
we show that \eqref{est_mr} allows to refine
the estimates on the deterministic and stochastic 
convolutions performed in the proof of Proposition~\ref{prop_stime_Y_3}.
 Again, we only focus on values of
$\ell>\frac1\vartheta$, as this is not restrictive.
\\
\noindent\underline{\sc Step 1.}
First,
we show that estimate \eqref{aux:4} can be refined 
by using \eqref{est_mr} as
\begin{equation}
\|\Psi_{\beta,\varepsilon}''(\xe)\yeh\|_{L^\ell_\cP(\Omega; L^2(0,T;V_{2\sigma}))}
\le
C\left(1+
  \|x\|_{V_{2\sigma}}\right)
  \left(1+
  \|\Psi_{\gamma}(x)\|_{L^1(\Td)}^{1-\frac1p}\right)
\|h\|_{V_{2\sigma+1-2\vartheta}}.
\label{aux:7}
\end{equation}
Indeed, by Lemma~\ref{multiplication} we have
for every $\zeta,\rho\in[\sigma,\frac12)$
with $\zeta+\rho>\sigma+\frac d4$
that
\[
  \norm{\Psi_{\beta,\eps}''(\xex)\yeh}_{V_{2\sigma}}
  \lesssim\norm{\Psi_{\beta,\eps}''(\xex)}_{V_{2\zeta}}
  \norm{\yeh}_{V_{2\rho}}.
\]
By choosing $\rho$ sufficiently close to $\frac12$, 
one has $V_{2\rho}\embed L^p(\Td)$ and 
by Proposition~\ref{prop_stime_Y_3} it holds, 
for every $l\geq2$,
\[
  \norm{\yeh}_{L^{l}(\Omega; L^\infty(0,T; V_{2\rho}))}
  \lesssim_l \left(1+
  \|\Psi_{\gamma}(x)\|_{L^1(\Td)}^{\frac12-\frac1p}\right)
\|h\|_{V_{2\rho}}.
\]
Moreover, by interpolation and the H\"older inequality we have, 
for every $\kappa> 2$,
\begin{align*}
    \|\Psi''_{\beta,\varepsilon}(\xe)\|_{V_{2\zeta}} 
    &\lesssim
    \|\Psi''_{\beta,\varepsilon}(\xe)\|_H
    +
    \|\Psi''_{\beta,\varepsilon}(\xe)\|_H^{1-2\zeta} \|\nabla\Psi''_{\beta,\varepsilon}(\xe)\|_H^{2\zeta} 
    \\
    &= \|\Psi''_{\beta,\varepsilon}(\xe)\|_H
    + \|\Psi''_{\beta,\varepsilon}(\xe)\|_H^{1-2\zeta} \|\Psi'''_{\beta,\varepsilon}(\xe)\nabla X_\eps^x\|_H^{2\zeta} 
    \\
    & \le\|\Psi''_{\beta,\varepsilon}(\xe)\|_H
    + \|\Psi''_{\beta,\varepsilon}(\xe)\|_H^{1-2\zeta} \|\Psi'''_{\beta,\varepsilon}(\xe)\|^{2\zeta}_{L^\kappa(\Td)}
    \|\xe\|^{2\zeta}_{W^{1,\frac{2\kappa}{\kappa-2}}(\Td)}.
\end{align*}
Thanks to Lemma~\ref{lem_app}-(v) 
we have
\begin{align*}
\|\Psi'''_{\beta,\varepsilon}(\xe)\|^{2\zeta}_{L^\kappa(\Td)} 
&\lesssim \|\Psi''_{\beta +1,\varepsilon}(\xe)\|_{
L^\kappa(\Td)}^{2 \zeta} 
= \|\Psi''_{\kappa(\beta +1),\varepsilon}(\xe)\|_{L^1(\Td)}^{\frac{2 \zeta}{\kappa}}\\ 
&\lesssim 
\|\Psi_{\kappa\left(\beta +1\right)+2,\varepsilon}(\xe)\|_{L^1(\Td)}^{\frac{2 \zeta}{\kappa}},\\
    \|\Psi''_{\beta,\varepsilon}(\xe)\|_H^{1-2\zeta}
    &=  \|\Psi''_{2\beta,\varepsilon}(\xe)\|_{L^1(\Td)}^{\frac12-\zeta}
    \lesssim \|\Psi_{2\beta+2,\varepsilon}(\xe)\|_{L^1(\Td)}^{\frac12-\zeta},\\
    \|\Psi''_{\beta,\varepsilon}(\xe)\|_H
    &\lesssim \|\Psi_{2\beta+2,\varepsilon}(\xe)\|^{\frac12}_{L^1(\Td)},
\end{align*}
so that by the continuous embedding 
$V_{1+\frac{d}{\kappa}}\embed 
W^{1, \frac{2\kappa}{\kappa-2}}(\Td)$ we infer then 
\begin{align*}
  &\norm{\Psi_{\beta,\eps}''(\xex)}_{
  L^2(0,T;V_{2\zeta})}
  \lesssim
  \|\Psi_{2\beta+2,\varepsilon}(\xe)\|^{\frac12}_{
  L^\infty(0,T;L^1(\Td))}\\
  &\qquad+\|\Psi_{2\beta+2,\varepsilon}(\xe)\|_{L^\infty(0,T;L^1(\Td))}^{\frac12-\zeta}
  \|\Psi_{\kappa\left(\beta +1\right)+2,\varepsilon}(\xe)\|_{L^\infty(0,T;L^1(\Td))}^{\frac{2 \zeta}{\kappa}}
  \|\xe\|_{L^2(0,T;V_{1+\frac d\kappa})}^{2\zeta}.
\end{align*}
The last term on the right-hand side is
uniformly bounded in $L^l(\Omega)$ for every $l\geq2$
by Proposition~\ref{prop5Xeps} since $x\in V_{2\sigma}$, 
provided that
$1+\frac d\kappa\leq1+2\sigma$, i.e.~for
$\kappa\geq\frac d{2\sigma}$.
The terms involving $\Psi$ on the right-hand side are
uniformly bounded in $L^l(\Omega)$ for every $l\geq2$
by Proposition~\ref{prop3Xeps} 
provided that 
$2\beta+2\leq\gamma$ and
$\kappa(\beta +1)+2\leq\gamma$. 
The first condition is true by assumption, 
while the second one is satisfied 
for $\kappa=\frac{\gamma-2}{\beta+1}$
since $\frac{\gamma-2}{\beta+1}>2$ and $
\frac{\gamma-2}{\beta+1}\geq\frac d{2\sigma}$
by assumption.
By putting everything together, the H\"older inequality 
and the choice $l=2\ell$ yield
\begin{align*}
  &\norm{\Psi''_{\beta,\varepsilon}(\xe)\yeh}_{
  L^\ell(\Omega;L^2(0,T;V_{2\sigma}))}
  \lesssim
  \norm{\Psi''_{\beta,\varepsilon}(\xe)}_{
  L^{2\ell}(\Omega;L^2(0,T;V_{2\zeta}))}
  \norm{\yeh}_{
  L^{2\ell}(\Omega;L^\infty(0,T;V_{2\rho}))}\\
  &\lesssim
  \left(1+
  \|\Psi_{\gamma}(x)\|_{L^1(\Td)}^{\frac12}\right)
  \left(1+
  \|\Psi_{\gamma}(x)\|_{L^1(\Td)}^{\frac12-\frac1p}\right)
\|h\|_{V_{2\rho}}\\
&\qquad+
\left(1+
  \|\Psi_{\gamma}(x)\|_{L^1(\Td)}^{\frac12-\zeta+\frac{2\zeta}\kappa}\right)
  \left(1+
  \|x\|_{V_{2\sigma}}^{2\zeta}\right)
  \left(1+
  \|\Psi_{\gamma}(x)\|_{L^1(\Td)}^{\frac12-\frac1p}\right)
\|h\|_{V_{2\rho}},
\end{align*}
and \eqref{aux:7} follows from the fact that 
$\kappa>2$ and $\zeta,\rho<\frac12$.

\noindent\underline{\sc Step 2.}
Analogously, we show here that \eqref{est_mr}
allows to refine also \eqref{aux:5}, in the sense that 
there exists a family $(C_\lambda)_\lambda$ independent of $\eps$ such that, for every $\lambda>0$
\begin{multline}
  \norm{DG_{\alpha,\eps}(\xex)[\yeh]}_{L^\ell_\cP(\Omega;L^\ell(0,T; \cL^2(H,V_{2\sigma})))}
  \leq 
  \lambda
\norm{\yeh}_{L^{2\ell}(\Omega; C([0,T]; V_{1+2\sigma-2\vartheta}))}\\
+C_\lambda 
\left(1+\|\Psi_\gamma(x)\|_{L^1(\Td)}
  +\|x\|_{V_{2\xi}}\right)
  \left(1+
  \|\Psi_{\gamma}(x)\|_{L^1(\Td)}^{\frac12-\frac1p}\right)
\|h\|_{V_{2\sigma+1-2\vartheta}}.
  \label{aux:8}
\end{multline}
Indeed, since $\xi>\sigma$ and $\xi>\frac d4$,
for every $\zeta\in[\sigma, \xi)\cap (\frac d4, \xi)$
by Lemma~\ref{multiplication}
we have 
\begin{align*}
  \norm{DG_{\alpha,\eps}(\xex)[\yeh]}_{\cL^2(H,V_{2\sigma})}^2
  &=\sum_{k\in\mathbb N_+}\norm{m_{\frac\alpha2, \eps}'(\xex)\yeh(\operatorname{I}+A)^{-\delta}e_k}_{V_{2\sigma}}^{2}\\
  &\lesssim\norm{(\operatorname{I}+A)^{-\delta}}^2_{\cL^2(H,V_{2\sigma})}
  \norm{m_{\frac\alpha2, \eps}'(\xex)}_{V_{2\zeta}}^2\norm{\yeh}_{V_{2\zeta}}^2.
\end{align*}
Since $d=1$, one can choose $\zeta\in[\sigma, \xi)\cap (\frac14, \frac12)$: 
hence, the fact that $(m_{\frac\alpha2, \eps}')_\eps$ and
$(m_{\frac\alpha2, \eps}'')_\eps$ are uniformly bounded in 
$L^\infty(\mathbb R)$ since $\alpha\geq4$,
the H\"older inequality,
and 
Propositions~\ref{prop5Xeps} and \ref{prop_stime_Y_3} yield
\begin{multline*}
  \norm{DG_{\alpha,\eps}(\xex)[\yeh]}_{L^\ell_\cP(\Omega;
  L^\ell(0,T; \cL^2(H,V_{2\sigma})))}\\
  \lesssim
  \norm{\xex}_{L^{2\ell}(\Omega; L^\infty(0,T; V_{2\zeta}))}
  \norm{\yeh}_{L^{2\ell}(\Omega; L^\infty(0,T; V_{2\zeta}))}\\
  \lesssim \left(1+\|\Psi_\gamma(x)\|_{L^1(\Td)}
  +\|x\|_{V_{2\zeta}}\right)
  \left(1+
  \|\Psi_{\gamma}(x)\|_{L^1(\Td)}^{\frac12-\frac1p}\right)
\|h\|_{V_{2\zeta}}
\end{multline*}
and \eqref{aux:8} follows since $\zeta<\xi$.

\noindent\underline{\sc Step 3.}
We are now ready to conclude the bootstrap argument by
exploiting again maximal regularity arguments on the mild
formulation of equation \eqref{eq:Y}.
To this end, the proof is exactly analogous to the 
one performed in Proposition~\ref{prop_stime_Y_3}, by 
relying on the improved estimates \eqref{aux:7}--\eqref{aux:8}
instead of \eqref{aux:4}--\eqref{aux:5}.
In particular, since the restriction of the $S$ to $V_{2\sigma+1-2\vartheta}$ is a strongly continuous semigroup of contractions on 
$V_{2\sigma+1-2\vartheta}$, it holds that
\[
\|S(\cdot)h\|_{L^\ell(\Omega; C([0,T];V_{2\sigma+1-2\vartheta})\cap L^2(0,T; V_{2\sigma+1}))} \lesssim \|h\|_{V_{2\sigma+1-2\vartheta}}.
\]
Moreover, 
from the maximal regularity results
\cite[Thm.~3.3, Ex.~3.2]{VNVW} 
and \cite[Prop.~A.24]{dapratozab}, we infer that
\begin{align*}
&\norm{\int_0^\cdot S(\cdot-s)\Psi''_{\beta, \varepsilon}(X_\eps^x(s))\yeh(s)  \, {\rm d}s }_{L^\ell(\Omega; 
 C([0,T];V_{2\sigma+1-2\vartheta})\cap L^2(0,T; V_{2\sigma+1}))}\\
 &\qquad\lesssim
    \left\Vert\int_0^\cdot S(\cdot-s)\Psi''_{\beta, \varepsilon}(X(s))\yeh(s)  \, {\rm d}s \right\Vert_{L^\ell(\Omega; L^2(0,T;V_{2\sigma+2}) 
    \cap C([0,T];V_{2\sigma+1}))}\\
    &\qquad\lesssim \|\Psi_{\beta,\varepsilon}''(\xe)\yeh\|_{L^\ell(\Omega; L^2(0,T;V_{2\sigma}))}.
\end{align*}
Lastly, by the stochastic maximal regularity results
\cite[Thm~3.5]{VNVW}
we infer that
\begin{align*}
    &\norm{ \int_0^\cdot S(\cdot-s) DG_{\alpha,\eps}(\xe(s))[\yeh(s)]\,\d W (s) }_{L^\ell(\Omega; C([0,T];V_{2\sigma+1-2\vartheta})\cap L^2(0,T; V_{2\sigma+1}))}\\
    &\qquad\lesssim
    \norm{ \int_0^\cdot S(\cdot-s) DG_{\alpha,\eps}(\xe(s))[\yeh(s)]\,\d W (s) }_{L^\ell(\Omega;
    C^{\vartheta-\frac 1 \ell}([0,T];V_{2\sigma+1-2\vartheta})\cap L^2(0,T; V_{2\sigma+1}))}\\
    &\qquad\lesssim
    \norm{DG_{\alpha,\eps}(\xe)[\yeh]}_{
    L^\ell(\Omega; L^\ell(0,T;\cL^2(H,V_{2\sigma})))}.
\end{align*}
The estimate \eqref{est2_mr} follows then from \eqref{aux:7}--\eqref{aux:8} by taking $\lambda$ small enough.
\end{proof}

\begin{remark}
\label{rem_parameters}
Let us point out that
the assumptions of Propositions~\ref{prop_stime_Y_3}--\ref{prop_stime_Y_4}
are compatible with the ones of Theorem~\ref{th:wp}
and of Proposition~\ref{prop_stime_Y_2}.
Indeed, if $\sigma\in[\frac d4,\frac12)$, then
Theorem~\ref{th:wp} allows for 
any values of $\gamma\in[1,+\infty)$
(provided that $\alpha>2$ if $\gamma>\alpha+\beta$ and $\sigma=\frac d4$)
while Proposition~\ref{prop_stime_Y_2}
allows for any value of $p\in(2,+\infty)$: hence, 
the requirements of Propositions~\ref{prop_stime_Y_3}--\ref{prop_stime_Y_4} that 
$\gamma>\beta+2$, $\gamma>2\beta+4$ 
and $p\geq2\frac{\gamma-2}{\gamma-\beta-2}$
cause no issues.
If $\sigma\in(0,\frac d4)$, 
then it is possible to choose some 
$\gamma\geq\frac\beta{4\sigma/d}+2$ in Proposition~\ref{prop_stime_Y_3} which also satisfies the 
assumptions of Theorem~\ref{th:wp} provided 
that either $\frac\beta{4\sigma/d}+2\leq\frac{\alpha-8\sigma/d}{1-4\sigma/d}$ or 
$\frac\beta{4\sigma/d}+2\leq\min\{\alpha+\beta, \frac{\alpha-8\sigma/d+4\beta\sigma/d}{1-4\sigma/d}\}$.
Elementary computations show that $\frac\beta{4\sigma/d}+2\leq\frac{\alpha-8\sigma/d}{1-4\sigma/d}$,
if and only if $\frac\beta{4\sigma/d}+2\leq\alpha+\beta$, 
if and only if $\alpha+\beta\leq\frac{\alpha-8\sigma/d}{1-4\sigma/d}$,
if and only if $\alpha\geq\beta\frac{1-4\sigma/d}{4\sigma/d}+2$, which is exactly to the other 
requirement 
of Proposition~\ref{prop_stime_Y_3}:
in such a case, note that the choice $\gamma=\alpha+\beta$
is always feasible.
Analogously, for Proposition~\ref{prop_stime_Y_4},
one can check that 
$\frac{2(\beta+1)}{4\sigma/d}+2
\leq \frac{\alpha-8\sigma/d}{1-4\sigma/d}$,
if and only if $\frac{2(\beta+1)}{4\sigma/d}+2\leq\alpha+2\beta+2$, 
if and only if $\alpha+2\beta+2\leq\frac{\alpha-8\sigma/d}{1-4\sigma/d}$,
if and only if $\alpha\geq2(\beta+1)\frac{1-4\sigma/d}{4\sigma/d}+2$, which is indeed required in 
Proposition~\ref{prop_stime_Y_4}: in such a case, 
note that the choice $\gamma=\alpha+2\beta+2$
is always feasible.
\end{remark}

\begin{remark}
    \label{rem_max_reg}
    Let us highlight the following points,
    which will be of interest 
    later on:
    \begin{itemize}
    \item for every $\alpha\geq2$, $\beta\geq1$,
    and $\sigma\in[\frac d4,\frac12)$
    (with $\alpha>2$ if $\sigma=\frac d4$),
    all
    the assumptions of Proposition~\ref{prop_stime_Y_3}
    and Theorem~\ref{th:wp}
    are met for all $\gamma>\beta+2$ and
    $\delta>\sigma+\frac d4$;
    \item 
    for every $\alpha\geq4$, $\beta\geq1$, and 
    $\sigma\in[\frac d4,\frac12)$
    (with $\alpha>2$ if $\sigma=\frac d4$),
    all
    the assumptions of Proposition~\ref{prop_stime_Y_4}
    and Theorem~\ref{th:wp}
    are met for all $\gamma>2\beta+4$ and
    $\delta>\sigma+\frac d4$;
        \item for every $\alpha>2$ and $\beta\geq1$,
    there exists $\sigma_0'\in(0, \frac d4)$
    such that all
    the assumptions of Proposition~\ref{prop_stime_Y_3}
    and Theorem~\ref{th:wp}
    are met for $\gamma=\alpha+\beta$,
    for all $\sigma\in(\sigma_0',\frac d4)$
    and $\delta>\sigma+\frac d4$;
    \item for every $\alpha\geq4$ and $\beta\geq1$,
    there exists $\sigma_0''\in(0, \frac d4)$
    such that all
    the assumptions of Proposition~\ref{prop_stime_Y_4}
    and Theorem~\ref{th:wp}
    are met for 
    $\gamma=\alpha+2\beta+2$, for
    all $\sigma\in(\sigma_0'',\frac d4)$
    and $\delta>\sigma+\frac d4$.
    \end{itemize}
    Indeed, for $\sigma\in[\frac d4, \frac12)$ both statements are trivial, so let us focus on the range $\sigma<\frac d4$.
    In this case, the requirements of Proposition~\ref{prop_stime_Y_3}
    and Theorem~\ref{th:wp} come down to 
    \[
    \alpha\geq\beta
    \frac{1-4\sigma/d}{4\sigma/d}+2,\qquad
    \frac{\beta}{4\sigma/d}+2\leq
    \gamma\leq
    \frac{\alpha - 8 \sigma/d}{1- 4\sigma/d},
    \]
    and direct computations show that these are satisfied 
    for $\sigma\in(\sigma_0', \frac d4)$ by choosing 
    \[
    \sigma_0':=
    \frac d4\max\left\{ 
    \frac{\beta}{\alpha+\beta-2},
    \frac{\beta}{\gamma-2}, 
    \frac{\gamma-\alpha}{\gamma-2}
    \right\}\in\left(0, \frac d4\right).
    \]
    In particular, we note that the choice $\gamma=\alpha+\beta$
    gives $\sigma_0'=\frac d4
    \frac{\beta}{\alpha+\beta-2}\in (0,\frac d4)$.
    Analogously,
    the requirements of Proposition~\ref{prop_stime_Y_4}
    and Theorem~\ref{th:wp} reduce to
    \[
    \alpha\geq2(\beta+1)
    \frac{1-4\sigma/d}{4\sigma/d}+2,\qquad
    \frac{2(\beta+1)}{4\sigma/d}+2\leq
    \gamma\leq
    \frac{\alpha - 8 \sigma/d}{1- 4\sigma/d},
    \]
    which are met for all $\sigma\in(\sigma_0'',\frac d4)$ by choosing
    \[
    \sigma_0'':=
    \frac d4\max\left\{ 
    \frac{2\beta+2}{\alpha+2\beta},
    \frac{2\beta+2}{\gamma-2}, 
    \frac{\gamma-\alpha}{\gamma-2}
    \right\}\in\left(0, \frac d4\right).
    \]
    Here, the choice $\gamma=\alpha+2\beta+2$ gives
    $\sigma_0''=\frac d4 
    \frac{2\beta+2}{\alpha+2\beta}\in (0,\frac d4)$.    \end{remark}

    \begin{remark}
    \label{rem_max_reg2}
    In the next section, it will be necessary to introduce 
    a further constraint on $\delta$, in the form $\delta<\frac12+\sigma$.
    Since $\delta>\frac d4+\sigma$
    by assumption of Theorem~\ref{th:wp},
    such requirement will force the space dimension to be $d=1$,
    and will identify $d=2$ as a critical case.
    Let us highlight a direct consequence of 
    Propositions~\ref{prop_stime_Y_3}--\ref{prop_stime_Y_4}
    in the case $d=1$ and $\delta\in(\frac14+\sigma, \frac12+\sigma)$. We have the following two regimes.
    \begin{itemize}
        \item If $\sigma\in(\sigma_0',\frac14)$ and $\delta\in(\frac14+\sigma, \frac12)$,
        then for every $\ell>2$ and $T>0$,
        there exists $C>0$,
        only depending on $T,\ell,\delta,\sigma,d$, such that,
        for every $h\in V_{2\delta}$ and 
        for every $\eps\in(0,1)$,
        \[
         \norm{Y_\eps^h}_{L^\ell_\cP(\Omega; 
         C([0,T]; V_{2\delta})
         \cap L^2(0,T; V_{1}))} \leq C
          \left( 1
          +\|\Psi_{\gamma}(x)\|_{L^1(\Td)}^{2\sigma}\right)
        \|h\|_{V_{2\delta}}.
         \]
        \item If $\sigma\in(\sigma_0'',\frac14)$ and $\delta\in[\frac12,\frac12+\sigma)$,
        then for every $\ell>2$ and $T>0$,
        there exists $C>0$,
        only depending on $T,\ell,\delta,\sigma,d$, such that,
        for every $x\in V_{2\xi}$ and $h\in V_{2\delta}$ and 
        for every $\eps\in(0,1)$,
        \[
         \norm{Y_\eps^h}_{L^\ell_\cP(\Omega; 
         C([0,T]; V_{2\delta})
         \cap L^2(0,T; V_{2\sigma+1}))} \leq C
          \left( 1
          +\|\Psi_{\gamma}(x)\|_{L^1(\Td)}^{\frac32+2\sigma}
          +\|x\|_{V_{2\xi}}^{\frac32+2\sigma}\right)
        \|h\|_{V_{2\delta}}.
         \]
         \item If $\sigma\in[\frac14,\frac12)$ and $\delta\in(\frac14+\sigma,\frac12+\sigma)$,
        then for every $\ell>2$ and $T>0$,
        there exists $C>0$,
        only depending on $T,\ell,\delta,\sigma,d$, such that,
        for every $x\in V_{2\xi}$ and $h\in V_{2\delta}$ and 
        for every $\eps\in(0,1)$,
        \[
         \norm{Y_\eps^h}_{L^\ell_\cP(\Omega; 
         C([0,T]; V_{2\delta})
         \cap L^2(0,T; V_{2\sigma+1}))} \leq C
          \left( 1
          +\|\Psi_{\gamma}(x)\|_{L^1(\Td)}^{\frac32+\frac\beta{2(\gamma-2)}}
          +\|x\|_{V_{2\xi}}^{
          \frac32+\frac\beta{2(\gamma-2)}}\right)
        \|h\|_{V_{2\delta}}.
         \]
    \end{itemize}
    Indeed, one can choose either
    $\vartheta=\frac12-\delta\in(0, 
    \frac12)$ in \eqref{est_mr} when $\delta<\frac12$
    or $\vartheta=\frac12+\sigma-\delta\in(0, 
    \frac12]$ in \eqref{est2_mr} when $\delta\geq\frac12$: 
    the results
    follow from Propositions~\ref{prop_stime_Y_3}--\ref{prop_stime_Y_4} by the fact that $V_{2\delta}\embed L^p(\Td)$.
\end{remark}

\section{Strong Feller property}
\label{sec:str_fell}
This section contains the proof of Theorem~\ref{th:sf}
on the strong Feller properties for equation \eqref{eq_ast}.
The proof is structured in several steps.
First, we give rigorous meaning to the inverse
of the noise coefficient appearing in \eqref{eq_ast}.
Secondly, we exploit the Bismut-Elworthy-Li formula
at the $\eps$-approximate level
(where the noise is non-degenerate)
in order to 
suitably represent the derivative of the transition semigroup
associated to \eqref{eq_app}.
Lastly, we 
prove uniform estimate on the derivatives of the transition semigroup
associated to \eqref{eq_app}, by relying on 
the results of Section~\ref{sec:Y}. This allows in turn
to prove the required strong Feller property for 
the limiting transition semigroup associated to \eqref{eq_ast}.
Throughout the section, we work under the assumptions
of Theorem~\ref{th:wp} and in the 
setting of Sections~\ref{sec:X}--\ref{sec:Y}.

\subsection{The inverse of the degenerate noise coefficient}
\label{sec:inv}
We rigorously introduce here the inverse of the noise 
operator $G_{\alpha}:B^\infty_1\to\cL^2(H,H)$. 
Since $G_\alpha$ is degenerate,
this requires some careful considerations.

Let us first argue heuristically
in order to understand what is the natural candidate for the inverse.
Let $x\in B^\infty_1$ and $u,v\in H$ be such that 
$G_\alpha(x)[u]=v$: by definition of $G_\alpha$ this means that
\[
  m_{\frac\alpha2}(x)\cdot(\operatorname{I}+A)^{-\delta}u = v
  \quad\text{a.e.~in } \Td.
\]
Now, the degeneracy of $m_{\frac\alpha2}$ at $\pm1$
causes issues in inverting $G_\alpha$ when $x$ takes values
in $\pm1$. However, when $x$ belongs also to some 
$\mathcal K_\s$ for some $\s>1$, then actually 
$|x|<1$ almost everywhere, hence the relation can be written 
equivalently as
\[
(\operatorname{I}+A)^{-\delta}u=
\frac1{m_{\frac\alpha2}(x)}v=\Psi''_{\frac\alpha2}(x)v,
\]
implying a fortiori that
$\Psi''_{\frac\alpha2}(x)v\in D(A^{\delta})=V_{2\delta}$ 
and that 
\[
u=(\operatorname{I}+A)^{\delta}(\Psi''_{\frac\alpha2}(x)v).
\]
Notice also that if $v\in V_{2\delta}$,
then the condition $\Psi''_{\frac\alpha2}(x)v\in V_{2\delta}$
is implied by $\Psi_{\frac\alpha2}''(x)\in V_{2\delta}$
by Lemma~\ref{multiplication} since $\delta>\frac d4$.
These considerations suggest that for every $x$
such that $\Psi_{\frac\alpha2}''(x)\in V_{2\delta}$, 
the linear operator $G_{\alpha}(x):H\to V_{2\delta}$
is invertible with inverse in
$\cL(V_{2\delta}, H)$.
With a slight abuse of notation, 
we will denote the inverse of the linear operator $G_\alpha(x)$ as $G_\alpha^{-1}(x)$, 
instead of $G_\alpha(x)^{-1}$. In such a way, 
the operator $G_\alpha^{-1}$ will have to be considered 
as the nonlinear mapping associating every 
$x$ to
the inverse of the linear operator $G_{\alpha}(x)$
(and not as the inverse of the map
$G_{\alpha}:B^\infty_1\to\cL^2(H,H)$).
Let us stress that this notational choice is classical 
in the literature, see e.g.~\cite{dapratozab}.

Let us proceed now rigorously. We define
the set
\[
  \mathcal D_{\delta}:=\left\{
  v:\Td\to(-1,1) \text{ measurable:}\quad
  v\in V_{2\delta},\;
  \Psi_{\frac\alpha2}''(v)\in V_{2\delta}\right\}.
\]
Let us first note that actually $\mathcal D_{\delta}$
does not depend on $\alpha$: indeed, since $\delta>\frac d4$, 
for every $x\in\mathcal D_{\delta}$ it holds that 
$\Psi''_{\frac\alpha2}(x)\in V_{2\delta}\embed L^\infty(\Td)$, 
hence $x$ is separated from $\pm1$, 
i.e.~$\|x\|_{L^\infty(\Td)}<1$. This implies the characterisation
\[
  \mathcal D_\delta=
  V_{2\delta}\cap\mathcal K_\infty.
\]
Bearing this in mind, we define the operator
\[
  G_\alpha^{-1}:\mathcal D_{\delta}
  \to\cL(V_{2\delta}, H)
\]
as
\[
  G_\alpha^{-1}(x)[v]:=
  (\operatorname{I}+A)^{\delta}
  \left(\Psi_{\frac\s2}''(x)v\right),
  \quad v\in V_{2\delta}, \quad x\in \mathcal D_{\delta}.
\]
Note first of all that $G_\alpha$ is actually well-defined.
Indeed, if $x\in \mathcal D_{\delta}$, then one has that 
$\Psi_{\frac\s2}''(x)\in V_{2\delta}$ and $v\in V_{2\delta}$:
hence, since $\delta>\frac d4$,
by Lemma~\ref{multiplication}
it holds also that $\Psi_{\frac\s2}''(x)v\in V_{2\delta}$,
and the definition makes sense.
Furthermore, the fact that 
$G_\alpha^{-1}(x):v\mapsto G_\alpha^{-1}(x)[v]$ is linear is obvious.
As far as continuity is concerned, note that, again by Lemma~\ref{multiplication}, one has
\begin{align*}
\sup_{\|v\|_{V_{2\delta}}\leq1}
\|G_\alpha^{-1}(x)[v]\|_{H}
&=\sup_{\|v\|_{V_{2\delta}}\leq1}
\norm{(\operatorname{I}+A)^{\delta}
  \left(\Psi_{\frac\alpha2}''(x)v\right)}_H\\
&\lesssim
\sup_{\|v\|_{V_{2\delta}}\leq1}
\norm{\Psi_{\frac\alpha2}''(x)v}_{V_{2\delta}}
\lesssim
\norm{\Psi''_{\frac\alpha2}(x)}_{V_{2\delta}}.
\end{align*}
This shows that $G_\alpha^{-1}$ is well-defined
with values in $\cL(V_{2\delta}, H)$ and that 
\begin{equation}\label{est:inv_G}
  \norm{G_\alpha^{-1}(x)}_{\cL(V_{2\delta}, H)}
  \lesssim \norm{\Psi''_{\frac\alpha2}(x)}_{V_{2\delta}}
  \quad\forall\,x\in \mathcal D_{\delta}.
\end{equation}
Moreover, it is immediate to verify that, 
for every $x\in \mathcal D_{\delta}$,
the linear operator $G_{\alpha}^{-1}(x)$ is the inverse of $G_\alpha(x)$,
namely that $G_\alpha(x)G_{\alpha}^{-1}(x)=\operatorname{I}_{V_{2\delta}}$
and $G_\alpha^{-1}(x)G_{\alpha}(x)=\operatorname{I}_{H}$.
Indeed, one has that 
\begin{align*}
G_\alpha(x)G_\alpha^{-1}(x)[v]&=
G_\alpha(x)\left[(\operatorname{I}+A)^\delta
\left(\Psi_{\frac\alpha2}''(x)v \right) \right]
= 
m_{\frac\alpha2}(x)(\operatorname{I}+A)^{-\delta}(\operatorname{I}+A)^\delta\left(\Psi_{\frac\alpha2}''(x)v\right)\\
&=
m_{\frac\alpha2}(x)\Psi_{\frac\alpha2}''(x)v=v \qquad\forall\,v\in V_{2\delta}
\end{align*}
and 
\begin{align*}
G_\alpha^{-1}(x)G_\alpha(x)[u]&=
G_\alpha^{-1}(x)\left[m_{\frac\alpha2}(x)
(\operatorname{I}+A)^{-\delta}u
\right]
= 
(\operatorname{I}+A)^{\delta}
\left(\Psi_{\frac\alpha2}''(x)
m_{\frac\alpha2}(x)
(\operatorname{I}+A)^{-\delta}u
\right)\\
&=(\operatorname{I}+A)^{\delta}
(\operatorname{I}+A)^{-\delta}u=u
\qquad\forall\,u\in H.
\end{align*}

By arguing analogously, for every $\eps\in(0,1)$ 
we introduce the inverse of the approximated operator $G_{\alpha,\eps}$
as
\[
  G_{\alpha,\eps}^{-1}:V_{2\delta}\to \cL(V_{2\delta}, H),
  \qquad
  G_{\alpha,\eps}^{-1}(x)[v]:=
  (\operatorname{I}+A)^{\delta}
  \left(\Psi_{\frac\alpha2,\eps}''(x)v\right),
  \quad v\in V_{2\delta}, \quad x\in V_{2\delta}.
\]
Note that  since $\Psi_{\frac\alpha2,\eps}''$ 
is smooth and bounded,
the fact that $x\in V_{2\delta}$ implies directly 
that $\Psi_{\frac\alpha2,\eps}''(x)\in V_{2\delta}$ too,
hence also that $\Psi_{\frac\alpha2,\eps}''(x)v\in V_{2\delta}$
by Lemma~\ref{multiplication}.
Computations similar to the ones performed above
show  that $G_{\alpha,\eps}^{-1}$ is well-defined
with values in $\cL(V_{2\delta}, H)$ and
there exists a constant $C>0$, independent of $\eps$, 
such that 
\begin{equation}
    \label{est:inv_eps}
  \norm{G_{\alpha,\eps}^{-1}(x)}_{\cL(V_{2\delta}, H)}
  \leq C\norm{\Psi''_{\frac\alpha2,\eps}(x)}_{V_{2\delta}}
  \quad\forall\,x\in V_{2\delta}.
\end{equation}
Moreover, it is immediate to verify that, 
for every $x\in V_{2\delta}$,
the linear operator $G_{\alpha,\eps}^{-1}(x)$ is the inverse of $G_{\alpha,\eps}(x)$,
namely that $G_{\alpha,\eps}(x)G_{\alpha,\eps}^{-1}(x)=\operatorname{I}_{V_{2\delta}}$
and $G_{\alpha,\eps}^{-1}(x)G_{\alpha,\eps}(x)=\operatorname{I}_{H}$.

We prove now 
uniform estimates on the family of processes 
$(G_{\alpha,\eps}^{-1}(X_\eps^x))_\eps$.

\begin{prop}
\label{prop:Xe_inverse}
Under the assumptions of Theorem~\ref{th:wp},
let $d=1$, let 
$\delta\in(\frac14+\sigma, \frac12+\sigma)$,
let $x\in V_{2\xi}$ for some $\xi\in[0,\frac12+\sigma)$,
and
assume one of the following settings:
\begin{itemize}
    \item $\delta\in(\frac14,\frac12)$, 
    $\gamma\geq\alpha+2$, $\xi\geq0$,
    $r:=\frac1\delta$, and
    $\mathfrak p:=\frac12+\delta$;
    \item $\delta\in[\frac12,\frac34]$, 
    $\gamma\geq\alpha+4$,
    $\xi\geq\delta$,
    $r\in(2,4)$, and
    $\mathfrak p:=\frac32+\frac1r$;
    \item  $\delta\in(\frac34,1)$, 
    $\gamma\geq\alpha+4$, 
    $\xi\geq\delta$,
    $r:=\frac2{2\delta-1}$, and
    $\mathfrak p:=1+\delta$.
\end{itemize}
Then,
for every $\ell\geq2$ and $T>0$, there exists a constant
$C>0$, 
only depending on $\alpha,\beta,\gamma,\delta,\sigma,\ell, T$,
such that, for every $\eps\in(0,1)$, it holds that
\begin{align*}
\|G_{\alpha, \varepsilon}^{-1}(\xe)\|_{
L^\ell(\Omega; L^r(0,T;\cL(V_{2\delta},H)))}
\le C
\left(1+\|\Psi_{\gamma}(x)\|^{\mathfrak p}_{L^1(\Td)}+
  \|x\|_{V_{2\xi}}^{\mathfrak p}\right).
\end{align*}
Furthermore, the same conclusion holds also 
under any of the following stronger scenarios:
\begin{itemize}
    \item $\delta\in(\frac14,\frac12)$, 
    $\gamma>\max\{1,\frac1{4\sigma}\}(\alpha+2)+2$,
    $\xi\geq\frac12+\frac{\alpha+2}{4(\gamma-2)}$,
    $r:=\infty$, and $\mathfrak p:=\frac12+2\delta$;
    \item $\delta\in[\frac12,\frac34]$, 
    $\gamma>\max\{1,\frac1{4\sigma}\}(\alpha+4)+2$,
    $\xi\geq\max\{\delta,\frac12+\frac{\alpha+4}{4(\gamma-2)}\}$,
    $r:=\infty$, and $\mathfrak p\in(2,\frac52)$;
    \item $\delta\in(\frac34,1)$, 
    $\gamma>\max\{1,\frac1{4\sigma}\}(\alpha+4)+2$,
    $\xi\geq\max\{\delta,\frac12+\frac{\alpha+4}{4(\gamma-2)}\}$,
    $r:=\infty$, and $\mathfrak p:=\frac12+2\delta$.
\end{itemize}
\end{prop}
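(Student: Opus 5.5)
By \eqref{est:inv_eps} it suffices to bound $\norm{\Psi''_{\frac\alpha2,\eps}(\xe)}_{L^\ell(\Omega;L^r(0,T;V_{2\delta}))}$ uniformly in $\eps$. Since $d=1$, I would estimate the $V_{2\delta}$-norm of the composition $\Psi''_{\frac\alpha2,\eps}(\xe)$ by interpolation and the chain rule, exactly as in Step~1 of Proposition~\ref{prop_stime_Y_4}. The constants are fixed by Lemma~\ref{lem_app}: the identity $m\Psi''\equiv1$ together with item (v) gives $|\Psi'''_{\frac\alpha2,\eps}|\lesssim\Psi''_{\frac\alpha2+1,\eps}$ and $|\Psi^{(4)}_{\frac\alpha2,\eps}|\lesssim\Psi''_{\frac\alpha2+2,\eps}$. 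Moreover, powers of these functions are controlled in $L^1$ by $\Psi_{\gamma,\eps}$ whenever the resulting index is at most $\gamma$ (through item (v) and $\Psi''_{\kappa,\eps}\lesssim\Psi_{\kappa+2,\eps}$). Proposition~\ref{prop3Xeps} bounds $\Psi_{\gamma,\eps}(\xe)$ in $L^\ell(\Omega;L^\infty(0,T;L^1))$ by $1+\norm{\Psi_\gamma(x)}_{L^1}$. Proposition~\ref{prop5Xeps}, combined with maximal regularity, controls $\xe$ in $C([0,T];V_{2\xi})\cap L^2(0,T;V_{2\sigma+1})$, and in the better regime in $L^2(0,T;V_{2\xi+1})$.

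\emph{Case $\delta\in(\frac14,\frac12)$.} I would interpolate between $H$ and $V$:
\[
\norm{f}_{V_{2\delta}}\lesssim\norm f_H+\norm f_H^{1-2\delta}\norm{\nabla f}_H^{2\delta},\qquad \nabla\Psi''_{\frac\alpha2,\eps}(\xe)=\Psi'''_{\frac\alpha2,\eps}(\xe)\nabla\xe .
\]
In $d=1$, $\norm{\Psi''_{\frac\alpha2+1,\eps}(\xe)\nabla\xe}_H\le\norm{\Psi''_{\frac\alpha2+1,\eps}(\xe)}_{L^\infty}\norm{\nabla \xe}_H$, which is too expensive. I would instead rewrite this gradient term as $\norm{\Psi''_{\frac{\alpha+2}{2}\cdot}(\xe)\nabla\xe}_H$ and control it by the dissipation term $\norm{\Psi''_{\gamma/2,\eps}(\xe)\nabla\xe}_{L^2(0,T;H)}$ of Proposition~\ref{prop3Xeps}. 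This requires $\frac\alpha2+1\le\frac\gamma2$, i.e.\ $\gamma\ge\alpha+2$, which is exactly the hypothesis. Taking the time integrability $r=1/\delta$ makes $\norm{\nabla f}_H^{2\delta}\in L^{1/\delta}(0,T)$ from $L^2(0,T)$, with the lower-order factor in $L^\infty(0,T)$. This produces the exponent $\frac12+\delta$ from $(\frac12)(1-2\delta)+\delta$ together with the $\Psi_\gamma$-power from the $L^\infty$ factor.

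\emph{Cases $\delta\ge\frac12$.} I would interpolate between $V$ and $V_2$, i.e.\ use $\norm{\Psi''(\xe)\nabla\xe}$ and $\nabla^2\Psi''=\Psi^{(4)}|\nabla\xe|^2+\Psi'''\Delta\xe$, with exponent $2\delta-1$. The $\Psi^{(4)}$ term is handled by $\gamma\ge\alpha+4$. The $\Delta\xe$ and $|\nabla\xe|^2$ factors are handled by $\xi\ge\delta$, Gagliardo–Nirenberg, and Proposition~\ref{prop5Xeps}. The time exponent $r$ is then dictated by how many $L^2(0,T)$ factors appear, which gives $r=\frac2{2\delta-1}$ for $\delta>\frac34$ and any $r\in(2,4)$ in the middle range. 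In the stronger scenarios, with $r=\infty$, I would avoid time-integrated dissipation altogether. Instead I would use $\xe\in C([0,T];V_{2\xi})$ with $\xi>\frac12$, so that $\nabla\xe\in L^\infty(0,T;L^{s})$, and put all the singular weight into $L^{\kappa}$-norms of $\Psi''_{\frac\alpha2+j,\eps}(\xe)$ with $\kappa$ large. These are bounded via $\Psi_\gamma$ when $\kappa(\frac\alpha2+j)+2\le\gamma$. Balancing Hölder exponents against the Sobolev embedding $V_{2\xi-1}\embed L^{s}$ then gives the conditions $\xi\ge\frac12+\frac{\alpha+2}{4(\gamma-2)}$ (respectively $\alpha+4$) and $\gamma>\frac1{4\sigma}(\alpha+2)+2$.

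I expect the main obstacle to be the bookkeeping in the middle case $\delta\in[\frac12,\frac34]$. There the quadratic term $\Psi^{(4)}|\nabla\xe|^2$ has to be placed in $L^2(\Td)$ with integrability in time strictly greater than $2$, while only $L^2(0,T;V_{2\sigma+1})$ is available. This is why $r<4$ and $\mathfrak p=\frac32+\frac1r$ appear, and I would handle it by a Hölder split in time between $L^\infty(0,T;V_{2\xi})$ and $L^2(0,T;V_{2\sigma+1})$ norms of $\xe$.
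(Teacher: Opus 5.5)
Your treatment of $\delta\in(\frac14,\frac12)$ matches the paper's, including the $r=\infty$ variant with the $L^\kappa$--$W^{1,\frac{2\kappa}{\kappa-2}}$ splitting and $\kappa=2\frac{\gamma-2}{\alpha+2}$. The case $\delta\ge\frac12$, however, does not go through as you propose.

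\textbf{Where it fails.} Interpolating $\Psi''_{\frac\alpha2,\eps}(X^x_\eps)$ between $V$ and $V_2$ requires $\Psi'''_{\frac\alpha2,\eps}(X^x_\eps)\Delta X^x_\eps\in L^2(\Td)$, i.e.\ two derivatives of $X^x_\eps$ in $L^2$. Nothing in the paper provides this.
\begin{itemize}
\item Proposition~\ref{prop5Xeps} gives only $X^x_\eps\in C([0,T];V_{2\xi})\cap L^2(0,T;V_{2\sigma+1})$, with $2\xi<1+2\sigma<2$.
\item The regularity $L^2(0,T;V_{2\xi+1})$ you invoke in the ``better regime'' is not available. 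Maximal regularity gains one derivative over the $\cL^2(H,V_{2\sigma})$-bound on $G_{\alpha,\eps}(X^x_\eps)$, which is exactly $V_{2\sigma+1}$.
\item Hence $\Delta X^x_\eps$ has negative Sobolev order, and no Gagliardo--Nirenberg inequality places it in $L^2$.
\end{itemize}
So $\|\Psi''_{\frac\alpha2,\eps}(X^x_\eps)\|_{V_2}$ is uncontrolled, and both the basic and the $r=\infty$ scenarios for $\delta\ge\frac12$ remain unproved. Your explanation of $r\in(2,4)$ and $\mathfrak p=\frac32+\frac1r$ via the term $\Psi^{(4)}|\nabla X|^2$ does not correspond to an estimate that actually closes.

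\textbf{The missing ingredient.} Use the fractional product estimate, Lemma~\ref{multiplication}, which spends only $2\delta$ derivatives on $X^x_\eps$. For $\zeta\in(\frac14,\frac12]\cap[\delta-\frac12,\frac12]$,
\[
\norm{\Psi''_{\frac\alpha2,\eps}(X^x_\eps)}_{V_{2\delta}}\lesssim\norm{\Psi''_{\frac\alpha2,\eps}(X^x_\eps)}_H+\norm{\Psi'''_{\frac\alpha2,\eps}(X^x_\eps)\nabla X^x_\eps}_{V_{2\delta-1}}\lesssim\norm{\Psi''_{\frac\alpha2,\eps}(X^x_\eps)}_H+\norm{\Psi'''_{\frac\alpha2,\eps}(X^x_\eps)}_{V_{2\zeta}}\norm{X^x_\eps}_{V_{2\delta}}.
\]
The two factors are handled as follows.
\begin{itemize}
\item $\|X^x_\eps\|_{V_{2\delta}}\in L^\infty(0,T)$ by $\xi\ge\delta$ and Proposition~\ref{prop5Xeps}.
\item $\Psi'''_{\frac\alpha2,\eps}(X^x_\eps)$ is interpolated between $H$ and $V$ exactly as in your case $\delta<\frac12$. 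This uses $|\Psi^{(4)}_{\frac\alpha2,\eps}|\lesssim\Psi''_{\frac\alpha2+2,\eps}$ and the dissipation term of Proposition~\ref{prop3Xeps} under $\gamma\ge\alpha+4$.
\end{itemize}
This yields $L^{1/\zeta}$-integrability in time, so $r=\frac1\zeta$.
\begin{itemize}
\item For $\delta>\frac34$, the choice $\zeta=\delta-\frac12$ gives $r=\frac2{2\delta-1}$.
\item For $\delta\in[\frac12,\frac34]$, the constraint $\zeta>\frac14$ (needed for the product estimate in $d=1$) is what forces $r\in(2,4)$.
\item The exponent $\mathfrak p=(\frac12-\zeta)+2\zeta+1=\frac32+\frac1r$ comes from the extra factor $\|X^x_\eps\|_{V_{2\delta}}$.
\end{itemize}
For $r=\infty$, keep this decomposition and apply your $L^\kappa$ splitting to $\Psi'''_{\frac\alpha2,\eps}(X^x_\eps)$ with $\kappa=2\frac{\gamma-2}{\alpha+4}$.
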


\begin{proof}
We divide the proof in the cases 
$\delta<\frac12$ and $\delta\geq\frac12$.

\noindent\underline{\sc Case $\delta<1/2$.}
By interpolation and Lemma~\ref{lem_app}-(v) we have
\begin{align*}
    \|G_{\alpha, \varepsilon}^{-1}(\xe)\|_{\cL(V_{2\delta},H)}
    &\lesssim \|\Psi''_{\frac \alpha2,\varepsilon}(\xe)\|_{V_{2\delta}} \\
    &\lesssim
    \|\Psi''_{\frac \alpha2,\varepsilon}(\xe)\|_H
    +
    \|\Psi''_{\frac \alpha2,\varepsilon}(\xe)\|_H^{1-2\delta} \|\nabla\Psi''_{\frac \alpha2,\varepsilon}(\xe)\|_H^{2\delta} 
    \\
    &= \|\Psi''_{\frac \alpha2,\varepsilon}(\xe)\|_H
    + \|\Psi''_{\frac \alpha2,\varepsilon}(\xe)\|_H^{1-2\delta} \|\Psi'''_{\frac \alpha2,\varepsilon}(\xe)\nabla X_\eps^x\|_H^{2\delta} 
    \\
    & \lesssim\|\Psi_{\alpha+2,\varepsilon}(\xe)\|_{L^1(\Td)}^{\frac12}
    + \|\Psi_{\alpha+2,\varepsilon}(\xe)\|_{L^1(\Td)}^{\frac12-\delta} \|\Psi''_{\frac \alpha2+1,\varepsilon}(\xe)\nabla\xex\|^{2\delta}_H.
\end{align*}
Since $\gamma\geq\alpha+2$ by assumption,
by the H\"older inequality in time we get
\begin{multline*}
  \|G_{\alpha, \varepsilon}^{-1}(\xe)\|_{
  L^{\frac1\delta}(0,T;\cL(V_{2\delta},H))}
  \lesssim
  \|\Psi_{\gamma,\varepsilon}(\xe)\|_{L^\infty(0,T;L^1(\Td))}^{\frac12}\\
  +\|\Psi_{\gamma,\varepsilon}(\xe)\|_{
  L^\infty(0,T;L^1(\Td))}^{\frac12-\delta}
  \|\Psi''_{\frac \gamma2,\varepsilon}(\xe)\nabla\xex\|^{2\delta}_{L^2(0,T; H)}
\end{multline*}
and Proposition~\ref{prop3Xeps} yields
the required estimate for $x\in\mathcal K_\gamma$.
If also $x\in V_{1+\frac{\alpha+2}{2(\gamma-2)}}$,
by proceeding as in the proof of Proposition~\ref{prop_stime_Y_4} instead,
one has for every $\kappa>2$ that  
\begin{align*}
  \|G_{\alpha, \varepsilon}^{-1}(\xe)\|_{\cL(V_{2\delta},H)}
  &\lesssim
  \|\Psi_{\alpha+2,\varepsilon}(\xe)\|^{\frac12}_{L^1(\Td)}\\
  &+\|\Psi_{\alpha+2,\varepsilon}(\xe)\|_{L^1(\Td)}^{\frac12-\delta}
  \|\Psi_{\kappa(\frac\alpha2 +1)+2,\varepsilon}(\xe)\|_{L^1(\Td)}^{\frac{2 \delta}{\kappa}}
  \|\xe\|_{V_{1+\frac d\kappa}}^{2\delta},
\end{align*}
where the terms on the right-hand side are 
bounded in $L^l(\Omega; L^\infty(0,T))$ for all $l\geq2$
by Propositions~\ref{prop3Xeps} and \ref{prop5Xeps}
provided that 
$\alpha+2\leq\gamma$,
$\kappa(\frac\alpha2 +1)+2\leq\gamma$, 
and $1+\frac1\kappa< 1+2\sigma$:
the first condition is true by assumption,
while 
the second and third ones require $\frac1{2\sigma}<\kappa\leq2\frac{\gamma-2}{\alpha+2}$.
Hence, one can choose 
$\kappa=2\frac{\gamma-2}{\alpha+2}$
since $2\frac{\gamma-2}{\alpha+2}>2$ and $
2\frac{\gamma-2}{\alpha+2}>\frac 1{2\sigma}$
by assumption.
The required inequality follows then by combining the terms.

\noindent\underline{\sc Case $\delta\geq1/2$.}
For every 
$\zeta\in(\frac 14
,\frac12]\cap[\delta-\frac12,\frac12]$, Lemma~\ref{multiplication} and Lemma~\ref{lem_app}-(v) yield
\begin{align*}
    \|G_{\alpha, \varepsilon}^{-1}(\xe)\|_{\cL(V_{2\delta},H)}
    &\lesssim\|\Psi''_{\frac \alpha2,\varepsilon}(\xe)\|_{V_{2\delta}}
    \lesssim 
    \|\Psi''_{\frac \alpha2,\varepsilon}(\xe)\|_H+
    \|\Psi'''_{\frac\alpha2, \eps}(\xe)
    \nabla\xe\|_{V_{2(\delta-\frac12)}}\\
    &\lesssim
    \|\Psi''_{\frac \alpha2,\varepsilon}(\xe)\|_H+
    \|\Psi'''_{\frac\alpha2, \eps}(\xe)\|_{V_{2\zeta}}
    \|\nabla\xe\|_{V_{2\delta-1}}\\
    &\lesssim\|\Psi_{\alpha+2,\varepsilon}(\xe)\|_{L^1(\Td)}^{\frac12}+
    \|\Psi'''_{\frac\alpha2, \eps}(\xe)\|_{V_{2\zeta}}
    \|\xe\|_{V_{2\delta}}.
\end{align*}
By proceeding as in the previous step,
by interpolation and Lemma~\ref{lem_app}-(v) we have
\begin{align*}
    \|\Psi'''_{\frac\alpha2, \eps}(\xe)\|_{V_{2\zeta}}
    &\lesssim \|\Psi''_{\frac{\alpha+2}2,\varepsilon}(\xe)\|_{V_{2\zeta}} \\
    & \lesssim\|\Psi_{\alpha+4,\varepsilon}(\xe)\|_{L^1(\Td)}^{\frac12}
    + \|\Psi_{\alpha+4,\varepsilon}(\xe)\|_{L^1(\Td)}^{\frac12-\zeta} 
    \|\Psi''_{\frac{\alpha+4}2,\varepsilon}(\xe)\nabla\xex\|^{2\zeta}_H.
\end{align*}
Since $\gamma\geq\alpha+4$ by assumption,
by the H\"older inequality in time we get
\begin{multline*}
  \|G_{\alpha, \varepsilon}^{-1}(\xe)\|_{
  L^{\frac1\zeta}(0,T;\cL(V_{2\delta},H))}
  \lesssim
  \|\Psi_{\gamma,\varepsilon}(\xe)\|_{L^\infty(0,T;L^1(\Td))}^{\frac12}
  \left(1+
  \|\xe\|_{L^\infty(0,T;V_{2\delta})}\right)\\
  +\|\Psi_{\gamma,\varepsilon}(\xe)\|_{
  L^\infty(0,T;L^1(\Td))}^{\frac12-\zeta}
  \|\Psi''_{\frac \gamma2,\varepsilon}(\xe)\nabla\xex\|^{2\zeta}_{L^2(0,T; H)}
  \|\xe\|_{L^\infty(0,T;V_{2\delta})}
\end{multline*}
and Proposition~\ref{prop3Xeps} yields
the required estimate,
by noting that one can take $\zeta=\delta-\frac12$
if $\delta\in(\frac34, 1)$ and any $\zeta\in(\frac14, \frac12)$
if $\delta\in[\frac12,\frac34]$.
If also $x\in V_{1+\frac{\alpha+4}{2(\gamma-2)}}$,
by proceeding as in the proof of the previous step
one has for every $\kappa>2$ that  
\begin{align*}
  \|G_{\alpha, \varepsilon}^{-1}(\xe)\|_{\cL(V_{2\delta},H)}
  &\lesssim
  \|\Psi_{\gamma,\varepsilon}(\xe)\|^{\frac12}_{L^1(\Td)}
  \left(1+\|\xe\|_{V_{2\delta}}\right)\\
  &+\|\Psi_{\alpha+4,\varepsilon}(\xe)\|_{L^1(\Td)}^{\frac12-\zeta}
  \|\Psi_{\kappa\frac{\alpha+4}2+2,\varepsilon}(\xe)\|_{L^1(\Td)}^{\frac{2 \zeta}{\kappa}}
  \|\xe\|_{V_{1+\frac 1\kappa}}^{2\zeta}
  \|\xe\|_{V_{2\delta}},
\end{align*}
where the terms on the right-hand side are 
bounded in $L^l(\Omega; L^\infty(0,T))$ for all $l\geq2$
by Propositions~\ref{prop3Xeps} and \ref{prop5Xeps}
provided that 
$\alpha+4\leq\gamma$,
$\kappa\frac{\alpha+4}2 +2\leq\gamma$, 
and $1+\frac1\kappa< 1+2\sigma$:
the first condition is true by assumption,
while 
the second and third ones require $\frac1{2\sigma}<\kappa\leq2\frac{\gamma-2}{\alpha+4}$.
Hence, one can choose 
$\kappa=2\frac{\gamma-2}{\alpha+4}$
since $2\frac{\gamma-2}{\alpha+4}>2$ and $
2\frac{\gamma-2}{\alpha+4}>\frac 1{2\sigma}$
by assumption.
The required inequality follows then by combining the terms.
\end{proof}

\begin{remark}
    \label{rem:G_inv}
    Let us comment on the range of values 
    for $\sigma$ and $\delta$ appearing in Proposition~\ref{prop:Xe_inverse}.
    First of all, let us highlight that the restriction to 
    dimension $d=1$ arises since Proposition~\ref{prop:Xe_inverse} strongly relies on the 
    maximal regularity $C([0,T]; V_{2\delta})\cap L^2(0,T; V_{2\sigma+1})$ for $\xex$: hence, this naturally forces
    $\delta$ to satisfy the constraint $2\delta\leq2\sigma+1$.
    Since by assumption $\delta>\sigma+\frac d4$, 
    dimension $d=2$ appears to be critical under this approach
    as one would obtain both $\sigma+\frac12<\delta\leq\sigma+\frac12$.
    By contrast, dimension $d=1$ allows for admissible ranges of $\delta$ and $\sigma$.
    More precisely, for $\delta\in(\frac14,\frac12]$
    one has $\sigma\in(0, \frac14)$,
    while for $\delta\in(\frac12,1)$
    one has $\sigma\in(0, \frac12)$.
\end{remark}

\begin{remark}
    Note that the assumptions of 
    Proposition~\ref{prop:Xe_inverse} are compatible 
    with the ones of Theorem~\ref{th:wp}.
    Indeed, if $\sigma\geq\frac14$ then all values of $\gamma$
    are admissible in Theorem~\ref{th:wp}
    (with $\alpha>2$ if $\sigma=\frac14$),
    so the requirements on the amplitude of $\gamma$
    in Proposition~\ref{prop:Xe_inverse} cause no issues.
    Otherwise, if $\sigma\in (0,\frac14)$,
    $\gamma\geq\alpha+2$ can be chosen in Proposition~\ref{prop:Xe_inverse} provided that 
    $\alpha+2\leq\frac{\alpha-8\sigma}{1-4\sigma}$, 
    i.e.~$\alpha\geq\frac2{4\sigma}$: hence, any values
    $\alpha>2$ are included for $\sigma>\frac1{2\alpha}$.
    Analogously, $\gamma\geq\alpha+4$ can be chosen in Proposition~\ref{prop:Xe_inverse} provided that 
    $\alpha+4\leq\frac{\alpha-8\sigma}{1-4\sigma}$, 
    i.e.~$\alpha\geq\frac{1-2\sigma}{\sigma}$: hence, any values
    $\alpha>2$ are included for $\sigma>\frac1{\alpha+2}$.
    Similarly, the condition 
    $\frac{\alpha+2}{4\sigma}+2<\frac{\alpha-8\sigma}{1-4\sigma}$
    yields $\alpha>2$ and $\sigma>\frac18+\frac1{4\alpha}$:
    here, the choice $\gamma=\alpha+\beta$ can be done 
    for $\sigma>\frac14\frac{\alpha+2}{\alpha+\beta-3}$,
    while $\gamma=\alpha+2\beta+2$ requires
    $\sigma>\frac14\frac{\alpha+2}{\alpha+2\beta}$.
    Lastly,  
    $\frac{\alpha+4}{4\sigma}+2<\frac{\alpha-8\sigma}{1-4\sigma}$
    yield $\alpha>2$ and $\sigma>\frac18\frac{\alpha+4}{\alpha+1}$: here, $\gamma=\alpha+2\beta+2$ can be taken 
    provided that $\sigma>\frac14\frac{\alpha+4}{\alpha+2\beta}$.
\end{remark}

\subsection{Uniform estimates on the Bismut-Elworthy-Li formula}
We introduce here the transition semigroup associated to 
the approximated problem \eqref{eq_app} and 
show a Bismut-Elworthy-Li formula
to represent its derivative. Then, we exploit Proposition~\ref{prop:Xe_inverse} to prove uniform estimates 
on the derivatives of the approximated semigroup.

For every $\varepsilon \in (0,1)$,
the transition Markov semigroup 
$\pe:=(\pe_t)_{t \ge 0}$ associated to equation \eqref{eq_app}
is defined as 
\begin{equation}
\label{P_t_eps}
\pe_t\varphi(x):= \mathbb{E}[ \varphi(\xex(t))], \quad x\in H,\quad \varphi\in \mathcal{B}_b(H).
\end{equation}
The fact that $P^\eps$ is a well-defined Markov semigroup on
$\mathcal{B}_b(H)$ is a well-known result that follows from 
Proposition~\ref{ex_app_thm} and the fact that $\Psi'_{\beta,\eps}$
and $G_{\alpha,\eps}$ are Lipschitz-continuous.

Let us note that since $m_{\alpha,\eps}\geq\eps^\alpha$
the approximated noise operator 
$G_{\alpha,\eps}:H\to\cL^2(H,H)$ is nondegenerate,
in the sense that 
\begin{align*}
  \norm{G_{\alpha,\eps}(x)}_{\cL^2(H,H)}^2
  &=\sum_{k\in\mathbb N_+}
  \norm{m_{\frac\alpha2,\eps}(x)(\operatorname{I}+A)^{-\delta}e_k}_H^2
  =\sum_{k\in\mathbb N_+}\int_{\Td}
  m_{\alpha,\eps}(x)|(\operatorname{I}+A)^{-\delta}e_k|^2\\
  &\geq\eps^{\alpha}\sum_{k\in\mathbb N_+}
  \norm{(\operatorname{I}+A)^{-\delta}e_k}^2_H
  =\eps^\alpha\norm{(\operatorname{I}+A)^{-\delta}}_{\cL^2(H,H)}^2.
\end{align*}
The nondegeneracy of $G_{\alpha,\eps}$ allows to 
prove, at the $\eps$-approximated level, a
Bismut-Elworthy-Li formula to represent the derivative of
the semigroup $P^\eps$. 
Let us stress that even if the $\eps$-regularised
noise coefficient is non-degenerate,
the proof requires careful examination since 
the image of the inverse is still
$V_{2\delta}$, and not $H$
as in more classical settings.
More precisely, we have the following result.

\begin{prop}[Bismut-Elworthy-Li formula for $P^\eps$]
\label{BEL3}
Under the assumptions of Theorem~\ref{th:wp}
and Proposition~\ref{prop:Xe_inverse},
suppose that $\xi>\max\{\delta-\frac12,
\delta-\sigma-\frac14\}$, and
let 
$\eps\in(0,1)$,
$\varphi\in C^2_b(H)$, and $t>0$. Then, 
$P_t^\eps\varphi$ is G\^ateaux-differentiable in $V_{2\xi}$ along directions of $V_{2\delta}$, and 
\[
D(P^\eps_t\varphi)(x)[h]=\frac 1t \mathbb{E}\left[ \varphi(\xex(t))\int_0^t\left( G_{\alpha,\varepsilon}^{-1}(\xex(s))\yeh(s), \d W(s)\right)_H \right]
\quad\forall\,x\in V_{2\xi},\quad\forall\,h\in V_{2\delta}.
\]
\end{prop}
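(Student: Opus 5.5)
The plan is to follow the classical derivation of the Bismut--Elworthy--Li formula, in the spirit of \cite{EL} and \cite[Ch.~9]{dapratozab}. The argument runs at fixed $\eps$, where $G_{\alpha,\eps}$ is nondegenerate and $\Psi'_{\beta,\eps}$ is Lipschitz. The one modification is that $G^{-1}_{\alpha,\eps}(x)$ only maps $V_{2\delta}$ into $H$. This forces us to work with initial data $x\in V_{2\xi}$ and directions $h\in V_{2\delta}$, and to use the regularity of $Y^h_\eps$ in $V_{2\delta}$ from Section~\ref{sec:Y}.

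\emph{Step 1 (chain rule).} Fix $\varphi\in C^2_b(H)$, $x\in V_{2\xi}$ and $h\in V_{2\delta}$. By Proposition~\ref{prop:diff_S}, $S_\eps$ is Fr\'echet differentiable with $DS_\eps(x)[h]=Y^h_\eps$. Dominated convergence then gives, for every $s\in[0,t]$,
\[
D(P^\eps_s\varphi)(x)[h]=\E\bigl[D\varphi(X^x_\eps(s))Y^h_\eps(s)\bigr].
\]
The same holds for $u(r,y):=P^\eps_{t-r}\varphi(y)$, which is $C^{1,2}$ in the appropriate sense. To justify this I would first treat $\varphi$ depending on finitely many Fourier modes, or use the standard smoothing of the Kolmogorov equation with Lipschitz coefficients, and then pass to the limit.

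\emph{Step 2 (It\^o formula along the flow).} Apply It\^o's formula to $r\mapsto u(r,X^x_\eps(r))$ on $[0,t]$. Since $u$ solves the backward Kolmogorov equation, the drift vanishes and
\[
\varphi(X^x_\eps(t))=P^\eps_t\varphi(x)+\int_0^t\bigl(Du(r,X^x_\eps(r)),G_{\alpha,\eps}(X^x_\eps(r))\,\d W(r)\bigr)_H.
\]
Multiply both sides by $\int_0^t(G^{-1}_{\alpha,\eps}(X^x_\eps(r))Y^h_\eps(r),\d W(r))_H$ and take expectations. The It\^o isometry, together with $G_{\alpha,\eps}G^{-1}_{\alpha,\eps}=\operatorname{I}_{V_{2\delta}}$, turns the right-hand side into
\[
\E\int_0^t Du(r,X^x_\eps(r))[Y^h_\eps(r)]\,\d r .
\]
The Markov property and Step~1 give $Du(r,X^x_\eps(r))[Y^h_\eps(r)]$ in expectation equal to $D(P^\eps_t\varphi)(x)[h]$ for each $r$; rigorously this is the cocycle identity $Y^{h}_\eps(t)=DX(t;r,X^x_\eps(r))[Y^h_\eps(r)]$. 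Integrating in $r$ yields $t\,D(P^\eps_t\varphi)(x)[h]$, which is the claimed formula. Since $P^\eps_t\varphi(x)$ is deterministic, the corresponding term drops out because the stochastic integral has zero mean.

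\emph{Step 3 (integrability), which is the main obstacle.} Everything above requires
\[
\E\int_0^t\|G^{-1}_{\alpha,\eps}(X^x_\eps)Y^h_\eps\|_H^2\,\d r<\infty,
\]
in particular that $Y^h_\eps(r)\in V_{2\delta}$. For this I would use \eqref{est:inv_eps}: at fixed $\eps$ one has $\|\Psi''_{\frac\alpha2,\eps}(X^x_\eps)\|_{V_{2\delta}}\lesssim_\eps 1+\|X^x_\eps\|_{V_{2\delta}}$, and $\|X^x_\eps\|_{V_{2\delta}}$ is controlled by Proposition~\ref{prop5Xeps} or by Proposition~\ref{prop:Xe_inverse} with the rates in $L^r(0,T)$. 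For $Y^h_\eps$ I would use Propositions~\ref{prop_stime_Y_3}--\ref{prop_stime_Y_4}, via Remark~\ref{rem_max_reg2}, to bound $\|Y^h_\eps\|_{L^\ell(\Omega;C([0,T];V_{2\delta}))}\lesssim\|h\|_{V_{2\delta}}$.

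The hypothesis $\xi>\max\{\delta-\frac12,\delta-\sigma-\frac14\}$ is what I expect to be needed here. Since $x$ is only in $V_{2\xi}$, $X^x_\eps(r)$ lies in $V_{2\delta}$ only for $r>0$. By parabolic smoothing, $\|S(r)x\|_{V_{2\delta}}\lesssim r^{-(\delta-\xi)}\|x\|_{V_{2\xi}}$, and the condition should make this blow-up square-integrable in time against the remaining factors. So I would split the mild formulation of $X^x_\eps$ as the free part $S(\cdot)x$ plus convolution terms, the latter being bounded in $V_{2\delta}$ by maximal regularity. The delicate point is checking that the exponents close via H\"older's inequality in time.

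Finally, G\^ateaux differentiability of $P^\eps_t\varphi$ in $V_{2\xi}$ along $V_{2\delta}$ follows from Step~1, since $h\mapsto Y^h_\eps$ is linear and continuous.
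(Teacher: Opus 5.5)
Your formal computation in Step~2 (It\^o isometry, $G_{\alpha,\varepsilon}G^{-1}_{\alpha,\varepsilon}=\operatorname{I}_{V_{2\delta}}$, Markov/cocycle property) is the right one, and it is what the paper does. The paper, however, does it only after a Galerkin truncation, and the genuine gap in your plan is justifying that step in infinite dimensions.

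\emph{The gap.} To apply It\^o's formula to $r\mapsto P^\varepsilon_{t-r}\varphi(X^x_\varepsilon(r))$, three things are needed:
\begin{itemize}
\item $u$ must be $C^{1,2}$;
\item $u$ must solve the Kolmogorov equation;
\item $Du(r,y)$ must be regular enough to pair with $AX^x_\varepsilon$, which only lies in $V^*$.
\end{itemize}
None of this is available here. Proposition~\ref{prop:diff_S} gives only first-order differentiability of $S_\varepsilon$. The Nemytskii maps $\Psi'_{\beta,\varepsilon}$ and $G_{\alpha,\varepsilon}$ are Fr\'echet differentiable only along $L^4$, resp.\ $L^q$, directions (Propositions~\ref{prop:diff_psi}--\ref{prop:diff_G}), not on $H$. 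So the Hilbert-space Kolmogorov theory for $C^2_b$ coefficients does not apply. Restricting to $\varphi$ that depend on finitely many modes does not help, because the lack of smoothness lies in the dynamics, not in $\varphi$.

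\emph{What the paper does instead.} It approximates the \emph{equation}. It projects onto $H_n=\operatorname{span}\{e_1,\dots,e_n\}\subset L^\infty$, where $\mathfrak F_{\beta,\varepsilon,n}$ and $\mathfrak G_{\alpha,\varepsilon,n}=P_nG_{\alpha,\varepsilon}$ are smooth. There the Kolmogorov equation and It\^o formula are classical.

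\emph{Two issues this raises that your plan does not address.}
\begin{itemize}
\item \textbf{Loss of invertibility.} $P_nG_{\alpha,\varepsilon}(x)$ is no longer invertible. The paper observes that $G^{-1}_{\alpha,\varepsilon}(x)|_{H_n}$ is still a right inverse, $\mathfrak G_{\alpha,\varepsilon,n}(x)\,G^{-1}_{\alpha,\varepsilon}(x)=\operatorname{I}_{H_n}$. That is all the It\^o-isometry step uses.
\item \textbf{Limit in the weight.} One must let $n\to\infty$ in the formula. The paper does this in integrated form, $P^{\varepsilon,n}_t\varphi_n(x_n+\theta_0h_n)-P^{\varepsilon,n}_t\varphi_n(x_n)=\int_0^{\theta_0}(\cdots)\,\mathrm{d}\theta$. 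This needs $G^{-1}_{\alpha,\varepsilon}(X_{\varepsilon,n})Y_{\varepsilon,n}\to G^{-1}_{\alpha,\varepsilon}(X_\varepsilon)Y_\varepsilon$ in $L^2(\Omega;L^2(0,t;H))$. Hence one needs $Y_{\varepsilon,n}\to Y_\varepsilon$ in $C([0,T];V_{2\delta})$ and control of $\Psi''_{\frac\alpha2,\varepsilon}(X_{\varepsilon,n})$ in $L^2(0,t;V_{2\delta})$.
\end{itemize}

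\emph{Role of the hypothesis on $\xi$.} It enters in the second point above, not through a parabolic-smoothing singularity at $r=0$ as you suggest.
\begin{itemize}
\item For $\delta<\frac12$, Lipschitz continuity of $\Psi''_{\frac\alpha2,\varepsilon}$ together with $X\in L^2(0,T;V)$ suffices.
\item For $\delta\ge\frac12$, the assumptions of Proposition~\ref{prop:Xe_inverse} already force $\xi\ge\delta$, so $X$ is continuous in $V_{2\delta}$ up to $r=0$ and there is no singularity to integrate.
\end{itemize}
The actual difficulty is that for $2\delta\ge1$ your bound $\|\Psi''_{\frac\alpha2,\varepsilon}(X)\|_{V_{2\delta}}\lesssim_\varepsilon 1+\|X\|_{V_{2\delta}}$ no longer follows from Lipschitz continuity. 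The paper writes $\nabla\Psi''_{\frac\alpha2,\varepsilon}(X)=\Psi'''_{\frac\alpha2,\varepsilon}(X)\nabla X$. It then applies Lemma~\ref{multiplication} with some $z\in[\delta-\frac12,\frac12)\cap(\delta-\sigma-\frac14,\xi]$; such $z$ exists exactly because $\xi>\max\{\delta-\frac12,\delta-\sigma-\frac14\}$. This bounds $\|\Psi''_{\frac\alpha2,\varepsilon}(X)\|_{L^2(0,t;V_{2\delta})}$ by $\|X\|_{C([0,t];V_{2\xi})}\|X\|_{L^2(0,t;V_{2\sigma+1})}$ plus lower-order terms. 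Your smoothing heuristic can account for $\xi>\delta-\frac12$, but not for the condition $\xi>\delta-\sigma-\frac14$.
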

\begin{proof}
The proof is based on a further approximation on 
on the operators $\Psi_{\beta,\eps}$ and $G_{\alpha,\eps}$
in order to recover smoothness, and that allow to 
exploit the classical Bismut-Elworthy-Li
in the non-degenerate case of noise with uniformly bounded 
inverse. Then, a passage to the limit
concludes the proof. We proceed in three steps.

\noindent\underline{\sc The approximation}.
For every $n\in\enne$, let $P_n:H\to H_n:=\operatorname{span}\{e_1,\ldots,e_n\}$ the orthogonal projection.
We define
\begin{alignat*}{2}
  &\mathfrak F_{\beta,\eps,n}:H_n\to \erre, \qquad
  &&\mathfrak F_{\beta,\eps,n}(x):=\int_{\Td}
  \Psi_{\beta,\eps}
  (x),
  \quad x\in H_n,\\
  &\mathfrak G_{\alpha,\eps,n}:H_n\to \cL^2(H,H_n)\,, \qquad
  &&\mathfrak G_{\alpha,\eps,n}(x)[v]:=
  P_nG_{\alpha,\eps}(x), \quad x\in H_n.
\end{alignat*}
By recalling that $\Psi_{\beta,\eps}\in C^\infty(\erre)$
with bounded derivatives of any order $\geq2$ and
that $m_{\frac\alpha2, \eps}\in C^\infty_b(\erre)$
with bounded derivatives of any order, 
since $H_n\subset L^\infty(\Td)$ it readily follows that 
$\mathfrak F_{\beta,\eps,n}\in C^\infty(H_n)$
with bounded derivatives of order $\geq2$
and $\mathfrak G_{\alpha,\eps,n}\in C^\infty_b(H_n)$
with bounded derivatives of order.
In particular, it holds that 
\[
  D\mathfrak F_{\beta,\eps,n}\in C^\infty(H_n;H_n),
  \qquad
  D\mathfrak F_{\beta,\eps,n}(x)=
  P_n\Psi_{\beta,\eps}'(x) \quad\forall\,x\in H_n,
\]
and 
\[
  D\mathfrak G_{\alpha,\eps,n}\in C^\infty(H_n;\cL^2(H,H_n)),
  \qquad
  D\mathfrak G_{\alpha,\eps,n}(x)=
  P_n DG_{\alpha,\eps}(x) \quad\forall\,x\in H_n.
\]
For such an approximation, it is not guaranteed that 
$\mathfrak G_{\alpha,\eps,n}(x)$ is invertible for every $x\in H_n$.
Nonetheless, we can prove that it always admits a right-inverse.
Indeed, since $H_n\subset V_{2\delta}$
we can define the operator 
\[
  \mathfrak G_{\alpha,\eps,n}^{-1}:H_n\to \cL(H_n,H),
  \qquad
  \mathfrak G_{\alpha,\eps,n}^{-1}(x)[v]:=
  G_{\alpha,\eps}^{-1}(x)[v], \quad v,x\in H_n,
\]
which satisfies, for every $x,v\in H_n$, 
\[
  \mathfrak G_{\alpha,\eps,n}(x)
  \mathfrak G_{\alpha,\eps,n}^{-1}(x)[v]
  =P_n\left[G_{\alpha,\eps}(x)G_{\alpha,\eps}^{-1}(x)[v]
  \right]=P_nv=v.
\]
In other words, we have that $\mathfrak G_{\alpha,\eps,n}(x)
\mathfrak G_{\alpha,\eps,n}^{-1}(x)=\operatorname{I}_{H_n}$
for every $x\in H_n$. Let us stress again that 
$\mathfrak G_{\alpha,\eps,n}^{-1}(x)$ may not be 
also left-inverse of $\mathfrak G_{\alpha,\eps,n}(x)$,
for every $x\in H_n$.

Now, for every $x\in H_n$, 
the approximated equation
\[
  \d X_{\eps,n}^x+AX_{\eps,n}^x\,\d t
  +D\mathfrak F_{\beta,\eps,n}(X_{\eps,n}^x)\,\d t
  =\mathfrak G_{\alpha,\eps,n}(X_{\eps,n}^x)\,\d W,
  \qquad X_{\eps,n}^x(0)=x,
\]
admits a unique solution  
$X^x_{\eps,n}\in L^\ell_\cP(\Omega;
C([0,T]; H_n))$
for every $\ell\geq2$ and $T>0$.
Moreover, a direct application of the It\^o formula and 
classical computations on finite dimensional approximations
yield that, as $n\to\infty$,
\[
  X_{\eps,n}^x\to\xex \quad\text{in } 
  L^\ell_\cP(\Omega; C([0,T]; H)\cap L^2(0,T; V))
  \quad\forall\,\ell\geq2, \quad\forall\,T>0.
\]
For every $\ell\geq2$ and $T>0$,
due to the smoothness of the coefficients, 
the solution map 
\[
S_{\eps,n}:H\to
L^\ell_\cP(\Omega; C([0,T]; H)\cap L^2(0,T; V)),
\qquad
  S_{\eps,n}:x\mapsto X_{\eps,n}^x, \quad x\in H,
\]
is in particular 
twice Fr\'echet-differentiable with continuous derivatives,
and for every $x,h\in H_n$ it holds that $DS_{\eps,n}(x)[h]=Y_{\eps,n}^h$, where 
$Y_{\eps,n}^h\in L^\ell_\cP(\Omega; C([0,T]; H)\cap L^2(0,T; V))$
is the unique solution to the approximated equation
\[
  \d Y_{\eps,n}^h+AY_{\eps,n}^h\,\d t
  +D^2\mathfrak F_{\beta,\eps,n}(X_{\eps,n}^x)[Y_{\eps,n}^h]\,\d t
  =D\mathfrak G_{\alpha,\eps,n}(X_{\eps,n}^x)[Y_{\eps,n}^h]\,\d W,
  \qquad Y_{\eps,n}^h(0)=h.
\]
Again, a direct application of the It\^o formula and 
of the finite dimensional approximations yield that 
\[
  Y_{\eps,n}^h\to\yeh \quad\text{in } 
  L^\ell_\cP(\Omega; C([0,T]; H)\cap L^2(0,T; V))
  \quad\forall\,\ell\geq2, \quad\forall\,T>0.
\]

\noindent\underline{\sc Classical Bismut-Elworthy-Li formula}.
Let $P^{\eps,n}=(P^{\eps,n}_t)_{t\geq0}$ be the Markov transition 
semigroup associated to the approximated problem above on $H_n$,
namely
\[
P_t^{\eps,n}\varphi(x):= 
\mathbb{E}[ \varphi(X^x_{\eps,n}(t))], \quad x\in H_n,\quad \varphi\in \mathcal{B}_b(H_n).
\]
Since the coefficients are smooth, 
by the It\^o formula it can be shown 
(see \cite[Lem.~7.7.2]{Dap-Zab2002}) that 
for every $\varphi\in C^2_b(H_n)$ and $T>0$, the function 
$v_{\eps,n}:[0,T]\times H_n\to \erre$
given by $v_{\eps,n}(t,x):=P^{\eps,n}_t\varphi(x)$,
$(t,x)\in[0,T]\times H_n$, is the unique classical solution to 
the parabolic Kolmogorov equation
\begin{align*}
\frac{\partial}{\partial t}v_{\eps,n}(t,x)
&-\frac 12 \operatorname{Tr}
\left(\mathfrak G_{\alpha,\eps,n}(x)^*
D^2 v_{\eps,n}(t,x)
\mathfrak G_{\alpha, \varepsilon,n}(x) \right)\\
&+(Ax+D\mathfrak F_{\beta,\eps,n}(x), 
Dv_{\eps,n}(t,x))_{H_n}
=0, \qquad (t,x)\in(0,T)\times H_n,
\end{align*}
with initial condition
\[
v_{\eps,n}(0,x)=\varphi(x), \qquad x \in H_n.
\]
Moreover, by applying the It\^o formula to 
$s \mapsto v_{\eps,n}(t-s,X^x_{\eps,n}(s))$, 
one obtains
\begin{equation}
\label{BEL1}
\varphi(X^x_{\eps,n}(t))
=v_{\eps,n}(t,x)+ 
\int_0^t \left( Dv_{\eps,n}(t-s, X^x_{\eps,n}(s)),
\mathfrak G_{\alpha, \varepsilon,n}(X^x_{\eps,n}(s))
\, \d W(s)\right)_{H_n}
\end{equation}
for every $t\geq0$, $x\in H_n$, and $\varphi\in C^2_b(H_n)$.
Now, we consider the stochastic integral
\begin{equation}
\label{BEL2}
\int_0^t \left( \mathfrak G_{\alpha,\varepsilon,n}^{-1}
(X^x_{\eps,n}(s))[Y^h_{\eps,n}(s)], \d W(s)\right)_H, \quad t\geq0,
\end{equation}
which is well-defined since by \eqref{est:inv_eps} one has
\begin{align*}
\E\int_0^t\|\mathfrak G_{\alpha,\varepsilon,n}^{-1}
(X^x_{\eps,n}(s))[Y^h_{\eps,n}(s)\|_H^2\,\d s
&=\E\int_0^t\|G_{\alpha,\varepsilon}^{-1}
(X^x_{\eps,n}(s))[Y^h_{\eps,n}(s)\|_H^2\,\d s\\
&\lesssim\E\int_0^t\|\Psi''_{\frac\alpha2,\eps}(X^x_{\eps,n}(s))\|_{V_{2\delta}}^2
\|Y^h_{\eps,n}(s)\|_{V_{2\delta}}^2\,\d s\\
&\lesssim_{\eps,n}t
\E\left[\sup_{s\in[0,t]}\|X^x_{\eps,n}(s)\|_{H_n}^2
\sup_{s\in[0,t]}\|Y^h_{\eps,n}(s)\|_{H_n}^2\,\d s\right]<+\infty.
\end{align*}
By multiplying both members of \eqref{BEL1} by the term \eqref{BEL2}, by taking expectations we obtain the equality 
\begin{align*}
&\mathbb{E}\left[\varphi(X^x_{\eps,n}(t))
\int_0^t \left( \mathfrak G_{\alpha,\varepsilon,n}^{-1}(X^x_{\eps,n}(s))Y^h_{\eps,n}(s), \d W(s)\right)_H \right]\\
&=\mathbb{E}\left[v_{\eps,n}(t,x) 
\int_0^t \left( \mathfrak G_{\alpha,\varepsilon,n}^{-1}(X^x_{\eps,n}(s))Y^h_{\eps,n}(s), \d W(s)\right)_H \right]
\\
& \qquad +\mathbb{E}\left[ \int_0^t \left( Dv_{\eps,n}(t-s, X^x_{\eps,n}(s)), \mathfrak G_{\alpha, \varepsilon,n}(X^x_{\eps,n}(s))\, \d W(s)\right)_{H_n}\right.\\
&\qquad\qquad\left.\times\int_0^t \left( \mathfrak G_{\alpha,\varepsilon,n}^{-1}(X^x_{\eps,n}(s))Y^h_{\eps,n}(s), \d W(s)\right)_H\right].
\end{align*}
On the one hand, we clearly have 
\[
\mathbb{E}\left[v_{\eps,n}(t,x) 
\int_0^t \left( \mathfrak G_{\alpha,\varepsilon,n}^{-1}(X^x_{\eps,n}(s))Y^h_{\eps,n}(s), \d W(s)\right)_H \right]=0.
\]
On the other hand, recalling the right-inverse property
$\mathfrak G_{\alpha,\eps,n}(x)
\mathfrak G_{\alpha,\eps,n}^{-1}(x)=\operatorname{I}_{H_n}$
for every $x\in H_n$, 
we have
\begin{align*}
&\mathbb{E}\left[ \int_0^t \left( Dv_{\eps,n}(t-s, X^x_{\eps,n}(s)), \mathfrak G_{\alpha, \varepsilon,n}(X^x_{\eps,n}(s))\, \d W(s)\right)_{H_n}\int_0^t \left( \mathfrak G_{\alpha,\varepsilon,n}^{-1}(X^x_{\eps,n}(s))Y^h_{\eps,n}(s), \d W(s)\right)_H\right]
\\
&=\mathbb{E}\int_0^t \left( 
\mathfrak G_{\alpha, \varepsilon,n}(X^x_{\eps,n}(s))^*
Dv_{\eps,n}(t-s, X^x_{\eps,n}(s)), 
\mathfrak G_{\alpha,\varepsilon,n}^{-1}(X^x_{\eps,n}(s))Y^h_{\eps,n}(s)\right)_{H}\, \d s 
\\
&=\mathbb{E}\int_0^t \left( 
Dv_{\eps,n}(t-s, X^x_{\eps,n}(s)), 
\mathfrak G_{\alpha, \varepsilon,n}(X^x_{\eps,n}(s))\mathfrak G_{\alpha,\varepsilon,n}^{-1}(X^x_{\eps,n}(s))Y^h_{\eps,n}(s)\right)_{H_n}\, \d s 
\\
&=\mathbb{E}\int_0^t \left( 
Dv_{\eps,n}(t-s, X^x_{\eps,n}(s)), 
Y^h_{\eps,n}(s)\right)_{H_n}\, \d s 
\\
&=\int_0^t D\left(\mathbb{E}\left[v_{\eps,n}(t-s, X^x_{\eps,n}(s))\right]\right)[h]\, \d s
=\int_0^t D(P^{\eps,n}_{t-s}P^{\eps,n}_s\varphi(x))[h]\, \d s = tD(\ve(t,x))[h].
\end{align*}
Upon rearranging the terms we obtain 
the approximated Bismut-Elworthy-Li formula
\[
D(P^{\eps,n}_t\varphi)(x)[h]=
\frac 1t \mathbb{E}\left[ \varphi(X^x_{\eps,n}(t))
\int_0^t\left( \mathfrak G_{\alpha,\varepsilon,n}^{-1}(X^x_{\eps,n}(s))Y^h_{\eps,n}(s), \d W(s)\right)_H \right]
\quad\forall\,x,h\in H_n,
\]
for every $\varphi\in C^2_b(H_n)$ and $t>0$.

\noindent\underline{\sc Passage to the limit}.
We pass here to the limit as $n\to\infty$. 
Let $\varphi\in C^2_b(H)$, $x\in V_{2\xi}$,
$h\in V_{2\delta}$,
and set $\varphi_n:=\varphi\circ P_n\in C^2_b(H_n)$,
$x_n:=P_nx\in H_n$, and $h_n:=P_n h\in H_n$.
We know from the  Bismut-Elworthy-Li formula proved above that
for every $\theta_0\geq0$,
\begin{align}
\notag
    &P^{\eps,n}_t\varphi_n(x_n+\theta_0 h_n)-P^{\eps,n}_t\varphi_n(x_n)\\
\label{aux:9}
    &=\int_0^{\theta_0}
    \frac{1}t \mathbb{E}\left[ \varphi(X^{x_n+\theta h_n}_{\eps,n}(t))
\int_0^t\left( \mathfrak G_{\alpha,\varepsilon,n}^{-1}(X^{x_n+\theta h_n}_{\eps,n}(s))Y^{h_n}_{\eps,n}(s), \d W(s)\right)_H \right]
\,\d\theta.
\end{align}
Now, since $x_n+\theta h_n\to x+\theta h$ in $V_{2\xi}$
and $h_n\to h$ in $V_{2\delta}$,
for every $\theta\in[0,\theta_0]$, classical computations 
based on finite dimensional approximations ensure that 
\begin{alignat*}{2}
  X^{x_n+\theta h_n}_{\eps,n}&\to X^{x+\theta h}_{\eps}
  \quad &&\text{in } L^\ell_\cP(\Omega; C([0,T]; V_{2\xi})\cap L^2(0,T; V_{2\sigma+1}))
  \quad\forall\,\ell\geq2,\quad\forall\,T>0,\\
  Y^{h_n}_{\eps,n}&\to Y^{h}_{\eps}
  \quad&&\text{in } L^\ell_\cP(\Omega; C([0,T]; V_{2\delta})\cap L^2(0,T; V_{2\sigma+1}))
  \quad\forall\,\ell\geq2,\quad\forall\,T>0.
\end{alignat*}
Hence, for every $\theta\in[0,\theta_0]$,
by the dominated convergence theorem we have 
\begin{align*}
    P^{\eps,n}_t\varphi_n(x_n+\theta h_n)
    &=\E\left[\varphi_n(X^{x_n+\theta h_n}_{\eps,n}(t))\right]\\
    &=\E\left[\varphi(X^{x_n+\theta h_n}_{\eps,n}(t))\right]
    \to \E\left[\varphi(X^{x+\theta h}_{\eps}(t))\right]
    =P^{\eps}_t\varphi(x+\theta h).
\end{align*}
Furthermore, note that, by definition of $\mathfrak G_{\alpha,\eps,n}^{-1}$, $G_{\alpha,\eps}^{-1}$, and by \eqref{est:inv_eps}, we have
\begin{align*}
    &\norm{\mathfrak G_{\alpha,\varepsilon,n}^{-1}(X^{x_n+\theta h_n}_{\eps,n})Y^{h_n}_{\eps,n}
    - G_{\alpha,\varepsilon}^{-1}(X^{x+\theta h}_{\eps})Y^{h}_{\eps}}_{L^2(0,t;H)}^2\\
    &=\norm{G_{\alpha,\varepsilon}^{-1}(X^{x_n+\theta h_n}_{\eps,n})Y^{h_n}_{\eps,n}- 
    G_{\alpha,\varepsilon}^{-1}(X^{x+\theta h}_{\eps})Y^{h}_{\eps}}_{L^2(0,t;H)}^2\\
    &\leq2\norm{G_{\alpha,\varepsilon}^{-1}(X^{x_n+\theta h_n}_{\eps,n})[Y^{h_n}_{\eps,n}- 
    Y^{h}_{\eps}]}_{L^2(0,t;H)}^2
    +2\norm{(G_{\alpha,\varepsilon}^{-1}(X^{x_n+\theta h_n}_{\eps,n})- 
    G_{\alpha,\varepsilon}^{-1}(X^{x+\theta h}_{\eps}))Y^{h}_{\eps}}_{L^2(0,t;H)}^2\\
    &\lesssim
    \int_0^t\norm{\Psi''_{\frac\alpha2,\eps}
    (X^{x_n+\theta h_n}_{\eps,n}(s))}_{V_{2\delta}}^2
    \norm{Y^{h_n}_{\eps,n}(s)- 
    Y^{h}_{\eps}(s)}_{V_{2\delta}}^2\,\d s\\
    &\qquad+\int_0^t\norm{\Psi''_{\frac\alpha2,\eps}
    (X^{x_n+\theta h_n}_{\eps,n}(s))-
    \Psi''_{\frac\alpha2,\eps}
    (X^{x+\theta h}_{\eps}(s))}_{V_{2\delta}}^2
    \norm{Y^{h}_{\eps}(s)}_{V_{2\delta}}^2\,\d s\\
    &\lesssim 
    \norm{\Psi''_{\frac\alpha2,\eps}
    (X^{x_n+\theta h_n}_{\eps,n})}_{L^2(0,t;V_{2\delta})}^2
    \norm{Y^{h_n}_{\eps,n}- 
    Y^{h}_{\eps}}_{C([0,T];V_{2\delta})}^2\\
    &\qquad+
    \norm{\Psi''_{\frac\alpha2,\eps}
    (X^{x_n+\theta h_n}_{\eps,n})-
    \Psi''_{\frac\alpha2,\eps}
    (X^{x+\theta h}_{\eps})}_{L^2(0,t; V_{2\delta})}^2
    \norm{Y^{h}_{\eps}}_{C([0,T];V_{2\delta})}^2.
\end{align*}
If $\delta<\frac12$, since $\Psi_{\frac\alpha2,\eps}''$
is Lipschitz-continuous, one has that 
\begin{align*}
  \norm{\Psi''_{\frac\alpha2,\eps}
    (X^{x_n+\theta h_n}_{\eps,n})}_{L^2(0,t;V_{2\delta})}^2
  &\lesssim_\eps \norm{X^{x_n+\theta h_n}_{\eps,n}}_{L^2(0,t;V_{2\delta})}^2,\\
  \norm{\Psi''_{\frac\alpha2,\eps}
    (X^{x_n+\theta h_n}_{\eps,n})-
    \Psi''_{\frac\alpha2,\eps}
    (X^{x+\theta h}_{\eps})}_{L^2(0,t; V_{2\delta})}^2
    &\lesssim_{\eps}
    \norm{X^{x_n+\theta h_n}_{\eps,n}-
    X^{x+\theta h}_{\eps}}_{L^2(0,t; V_{2\delta})}^2,
\end{align*}
so the convergences above imply that
\[
\mathfrak G_{\alpha,\varepsilon,n}^{-1}(X^{x_n+\theta h_n}_{\eps,n})Y^{h_n}_{\eps,n}
    \to 
    G_{\alpha,\varepsilon}^{-1}(X^{x+\theta h}_{\eps})Y^{h}_{\eps}
\quad\text{in } L^2_\cP(\Omega; L^2(0,t; H)) \quad\forall\,\ell\geq2.
\]
If $\delta\in[\frac12,1)$, we need a more careful examination.
Since $\xi>\max\{\delta-\frac12,
\delta-\sigma-\frac14\}$ by assumption,
we can choose $z\in[\delta-\frac12,\frac12)
\cap (\delta-\sigma-\frac 14,\xi]$,
so that Lemma~\ref{multiplication} yields
\begin{align*}
  &\norm{\Psi''_{\frac\alpha2,\eps}
    (X^{x_n+\theta h_n}_{\eps,n})}_{L^2(0,t;V_{2\delta})}^2
  \lesssim_\eps \norm{X^{x_n+\theta h_n}_{\eps,n}}_{L^2(0,t;V)}^2+
  \norm{\Psi_{\frac\alpha2,\eps}'''(X^{x_n+\theta h_n}_{\eps,n})
  \nabla X^{x_n+\theta h_n}_{\eps,n}}_{L^2(0,t; V_{2(\delta-\frac12)})}^2\\
  &\qquad\lesssim\norm{X^{x_n+\theta h_n}_{\eps,n}}_{L^2(0,t;V)}^2+
  \int_0^t
  \norm{\Psi_{\frac\alpha2,\eps}'''(X^{x_n+\theta h_n}_{\eps,n}(s))}_{V_{2z}}^2
  \norm{\nabla X^{x_n+\theta h_n}_{\eps,n}(s)}_{V_{2\sigma}}^2\,\d s\\
  &\qquad\lesssim_\eps 
  \norm{X^{x_n+\theta h_n}_{\eps,n}}_{L^2(0,t;V)}^2+
  \norm{X^{x_n+\theta h_n}_{\eps,n}}_{C([0,t]; V_{2\xi})}^2
  \norm{X^{x_n+\theta h_n}_{\eps,n}}_{L^2(0,t;V_{2\sigma+1})}^2.
\end{align*}
Analogous computations show that 
\begin{align*}
    &\norm{\Psi''_{\frac\alpha2,\eps}
    (X^{x_n+\theta h_n}_{\eps,n})-
    \Psi''_{\frac\alpha2,\eps}
    (X^{x+\theta h}_{\eps})}_{L^2(0,t; V_{2\delta})}^2\\
    &\lesssim
    \norm{\Psi'''_{\frac\alpha2,\eps}
    (X^{x_n+\theta h_n}_{\eps,n})
    \nabla X^{x_n+\theta h_n}_{\eps,n}-
    \Psi'''_{\frac\alpha2,\eps}
    (X^{x+\theta h}_{\eps})
    \nabla X^{x+\theta h}_{\eps}}_{L^2(0,t; H)}^2\\
    &\qquad+
    \norm{X^{x_n+\theta h_n}_{\eps,n}}_{C([0,t]; V_{2\xi})}^2
  \norm{X^{x_n+\theta h_n}_{\eps,n}-X^{x+\theta h}_{\eps}}_{L^2(0,t;V_{2\sigma+1})}^2\\
  &\qquad+\norm{\Psi'''_{\frac\alpha2,\eps}
    (X^{x_n+\theta h_n}_{\eps,n})
    -\Psi'''_{\frac\alpha2,\eps}
    (X^{x+\theta h}_{\eps})}_{C([0,t]; V_{2z})}^2
    \norm{X^{x+\theta h}_{\eps}}_{L^2(0,t;V_{2\sigma+1})}^2.
\end{align*}
Hence, by exploiting  
the fact that $\Psi_{\frac\alpha2,\eps}'''$ is bounded and Lipschitz-continuous, the dominated convergence theorem,
and the convergences pointed out above, we infer 
also in this case that 
\[
\mathfrak G_{\alpha,\varepsilon,n}^{-1}(X^{x_n+\theta h_n}_{\eps,n})Y^{h_n}_{\eps,n}
    \to 
    G_{\alpha,\varepsilon}^{-1}(X^{x+\theta h}_{\eps})Y^{h}_{\eps}
\quad\text{in } L^2_\cP(\Omega; L^2(0,t; H)) \quad\forall\,\ell\geq2.
\]
Putting everything together, we can let $n\to\infty$ 
in \eqref{aux:9} by using the It\^o isometry 
and the dominated convergence theorem. This concludes the proof.
\end{proof}

We are now ready to prove uniform estimate on the derivatives
$(DP^\eps_t)_\eps$, for every $t>0$.

\begin{prop}
\label{prop:est_BEL}
Under the assumptions of Theorem~\ref{th:sf},
let $\varphi\in C^2_b(H)$ and $T>0$.
Then, there exists a constant
$C>0$, 
only depending on $\alpha,\beta,\delta,\sigma,\xi,T$,
such that, for every $t\in(0,T)$ and
$\eps\in(0,1)$, it holds
\begin{align*}
\norm{D(P^\eps_t\varphi)(x)}_{V_{2\delta}^*}
\leq \frac{C}{t^{\mathfrak a}}
\left(1+\|x\|^{\mathfrak b}_{V_{2\sigma}}+
\|\Psi_{\gamma}(x)\|_{L^1(\Td)}^{\mathfrak b}\right)
\sup_{v\in H}|\varphi(v)|
\quad\forall\,x\in V_{2\xi}\cap\mathcal K_\gamma.
\end{align*}
\end{prop}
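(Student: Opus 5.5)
Starting point: the Bismut--Elworthy--Li representation of Proposition~\ref{BEL3}. For $x\in V_{2\xi}\cap\mathcal K_\gamma$ and $h\in V_{2\delta}$ it gives
\[
|D(P^\eps_t\varphi)(x)[h]|\leq \frac1t\sup_{v\in H}|\varphi(v)|\,
\E\left|\int_0^t\left(G_{\alpha,\eps}^{-1}(\xex(s))\yeh(s),\d W(s)\right)_H\right|.
\]
By the It\^o isometry (or Burkholder--Davis--Gundy) the expectation is bounded by
$\left(\E\int_0^t\|G_{\alpha,\eps}^{-1}(\xex(s))\yeh(s)\|_H^2\,\d s\right)^{1/2}$.
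Using \eqref{est:inv_eps}, the integrand is bounded by $\|G^{-1}_{\alpha,\eps}(\xex(s))\|_{\cL(V_{2\delta},H)}^2\|\yeh(s)\|_{V_{2\delta}}^2$. So the whole task reduces to bounding this product uniformly in $\eps$, with an explicit power of $t$.

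The two factors are separated by H\"older in $\omega$ and in time:
\[
\int_0^t\|G^{-1}_{\alpha,\eps}(\xex)\|^2\|\yeh\|^2_{V_{2\delta}}\,\d s\le t^{1-\frac2r}\,\|G^{-1}_{\alpha,\eps}(\xex)\|^2_{L^r(0,t;\cL(V_{2\delta},H))}\,\|\yeh\|^2_{C([0,t];V_{2\delta})},
\]
with $r$ as in Proposition~\ref{prop:Xe_inverse}. Each factor is then controlled by an earlier result.
\begin{itemize}
\item The inverse is bounded by Proposition~\ref{prop:Xe_inverse}, which costs $(1+\|\Psi_\gamma(x)\|^{\mathfrak p}_{L^1}+\|x\|^{\mathfrak p}_{V_{2\xi}})$.
\item The derivative $\yeh$ in $L^\ell(\Omega;C([0,T];V_{2\delta}))$ is bounded by Remark~\ref{rem_max_reg2}, that is, by Propositions~\ref{prop_stime_Y_3}--\ref{prop_stime_Y_4}. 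This gives a factor $(1+\|\Psi_\gamma(x)\|^{c}+\|x\|^c_{V_{2\xi}})\|h\|_{V_{2\delta}}$.
\end{itemize}
This yields $\|D(P^\eps_t\varphi)(x)\|_{V_{2\delta}^*}\lesssim t^{-1}t^{\frac12-\frac1r}(\cdots)$. In the regime $\delta<\frac12$ with $r=1/\delta$, the exponent becomes $t^{-\frac12-\delta}$, i.e.\ $\mathfrak a=\frac12+\delta$. In the case $r=\infty$ it becomes $t^{-1/2}$, which matches the second part of Theorem~\ref{th:sf}. The remaining $\delta\ge\frac12$ cases use the corresponding $r$ and give the listed $\mathfrak a$. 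The exponent $\mathfrak b$ is obtained by adding the powers $\mathfrak p$ and $c$.

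The main obstacle is the bookkeeping of parameters. One has to check that the settings of Theorem~\ref{th:sf} are admissible simultaneously for Theorem~\ref{th:wp}, Proposition~\ref{prop:Xe_inverse} (which needs $\gamma\ge\alpha+2$ or $\alpha+4$), Remark~\ref{rem_max_reg2} (which needs $\sigma>\sigma_0'$ or $\sigma_0''$, and $\gamma=\alpha+\beta$ or $\alpha+2\beta+2$), and the condition $\xi>\max\{\delta-\frac12,\delta-\sigma-\frac14\}$ in Proposition~\ref{BEL3}. I would do this as in Remarks~\ref{rem_max_reg} and~\ref{rem:G_inv}. In the first setting, $\gamma=\alpha+\max\{\beta,2\}$ covers both $\alpha+\beta$ and $\alpha+2$, and $\sigma_0^1$ is defined as the maximum of the two thresholds. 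The second setting is handled analogously with $\gamma=\alpha+2\beta+2\ge\alpha+4$.

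A secondary care point concerns $\Psi_\gamma$ versus $\Psi_{\gamma,\eps}$: all the bounds are stated through $\|\Psi_\gamma(x)\|_{L^1}$, which dominates the approximate quantity by Lemma~\ref{lem_app}(viii), so the estimates are uniform in $\eps$.
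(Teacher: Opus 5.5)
Your proposal is correct and follows the paper's proof essentially step by step. The chain is the same: the Bismut--Elworthy--Li formula, the It\^o isometry, H\"older in $\omega$ (with $L^4$) and in time to produce $t^{-1}\cdot t^{\frac12-\frac1r}=t^{-\mathfrak a}$, then Proposition~\ref{prop:Xe_inverse} for $G_{\alpha,\eps}^{-1}(\xex)$ and Propositions~\ref{prop_stime_Y_3}--\ref{prop_stime_Y_4} (via Remark~\ref{rem_max_reg2}) for $\yeh$, and finally $\mathfrak p+\mathfrak q\le\mathfrak b$.

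One point to keep in mind: the $x$-dependence you actually obtain is through $\|x\|_{V_{2\xi}}$, which enters via Propositions~\ref{prop:Xe_inverse} and \ref{prop_stime_Y_4}, not through the $\|x\|_{V_{2\sigma}}$ written in the statement. The paper's own argument has the same feature, and $\|x\|_{V_{2\xi}}$ is the form Theorem~\ref{th:sf} uses anyway.
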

\begin{proof}
    First of all, let us note that the assumptions of 
    Theorem~\ref{th:sf} imply that all the assumptions 
    of Theorem~\ref{th:wp} and Propositions~\ref{prop_stime_Y_3},
    \ref{prop_stime_Y_4}, and \ref{prop:Xe_inverse} 
    are satisfied (see Remarks~\ref{rem_max_reg}--\ref{rem_max_reg2}).
    By Proposition~\ref{BEL3}, It\^o isometry,
    and the H\"older inequality,
    for every $x\in V_{2\sigma}$ and $h\in V_{2\delta}$
    we have that 
    \begin{align*}
        |D(P_t^\eps\varphi)(x)[h]|^2
        &\leq
        \frac1{t^2}
        \sup_{v\in H}|\varphi(v)|^2
        \E\int_0^t\norm{G_{\alpha,\eps}^{-1}
        (\xex(s))Y_\eps^h(s)}_H^2\,\d s\\
        &\leq\frac1{t^2}
        \sup_{v\in H}|\varphi(v)|^2
        \E\int_0^t
        \norm{G_{\alpha,\eps}^{-1}
        (\xex(s))}^2_{\cL(V_{2\delta}, H)}
        \norm{Y_\eps^h(s)}_{V_{2\delta}}^2\,\d s\\
        &\leq\frac1{t^2}
        \sup_{v\in H}|\varphi(v)|^2
        \E\left[
        \norm{G_{\alpha,\eps}^{-1}(\xex)}_{L^2(0,t;\cL(V_{2\delta},H))}^2
        \norm{Y_\eps^h}_{C([0,t]; V_{2\delta})}^2
        \right]\\
        &\leq\frac1{t^2}
        \sup_{v\in H}|\varphi(v)|^2
        \norm{G_{\alpha,\eps}^{-1}(\xex)}_{L^4(\Omega; L^2(0,t;\cL(V_{2\delta},H)))}^2
        \norm{Y_\eps^h}_{L^4(\Omega; C([0,T]; V_{2\delta}))}^2,
    \end{align*}
    so that 
    \[
    |D(P_t^\eps\varphi)(x)[h]|
    \leq\frac1t
    \sup_{v\in H}|\varphi(v)|
    \norm{G_{\alpha,\eps}^{-1}(\xex)}_{L^4(\Omega; L^2(0,t;\cL(V_{2\delta,H})))}
        \norm{Y_\eps^h}_{L^4(\Omega; C([0,T]; V_{2\delta}))}.
    \]
    By letting $r$ as in Proposition~\ref{prop:Xe_inverse}, 
    by noting that $\gamma$ in Theorem~\ref{th:sf}
    is admissible in Proposition~\ref{prop:Xe_inverse},
    by the H\"older inequality and 
    Proposition~\ref{prop:Xe_inverse} we have 
    \begin{align*}
    \norm{G_{\alpha,\eps}^{-1}(\xex)}_{L^4(\Omega; L^2(0,t;\cL(V_{2\delta,H})))}
    &\leq t^{\frac12-\frac1r}
    \norm{G_{\alpha,\eps}^{-1}(\xex)}_{L^4(\Omega; L^r(0,T;\cL(V_{2\delta,H})))}\\
    &\leq Ct^{\frac12-\frac1r}\left(1+\|x\|_{V_{2\xi}}^{\mathfrak p}
    +\|\Psi_{\gamma}(x)\|_{L^1(\Td)}^{\mathfrak p}\right).
    \end{align*}
    Moreover, by noting that $\gamma$ in Theorem~\ref{th:sf}
    always dominates the choices $\alpha+\beta$
    and $\alpha+2\beta+2$ appearing in 
    Remark~\ref{rem_max_reg2}, by 
    Propositions~\ref{prop_stime_Y_3}-\ref{prop_stime_Y_4}
    we infer
    \[
    \norm{Y_\eps^h}_{L^4(\Omega; C([0,T]; V_{2\delta}))}
    \leq C\left(1+\|x\|_{V_{2\sigma}}^{\mathfrak q}
    +\|\Psi_{\alpha+\beta}(x)\|_{L^1(\Td)}^{\mathfrak q}\right)
    \|h\|_{V_{2\delta}},
    \]
    where $\mathfrak q:=2\sigma$ if $\delta<\frac12$ and $\sigma<\frac14$, 
    $\mathfrak q:=\frac32+2\sigma$ if $\delta\geq\frac12$
    and $\sigma<\frac14$, and 
    $\mathfrak q:=\frac32+\frac\beta{2(\gamma-2)}$
    if $\delta\geq\frac12$
    and $\sigma\geq\frac14$.
    Putting everything together yields
    \[
    |D(P_t^\eps\varphi)(x)[h]|
    \leq\frac{C}{t^{\frac12+\frac1r}}
    \sup_{v\in H}|\varphi(v)|
    \left(1+\|x\|_{V_{2\sigma}}^{\mathfrak p+\mathfrak q}
    +\|\Psi_{\gamma}(x)\|_{L^1(\Td)}^{\mathfrak p+\mathfrak q}\right)
    \|h\|_{V_{2\delta}}
    \]
    and the thesis follows since $\frac12+\frac1r=\mathfrak a$, $\mathfrak q\leq2$,
    and 
    $\mathfrak p+\mathfrak q\leq\mathfrak b$.
\end{proof}

\subsection{Proof of the strong Feller property}
We are now ready to conclude the proof of the main Theorem~\ref{th:sf}.
Under the assumptions of Theorem~\ref{th:sf},
given $\varphi\in\mathcal B_b(\mathcal X)$, let us define $\tilde\varphi\in\mathcal B_b(H)$
by extending $\varphi$ to $0$ outside $\mathcal X$, namely 
\[
  \tilde\varphi(x):=
  \begin{cases}
      \varphi(x) \quad&\text{if } x\in \mathcal X,\\
      0 \quad&\text{if } x\in H\setminus\mathcal X.
  \end{cases}
\]
Now, by relying on classical 
approximation techniques via
convolutions with Gaussian measures on $H$, it is clear that there exists a sequence $(\varphi_m)_m\subset C^2_b(H)$
such that 
\[
\lim_{m\to\infty}\varphi_m(x)\to\tilde\varphi(x) \quad\forall\,x\in H
\]
and 
\[
  \sup_{v\in H}|\varphi_m(v)|\leq\sup_{v\in H}|\tilde\varphi(v)|
  =\sup_{v\in \mathcal X}|\varphi(v)|
  \quad\forall\,m\in\enne.
\]
Given $x_1,x_2\in V_{2\xi}\cap \mathcal K_{\gamma}$ with $x_1-x_2\in V_{2\delta}$, by convexity one has that 
$x_2+\theta(x_1-x_2)\in V_{2\xi}\cap \mathcal K_{\gamma}$
for every $\theta\in[0,1]$: hence, 
by Proposition~\ref{prop:est_BEL} we have 
\begin{align*}
&|P^\eps_t\varphi_m(x_1)-P_t^\eps\varphi_m(x_2)|
=\left|\int_0^1D(P^\eps_t\varphi_m)(x_2+\theta(x_1-x_2))[x_1-x_2]\,\d\theta\right|\\
&\leq\int_0^1
\norm{D(P^\eps_t\varphi_m)(x_2+\theta(x_1-x_2))}_{V_{2\delta}^*}
\|x_1-x_2\|_{V_{2\delta}}\,\d\theta\\
&\leq\frac{C\|x_1-x_2\|_{V_{2\delta}}}{t^{\mathfrak a}}
\sup_{v\in H}|\varphi_m(v)|
\left(1+\|x_2+\theta(x_1-x_2)\|_{V_{2\sigma}}^{\mathfrak b}+
\|\Psi_{\gamma}(x_2+\theta(x_1-x_2))\|_{L^1(\Td)}^{\mathfrak b}\right)\\
&\leq\frac{C}{t^{\mathfrak a}}
\sup_{v\in \mathcal X}|\varphi(v)|
\sum_{i=1}^2
    \left(1+\|x_i\|_{V_{2\sigma}}^{\mathfrak b}
    +\|\Psi_{\gamma}(x_i)\|_{L^1(\Td)}^{\mathfrak b}\right)
\|x_1-x_2\|_{V_{2\delta}}.
\end{align*}
Now, since $\P(X(t)\in \mathcal X)=1$,
by letting $\eps\to0^+$ and then $m\to\infty$ via the dominated convergence theorem one has, for $i=1,2$, that
\begin{align*}
  \lim_{m\to\infty}\lim_{\eps\to0^+}
  P^\eps_t\varphi_m(x_i)&=
\lim_{m\to\infty}\lim_{\eps\to0^+}
\E[\varphi_m(X^{x_i}_\eps(t))]
  =\lim_{m\to\infty}\E[\varphi_m(X^{x_i}(t))]\\
  &=\E[\tilde\varphi(X^{x_i}(t))]
  =\E[\varphi(X^{x_i}(t))]=P_t\varphi(x_i),
\end{align*}
and this concludes the proof of Theorem~\ref{th:sf}.


\section{Irreducibility and uniqueness of the invariant measure}
\label{sec:irr}
This section is devoted to the proof of Theorems~\ref{th:irr} and \ref{th:uniq_im} on irreducibility and uniqueness of the invariant measure.

\subsection{Irreducibility}
\label{ssec:irr}
Let us work here under 
the assumptions of Theorem~\ref{th:irr}.
Since $\gamma\geq2\beta$, we know from Theorem~\ref{th:wp}
that 
\begin{equation}
    \label{est:X_R}
  X\in L^\ell_\cP(\Omega; C([0,T]; V_{2\delta}))
  \quad\forall\,\ell\geq2,\quad\forall\,T>0.
\end{equation}

Now, given $t,r>0$ and $a\in \mathcal K_{\gamma}\cap V_{2\delta}$
as in Theorem~\ref{th:irr},
let $\tilde a\in \mathcal K_{\gamma}\cap V_{2+2\delta}$
be such that 
$\|a-\tilde a\|_{V_{2\delta}}<r/2$
(note that such $\tilde a$ always exists by convexity of $\Psi_{\gamma}$ and classical density results).
For $\tau\in(0,t)$ and $M>0$ to be chosen later, 
we consider the modified 
equation
\[
  \begin{cases}
  \d Z(s)+AZ(s)\,\d s + \Psi_\beta'(Z(s))\,\d s
  +\frac{M}{t-\tau}(Z-\tilde a)\,\d s
  =G_\alpha(Z(s))\,\d W(s),
  \quad s\in[\tau,t],\\
  Z(\tau)=X^x(\tau).
  \end{cases}
\]
A direct
adaptation of the proof of Theorem~\ref{th:wp}
ensures that, for every $\tau\in(0,t)$ and $M>0$, 
there exists a
unique variational solution
\begin{equation}
\label{est:Z_R}
Z\in L^\ell_\cP(\Omega; C([\tau,t]; V_{2\delta})\cap L^2(\tau,t; V_{2\sigma+1}))
\quad\forall\,\ell\geq2.
\end{equation}
In order to find a right scaling between 
$\tau$ and $M$, we need to prove some estimates on $Z$, by 
keeping track of the dependence on $\tau$ and $M$.
The idea will be to choose $M$ sufficiently large and
$t-\tau$ sufficiently small: hence, it is not 
restrictive to assume $t-\tau<1$.
\begin{lem}
    \label{lem:Z0}
    In the current setting, 
    there exist positive 
    constants $(C_{1,\ell})_{\ell\geq1}$,
    independent of $\tau$ and $M$,
    such that, for every $\ell\geq2$,
    \begin{align}
    \label{est:Z2}
    \|Z\|_{L^\ell(\Omega;L^\infty(\tau,t;H))}
    +\|Z\|_{L^\ell(\Omega;L^2(\tau,t;V))}
    +\|Z\|_{L^\ell(\Omega;L^{\frac1\sigma}(\tau,t;V_{2\sigma}))}
    &\leq C_{1,\ell}(1+\sqrt{M}),\\
    \label{est:Z4}
    \|\Psi_\gamma(Z)\|_{L^{\ell}(\Omega;L^\infty(\tau,t;
    L^1(\Td)))}
    +\|\Psi''_{\frac\gamma2}(Z)\nabla Z\|^2_{
    L^\ell(\Omega;L^2(\tau,t;H))}
    &\leq C_{1,\ell}(1+M).
    \end{align}
    Moreover, 
    \begin{alignat}{2}
    \label{est:Z4ter}
    \|\Psi_\beta'(Z)\|_{L^{\ell}(\Omega;L^2(\tau,t;
    H))}
    &\leq C_{1,\ell}(1+M)\quad&&\text{if } 
    \delta<\frac12,\\
    \label{est:Z4bis}
    \|\Psi_\beta'(Z)\|_{L^{\ell}(\Omega;L^{\frac1\sigma}(\tau,t;
    V_{2\sigma}))}
    &\leq C_{1,\ell}(1+M)\quad&&\text{if } \delta\geq\frac12,
\end{alignat}
and
\begin{equation}
    \label{est:Z5}
  \P\left(\int_\tau^t\|\Psi_{\frac\alpha2}''
  (Z(s))\|^2_{V_{2\delta}}\,\d s<+\infty\right)=1.
\end{equation}
\end{lem}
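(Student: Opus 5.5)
We will redo, for the controlled equation on $[\tau,t]$, the a priori estimates of Section~\ref{sec:X}, now keeping track of the extra drift $\frac{M}{t-\tau}(Z-\tilde a)$. Rigorously, everything is done on the $\eps$-regularised version of the $Z$-equation (with $\Psi_{\beta,\eps}'$, $G_{\alpha,\eps}$), with constants uniform in $\eps$, and then one passes to the limit as in the proof of Theorem~\ref{th:wp}. The initial datum $Z(\tau)=X^x(\tau)$ is controlled in every $L^\ell(\Omega)$ by Theorem~\ref{th:wp}, Proposition~\ref{prop3Xeps} and \eqref{est:X_R}, independently of $\tau$. The key point is that the extra drift only produces error terms of the form $\frac{M}{t-\tau}\int_\tau^t(\cdots)\,\d s$ whose integrand is bounded by a quantity depending only on $\tilde a$. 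Integrating over an interval of length $t-\tau$ then gives an $O(M)$ contribution, independent of $\tau$.

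\textbf{$H$-estimate.} In the It\^o formula for $\frac12\|Z\|_H^2$, the extra drift contributes
\[
\frac{M}{t-\tau}(Z-\tilde a,Z)_H\ge \frac{M}{2(t-\tau)}\big(\|Z\|_H^2-\|\tilde a\|_H^2\big),
\]
so its negative part is at most $\frac M2\|\tilde a\|_H^2$. The remaining terms are handled exactly as in Proposition~\ref{prop1Xeps}, using the BDG inequality, Lemma~\ref{lem_app_G} and Gronwall on an interval of length less than $1$. This gives $\|Z\|_{L^\infty H}^2+\|Z\|_{L^2V}^2\lesssim 1+M$, hence the first two terms of \eqref{est:Z2} with $1+\sqrt M$.

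\textbf{$V_{2\sigma}$-bound and $\Psi_\gamma$-estimate.} For the $L^{1/\sigma}(\tau,t;V_{2\sigma})$ bound we interpolate: $\|Z\|_{V_{2\sigma}}\le\|Z\|_H^{1-2\sigma}\|Z\|_V^{2\sigma}$, and $L^\infty H\cap L^2V\hookrightarrow L^{1/\sigma}V_{2\sigma}$. For \eqref{est:Z4} we repeat Proposition~\ref{prop3Xeps}. The new term is
\[
\frac{M}{t-\tau}\int\Psi_\gamma'(Z)(Z-\tilde a).
\]
By convexity, $\Psi_\gamma'(Z)(Z-\tilde a)\ge\Psi_\gamma(Z)-\Psi_\gamma(\tilde a)$. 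Moreover $\tilde a\in V_{2+2\delta}\cap\mathcal K_\gamma\subset\mathcal K_\infty$ (by the separation argument of Corollary~\ref{cor:sep}), so $\int_\Td\Psi_\gamma(\tilde a)<\infty$. The negative part is therefore bounded by $M\|\Psi_\gamma(\tilde a)\|_{L^1}$. The rest of the argument (the case analysis in $\sigma$ and $\gamma$, and the Gronwall step) is unchanged, which gives the bound $C(1+M)$.

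\textbf{Bounds on $\Psi_\beta'(Z)$, and the main obstacle.} For \eqref{est:Z4ter}, since $\gamma\ge\beta$ we have $|\Psi_\beta'|^2\lesssim\Psi'_\gamma\Psi'_\beta+1$, so the bound follows from the $\Psi'_\gamma\Psi'_\beta$ term in the $\Psi_\gamma$-estimate. For \eqref{est:Z4bis} we use the interpolation computation from Proposition~\ref{prop5Xeps}, with $\gamma\ge2\beta$, now fed by \eqref{est:Z4}. I expect this step to be the main obstacle: the exponents produced by that interpolation must stay at most linear in $M$. This requires reading the $\Psi''_{\gamma/2}\nabla Z$ norm in \eqref{est:Z4} squared and using $\|\Psi_{2\beta}\|^{1/2-\sigma}\cdot\|\Psi''_\beta\nabla Z\|^{2\sigma}$, which scales like $M^{1/2-\sigma+\sigma}\le M$.

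\textbf{Proof of \eqref{est:Z5}.} Finally, \eqref{est:Z5} should follow as in Proposition~\ref{prop:Xe_inverse}. One bounds $\|\Psi''_{\alpha/2}(Z)\|_{V_{2\delta}}$ pathwise by $\|\Psi_\gamma(Z)\|_{L^\infty L^1}$ and $\|\Psi''_{\gamma/2}(Z)\nabla Z\|_{L^2H}$, together with $\|Z\|_{C V_{2\delta}}$ from \eqref{est:Z_R} when $\delta\ge\frac12$, using $\gamma\ge\alpha+2$ or $\gamma\ge\alpha+4$ respectively. Here only almost-sure finiteness is needed, not a quantitative bound. The available integrability in time must nevertheless reach $L^2$. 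For $\delta<\frac12$ this should come from $r=1/\delta>2$; for $\delta\ge\frac12$, from $r=\infty$ or $r\ge 2$, which is where $\xi=\delta$ and the $C([\tau,t];V_{2\delta})$ regularity of $Z$ are used.
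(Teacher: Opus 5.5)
Your proposal is correct and follows essentially the same route as the paper. Both arguments use It\^o's formula for $\frac12\|Z\|_H^2$ with Young's inequality on the control term, and It\^o's formula for $\int_\Td\Psi_\gamma(Z)$ with the convexity bound producing the $M\|\Psi_\gamma(\tilde a)\|_{L^1(\Td)}$ error. The $\Psi'_\beta$ bounds \eqref{est:Z4ter}--\eqref{est:Z4bis} come from the interpolation of Proposition~\ref{prop5Xeps}. For the $L^2(\tau,t;H)$ bound, you usefully make explicit that it comes from the dissipation term $\int\Psi'_\gamma(Z)\Psi'_\beta(Z)$ via $|\Psi'_\beta|\le|\Psi'_\gamma|$. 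Finally, \eqref{est:Z5} follows from the argument of Proposition~\ref{prop:Xe_inverse}, fed by \eqref{est:Z_R} and \eqref{est:Z4}.

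The only superfluous step is the detour through $\mathcal K_\infty$ for $\tilde a$: $\int_\Td\Psi_\gamma(\tilde a)$ is finite simply because $\tilde a\in\mathcal K_\gamma$ by construction.
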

\begin{proof}
    It\^o formula implies, for every $s\in[\tau,t]$, $\mathbb{P}$-a.s., that
\begin{align*}
    &\frac 12 \|Z(s)\|^2_{H} + 
    \int_\tau^s \| A^{\frac12}Z(r)\|^2_{H} \, \d r
    + \int_\tau^s \left(  
    \Psi_{\beta}'(Z(r)),
    Z(r)\right) \, \d r 
    \\
    &\qquad+\frac{M}{t-\tau}\int_\tau^t
    \left(Z(r)-\tilde a, Z(r)\right)_H\,\d r\\
    &= \frac 12 \|X^x(\tau)\|^2_{H} 
    + \int_\tau^s \left( Z(r), 
    G_{\alpha} (Z(r)) \, \d W(r)\right)_H  
    + \frac 12 \int_\tau^s 
    \|G_{\alpha}(Z(r))
    \|^2_{\cL^2(H,H)}\, \d r.
\end{align*}
On the left-hand side, arguing as in Proposition~\ref{prop1Xeps}
one has by monotonicity of $\Psi_\beta'$ that
\[
\int_\tau^s \left( 
    \Psi_{\beta}'(Z(r)), Z(r)\right) \, \d r\geq0,
\]
while by the Young inequality 
we have 
\begin{align*}
&\frac{M}{t-\tau}\int_\tau^s
    \left(Z(r)-\tilde a,  Z(r)\right)_H\,\d r\\
&\geq\frac{M}{t-\tau}\int_\tau^s
    \|Z(r)\|_H^2\,\d r-
    \frac{M}{t-\tau}\int_\tau^s
    \|\tilde a\|_H\|Z(r)\|_H\,\d r\\
&\geq\frac{M}{2(t-\tau)}\int_\tau^s
    \|Z(r)\|_H^2\,\d r-M\|\tilde a\|_{H}^2.
\end{align*}
By proceeding now as in the 
proof of Proposition~\ref{prop1Xeps}
and using that $|Z|\leq1$,
we deduce that 
\begin{align*}
    &\E\sup_{r\in[\tau,s]}\|Z(r)\|_H^2
    +\E\int_\tau^s\| A^{\frac12}Z(r)\|^2_{H} \, \d r
    +\frac{M}{t-\tau}\int_\tau^s
    \|Z(r)\|_H^2\,\d r\\
    &\lesssim_t 1+\E\|X^x(\tau)\|^2_{H}+
    M\|\tilde a\|_{H}^2 
    \lesssim_t 1 + M,
\end{align*}
and \eqref{est:Z2} follows by interpolation.
Analogously, by formally replicating the estimate of Proposition~\ref{prop3Xeps}, by the Fenchel-Young inequality and the convexity of $\Psi_\gamma$
we have, for every $s\in[\tau,t]$, that
\begin{align*}
&\norm{\Psi_{\gamma}(Z(s))}_{L^1(\Td)}
+\int_\tau^s \norm{\Psi_{\frac\gamma2}''(Z(r)) 
\nabla Z(r)}_H^2\, \d r
+ \int_\tau^s \int_{\Td}\Psi_{\gamma}'(Z(r))
\Psi_{\beta}'(Z(r))\, \d r 
\\
&\qquad+\frac{M}{t-\tau}\int_\tau^s\|
\Psi_\gamma(Z(r))\|_{L^1(\Td)}\,\d r\\
&\leq \norm{\Psi_{\gamma}(X^x(\tau))}_{L^1(\Td)}
+M\norm{\Psi_\gamma(\tilde a)}_{L^1(\Td)}
+ \int_\tau^s \left(\Psi_{\gamma}'(Z(r)), 
G_{\alpha} (Z(r))\, \d W(r)\right)_H \\
&\qquad+
\frac 12 \int_\tau^s \sum_{k \in \mathbb{Z}^d}
\int_{\Td} \Psi_{\gamma}''(Z(r))
m_{\alpha}(Z(r))
|(\operatorname{I}+A)^{-\delta}e_k|^2\, \d r.
\end{align*}
By arguing exactly as in the proof of
Proposition~\ref{prop3Xeps}, we infer that 
\begin{align*}
    &\E\sup_{r\in[\tau,s]}\norm{\Psi_{\gamma}(Z(r))}_{L^1(\Td)}
+\E\int_\tau^s \norm{\Psi_{\frac\gamma2}''(Z(r)) 
\nabla Z(r)}_H^2\, \d r
+\frac{M}{t-\tau}\E\int_\tau^s\|
\Psi_\gamma(Z(r))\|_{L^1(\Td)}\,\d r\\
&\lesssim_t
1+\E\norm{\Psi_{\gamma}(X^x(\tau))}_{L^1(\Td)}+M
+\E\int_\tau^s\norm{\Psi_{\gamma}(Z(r))}_{L^1(\Td)}\,\d r\\
&\lesssim
1+\|\Psi_\gamma(x)\|_{L^1(\Td)}+ M
+\E\int_\tau^s\norm{\Psi_{\gamma}(Z(r))}_{L^1(\Td)}\,\d r,
\end{align*}
so that  
\eqref{est:Z4} follows.
Moreover, \eqref{est:Z4ter}--
\eqref{est:Z4bis} follow from \eqref{est:Z2}--\eqref{est:Z4} 
by arguing as in Proposition~\ref{prop5Xeps}.
Finally, \eqref{est:Z5} can be proved
by arguing exactly as in Proposition~\ref{prop:Xe_inverse},
by using \eqref{est:Z_R} and \eqref{est:Z4}
instead of Proposition~\ref{prop5Xeps} and \ref{prop3Xeps},
respectively.
\end{proof}

\begin{lem}
    \label{lem:Z1}
    In the current setting, there exist constants $C_2,C_3, C_4>0$, independent of $\tau$ and $M$,
    such that, for every $s\in[\tau,t]$,
    \begin{align}
    \label{est:Za}
    \E\|Z(s)-\tilde a\|_H^2
    &\leq 
    C_2
    e^{-\frac{M}{t-\tau}(s-\tau)},\\
    \label{est:Za2}
    \E\|Z(s)-\tilde a\|_{V_{2\delta}}^2
    &\leq C_3\left[e^{-\frac{M}{t-\tau}(s-\tau)}
    +(1+M)(t-\tau)^{C_4}
    \right].
\end{align}
\end{lem}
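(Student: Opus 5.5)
The plan is to work with the difference $R:=Z-\tilde a$ on $[\tau,t]$. Since $\tilde a$ is deterministic and time-independent, $R$ solves
\[
\d R+AR\,\d s+\bigl(\Psi_\beta'(Z)-\Psi_\beta'(\tilde a)\bigr)\,\d s+\frac{M}{t-\tau}R\,\d s=-\bigl(A\tilde a+\Psi_\beta'(\tilde a)\bigr)\,\d s+G_\alpha(Z)\,\d W,\qquad R(\tau)=X^x(\tau)-\tilde a .
\]
Both forcing terms $A\tilde a$ and $\Psi_\beta'(\tilde a)$ are fixed elements of $V_{2\delta}$. For the first, $\tilde a\in V_{2+2\delta}$. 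For the second, $\tilde a\in\mathcal K_\gamma\cap V_{2+2\delta}\subset\mathcal K_\infty$ by the separation argument in the proof of Corollary~\ref{cor:sep}, and $\Psi_\beta'$ is smooth on $[-\|\tilde a\|_\infty,\|\tilde a\|_\infty]$. Hence their norms are constants independent of $\tau,M$.

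\emph{Proof of \eqref{est:Za}.} I apply It\^o's formula to $\|R\|_H^2$ (justified through the regularisation of Section~\ref{ssec:uniq}), take expectations, and drop $\|A^{1/2}R\|_H^2$ and the monotone term $(\Psi_\beta'(Z)-\Psi_\beta'(\tilde a),R)_H\ge0$. The noise contributes at most a constant $C$ by Lemma~\ref{G_reg}, and Young's inequality absorbs the forcing into $\frac{M}{2(t-\tau)}\|R\|_H^2$. Setting $y(s):=\E\|R(s)\|_H^2$ and $k:=\frac{M}{t-\tau}$, this gives
\[
y'\le -k\,y+C\Bigl(1+\frac{t-\tau}{M}\Bigr).
\]
Integrating with the factor $e^{k(s-\tau)}$, and using $\E\|X^x(\tau)\|_H^2\lesssim1$ from Theorem~\ref{th:wp} and $\|\tilde a\|_H\le|\Td|^{1/2}$, I obtain
\[
y(s)\le e^{-k(s-\tau)}y(\tau)+\frac Ck\Bigl(1+\frac{t-\tau}{M}\Bigr)\bigl(1-e^{-k(s-\tau)}\bigr).
\]
\textbf{This is the main obstacle.} The second term is an additive floor of order $\frac{t-\tau}{M}$: it does not vanish as $s\to t$, while the claimed right-hand side at $s=t$ is $C_2e^{-M}$. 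So Gronwall alone does not give the pure exponential in \eqref{est:Za}. The source of the floor is the constant bound on $\|G_\alpha(Z)\|_{\cL^2(H,H)}^2$ together with the forcing terms.

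To remove it, I would first use the degeneracy of the noise: bound $\|G_\alpha(Z)\|_{\cL^2(H,H)}^2$ by a constant times $\|R\|_H^2$ near $\tilde a$, so that this contribution becomes a multiplicative perturbation of the rate $k$ rather than an additive source. The forcing $A\tilde a+\Psi_\beta'(\tilde a)$ would have to be treated the same way. I am not confident this closes: $\tilde a$ is separated from $\pm1$, so $m_\alpha(\tilde a)>0$ and the noise does not vanish at $\tilde a$. A genuine floor then seems to survive, which would force the relaxed form with an extra $\frac{t-\tau}{M}$. Working in that form would be harmless downstream, since it is of the same type as the $(1+M)(t-\tau)^{C_4}$ term in \eqref{est:Za2}. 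Settling whether the pure exponential actually holds is the first thing I would check before writing the argument out.

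\emph{Proof of \eqref{est:Za2}.} I use the mild formulation with the semigroup generated by $-(A+k)$:
\[
R(s)=e^{-k(s-\tau)}S(s-\tau)R(\tau)+\int_\tau^s e^{-k(s-r)}S(s-r)F(r)\,\d r+\int_\tau^s e^{-k(s-r)}S(s-r)G_\alpha(Z(r))\,\d W(r),
\]
where $F:=-\Psi_\beta'(Z)-A\tilde a$. The three terms are handled as follows.
\begin{itemize}
\item The initial term has $V_{2\delta}$-norm squared at most $e^{-k(s-\tau)}\E\|X^x(\tau)-\tilde a\|_{V_{2\delta}}^2$, where I bound only one factor $e^{-k(s-\tau)}\le1$. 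The expectation is a constant by \eqref{est:X_R}.
\item For the deterministic convolution, I drop $e^{-k(s-r)}\le1$ and use deterministic maximal regularity (\cite[Prop.~A.24]{dapratozab}) on an interval of length $t-\tau<1$. With \eqref{est:Z4ter} (if $\delta<\frac12$) or \eqref{est:Z4bis} (if $\delta\ge\frac12$), and H\"older in time, this gives a bound of order $(1+M)^2(t-\tau)^{c}$ for some $c>0$. Taking square roots where needed, this matches the $(1+M)(t-\tau)^{C_4}$ form.
\item For the stochastic convolution, I use stochastic maximal regularity (\cite[Thm~3.5]{VNVW}). With Lemma~\ref{G_reg} in $\cL^2(H,V_{2\sigma})$, this bounds it by $\|Z\|_{L^\ell(\tau,t;V_{2\eta})}$ times a positive power of $t-\tau$, since $\delta<\frac12+\sigma$. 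By \eqref{est:Z2} this is controlled by $(1+\sqrt M)(t-\tau)^{c}$.
\end{itemize}
Combining the three bounds, and taking $C_4$ as the smallest exponent appearing, gives \eqref{est:Za2}.
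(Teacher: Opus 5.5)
\paragraph{On \eqref{est:Za}.} Your suspicion is correct: the defect is in the statement, not in your argument. The paper derives the same energy bound you do, with right-hand side $\lesssim 1+(t-\tau)$, and then says \eqref{est:Za} ``follows by the Gronwall lemma''. But Gronwall applied to $y'\le-\frac{M}{t-\tau}y+C$ gives exactly your additive floor $C\frac{t-\tau}{M}$, and that floor is genuine.

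The reason is that $\tilde a\in\mathcal K_\infty$, so $m_\alpha(\tilde a)>0$. Testing the equation against $e_1$ then gives a scalar process with linear restoring rate $\frac{M}{t-\tau}+\lambda_1$. Its quadratic-variation density is $\|(\operatorname{I}+A)^{-\delta}(m_{\frac\alpha2}(Z)e_1)\|_H^2$, which stays bounded below while $Z$ is $H$-close to $\tilde a$. Comparing with an Ornstein--Uhlenbeck process gives $\E\|Z(t)-\tilde a\|_H^2\gtrsim\frac{t-\tau}{M}$ for large $M$, which is incompatible with $C_2e^{-M}$. For the same reason, your idea of recasting the noise as a multiplicative perturbation of the rate cannot work.

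What is true, and what your computation proves, is
\[
\E\|Z(s)-\tilde a\|_H^2\le C\bigl[e^{-\frac{M}{t-\tau}(s-\tau)}+\tfrac{t-\tau}{M}\bigr].
\]
This weaker bound is harmless. \eqref{est:Za} is never used afterwards: the proof of Theorem~\ref{th:irr} invokes only \eqref{est:Za2}, and neither the paper's proof of \eqref{est:Za2} nor yours relies on \eqref{est:Za}.

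\paragraph{On \eqref{est:Za2}.} Here you follow the paper's route: the mild formula for $A+\frac{M}{t-\tau}\operatorname{I}$, \eqref{est:X_R} for the datum, \eqref{est:Z4ter}--\eqref{est:Z4bis} for the drift, and Lemma~\ref{G_reg} with \eqref{est:Z2} for the noise. Two steps need repair, and the paper is equally loose on both.

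First, squaring \eqref{est:Z4ter}--\eqref{est:Z4bis} gives $(1+M)^2$, and taking square roots does not turn this into $(1+M)$. You can either keep $(1+M)^2$, which is harmless because $\bar M$ is fixed before $\bar\tau$, or read off $\E\|\Psi_\beta'(Z)\|^2_{L^2(\tau,t;H)}\lesssim1+M$ directly from the term $\int\!\int\Psi'_\gamma(Z)\Psi'_\beta(Z)$ in the It\^o formula for $\Psi_\gamma(Z)$, since $\gamma\ge\beta$.

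Second, and this is a genuine gap, the stochastic-convolution bullet does not close. Continuity in $V_{2\delta}$ via \cite[Thm~3.5]{VNVW} for a $\cL^2(H,V_{2\sigma})$-valued integrand needs time-integrability $\ell>\frac{2}{1+2\sigma-2\delta}$. This exceeds $4$ because $\delta>\frac14+\sigma$. Interpolating \eqref{est:Z2}, however, gives $Z\in L^\ell(\tau,t;V_{2\eta})$ only for $\ell\le\frac1\eta$, and Lemma~\ref{G_reg} forces $\eta>\frac14$, so $\ell<4$. The paper's bound by $\E\int_\tau^t\|G_\alpha(Z)\|^2_{\cL^2(H,V_{2\sigma})}$ fails for a related reason: integrands that are only $L^2$ in time give $C([\tau,t];V_{2\sigma})$, not $V_{2\delta}$.

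\paragraph{How to fix the stochastic term.} Only a fixed-$s$ second moment is needed, so use It\^o's isometry instead. Pick $\eta\in(\max\{\sigma,\frac14\},\frac12)$.
\begin{enumerate}
\item Apply the mild formula in $V_{2\eta}$. Using only $\|S(r-u)\|_{\cL(H,V_{2\eta})}\lesssim(r-u)^{-\eta}$ and the uniform bound $\|G_\alpha(Z)\|_{\cL^2(H,H)}\le C$, you get
\[
\sup_{r\in[\tau,t]}\E\|Z(r)\|_{V_{2\eta}}^2\lesssim(1+M)^2 .
\]
\item Combine Lemma~\ref{G_reg} with $\|S(s-r)\|_{\cL(V_{2\sigma},V_{2\delta})}\lesssim(s-r)^{-(\delta-\sigma)}$ to obtain
\[
\E\norm{\int_\tau^s e^{-\frac{M(s-r)}{t-\tau}}S(s-r)G_\alpha(Z(r))\,\d W(r)}_{V_{2\delta}}^2\lesssim(1+M)^2(t-\tau)^{1+2\sigma-2\delta}.
\]
\end{enumerate}
This has the required form because $\delta<\frac12+\sigma$.
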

\begin{proof}
First of all,
the It\^o formula implies, for every $s\in[\tau,t]$, that 
\begin{align*}
    &\frac12\|Z(s)-\tilde a\|_H^2
+\int_\tau^s\left(A^{\frac12}Z(r),
A^{\frac12}(Z(r)-\tilde a)\right)_H\,\d r
+\int_\tau^s\int_{\Td}
\Psi_\beta'(Z(r))(Z(r)-\tilde a)\,\d r\\
&\qquad+\frac M{t-\tau}\int_\tau^s\|Z(r)-\tilde a\|^2_H\,\d r\\
&=\frac12\|X^x(\tau)-\tilde a\|_H^2
+\int_\tau^s\left(Z(r)-\tilde a, G_\alpha(Z(r))\,\d W(r)\right)_H+\frac12\int_\tau^s
\|G_\alpha(Z(r))\|^2_{\cL^2(H,H)}\,\d r
\end{align*}
and classical computations based on 
the Young inequality, the convexity of $\Psi_\beta$,
Lemma~\ref{G_reg}, the fact that $|X^x|\leq1$,
and the Burkholder-Davis-Gundy 
inequality yield
\begin{align*}
    &\E\sup_{r\in[\tau,s]}\|Z(r)-\tilde a\|_H^2
+\E\int_\tau^s\|
A^{\frac12}(Z(r)-\tilde a)\|_H^2\,\d r
+\E\int_\tau^s
\|\Psi_\beta(Z(r))\|_{L^1(\Td)}\,\d r\\
&\qquad+\frac M{t-\tau}
\E\int_\tau^s\|Z(r)-\tilde a\|^2_H\,\d r
\lesssim 1+(t-\tau),
\end{align*}
and \eqref{est:Za} follows by the Gronwall lemma.  
Let us estimate now the $V_{2\delta}$-norm. To this end, 
note that 
by adding and subtracting suitable terms we have 
\[
  \d (Z-\tilde a) + \left(A+
  \frac{M}{t-\tau}\operatorname{I}\right)
  (Z-\tilde a)\,\d s
  =-A\tilde a\,\d t
  -\Psi_\beta'(Z)\,\d s + G_\alpha(Z)\,\d W,
\]
yielding
\begin{align*}
    Z(s)-\tilde a
    &=e^{-\frac{M}{t-\tau}(s-\tau)}S(s-\tau)(X^x(\tau)-\tilde a)
    -\int_\tau^se^{-\frac{M(s-r)}{t-\tau}}S(s-r)
    \left[A\tilde a+\Psi_\beta'(Z(r))\right]\,\d r\\
    &+\int_\tau^s
    e^{-\frac{M(s-r)}{t-\tau}}S(s-r)
    G_\alpha(Z(r))\,\d W(r)
    \qquad\forall\,s\in[\tau,t],\quad\P\text{-a.s.}
\end{align*}
By recalling \eqref{est:X_R}, we infer that 
there exists $R>0$, independent of $\tau$ and $M$, such that
\begin{align*}
    &\E\|Z(s)-\tilde a\|_{V_{2\delta}}^2
    \lesssim e^{-\frac{M}{t-\tau}(s-\tau)}
    \left(R^2+
    \|\tilde a\|_{V_{2\delta}}^2\right)
    +\|\tilde a\|_{V_{2+2\delta}}^2(t-\tau)\\
    &\quad+\E\norm{\int_\tau^se^{-\frac{M(s-r)}{t-\tau}}
    S(s-r)\Psi_\beta'(Z(r))\,\d r}_{V_{2\delta}}^2
    +\E\norm{\int_\tau^se^{-\frac{M(s-r)}{t-\tau}}
    S(s-r)G_\alpha(Z(r))\,\d W(r)}_{V_{2\delta}}^2.
\end{align*}
If $\delta<\frac12$ by interpolation and 
deterministic maximal regularity 
one has
\begin{align*}
    \E\norm{\int_\tau^se^{-\frac{M(s-r)}{t-\tau}}
    S(s-r)\Psi_\beta'(Z(r))\,\d r}_{V_{2\delta}}^2
    &\lesssim (t-\tau)^{1-2\delta}\E\int_\tau^t
    \norm{\Psi_\beta'(Z(r))}^2_{H}\,\d r,
\end{align*}
while if $\delta\in[\frac12,\frac12+\sigma)$
by deterministic and 
stochastic maximal regularity and Lemma~\ref{G_reg}
we have
\begin{align*}
    \E\norm{\int_\tau^se^{-\frac{M(s-r)}{t-\tau}}
    S(s-r)\Psi_\beta'(Z(r))\,\d r}_{V_{2\delta}}^2
    &\lesssim \E\int_\tau^t
    \norm{\Psi_\beta'(Z(r))}^2_{V_{2\sigma}}\,\d r\\
    &\leq (t-\tau)^{1-2\sigma}
    \E\norm{\Psi_\beta'(Z)}^2_{L^{\frac1\sigma}(\tau,t; V_{2\sigma})}
\end{align*}
and 
\begin{align*}
    \E\norm{\int_\tau^se^{-\frac{M(s-r)}{t-\tau}}
    S(s-r)G_\alpha(Z(r))\,\d W(r)}_{V_{2\delta}}^2
    &\lesssim\E\int_\tau^t
    \norm{G_\alpha(Z_\tau(r))}^2_{
    \cL^2(H,V_{2\sigma})}\,\d r\\
    &\lesssim\E\int_\tau^t
    \norm{Z_\tau(r)}^2_{
    V_{2\sigma}}\,\d r\\
    &\leq(t-\tau)^{1-2\sigma}
    \E\norm{Z}^2_{L^{\frac1\sigma}(\tau,t; V_{2\sigma})},
\end{align*}
so that \eqref{est:Z4ter}--\eqref{est:Z4bis} yield \eqref{est:Za2} with $C_4=1-2\delta$ if $\delta<\frac12$
and $C_4=1-2\sigma$ if $\delta\geq\frac12$.
\end{proof}

We are now ready to prove the irreducibility result of Theorem~\ref{th:irr}.
For $\tau\in(0,t)$ and $M>0$,
with $M>2(t-\tau)$,
we define the process
\[
\widetilde X(s):=X(s)\ind_{[0,\tau)}(s)
+Z(s)\ind_{[\tau,t]}(s), \qquad s\in[0,t],
\]
so that 
\[
  \widetilde X\in L^\ell_\cP(\Omega; C([0,t]; V_{2\delta}))
  \quad\forall\,\ell\geq2
\]
is the unique solution to 
\[
  \begin{cases}
  \d \widetilde X(s)+
  A\widetilde X(s)\,\d s + 
  \Psi_\beta'(\widetilde X(s))\,\d s
  +\frac{M}{t-\tau}(\widetilde X-\tilde a)
  \ind_{[\tau,t]}(s)\,\d s
  =G_\alpha(\widetilde X(s))\,\d W(s),
  \quad s\in[0,t],\\
  \widetilde X(0)=x.
  \end{cases}
\]
By noting that 
\begin{align*}
\int_\tau^t\norm{G_\alpha^{-1}(\widetilde X(s))
\left[\frac{M}{t-\tau}(\widetilde X(s)-\tilde a)\right]}_{H}^2\,\d s
&\lesssim_{M,\tau}
\int_\tau^t\norm{\Psi''_{\frac\alpha2}(Z(s))}_{V_{2\delta}}^2
\left(\norm{Z(s)}_{V_{2\delta}}^2
+\|\tilde{a}\|_{V_{2\delta}}^2\right)\,\d s\\
&\lesssim \left(1+\|\widetilde X\|^2_{C([0,t];V_{2\delta})}
\right)\norm{\Psi''_{\frac\alpha2}(Z)}^2_{L^2(\tau,t;V_{2\delta})},
\end{align*}
it follows from \eqref{est:Z5} that 
\[
  \P\left(
  \int_\tau^t\norm{G^{-1}(\widetilde X(s))
\left[\frac{M}{t-\tau}(\widetilde X(s)-\tilde a)\right]}_{H}^2\,\d s<+\infty\right)=1.
\]
Hence, the relaxed version of the Girsanov theorem 
contained in \cite{Ferr} (see also \cite[Thm.~7.4]{LS})
ensures that, 
for every choice of $\tau\in(0,t)$ and $M>0$,
the law of $\widetilde X$ is absolutely continuous 
with respect to the law of $X$.
Consequently, to show that 
$\P(\|X(t)-a\|_{V_{2\delta}}<r)>0$, it is sufficient
to find $\bar\tau\in(0,t)$ and $\bar M>0$
such that $\P(\|\widetilde X(t)-a\|_{V_{2\delta}}<r)>0$.
To this end, recalling that $\|\tilde a-a\|_{V_{2\delta}}<r/2$,
by the Markov inequality we have that 
\begin{align*}
  \P\left(\|\widetilde X(t)-a\|_{V_{2\delta}}<r\right)
  &\geq\P\left(\|\widetilde X(t)-\tilde a\|_{V_{2\delta}}<\frac r2\right)
  =1-\P\left(\|Z(t)-\tilde a\|_{V_{2\delta}}\geq\frac r2\right)\\
  &\geq1-\frac4{r^2}\E\|Z(t)-\tilde a\|_{V_{2\delta}}^2,
\end{align*}
so that \eqref{est:Za2} yields
\[
\P\left(\|\widetilde X(t)-a\|_{V_{2\delta}}<r\right)\geq
1-
\frac{4C_3}{r^2}\left[e^{-M}
    +(1+M)(t-\tau)^{C_4}
    \right].
\]
Hence, we first fix $\bar M>0$ large enough such that 
\[
\frac{4C_3}{r^2}e^{-\bar M}<\frac12.
\]
Secondly, we fix $\bar\tau$
(possibly depending on $\bar M$) 
close enough to $t$ such that 
\[
\frac{4C_3}{r^2}
(1+\bar M)(t-\bar \tau)^{C_4}<\frac12.
\]
and this concludes the proof of Theorem~\ref{th:irr}.

\subsection{Uniqueness of the invariant measure}
We prove here Theorem~\ref{th:uniq_im} on existence and uniqueness of invariant measures. To this end, since the existence part is rather classical while the uniqueness is the main novelty of the work, we shall focus in 
particular on the uniqueness part and just give a sketch of the 
existence, by providing suitable references.

First of all, from the energy estimates contained in Propositions~\ref{prop1Xeps}, \ref{prop3Xeps}, and \ref{prop5Xeps},
and from the classical Krylov-Bogoliubov theorem,
by directly adapting the arguments of \cite{SZ} one can readily show that there exists an invariant measure for the semigroup $P$, and that every invariant measure is concentrated on 
$V_{2\sigma+1}$. 
Moreover, from the estimate of Proposition~\ref{prop3Xeps}
one obtains, for every $\gamma$ as in Theorem~\ref{th:wp}
a control on the term
\begin{align*}
\E\int_0^t\int_{\Td}\Psi_{\beta-1}'(X^x(s))
\Psi_{\gamma-1}'(X^x(s))\,\d s
&\gtrsim
\E\int_0^t\int_{\Td}\Psi''_{\gamma+\beta-2}(X^x(s))\,\d s\\
&\gtrsim
\E\int_0^t\int_{\Td}\Psi_{\gamma+\beta}(X^x(s))\,\d s.
\end{align*}
Hence, classical computations concerning the support of invariant measures (see again \cite{SZ}) show that every invariant 
measure is supported on $\mathcal K_{\gamma+\beta}\cap V_{2\sigma+1}$
for every $\gamma$ as in Theorem~\ref{th:wp}.
In particular, recalling that 
$\mathcal K_{\gamma+\beta}\cap V_{2\sigma+1}\subseteq 
\mathcal K_\infty$ if $\gamma+\beta>\frac{2}{4\sigma+1}+2$
(see Corollary~\ref{cor:sep}) and that 
by assumption of Theorem~\ref{th:sf}
one can choose $\gamma$ with $\gamma+\beta>4$, 
since $\frac{2}{4\sigma+1}+2<4$ one gets exactly
that every invariant measure is concentrated on 
$\mathcal K_\infty\cap V_{2\sigma+1}$.

Let us focus on the uniqueness part.
We start by showing that the semigroup $P$ is regular,
i.e.~that for every $t>0$, the transition 
probabilities $(\mu_t(x,\cdot))_{x\in\mathcal K_{\gamma}\cap V_{2\delta}}$
are all equivalent, where $\gamma$ is 
as in Theorem~\ref{th:sf}.
To this end, let $x\in 
\mathcal K_{\gamma}\cap V_{2\delta}$,
$A\in\cB(\mathcal X)$, and suppose that $\mu_t(x,A)>0$: then, by the semigroup property 
and the fact that $\mu_{\frac t2}(x,\mathcal K_{\gamma}\cap V_{2\delta})=1$,
it holds that 
\[
0<\mu_t(x,A)=\int_{\mathcal X}\mu_{\frac t2}(z,A)\,
\mu_{\frac t2}(x,\d z)=
\int_{\mathcal K_{\gamma}\cap V_{2\delta}}
(P_{\frac t2}\ind_A)(z)
\,\mu_{\frac t2}(x,\d z).
\]
It follows that there exists 
$z_0\in\mathcal K_{\gamma}\cap V_{2\delta}$ such that 
$(P_{\frac t2}\ind_A)(z_0)>0$. By the strong Feller property of Theorem~\ref{th:sf}
we infer that there exists $r_0>0$
and $\eps_0>0$
such that, setting $B^{2\delta}_{z_0}(r):=
\left\{z\in V_{2\delta}:\;\|z-z_0\|_{V_{2\delta}}<r_0\right\}$,
it holds that 
$(P_{\frac t2}\ind_A)(z)\geq\eps_0$
for all $z\in \mathcal K_{\gamma}\cap B^{2\delta}_{z_0}(r_0)$.
Hence, for every $y\in\mathcal K_{\gamma}\cap V_{2\delta}$ one has that 
\[
\mu_t(y,A)=\int_{\mathcal X}\mu_{\frac t2}(z,A)\,
\mu_{\frac t2}(y,\d z)=
\int_{\mathcal K_{\gamma}\cap V_{2\delta}}
(P_{\frac t2}\ind_A)(z)
\,\mu_{\frac t2}(y,\d z)
\geq\eps_0\mu_{\frac t2}(y,\mathcal K_{\gamma}\cap B^{2\delta}_{z_0}(r)).
\]
Since 
$\gamma$ satisfies the assumptions of Theorem~\ref{th:irr},
by the irreducibility Theorem~\ref{th:irr} and
the fact that  $\mu_{\frac t2}(y,\mathcal K_{\gamma})=1$ 
it holds that 
\[
\mu_{\frac t2}(y,\mathcal K_{\gamma}\cap B^{2\delta}_{z_0}(r))
=\P\left(\|X^y(t)-z_0\|_{V_{2\delta}}\right)>0.
\]
By the arbitrariness of $x$ and $y$, this shows that $\mu_t(x,\cdot)$ and $\mu_t(y,\cdot)$
are equivalent for all $x,y\in\mathcal K_{\gamma}\cap V_{2\delta}$, as required.

We are now ready to conclude. 
Let $\mu$ be an invariant measure for $P$. 
We first show that $\mu$ is ergodic.
To this end, let $A\in\cB(\mathcal X)$ 
with $P_t\ind_A=\ind_A$
for all $t\geq0$ and $\mu(A)>0$:
since $\mu(\mathcal K_{\gamma}\cap V_{2\delta})=1$,
there exists $x_0\in A\cap \mathcal K_{\gamma}
\cap V_{2\delta}$
and $t_0>0$
such that $\mu_{t_0}(x_0,A)=1$.
It follows by the equivalence of transition probabilities proved above that $\mu_{t_0}(x,A)=1$
for all $x\in \mathcal K_{\gamma}\cap V_{2\delta}$, hence
\[
\mu(A)=\int_{\mathcal X}
\mu_{t_0}(x,A)\,\mu(\d x)
=\int_{\mathcal K_{\gamma}\cap V_{2\delta}}
\mu_{t_0}(x,A)\,\mu(\d x)=1.
\]
By virtue of the 
characterisation result 
\cite[Thm.~3.2.4]{DPZ-erg}, this shows that 
$\mu$ is ergodic.

Given now two invariant measures $\mu$ and $\nu$, 
by the result just proved one has that $\mu$ and $\nu$ are
ergodic. Moreover,  by invariance,
for every $A\in\cB(\mathcal X)$ and $t>0$ it holds that 
\[
\mu(A)
=\int_{\mathcal K_{\gamma}\cap V_{2\delta}}
\mu_{t}(x,A)\,\mu(\d x), \qquad
\nu(A)
=\int_{\mathcal K_{\gamma}\cap V_{2\delta}}
\mu_{t}(x,A)\,\nu(\d x)
\]
It follows that $\mu$ and $\nu$
are both equivalent to every transition probability.
Hence, $\mu$ and $\nu$ are equivalent and ergodic, 
which implies in turn that $\mu=\nu$ by \cite[Prop.~3.2.5]{DPZ-erg}. 
Eventually, the strong mixing property follows as in 
\cite[Thm.~4.2.1]{DPZ-erg}, and this concludes the proof 
of Theorem~\ref{th:uniq_im}.

\section*{Acknowledgements}
The authors are members of Gruppo Nazionale per l'Analisi Matematica, la Probabilit\'a e le loro Applicazioni (GNAMPA), Istituto Nazionale di Alta Matematica (INdAM).
The authors gratefully acknowledge the financial support of the project  ``Prot. P2022TX4FE\_02 -  Stochastic particle-based anomalous reaction-diffusion models with heterogeneous interaction for radiation therapy'' financed by the European Union - Next Generation EU, Missione 4-Componente 1-CUP: D53D23018970001.
The authors also gratefully acknowledge the financial support of the project  ``Equazioni differenziali stocastiche: sviluppi teorici e applicazioni a modelli per fenomeni fisici'' financed by Indam-Gnampa, CUP: E53C25002010001.

\section*{Data availability statement}
No new data were created or analysed in this study. 
Data sharing is not applicable.

\section*{Conflict of interest statement}
The authors have no conflicts of interest to declare.


\bibliographystyle{abbrv}\bibliography{ref}


\end{document}